\let\orgdescriptionlabel\descriptionlabel
\renewcommand*{\descriptionlabel}[1]{%
	\let\orglabel\label
	\let\label\@gobble
	\phantomsection
	\edef\@currentlabel{#1}%
	\let\label\orglabel
	\orgdescriptionlabel{#1}%
}
\newtheorem{definition}{Definition}[section]
\newtheorem{theorem}[definition]{Theorem}
\newcommand{\rubrik}{}
\newtheorem{proposition}[definition]{Proposition}
\newtheorem{lemma}[definition]{Lemma}
\newtheorem{corollary}[definition]{Corollary}
\newtheorem{remark}[definition]{Remark}
\newtheorem{example}[definition]{Example}
\renewcommand{\theequation}{\thesection.\arabic{equation}}
\def\supp{ {\operatorname{supp}} }
\newcommand{\cl}{\textnormal{cl}}
\newcommand{\m}{\mathbf{m}}
\def\Symp#1{ {{\sigma_{p}}} \left( #1 \right) }
\newcommand{\x}{\langle x\rangle}
\newcommand{\csi}{\langle \xi \rangle}
\def\jb#1{ {\langle} #1 {\rangle}\ }
\newcommand{\neutralize}[1]{\expandafter\let\csname c@#1\endcsname\count@}
\newtheorem{assumption}{Assumption}
\def\fios{\text{Fourier integral operators}}
\def\R {\mathbb{R}}       %%% Real numbers
\def\N {\mathbb{N}}       %%% Natural
\def\Z {\mathbb{Z}}
\def\Rn {\mathbb{R}^{n}}
\def\SG {{S}}
\def\ds{\displaystyle}
\def\cB{{\mathcal B}}
\def\cC{{\mathcal C}}
\def\cL{{\mathcal L}}
\def\cS{{\mathcal S}}
\def\1{\lambda}
\def\2{\Sigma_{2}}
\def\e{\epsilon}
\def\vp{\varphi}
\def\s{\sigma}
\def\g{\gamma}
\def\bt{\mathbf{t}}
\def\bL{\mathbf{L}}
\def\supp#1{ {\mathrm{supp}}\left( #1 \right) }
\def\diag{ \mathrm{diag} }
\def\mod{ \mathrm{mod} }
\def\SX{ {\cal{S}} }
\def\Op#1{ {\mathrm{Op}} \left( #1 \right) }
\def\Symp#1{ {\sigma_{p}} \left( #1 \right) }
\def\ssum#1#2{ { \displaystyle{\sum_{{#1}}^{#2}} } }
\def\tD{\Gamma}
\def\card#1{ \mathbf {card}(#1)}
\def\perm{M}
\def\perms{ \perm^\nmid }
\def\<{{\langle}}
\def\>{{\rangle}}
\def\a{{\alpha}}
\def\b{{\beta}}
\def\cl{\mathrm{cl}}
\def\SX{ {\mathcal{S}} }
\newcommand{\afrac}[2]{\genfrac{}{}{0pt}{1}{#1}{#2}} 
\newcommand{\beqsn}{\arraycolsep1.5pt\begin{eqnarray*}}
	\newcommand{\eeqsn}{\end{eqnarray*}\arraycolsep5pt}
\newcommand{\beqs}{\arraycolsep1.5pt\begin{eqnarray}}
\newcommand{\eeqs}{\end{eqnarray}\arraycolsep5pt}
\newcommand{\op}{\operatorname{Op}}
\def\Op{ {\operatorname{Op}} }
\newcommand{\Ph}{\mathcal P}
\newcommand{\Phr}{\mathcal P_r}
\def\fy{\varphi}
\newcommand*{\scrB}{\ensuremath{\mathscr{B}}}	%Borel sets
\newcommand*{\scrF}{\ensuremath{\mathscr{F}}}	%sigma-field
\newcommand*{\caP}{\ensuremath{\mathcal{P}}}		%domain of stochastic integral
\newcommand*{\caH}{\ensuremath{\mathcal{H}}}
\newcommand*{\caF}{\ensuremath{\mathcal{F}}}		%Fourier transform
\newcommand*{\caE}{\ensuremath{\mathcal{E}}}		%simple processes
\newcommand*{\E}{\mathbb{E}}										%expectation
\renewcommand*{\P}{\mathbb{P}}									%probability measure
\def\tv{\mathrm{tv}}
\newcommand{\WFgB}{\operatorname{WF}_{\cB}}
\newcommand{\direz}{{\operatorname{dir}}}
\newcommand{\Char}{\operatorname{Char}}
\newcommand{\back}[1]{\backslash {#1}}
\newcommand{\WFF}{\operatorname{WF}}
\newcommand{\sets}[2]{\{ \, #1\, ;\, #2\, \} }
\author{Ahmed Abdeljawad}
\address{Dipartimento di Matematica ``G. Peano'', Universit\`a degli Studi di Torino, via Carlo Alberto 10. 10123 Torino,  Italy}
\email{ahmed.abdeljawad@unito.it}
\author{Alessia Ascanelli}
\address{Dipartimento di Matematica, Universit\`a degli Studi di Ferrara, via Machiavelli 30, 44121 Ferrara, Italy}
\email{alessia.ascanelli@unife.it}
\author{Sandro Coriasco}
\address{Dipartimento di Matematica ``G. Peano'', Universit\`a degli Studi di Torino, via Carlo Alberto 10. 10123 Torino,  Italy}
\email{sandro.coriasco@unito.it}
\title[Cauchy problems for hyperbolic operators on $\R^n$]
	{Deterministic and stochastic Cauchy problems\\for a class of weakly hyperbolic operators on $\R^n$}
\keywords{Fourier integral operator, multi-product, hyperbolic Cauchy problem, involutive characteristics, stochastic PDEs}
\subjclass[2010]{Primary: 58J40; Secondary: 35S05, 35S30, 47G30, 58J45}
\begin{document}
	
	\begin{abstract}
		We study a class of hyperbolic Cauchy problems, associated with linear operators and systems
		with polynomially bounded coefficients, 
		variable multiplicities and involutive characteristics, globally defined on $\R^n$. We prove well-posedness
		in Sobolev-Kato spaces, with loss of smoothness and decay at infinity. We also obtain results about propagation of singularities,
		in terms of wave-front
		sets describing the evolution of both smoothness and decay singularities of temperate distributions. Moreover,
		we can prove the existence of random-field solutions for the associated stochastic Cauchy problems.
		To this aim, we first discuss algebraic properties for iterated integrals of suitable
		parameter-dependent families of Fourier integral operators,
		associated with the characteristic roots, which are involved in the construction of the fundamental solution. 
		In particular, we show that, also for this operator class, the involutiveness of the characteristics implies
		commutative properties for such expressions.
	\end{abstract}
	
	\maketitle
	\setcounter{tocdepth}{1}
	%\tableofcontents
	
	%%%%%%%%%%%%%%%%%%%%
	%%%%%%%%%%%%%%%%%%%%%%%%%%%%%%
	%%%%%%%%%%%%%%%%%%%%%%%%%%%%%%%%%%%%%%%%%
	
	\section{Introduction}\label{sec:intro}
	\setcounter{equation}{0}
	%%%%%%%%%%%%%%%%%%%%%%%%%%%%%%%%%%%%%%%%%%%%%%%%%%
	%							          
	% 			introduction	 		          
	%   							          
	%%%%%%%%%%%%%%%%%%%%%%%%%%%%%%%%%%%%%%%%%%%%%%%%%%
	In the present paper we focus on the Cauchy problem
	\begin{equation}
\label{cpintro}
\left\{
\begin{array}{ll}
L u(t) = f(t),          & t\in (0,T]
\\[1ex]
D_{t}^k u(0) = g_{k}, & k=0,\dots,m-1,
\end{array}
\right.
\end{equation}
where $L=L(t,D_t;x,D_x)$ is a linear partial differential operator of the form
\begin{equation}\label{Lintro} 
L(t,D_t;x,D_x)= D_{t}^m + \sum _{j=1}^{m}\sum_{|\alpha|\le j}c_{j\alpha}(t;x)D_x^\alpha D_t^{m-j},
\end{equation}
$D=-i\partial$, with $(t,x)-$smoothly depending coefficients $c_{j\alpha}$, possibly admitting a polynomial growth, namely,
\begin{equation}\label{grw}
|\partial_t^k\partial_x^\beta c_{j\alpha}(t;x)|\le C_{k\alpha }\langle x\rangle^{j-|\beta|},\qquad \alpha,\beta\in\Z^n_+,\ |\alpha|\leq j,\ j=1,\ldots,m,
\end{equation}
for $\langle x \rangle:=(1+|x|^2)^{1/2}$, some $C_{k\alpha}>0$ and all $t\in[0,T]$, $x\in\R^n$.
The hypothesis of smoothness with respect to $t$ is assumed only for the sake of simplicity, since here we will not deal with questions concerning low regularity in time for the coefficients of the Cauchy problem. A very wide literature concerning this topic has been developed along the past 40 years, starting with the paper \cite{cdgs}. 
 
The assumption \eqref{grw} suggests to set the problem within the framework of the so-called SG calculus (see \cite{CO,PA72}), defined through symbols satisfying global estimates on $\R^n\times\R^n$.
Explicitly, given $(m,\mu)\in\R^2$, the class of SG symbols $S^{m,\mu}$ consists of all symbols $a\in C^\infty(\R^n\times\R^n)$ such that 
 $$
 	| \partial_x^{\a} \partial_{\xi}^{\b} a(x,\xi)| \leq C_{\a\b}\,\jb{x}^{m-|\a|}\jb{\xi}^{\mu-|\b|}
$$
for all $ x, \xi \in \R^n$, $\alpha,\beta\in \Z^n_+$, and some $C_{\a\b}>0$,
see Section \ref{sec:sgcalc} below for the precise definitions and some basic properties of the corresponding calculus. 
Notice that for $m=0$ we recover the class $S^\mu=\SG^{0,\mu}$ of (H\"ormander's) symbols uniformly bounded with respect to the space variable $x$.
We are interested in existence and uniqueness of the solution to \eqref{cpintro} in suitable weighted Sobolev-type spaces of functions or distributions, according to the regularity of the Cauchy data and of the operator $L$.
More precisely, it is well-estabilished (see, e.g., \cite{M2} for PDEs with uniformly bounded coefficients, and \cite{CO} for PDEs in the SG framework) that, to have existence of a unique solution to \eqref{cpintro} in Sobolev-type spaces, an hyperbolicity assumption is needed. The operator $L$ is said to be \emph{hyperbolic} if its principal symbol 
$$
L_m(t,\tau;x,\xi):=\tau^m + \sum _{j=1}^{m}\sum_{|\alpha|= j}c_{j\alpha}(t;x)\xi^\alpha \tau^{m-j}
$$
factorizes as 
\begin{equation}
\label{hypintro}L_m(t,\tau;x,\xi)=\prod_{j=1}^m (\tau-\tau_j(t;x,\xi)),
\end{equation}
with real-valued and smooth roots $\tau_j$, usually called \emph{characteristic roots} of the operator $L$.
A sufficient condition for existence of a unique solution is the \textit{separation between the roots}, which reads either as 
\begin{align}
\tag{H}\quad  |\tau_j(t;x,\xi)-\tau_k(t;x,\xi)|\geq & C\langle\xi\rangle &&\text{(uniformly bounded coefficients) or},
\\
\tag{SGH}\quad  |\tau_j(t;x,\xi)-\tau_k(t;x,\xi)|\geq & C\langle x\rangle\langle\xi\rangle &&\text{(SG type coefficients)},
\end{align}
for a suitable $C>0$ and every $(t;x,\xi)\in [0,T]\times\R^{2n}$. Conditions (H) or (SGH), respectively, are assumed to hold true
either for every $j\neq k$ (\emph{strict hyperbolicity}), or at least for every $\tau_j$, $\tau_k$ belonging to different groups of coinciding roots (\emph{weak hyperbolic with roots of constant multiplicity}).
The strict hyperbolicity condition (H) (respectively, (SGH)) allows to prove existence and uniqueness in Sobolev spaces (respectively, in Sobolev-Kato spaces) of a solution to \eqref{cpintro} for every 
$f,g_k$, $k=0,\dots,m-1$, in Sobolev spaces (respectivey, Sobolev-Kato spaces). A weak hyperbolicity condition with roots of constant multiplicity, together with a condition on the lower order terms of the operator $L$ (a so called \emph{Levi condition}) allows to prove a similar result, and the phenomena of loss of derivatives and modification of the behavior as $|x|\to\infty$ with respect to the initial data appear. The first phenomenon has been observed for the first time in \cite{cdgs}, the second one in \cite{scncpp}.
If there is no separation condition on the roots, then the only available results in literature that we are aware of are those due to Morimoto \cite{Morimoto} and Taniguchi \cite{Taniguchi02}, dealing with \emph{involutive roots} in the uniformly bounded coefficients case. The condition of involutiveness, which is weaker than the separation condition (H), requires that, for all $j,k\in\N$, $t\in[0,T]$, $x,\xi\in\R^n$, the Poisson brackets
$$\{\tau-\tau_j,\tau-\tau_{k}\}:=\partial_t\tau_j-\partial_t\tau_{k}+ \tau'_{j,\xi}\cdot \tau'_{k,x}-\tau'_{j,x}\cdot  \tau'_{k,\xi}$$
may be written as 
\begin{equation}\label{invintro}
\{\tau-\tau_j,\tau-\tau_{k}\}
=
b_{jk}\cdot(\tau_j-\tau_{k})+d_{jk},
\end{equation}
for suitable parameter-dependent, real-valued symbols $b_{jk},d_{jk}\in C^\infty([0,T];S^{0}(\R^{2n}))$, $j,k\in\N$.
Under condition \eqref{invintro} and adding a Levi condition, the authors reduced \eqref{cpintro} to an equivalent first order system
\begin{equation}\label{sysint}\begin{cases}
	\bL U(t) = F(t), & 0\leq t\leq T, \qquad \bL = D_t + \Lambda(t,x,D_x) +R(t,x,D_x),
	\\[1ex]
	U(0)  = G, & 
	\end{cases}
\end{equation}
%\begin{equation}\label{sysint}\begin{cases}
%	\bL U(t,s) = F(t), & 0\leq s\leq t\leq T, \qquad \bL = D_t + \Lambda(t,x,D_x) +R(t,x,D_x),
%	\\[1ex]
%	U(s,s)  = G, & s\in[0,T), 
%	\end{cases}
%\end{equation}
%
with $\Lambda$ a diagonal matrix having entries given by operators with symbol coinciding with the $\tau_j$, $j=1,\dots, m$,
and $R$ a (full) matrix of operators of order zero. They constructed the fundamental solution to \eqref{sysint}, that is, a smooth family $\{E(t,s)\}_{0\leq s\leq t\leq T}$ of operators such that  
\begin{equation*}\begin{cases}
	\bL E(t,s) = 0 & 0\leq s\leq t\leq T,
	\\[1ex]
	E(s,s)  = I, & s\in[0,T),
	\end{cases}
\end{equation*}
so obtaining by Duhamel's formula the solution
$$U(t)=E(t,0)G+i\int_0^t E(t,s)F(s)ds$$
to the system and then the solution $u(t)$ to the corresponding higher order Cauchy problem \eqref{cpintro} by the reduction procedure.
\vskip+0.5cm
In the present paper we consider the Cauchy problem \eqref{cpintro} in the SG setting, that is, under the growth condition \eqref{grw}, the hyperbolicity condition \eqref{hypintro} and the involutiveness assumption \eqref{invintro} for suitable parameter-dependent, real-valued symbols $b_{jk},d_{jk}\in C^\infty([0,T];S^{0,0}(\R^{2n}))$, $j,k\in\N$.  The involutiveness of the characteristic roots of the operator, or, equivalently, of the eigenvalues of the (diagonal) principal part of the corresponding first order system is here the main assumption (see Assumption \ref{assumpt:invo} in Section \ref{subsec:commlaw} for the precise statement of such condition).

We obtain several results concerning the Cauchy problem \eqref{cpintro}:
\begin{itemize}
\item we prove, in Theorem \ref{thm:mainLastSecThm}, the existence, for every $f\in C^\infty([0,T]; H^{r,\rho}(\R^n))$ and $g_k\in H^{r+m-1-k,\rho+m-1-k}(\R^n)$, $k=0,\ldots,m-1$, of a unique 
$$
	u\in \bigcap_{k\in\Z^+}C^k([0,T']; H^{r-k,\rho-k}(\R^n)),
$$ 
for a suitably small $0< T'\leq T$, solution of \eqref{cpintro}; we also provide, in \eqref{eq:soleqordm}, the explicit expression of $u$, using SG Fourier integral operators (SG FIOs);
\item we give a precise description, in Theorem \ref{thm:propwfs}, of a global wave-front set of the solution, under a (mild) additional condition on the operator $L$; namely, we prove that, when $L$
is SG-classical, the (smoothness and decay) singularities of the solution of \eqref{cpintro} with $f\equiv0$ are points of unions of arcs of bicharacteristics, generated by the phase functions of the SG FIOs appearing in the expression \eqref{eq:soleqordm}, and emanating from (smoothness and decay) singularities of the Cauchy data $g_k$, $k=0,\dots,m-1$;
\item we deal, in Theorem \ref{thm:linearin}, with a stochastic version of the Cauchy problem \eqref{cpintro}, namely, with the case when $$f(t;x)=\gamma(t;x)+\sigma(t;x)\dot\Xi(t;x),$$ and $\Xi$ is a random noise of Gaussian type, white in time and with possible correlation in space; we give conditions on the noise $\Xi$, on the coefficients $\gamma,\sigma$, and on the data $g_k$, $k=0,\dots,m-1$, such that there  exists
a so-called \emph{random field solution} of \eqref{cpintro}, that is, a stochastic process which gives, for every fixed $(t,x)\in [0,T]\times\R^n$, a well-defined random variable (see Section \ref{sec:sghypspde} below for details).
\end{itemize}
	
All the results described above are achieved through the use of Fourier integral operators of SG-type, which are recalled in Section \ref{sec:fioprelim}. 
The \emph{involutiveness assumption} is the key to prove that the fundamental solution of the  first order system \eqref{sysint} may be written as a \emph{finite sum} of (iterated integrals of smooth parameter-dependent families of) 
SG FIOs (modulo a regularizing term). This is proved in Theorem \ref{thm:mainthm}, and follows from the analysis performed in Section \ref{sec:sgphfcomlaw}. The (rather technical) results 
proved in Section \ref{sec:sgphfcomlaw} are indeed crucial to achieve our main results. Namely, we prove that, under the involutiveness assumption, multi-products of regular SG-phase functions given as solutions of eikonal equations satisfy a \emph{commutative law} (Theorem \ref{cruthm}). Similar to the classical situation, this extends to the SG case the possibility to reduce the fundamental solution to a finite sum of terms (modulo a regularizing operator). This also implies commutativity properties of the flows \textit{at infinity} generated by the corresponding (classical) SG-phase functions. Their relationship with
the symplectic structures studied in \cite{CS17} will be investigated elsewhere.

The paper is organized as follows. In Section \ref{sec:sgcalc} we recall basic elements of the SG calculus of pseudo-differential and Fourier integral operators. In Section
\ref{sec:sgphfcomlaw} we prove our first main theorem, namely, a commutative property for multi-products of suitable families of regular SG-phase functions, determined
by involutive families of SG-symbols. In Section \ref{sec:fundsol}
we prove our second main theorem, showing how such commutative property can be employed to obtain the fundamental solution for certain $N\times N$ first order linear systems
with diagonal principal part and involutive characteristics. In Section \ref{sec:sginvcp} we study the Cauchy problem for SG-hyperbolic, involutive linear differential operators. 
Here two more main theorems are proved: a well-posedness result (with loss of smoothness and decay) and a propagation of singularity result, for wave-front sets of global type, whose definition
and basic properties are also recalled. Finally, in Section \ref{sec:sghypspde} we prove
our final main theorem, namely, the existence of random field solutions for a stochastic version of the Cauchy problems studied in Section \ref{sec:sginvcp}.

	\section{Symbolic calculus for pseudo-differential and Fourier integral operators of SG type}\label{sec:sgcalc}
	\setcounter{equation}{0}
	%%%%%%%%%%%%%%%%%%%%%%%%%%%%%%%%%%%%%%%%%%%%%%%%%%
	%							          
	%			 sg calculus			          
	%							          
	%%%%%%%%%%%%%%%%%%%%%%%%%%%%%%%%%%%%%%%%%%%%%%%%%%
	In this section we recall some properties of SG pseudo-differential operators,
	an operator class defined through symbols satisfying global estimates on $\R^n\times\R^n$.
	Moreover, we state several fundamental theorems for SG Fourier integral operators.
	%Also we introduce some preliminary material about SG calculus,
	Further details can be found, e.g., in \cite{AsCor,CO,EgorovSchulze,Kumano-go:1,SH87}
	and the references therein, from which we took most of the materials included in this section.
	Here and in what follows, $A\asymp B$ means that $A\lesssim B$ and $B\lesssim A$,
	where $A\lesssim B$ means that $A\le c\cdot B$, for a suitable constant $c>0$.
	
	\subsection{Pseudodifferential operators of SG type}
	
	\par
	
	The class $\SG^{m,\mu}(\R^{2n})$ of SG symbols of order $(m,\mu) \in \R^2$,
	is given by all the functions $a(x,\xi) \in C^\infty(\R^{2n})$ with the property that, 
	for any multiindices $\a,\b \in \Z_+^n$,  there exist
	constants $C_{\a\b}>0$ such that the conditions 
	\begin{equation}\label{eq:disSG}
	| D_{\xi}^{\a}D_x^{\b} a(x,\xi)| 
	\leq 
	C_{\a\b}\,\jb{x}^{m-|\b|}\jb{\xi}^{\mu-|\a|}
	\end{equation}
	hold, for all $ x, \xi \in \R^n$.
	Here $\langle x \rangle=(1+|x|^2)^{1/2}$ when $x\in\R^n$, and $\Z_+$ is the set of non-negative integers.
	For $a(x,\xi)\in\SG^{m,\mu}(\R^{2n})$ , $m,\mu\in\R$, we define the semi-norms $\vvvert a\vvvert _l^{m,\mu}$ by
	\begin{align}\label{semi_norm_MSG}
	\vvvert a\vvvert_l^{m,\mu}
	=\max_{|\a+\b|\leq l}
	\sup_{x,\xi\in\R^n}\jb{x}^{-m+|\b|}\jb{\xi}^{-\mu+|\a|}| D_{\xi}^{\a}D_x^{\b} a(x,\xi)|,
	\end{align}
	where $l\in\Z_+$.
	The quantities~\eqref{semi_norm_MSG} define a Fr\'echet topology of $\SG^{m,\mu}(\R^{2n})$.
	Moreover, let
	$$ 
	\SG^{\infty,\infty}(\R^{2n})=\bigcup_{m,\mu\in\R}\SG^{m,\mu}(\R^{2n}),
		\qquad
		\SG^{-\infty,-\infty}(\R^{2n})=\bigcap_{m,\mu\in\R}\SG^{m,\mu}(\R^{2n}).
	$$
	
	\par
	
	The functions $a\in\SG^{m,\mu}(\R^{2n})$ can be $(\nu\times\nu)$-matrix-valued.
	In such case the estimate \eqref{eq:disSG}~~must be valid for each entry of the matrix.
	The next technical lemma is useful when dealing with compositions of SG symbols.

	\begin{lemma} \label{compositionwithe1symbol}
		Let $f\in\SG^{m,\mu}(\R^{2n})$, $m,\mu\in\R$, and $g$ vector-valued
		in $\R^n$ such that $g\in\SG^{0,1}(\R^{2n})\otimes\R^n$ and $\jb{g(x,\xi)}\sim\jb{\xi}$. 
		Then $f(x,g(x,\xi))$ belongs to $\SG^{m,\mu}(\R^{2n})$.
	\end{lemma}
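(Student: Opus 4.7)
The plan is to verify directly the SG estimates \eqref{eq:disSG} for $h(x,\xi):=f(x,g(x,\xi))$. The natural tool is the multivariate Faà di Bruno / chain rule for the mixed partial derivatives of $h$, after which the bookkeeping of weights should close exactly, thanks to the two-sided comparison $\jb{g(x,\xi)}\sim\jb{\xi}$.

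First, I would observe that since $f$ depends on $\xi$ only through $g$, every $\xi$-derivative acting on $h$ must land on some component $g_k$, whereas each $x$-derivative either acts directly on the first slot of $f$ or on some $g_k$. Iterating the chain and Leibniz rules, $\partial_\xi^\alpha\partial_x^\beta h$ expands as a finite linear combination of terms of the form
\[
(\partial_y^\delta\partial_\eta^\gamma f)(x,g(x,\xi))\cdot\prod_{j=1}^{|\gamma|}(\partial_\xi^{\alpha_j}\partial_x^{\beta_j}g_{k_j})(x,\xi),
\]
subject to $\sum_j\alpha_j=\alpha$, $|\delta|+\sum_j|\beta_j|=|\beta|$, and $(\alpha_j,\beta_j)\neq(0,0)$ for every $j$. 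Since only finitely many such terms occur for any given $(\alpha,\beta)$, it suffices to bound one generic representative.

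Next, I would apply the SG estimate \eqref{eq:disSG} for $f\in\SG^{m,\mu}$ at the point $(x,g(x,\xi))$, using $\jb{g(x,\xi)}\sim\jb{\xi}$ to replace $\jb{g(x,\xi)}^{\mu-|\gamma|}$ by $\jb{\xi}^{\mu-|\gamma|}$, and then apply the SG estimates for $g\in\SG^{0,1}(\R^{2n})\otimes\R^n$ to each factor $\partial_\xi^{\alpha_j}\partial_x^{\beta_j}g_{k_j}$, which are controlled by $\jb{x}^{-|\beta_j|}\jb{\xi}^{1-|\alpha_j|}$. Multiplying the resulting bounds and summing the exponents by means of the identities $\sum_j|\alpha_j|=|\alpha|$, $|\delta|+\sum_j|\beta_j|=|\beta|$, and $\sum_j 1=|\gamma|$ collapses the weight on the representative term precisely to $\jb{x}^{m-|\beta|}\jb{\xi}^{\mu-|\alpha|}$, yielding the required estimate.

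The argument is essentially bookkeeping; no genuine analytic obstruction arises. The only mildly delicate point I would emphasize is that $\jb{g(x,\xi)}\sim\jb{\xi}$ must be used as a two-sided equivalence, so that also the factors $\jb{g(x,\xi)}^{\mu-|\gamma|}$ with possibly negative exponents (arising when $|\gamma|>\mu$) are correctly absorbed into powers of $\jb{\xi}$.
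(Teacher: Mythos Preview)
Your proof is correct and follows the standard route via the multivariate chain rule; the bookkeeping you describe closes exactly as you claim, with the key identity $\sum_j(1-|\alpha_j|)=|\gamma|-|\alpha|$ cancelling the $|\gamma|$ in the $\jb{\xi}$-exponent. The paper itself does not prove this lemma but simply cites \cite{Cothesis}, so there is no in-paper proof to compare against; your argument is precisely the kind of direct verification one would expect to find there.
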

	\par
	The previous result can be found in \cite{Cothesis},
	and can of course be extended to the other composition case, namely
	$h(x,\xi)$ vector valued in $\R^n$  such that it
	belongs to $\SG^{1,0}(\R^{2n})\otimes\R^n$ and $\jb{h(x,\xi)}\sim \jb{x}$,
	implying that $f(h(x,\xi),\xi)$ belongs to $\SG^{m,\mu}(\R^{2n})$.
	
	\par
	
	We now recall definition and properties of the pseudo-differential operators
	$a(x,D)=\Op(a)$ where $a\in\SG^{m,\mu}$.
	%
	%
%	Let $a\in\SX$, and $t\in \R$ be fixed. Then
%	the pseudo-differential operator $\op _t(a)$ is the linear and
%	continuous operator on $\SX$ defined by the formula
%	%%
%	\begin{equation}\label{e0.5}
%	(\op _t(a)u)(x)
%	=
%	(2\pi ) ^{-n }\iint_{\R^{2n}} e^{i({x-y})\cdot\xi } a((1-t)x+ty,\xi )u(y)\,
%	dyd\xi 
%	\end{equation}
%	%%
%	(cf. Chapter XVIII in \cite {Ho1}). For
%	general $a\in \SX'$, the pseudo-differential
%	operator $\op _t(a)$ is defined as the continuous operator from
%	$\SX$ to $\SX'$ with distribution
%	kernel
%	%%
%	\begin{equation}\label{weylkernel}
%	K_{t,a}(x,y)=(\mathscr F_2^{-1}a)((1-t)x+ty,x-y).
%	\end{equation}
%	
Given $a\in\SG^{m,\mu}$, we define $\Op(a)$ through the left-quantization (cf. Chapter XVIII in \cite {Ho1})
\begin{equation}\label{eq:psidos}
(\Op(a)u)(x)= (2 \pi)^{-n}\int_{\R^n} e^{i x \cdot \xi} a(x, \xi) \widehat{u}(\xi) \,d\xi, \quad u\in\SX,
\end{equation}
with $\widehat{u}$ the Fourier transform of $u\in\SX$, given by
\begin{equation}\label{eq:tfu}
\widehat{u}(\xi)=\int_{\R^n} e^{-ix\cdot\xi}u(x)\,dx.
\end{equation}
%%
%Moreover, if $t=1/2$, then $\op _t(a)$ is the so-called Weyl quantization $\Op^w{(a)}=a^w(x,D)$ represented as 
%
%\begin{equation}\label{eq:Weylrep}
%(\Op^w(a)u)(x)= (2 \pi)^{-n}\iint_{\R^{2n}} e^{i (x-y) \cdot \xi} a\left((x+y)/2, \xi\right) u(y) \,dyd\xi, \quad u\in\SX(\R^n).
%\end{equation}
	
\par
	
The operators in \eqref{eq:psidos} form a
graded algebra with respect to composition, i.e.,
	$$
	\op (\SG ^{m_1,\mu _1})\circ \op (\SG ^{m_2,\mu _2})
	\subseteq \op (\SG ^{m_1+m_2,\mu _1+\mu _2}).
	$$
	The symbol $c\in \SG ^{m_1+m_2,\mu _1+\mu _2}$ of the composed
	operator $\Op(a)\circ\op(b)$, where
	$a\in \SG ^{m_1,\mu _1}$, $b\in \SG ^{m_2,\mu _2}$,
	admits the asymptotic expansion
	\begin{equation}
	\label{eq:comp}
	c(x,\xi)\sim \sum_{\alpha}\frac{i^{|\alpha|}}{\alpha!}
		\,D^\alpha_\xi a(x,\xi)\, D^\alpha_x b(x,\xi),
	\end{equation}
	which implies that the symbol $c$ equals $a\cdot b$ modulo 
	$S ^{m_1+m_2-1,\mu _1+\mu _2-1}$.
	
	The residual elements of the calculus are operators with symbols in
	\[
	\SG ^{-\infty,-\infty}=\SG ^{-\infty,-\infty}
	=
	 \bigcap_{(m,\mu) \in \R^2} \SG ^{m,\mu} 
	=\SX,
	\]
	that is, those having kernel in $\SX$, continuously
	mapping  $\SX^\prime$ to $\SX$. 
	
	\par
	
	An operator $A=\Op(a)$, is called \emph{elliptic}
	(or $\SG ^{m,\mu}$-\emph{elliptic}) if
	$a\in \SG ^{m,\mu}$ and there
	exists $R\geq0$ such that
	%and
	%
	\begin{equation}\label{eq:ellcond}
	C\jb{x}^{m} \jb{\xi}^{\mu}\le |a(x,\xi)|,\qquad 
	|x|+|\xi|\geq R,
	\end{equation}
	for some constant $C>0$. An elliptic SG  operator $A \in \op (\SG ^{m,\mu})$ admits a
	parametrix $P\in \op (\SG ^{-m,-\mu})$ such that
	\[
	PA=I + K_1, \quad AP= I+ K_2,
	\]
	for suitable $K_1, K_2\in\Op(\SG^{-\infty,-\infty})$, where $I$ denotes the identity operator. 
	
	\par
	
	It is a well-known fact that SG-operators give rise to linear continuous mappings 
	from $\SX$ to itself, extendable as linear continuous mappings
	from $\SX'$ to itself. They also act continuously between
	the so-called Sobolev-Kato (or weighted Sobolev) space, that is from 
	$H^{r,\varrho}(\R^n)$ to $H^{r-m,\varrho-\mu}(\R^n)$,
	where $H^{r,\varrho}(\R^n)$,
	$r,\varrho \in \R$, is defined as
	\begin{equation*}
	H^{r,\varrho}(\R^n)= \left\{u \in \cS^\prime(\R^{n}) \colon \|u\|_{r,\varrho}=
	\|{\jb .}^r\jb D^\varrho u\|_{L^2}< \infty\right\}.
	\end{equation*}

	\par
	
	The so-called SG manifolds were introduced in 1987 by E.~Schrohe \cite{Sc}. 
	This is a class of non-compact
	manifolds, on which a version of SG calculus can be defined.
	Such manifolds admit a finite atlas, whose changes of
	coordinates behave like symbols of order $(0,1)$ (see \cite{Sc}
	for details and additional technical hypotheses). A relevant
	example of  SG manifolds are the manifolds with
	cylindrical ends, where also the concept of classical SG
	operator makes sense,
	see, e.{\,}g. \cite{BC10a,MP02}.

\par

We close this section with the next result, which deals with the composition (or multi-product)
of $(M+1)$ SG pseudo-differential operators, where $M\ge1$. The proof of Theorem 
\ref{thm:mpsgpdosymestim} can be found in \cite{Abdeljawadthesis}.

\begin{theorem}\label{thm:mpsgpdosymestim}
	Let $M\geq 1$, $p_j(x,\xi)\in\SG^{m_j,\mu_j}(\R^{2n})$, $m_j,\mu_j\in\R$,
	$j=1,\dots,M+1$. Consider the multi-product
	\begin{equation}\label{eq:qm}
	Q_{M+1}=P_1 \cdots P_{M+1}
	\end{equation}
	of the operators $P_j=\Op(p_j)$, $j=1,\dots,M+1$, and denote by $q_{M+1}(x,\xi)\in\SG^{m,\mu}(\R^{2n})$
	the symbol of $Q_{M+1}$, where	$m=m_1+\dots+m_{M+1}$, $\mu=\mu_1+\dots+\mu_{M+1}$.
	Then, the boundedness of $\displaystyle\oplus_{j=1}^{M+1} p_j$
	in $\displaystyle\oplus_{j=1}^{M+1}\SG^{m_j,\mu_j}(\R^{2n})$,
	implies the boundedness of 
	$q_{M+1}(x,\xi)$ in $\SG^{m,\mu}(\R^{2n})$.
\end{theorem}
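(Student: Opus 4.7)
The plan is to proceed by induction on $M$, reducing the general multi-product estimate to the base case $M=1$ of the composition of two SG pseudo-differential operators. The quantitative statement for the base case, which is the heart of the argument, is the following: for every $l\in\Z_+$ there exist $l'=l'(l,n)\in\Z_+$ and $C_l>0$ such that
\begin{equation*}
\vvvert p_1\# p_2\vvvert_{l}^{m_1+m_2,\mu_1+\mu_2}
\le C_l\,\vvvert p_1\vvvert_{l'}^{m_1,\mu_1}\,\vvvert p_2\vvvert_{l'}^{m_2,\mu_2},
\end{equation*}
where $p_1\# p_2$ denotes the symbol of $\Op(p_1)\circ\Op(p_2)$, admitting the oscillatory integral representation
\begin{equation*}
(p_1\# p_2)(x,\xi)=(2\pi)^{-n}\iint_{\R^{2n}} e^{-iy\cdot\eta}\,p_1(x,\xi+\eta)\,p_2(x+y,\xi)\,dy\,d\eta,
\end{equation*}
whose asymptotic expansion is \eqref{eq:comp}.

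To prove the base-case estimate, I would differentiate $p_1\# p_2$ under the integral sign by Leibniz's rule and then regularise the resulting oscillatory integrals by inserting the operators $\langle\eta\rangle^{-2N_1}(1-\Delta_y)^{N_1}$ and $\langle y\rangle^{-2N_2}(1-\Delta_\eta)^{N_2}$, integrating by parts in $y$ and in $\eta$, with $N_1,N_2$ chosen large enough (depending on $l,n,m_j,\mu_j$) to guarantee absolute convergence. The SG bounds for $p_1,p_2$ combined with the Peetre inequalities $\langle\xi+\eta\rangle^{s}\lesssim \langle\xi\rangle^{s}\langle\eta\rangle^{|s|}$ and $\langle x+y\rangle^{s}\lesssim \langle x\rangle^{s}\langle y\rangle^{|s|}$ then yield a pointwise bound of the form $\langle x\rangle^{m_1+m_2-|\b|}\langle\xi\rangle^{\mu_1+\mu_2-|\a|}$ times a product of finitely many seminorms of $p_1$ and $p_2$. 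Choosing $l'$ comparable to $l+2N_1+2N_2$ gives the required inequality.

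The inductive step is then immediate: assuming the bound for $M$ factors, write $Q_{M+1}=Q_M\circ P_{M+1}$ and combine the inductive control on the seminorms of $q_M$ with the base-case estimate applied to the pair $(q_M,p_{M+1})$. Each seminorm of $q_{M+1}$ is thereby dominated by a finite product of seminorms of $p_1,\dots,p_{M+1}$, so that boundedness of $\oplus_{j=1}^{M+1}p_j$ in $\oplus_{j=1}^{M+1}\SG^{m_j,\mu_j}(\R^{2n})$ forces boundedness of $q_{M+1}$ in $\SG^{m,\mu}(\R^{2n})$. The main technical obstacle lies in the base case, namely the bookkeeping needed to quantify precisely how many derivatives of $p_1,p_2$ and how many integration-by-parts steps are required so that the constants $C_l$ and the index $l'$ depend only on $l,n,m_j,\mu_j$; once this quantitative composition estimate is established, the induction and the conclusion of the theorem are routine.
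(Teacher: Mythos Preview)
The paper does not actually contain a proof of this theorem; it simply states that ``the proof of Theorem \ref{thm:mpsgpdosymestim} can be found in \cite{Abdeljawadthesis}'' and moves on. Your approach --- induction on the number of factors, reducing everything to a quantitative seminorm estimate for the two-factor composition, and proving the latter via the oscillatory-integral representation of $p_1\#p_2$ regularised by the standard $\langle\eta\rangle^{-2N_1}(1-\Delta_y)^{N_1}$, $\langle y\rangle^{-2N_2}(1-\Delta_\eta)^{N_2}$ integrations by parts together with Peetre's inequality --- is precisely the classical argument (cf.\ Kumano-go \cite{Kumano-go:1}, which the paper cites as the model for this type of result), and is almost certainly what the referenced thesis does as well. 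The proposal is correct and the identification of the base-case bookkeeping as the only nontrivial point is accurate.
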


\par
	
	%%%%%%%%%%%%%%%%%%%%%%%%%%%%%%%%%%%%%%%%%%%%%%%%%%%%%%%%%%%
	\subsection{Fourier integral operators of SG type}\label{sec:fioprelim}
	%%%%%%%%%%%%%%%%%%%%%%%%%%%%%%%%%%%%%%%%%%%%%%%%%%%
	%
	%               FIO of SG type
	%
	%%%%%%%%%%%%%%%%%%%%%%%%%%%%%%%%%%%%%%%%%%%%%%%%%%%

	\par
	
	Here we recall the class of Fourier integral operators we are interested in.
	Also, we show their compositions with the SG pseudo-differential operators,
	and the compositions between Type I and Type II operators.
	
	\par
	
	The simplest \fios\ are defined, for $u\in\SX$, as
	\begin{align}
	\label{eq:typei}
	u\mapsto (\Op _\fy (a)u)(x)&= (2\pi )^{-n}\int_{\R^n}
	e^{i\fy (x,\xi )} a(x,\xi )
	\widehat u(\xi )\, d\xi ,
	\end{align}
	and
	\begin{align}
	\label{eq:typeii}
	u\mapsto (\Op^*_\fy (a)u)(x)&= (2\pi )^{-n}\iint_{\R^{2n}}
	e^{i(x\cdot \xi -\fy (y,\xi ))} \overline {a(y,\xi )} u(y)\, dyd\xi,
	\end{align}
	with suitable choices of phase function $\varphi$ and symbols $a$ and $b$.
	
	\par
	
	The operators $\op _\fy (a)$ and
	$\op _\fy ^*(a)$ are sometimes called SG \fios\
	of type I and type II, respectively, with symbol $a$ and
	phase function $\fy$. Note that a type II operator satisfies
	$\op^*_\fy(a)=\op _\fy (a)^*$, that is, it is the formal $L^2$-adjoint of the 
	type I operator $\op_\fy(a)$.

	We now introduce the definition of the class of admissible phase functions
	in the SG context, as it was given in \cite{Coriasco:998.1}. 

	\begin{definition}[SG phase function]\label{def:phase}
		A real valued function $\varphi\in C^\infty(\R^{2n})$  belongs to the class $\mathcal P$
		of SG phase functions if it satisfies the following conditions:
		\begin{enumerate}
			\item $\varphi\in \SG^{1,1}(\R^{2n})$;
			\item $\<\varphi'_x(x,\xi)\>\asymp\<\xi\>$ as $|(x,\xi)|\to\infty$;
			\item $\<\varphi'_\xi(x,\xi)\>\asymp\<x\>$ as $|(x,\xi)|\to\infty$.	
		\end{enumerate}
	\end{definition}

	The SG \fios\ of type I and type II, $\Op_\fy(a)$ and
	$\Op^*_\fy(b)$, are defined as in \eqref{eq:typei} and \eqref{eq:typeii},
	respectively, with $\fy\in\Ph$ and $a,b\in \SG^{m,\mu}$.
	Notice that we do not assume any homogeneity hypothesis on the phase function $\varphi$.
	The next theorem, treating composition between SG pseudo-differential operators and SG \fios\,
	was originally proved in \cite{Coriasco:998.1}, see also \cite{CJT4, CoPa, CT14}.
	\begin{theorem}\label{thm:compi}
		Let $\fy\in\Ph$ and assume
		$p\in \SG^{t,\tau}(\R^{2n})$, $a,b\in \SG^{m,\mu}(\R^{2n})$. Then,
		\begin{align*}
		\Op (p)\circ \Op _\fy (a) &= \Op _\fy (c_1+r_1) = \Op _\fy (c_1)\
				 \mod\ \Op (\SG^{-\infty,-\infty}(\R^{2n})),
		\\[1ex]
		\Op (p)\circ \Op ^* _\fy(b) &= \Op ^* _\fy(c_2+r_2) = \Op^* _\fy (c_2)\ 
				\mod\ \Op (\SG^{-\infty,-\infty}(\R^{2n})),
		\\[1ex]
		\Op _\fy (a) \circ \Op (p) &= \Op _\fy (c_3+r_3) = \Op _\fy (c_3)\ 
				\mod\ \Op (\SG^{-\infty,-\infty}(\R^{2n})),
		\\[1ex]
		\Op _\fy ^*(b) \circ \Op (p) &= \Op ^* _\fy(c_4+r_4) = \Op^* _\fy (c_4)\
				\mod\ \Op (\SG^{-\infty,-\infty}(\R^{2n})),
		\end{align*}
		for some $c_j\in \SG^{m+t,\mu+\tau}(\R^{2n})$,
		$r_j\in \SG^{-\infty,-\infty}(\R^{2n})$, $j=1,\dots ,4$.
	\end{theorem}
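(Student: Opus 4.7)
The plan is to prove the first identity in detail and then obtain the other three by taking formal adjoints (recall $\Op_\varphi^*(b) = \Op_\varphi(\overline b)^*$), since adjunction exchanges the positions and types of the factors. Throughout I focus on the composition $\Op(p) \circ \Op_\varphi(a)$.

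Writing out the composition for $u \in \SX$ as an iterated (oscillatory) integral gives
\[
(\Op(p)\circ \Op_\varphi(a))u(x) = (2\pi)^{-2n}\iiint e^{i[(x-y)\cdot\eta + \varphi(y,\xi)]}\,p(x,\eta)\,a(y,\xi)\,\widehat u(\xi)\,dy\,d\eta\,d\xi,
\]
and the goal is to collapse the $(y,\eta)$ integration against a phase that morally behaves like $\varphi(x,\xi)$. The key step is the SG version of Kuranishi's trick: using $\varphi \in \SG^{1,1}$ together with Definition~\ref{def:phase}(2), I write
\[
\varphi(y,\xi) - \varphi(x,\xi) = (y-x)\cdot \Phi(x,y,\xi), \qquad \Phi(x,y,\xi)=\int_0^1 \varphi'_x(x+t(y-x),\xi)\,dt,
\]
where $\Phi$ is smooth, belongs (entrywise) to $\SG^{0,1}$ jointly in $(x,y,\xi)$, and satisfies $\jb{\Phi}\asymp\jb{\xi}$ for large $|(x,y,\xi)|$; this last point follows by combining $\jb{\varphi'_x}\asymp\jb{\xi}$ with Lemma~\ref{compositionwithe1symbol}. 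Changing the dual variable $\eta \mapsto \eta + \Phi(x,y,\xi)$ pulls the factor $e^{i\varphi(x,\xi)}$ out of the integral and reduces the remaining phase to $(x-y)\cdot\eta$.

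At this point the inner $(y,\eta)$-integral becomes, for each fixed $(x,\xi)$, a standard SG-pseudodifferential oscillatory integral with amplitude $p(x,\eta + \Phi(x,y,\xi))\,a(y,\xi)$. Expanding $p$ in a Taylor series in $\eta$ about $\eta=0$, applying the identity $e^{-i(y-x)\cdot\eta}\eta^\alpha = (-D_y)^\alpha e^{-i(y-x)\cdot\eta}$, integrating by parts in $y$, and finally using $(2\pi)^{-n}\int e^{i(x-y)\cdot\eta}\,d\eta = \delta(y-x)$ yields the asymptotic expansion
\[
c_1(x,\xi) \sim \sum_{\alpha} \frac{1}{\alpha!}\,(\partial_\eta^\alpha p)(x,\varphi'_x(x,\xi))\,D_y^\alpha\!\left[a(y,\xi)\,e^{i\psi(x,y,\xi)}\right]\!\Big|_{y=x},
\]
where $\psi(x,y,\xi)=\varphi(y,\xi)-\varphi(x,\xi)-(y-x)\cdot\varphi'_x(x,\xi)$ is the quadratic Taylor remainder of $\varphi$ in $y$. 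The principal term is $p(x,\varphi'_x(x,\xi))\,a(x,\xi)$, which lies in $\SG^{m+t,\mu+\tau}$ by Lemma~\ref{compositionwithe1symbol} (applied to $p$ composed with $\varphi'_x\in\SG^{0,1}$) and the fact that $\SG^{t,\tau}\cdot\SG^{m,\mu}\subseteq \SG^{m+t,\mu+\tau}$. The higher-order terms gain an order in $\mu$ at each step thanks to $\partial_\eta^\alpha p \in \SG^{t,\tau-|\alpha|}$, while each $y$-derivative of $e^{i\psi}$ produces factors in $\SG^{0,1}$ that are compensated by the extra decay in $|\eta|$ of $\partial_\eta^\alpha p$ (exploiting $\psi'_y(x,x,\xi)=0$); asymptotic summation produces $c_1 \in \SG^{m+t,\mu+\tau}$.

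The main technical obstacle is the rigorous control of the remainder $r_1$, that is, proving that truncating the asymptotic expansion leaves a symbol in $\SG^{-\infty,-\infty}$, uniformly in both $x$ and $\xi$. This requires global (not merely conic) integration-by-parts estimates in $(y,\eta)$, using the two SG-type vector fields $\jb{x-y}^{-2}(1-i(x-y)\cdot\partial_\eta)$ and $\jb{\eta}^{-2}(1-i\eta\cdot\partial_y)$ to gain arbitrary decay in $|y-x|$ and $|\eta|$, respectively; the SG phase assumptions on $\varphi$ are precisely what guarantees that $\Phi$ and its derivatives do not destroy these gains. Once the first identity is established, the third identity follows by the symmetric argument with the roles of $(y,\eta)$ replaced by analogous variables and using $\jb{\varphi'_\xi}\asymp\jb{x}$, while the second and fourth identities are obtained by passing to formal $L^2$-adjoints, since taking adjoints exchanges Type I with Type II FIOs, interchanges the order of the composition, and preserves the $\SG^{-\infty,-\infty}$ ideal.
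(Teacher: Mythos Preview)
The paper does not give its own proof of this theorem; it is stated as a known result, with the proof attributed to \cite{Coriasco:998.1} (see also \cite{CJT4,CoPa,CT14}). Your outline is precisely the standard argument carried out in those references: the Kuranishi substitution based on $\varphi(y,\xi)-\varphi(x,\xi)=(y-x)\cdot\Phi(x,y,\xi)$, the resulting asymptotic expansion with leading term $p(x,\varphi'_x(x,\xi))\,a(x,\xi)$, and the global remainder control via the two SG integration-by-parts operators. The reduction of the remaining three identities to the first by adjunction and by the ``dual'' Kuranishi trick in the $\xi$-variable (using $\jb{\varphi'_\xi}\asymp\jb{x}$) is also how it is done in the cited sources.

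One point you gloss over, which is handled explicitly in \cite{Coriasco:998.1}, is that the equivalence $\jb{\Phi(x,y,\xi)}\asymp\jb{\xi}$ is only immediate near the diagonal $y=x$; the standard remedy is to insert a cutoff of the form $\chi(|x-y|/(\varepsilon\jb{x}))$ so that on its support the equivalence holds (and Lemma \ref{compositionwithe1symbol} applies), while on the complementary region the oscillatory factor $e^{i(x-y)\cdot\eta}$ alone already yields a smoothing contribution. Apart from this localisation step, your sketch matches the published proof.
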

	In order to obtain the composition of SG \fios\ of type I and type II,
	some more hypotheses are needed, leading to the definition
	of the classes $\Phr$ and $\Phr(\tau)$ of regular SG phase functions,
	cf. \cite{Kumano-go:1}.
	\begin{definition}[Regular SG phase function]\label{def:phaser}
		Let $\tau\in [0,1)$ and $r>0$. A function $\varphi\in\mathcal P$ belongs
		to the class $\mathcal P_r(\tau)$ if it satisfies the following conditions:
		\begin{description}
			\item[1\label{1}] $\vert\det(\varphi''_{x\xi})(x,\xi)\vert\geq r$,
					$\text{ for any } x,\xi\, \in \Rn$;
			\item[2\label{2}] the function  $J(x,\xi):=\vp(x,\xi)-x\cdot\xi$ is such that
			\beqs\label{hyp}
			\ds\sup_{\afrac{x,\xi\in\R^n}{|\alpha+\beta|\leq 2}}
			\frac{|D_\xi^\alpha D_x^\beta J(x,\xi)|}{\x^{1-|\beta|}\<\xi\>^{1-|\alpha|}}\leq \tau.
			\eeqs
		\end{description}
		If only condition (1) holds, we write $\fy\in\Phr$.
	\end{definition}
	\begin{remark}
		Notice that if condition \eqref{hyp} actually holds true for any
		$\a,\,\b\, \in \Z_+^n$, then $J(x,\xi)/\tau$ is bounded with constant $1$
		in $\SG^{1,1}(\R^{2n})$. Notice also that condition (1) in Definition \ref{def:phaser}
		is authomatically fulfilled when condition (2) holds true for a sufficiently small $\tau\in[0,1)$.
	\end{remark}

\par
	
For $\ell\in\N$, we also introduce the semi-norms
	\[
	\|J\|_{2,\ell}:=
	\ds\sum_{2\leq |\alpha+\beta|\leq 2+\ell}\sup_{(x,\xi)\in \R^{2n}}
	\ds\frac{|D_\xi^\alpha D_x^\beta J(x,\xi)|}{\x^{1-|\beta|}\<\xi\>^{1-|\alpha|}},
	\]
	and
	\[
	\|J\|_\ell:=\ds\sup_{\afrac{x,\xi\in\R^n}{|\alpha+\beta|\leq 1}}\frac{|D_\xi^\alpha D_x^\beta J(x,\xi)|}{\x^{1-|\beta|}\<\xi\>^{1-|\alpha|}}+\|J\|_{2,\ell}.
	\]
	We notice that $\varphi\in\mathcal P_r(\tau)$ means that \eqref{1} of Definition \ref{def:phaser} and $\|J\|_0\leq \tau$ hold, and then we define the following subclass of the class of regular SG phase functions:
	\begin{definition}\label{def:phaserell}
		Let $\tau\in [0,1)$, $r>0$, $\ell \geq 0$. A function $\varphi$ belongs to the class $\mathcal P_r(\tau,\ell)$ if $\varphi\in\mathcal P_r(\tau)$ and $\|J\|_\ell\leq \tau$ for the corresponding $J$.
	\end{definition}
	Theorem \ref{thm:compii} below shows that the composition of SG \fios\ of type I and type II 
	with the same regular SG phase functions is a SG pseudo-differential operator.
	\begin{theorem}\label{thm:compii}
		Let $\fy\in\Phr$ and assume $a\in \SG^{m,\mu}(\R^{2n})$, $b\in \SG^{t,\tau}(\R^{2n})$. Then,
		\begin{align*}
		\Op _\fy(a)\circ \Op _\fy^*(b)= \Op(c_5+r_5)=\Op(c_5)\ \mod\ \Op (\SG^{-\infty,-\infty}(\R^{2n})),
		\\[1ex]
		\Op _\fy ^*(b)\circ \Op _\fy (a)=\Op(c_6+r_6)=\Op(c_6)\ \mod\ \Op (\SG^{-\infty,-\infty}(\R^{2n})),
		\end{align*}
		for some $c_j\in \SG^{m+t,\mu+\tau}(\R^{2n})$, $r_j\in \SG^{-\infty,-\infty}(\R^{2n})$, $j=5,6$.
	\end{theorem}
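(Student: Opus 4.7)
The plan is to compute each composition explicitly as an oscillatory integral, reduce it to the standard pseudodifferential form $(2\pi)^{-n}\int e^{i(x-z)\cdot\eta}\widetilde c(x,z,\eta)\,d\eta$ by a global change of phase variable, and then invoke the usual asymptotic reduction to a left symbol in the SG calculus.

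For $\Op_\varphi(a)\circ\Op_\varphi^*(b)$, I would substitute the definitions \eqref{eq:typei}, \eqref{eq:typeii} and compute $\widehat{\Op_\varphi^*(b)u}(\xi)$: the $y$-integration collapses against $\int e^{iy\cdot(\eta-\xi)}\,dy=(2\pi)^n\delta(\eta-\xi)$ and the composition reduces to the double integral with kernel
\begin{equation*}
	K_1(x,z)=(2\pi)^{-n}\int_{\R^n}e^{i(\varphi(x,\xi)-\varphi(z,\xi))}\,a(x,\xi)\,\overline{b(z,\xi)}\,d\xi.
\end{equation*}
Taylor's formula yields $\varphi(x,\xi)-\varphi(z,\xi)=(x-z)\cdot\Phi(x,z,\xi)$ with $\Phi(x,z,\xi)=\int_0^1\varphi'_x(z+t(x-z),\xi)\,dt$, and $\Phi'_\xi(x,x,\xi)=\varphi''_{x\xi}(x,\xi)$. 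The regularity condition $|\det\varphi''_{x\xi}|\geq r$ of Definition \ref{def:phaser}, together with Lemma \ref{compositionwithe1symbol}, ensures that $\xi\mapsto\eta=\Phi(x,z,\xi)$ is a global diffeomorphism of $\R^n$ with SG-controlled inverse $\Xi(x,z,\eta)$. Substituting and absorbing the Jacobian turns $K_1$ into
\begin{equation*}
	K_1(x,z)=(2\pi)^{-n}\int_{\R^n}e^{i(x-z)\cdot\eta}\,\widetilde c(x,z,\eta)\,d\eta,
\end{equation*}
with $\widetilde c\in\SG^{m+t,\mu+\tau}$ in the variables $(x,z,\eta)$. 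The standard asymptotic reduction to a left symbol $c_5(x,\eta)\sim \sum_\alpha(\alpha!)^{-1}\partial_\eta^\alpha D_z^\alpha\widetilde c(x,z,\eta)|_{z=x}$ then delivers $c_5\in\SG^{m+t,\mu+\tau}$ modulo $\SG^{-\infty,-\infty}$, and $r_5$ is the resulting regularizing tail.

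For $\Op_\varphi^*(b)\circ\Op_\varphi(a)$ I would proceed symmetrically. Inserting the definitions produces a four-fold oscillatory integral with phase $x\cdot\xi-\varphi(y,\xi)+\varphi(y,\eta)-z\cdot\eta$. Writing $\varphi(y,\eta)-\varphi(y,\xi)=(\eta-\xi)\cdot\Psi(y,\eta,\xi)$ with $\Psi(y,\eta,\xi)=\int_0^1\varphi'_\xi(y,\xi+t(\eta-\xi))\,dt$, the invertibility of $\Psi'_y$ --- again a consequence of $|\det\varphi''_{x\xi}|\geq r$ and Lemma \ref{compositionwithe1symbol} --- lets me perform a global change of variable in $y$ which effectively identifies $\eta$ with $\xi$ up to smooth corrections. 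A further reduction of the same shape as in the previous paragraph then rewrites the composition as a pseudodifferential operator with symbol $c_6\in\SG^{m+t,\mu+\tau}$ modulo $\SG^{-\infty,-\infty}$.

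The main obstacle is to make the changes of variables globally well-defined on $\R^n$ and to propagate the SG double-order semi-norm estimates through them. Global bijectivity stems from the regularity condition in Definition \ref{def:phaser}, possibly sharpened by requiring the second condition in Definition \ref{def:phaserell} with $\tau$ small, which places $\Phi$ (respectively $\Psi$) quantitatively close to the identity; uniform SG-bounds on the inverse and its Jacobian are then provided by the implicit function theorem together with Lemma \ref{compositionwithe1symbol}. The residual terms $r_5,r_6\in\SG^{-\infty,-\infty}$ are handled by standard non-stationary phase/integration by parts, exploiting the SG-ellipticity $\jb{\varphi'_x}\asymp\jb{\xi}$, $\jb{\varphi'_\xi}\asymp\jb{x}$ of Definition \ref{def:phase}. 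These are the SG analogues of Kumano-go's classical composition formulas \cite{Kumano-go:1}, the precise technical bookkeeping appearing in \cite{Coriasco:998.1,CJT4,CoPa,CT14}.
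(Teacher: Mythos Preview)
The paper does not actually prove this theorem; it is stated as a known result and the reader is referred to \cite{Coriasco:998.1,CJT4,CT14} for the proof. Your sketch follows precisely the Kuranishi-type argument used in those references (and in \cite{Kumano-go:1} for the H\"ormander classes): write the kernel as an oscillatory integral, straighten the phase $\varphi(x,\xi)-\varphi(z,\xi)$ via the mean-value map $\Phi$, and reduce the resulting amplitude to a left symbol. So the overall strategy is correct and matches the literature the paper cites.

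One point deserves more care. You claim that $\xi\mapsto\Phi(x,z,\xi)=\int_0^1\varphi'_x(z+t(x-z),\xi)\,dt$ is a global diffeomorphism for \emph{every} pair $(x,z)$, invoking $|\det\varphi''_{x\xi}|\ge r$. That pointwise bound does not, by itself, force the \emph{averaged} Jacobian $\Phi'_\xi(x,z,\xi)=\int_0^1\varphi''_{x\xi}(z+t(x-z),\xi)\,dt$ to be invertible when $x$ and $z$ are far apart; your fallback of ``possibly sharpened by requiring small $\tau$'' would prove a weaker statement than Theorem~\ref{thm:compii}, which is asserted for all $\varphi\in\Phr$. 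In the cited proofs this is handled not by strengthening the hypothesis but by inserting a cutoff $\chi(|x-z|/\varepsilon\langle x\rangle)$ that localises to an SG-conical neighbourhood of the diagonal: on the support of $\chi$ the averaged Jacobian stays close to $\varphi''_{x\xi}(x,\xi)$ and the change of variables goes through, while on the support of $1-\chi$ the phase has no critical points and repeated integration by parts (using $\langle\varphi'_x\rangle\asymp\langle\xi\rangle$) shows that this piece contributes a kernel in $\cS(\R^{2n})$, hence lands in $r_5$. The same remark applies to $\Psi$ in the second composition. With this localisation step made explicit, your argument is complete and coincides with the one in \cite{Coriasco:998.1}.
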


\par

Furthermore, asymptotic formulae can be given for $c_j$, $j=1,\dots ,6$,
in terms of $\fy$, $p$, $a$ and $b$, see \cite{Coriasco:998.1}.
Finally, when $a\in \SG^{m,\mu}$ is elliptic and $\fy\in\Phr$,
the corresponding SG \fios\ admit a parametrix, that is, 
there exist $b_1,b_2\in \SG^{-m,-\mu}$ such that
	\begin{align}
	\label{eq:parami}
	\Op _\fy(a)\circ \Op _\fy^*(b_1)= \Op _\fy^*(b_1)\circ\Op _\fy(a) 
			&= I\ \mod\ \Op (\SG^{-\infty,-\infty}),
	\\[1ex]
	\label{eq:paramii}
	\Op _\fy ^*(a)\circ \Op _\fy (b_2)=\Op_\fy(b_2)\circ\Op^*_\fy(a) 
			&= I\ \mod\ \Op (\SG^{-\infty,-\infty}),
	\end{align}
	where $I$ is the identity operator. For all these results, as well as for Theorem \ref{thm:fiocont} below,
	see again \cite{Coriasco:998.1,CJT4,CT14}.
	
	\par
	
	\begin{theorem}\label{thm:fiocont}
		Let $\varphi\in\Phr$ and $a\in\SG^{m,\mu}(\R^{2n})$, $m,\mu\in\R$. Then, for any $r,\varrho\in\R$,
		$\Op_\varphi(a)$ and $\Op^*_\varphi(a)$ continuously map $H^{r,\varrho}(\R^n)$ to $H^{r-m,\varrho-\mu}(\R^n)$.
	\end{theorem}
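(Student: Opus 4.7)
The plan is to reduce Theorem \ref{thm:fiocont} to the single case $m=\mu=r=\varrho=0$, that is, to the $L^2$-boundedness of $\Op_\varphi(a)$ and $\Op^*_\varphi(a)$ when $a\in\SG^{0,0}(\R^{2n})$ and $\varphi\in\Phr$, and then to cover arbitrary orders by conjugation with elliptic SG pseudo-differential operators.

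For the base $L^2$-estimate I would use the standard self-adjoint square trick. Since $\Op^*_\varphi(a)$ is the formal $L^2$-adjoint of $\Op_\varphi(a)$, Theorem \ref{thm:compii} applied to the product $\Op^*_\varphi(a)\circ\Op_\varphi(a)$ produces an SG pseudo-differential operator $\Op(c_6)+R_6$ with $c_6\in\SG^{0,0}(\R^{2n})$ and $R_6\in\Op(\SG^{-\infty,-\infty}(\R^{2n}))$. The $L^2$-continuity of zero-order SG pseudos, a standard Calder\'on--Vaillancourt-type fact of the calculus recalled in Section \ref{sec:sgcalc}, together with the trivial $L^2$-boundedness of regularizing operators, then gives
\[
\|\Op_\varphi(a)u\|_{L^2}^2 = \langle(\Op(c_6)+R_6)u,u\rangle_{L^2}\le C\|u\|_{L^2}^2.
\]
The same argument applied to $\Op_\varphi(a)\circ\Op^*_\varphi(a)$, also covered by Theorem \ref{thm:compii}, yields the $L^2$-continuity of $\Op^*_\varphi(a)$.

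For the general case, fix the model elliptic symbol $\lambda^{s,\sigma}(x,\xi):=\langle x\rangle^s\langle\xi\rangle^\sigma\in\SG^{s,\sigma}(\R^{2n})$ and set $\Lambda^{s,\sigma}:=\Op(\lambda^{s,\sigma})$. By the ellipticity theory recalled in Section \ref{sec:sgcalc}, each $\Lambda^{s,\sigma}$ admits a parametrix $\Lambda^{-s,-\sigma}$ modulo $\Op(\SG^{-\infty,-\infty}(\R^{2n}))$, and the standard norm equivalence $\|u\|_{r,\varrho}\asymp\|\Lambda^{r,\varrho}u\|_{L^2}$ holds. Hence $\Op_\varphi(a)\colon H^{r,\varrho}(\R^n)\to H^{r-m,\varrho-\mu}(\R^n)$ is continuous if and only if the conjugate
\[
B:=\Lambda^{r-m,\varrho-\mu}\circ\Op_\varphi(a)\circ\Lambda^{-r,-\varrho}
\]
is continuous on $L^2(\R^n)$. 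Applying Theorem \ref{thm:compi} twice (first to the left composition, which produces a type I SG FIO with amplitude in $\SG^{r,\varrho}(\R^{2n})$, then to the right composition, which lowers the order to $(0,0)$), one obtains $B=\Op_\varphi(\widetilde a)+\widetilde R$ with $\widetilde a\in\SG^{0,0}(\R^{2n})$ and $\widetilde R\in\Op(\SG^{-\infty,-\infty}(\R^{2n}))$, and the base case closes the argument. The corresponding statement for $\Op^*_\varphi(a)$ follows by identical conjugation, using the type II formulas of Theorem \ref{thm:compi}, or equivalently by duality on Sobolev--Kato spaces combined with the result for $\Op_\varphi$.

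The main obstacle is the base $L^2$-estimate: the whole reduction hinges on Theorem \ref{thm:compii}, which requires $\varphi\in\Phr$ rather than merely $\varphi\in\Ph$, because only the non-degeneracy condition (1) of Definition \ref{def:phaser} allows one to rewrite the type I -- type II product as a genuine pseudo-differential operator. Once that step is in place, the remaining elliptic conjugation and the use of Theorem \ref{thm:compi} are routine propagations within the SG calculus.
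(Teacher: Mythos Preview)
Your proposal is correct and is precisely the standard argument used in the references the paper cites for this result (\cite{Coriasco:998.1,CJT4,CT14}); the paper itself does not reproduce a proof of Theorem~\ref{thm:fiocont} but defers entirely to those sources. The $TT^*$ reduction via Theorem~\ref{thm:compii} for the base $L^2$-case, followed by elliptic conjugation through Theorem~\ref{thm:compi} to reach arbitrary Sobolev--Kato orders, is exactly the route taken there.
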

	
	\par

	%
	%%%%%%%%%%%%%%%%%%%%%%%%%%%%%%%%%%%%%%%%%%%%%%%%
	\subsection{Multi-products of regular SG phase functions and of
				regular SG Fourier integral operators}\label{sec:mpsgphf}
	%%%%%%%%%%%%%%%%%%%%%%%%%%%%%%%%%%%%%%%%%%%%%%
				
	For the convenience of the reader, here %and in the next Subsection \ref{sec:sgfioprod} 
	we recall a few results from \cite{AsCor}.
	Let us consider a sequence $\{\vp_j\}_{j\in\N}$ of regular
	SG phase functions $\vp_j(x,\xi)\in \mathcal P_r(\tau_j)$, $j\in\N$, with 
	\beqs\label{1/4}
	\ds\sum_{j=1}^\infty \tau_j=:\tau_0<1/4.
	\eeqs
	By \eqref{1/4} we have that  for every $\ell\in\N$
	there exists a constant $c_\ell>0$ such that 
	\beqs\label{marr}
	\|J_k\|_{2,\ell}\leq c_\ell\tau_k\quad\ {\rm and}
		\quad  \ds\sum_{k=1}^\infty\|J_k\|_{2,\ell}\leq c_\ell\tau_0.
	\eeqs
	We set $\overline{\tau}_M=\sum_{j=1}^{M}\tau_j.$
	
	\par
	
	With a fixed integer $M\geq 1$, we denote 
	\begin{align}\label{multivardef}\begin{split}
	(X,\Xi)&=(x_0,x_1,\ldots,x_M,\xi_1,\ldots,\xi_M,\xi_{M+1}):=(x,T,\Theta,\xi),
	\\
	(T,\Theta)&=(x_1,\ldots,x_M,\xi_1,\ldots,\xi_M),
	\end{split}
	\end{align}
	and define the function of $2(M+1)n$ real variables
	\beqs\label{serveilnome}
	\psi(X,\Xi):=\ds\sum_{j=1}^M\left(\vp_j(x_{j-1},\xi_j)-x_j\cdot\xi_j\right)
		+\vp_{M+1}(x_M,\xi_{M+1}).
	\eeqs
	For every fixed $(x,\xi)\in\R^{2n}$, the critical points $(Y,N)=(Y,N)(x,\xi)$ 
	of the function of $2Mn$ variables $\widetilde{\psi}(T,\Theta)=\psi(x,T,\Theta,\xi)$
	are the solutions to the system
	\begin{align}\label{critpntprob}
	\begin{cases}
	\psi'_{\xi_j}(X,\Xi)=\vp'_{j, \xi}(x_{j-1},\xi_{j})-x_j=0 & j=1,\ldots,M,
	\\
	\psi'_{x_j}(X,\Xi)=\vp'_{j+1, x}(x_j,\xi_{j+1})-\xi_j=0 & j=1,\ldots,M,
	\end{cases}
	\end{align}
	in the unknowns $(T,\Theta)$. That is $(Y,N)=(Y_1,\ldots,Y_M,N_1,\ldots,N_M)(x,\xi)$
			satisfies, if $M=1$,
	\beqs\label{(C1)}\begin{cases}
		Y_1(x,\xi)=\vp'_{1, \xi}(x,N_{1}(x,\xi)) 
		\\
		N_1(x,\xi)=\vp'_{2, x}(Y_1(x,\xi),\xi),
	\end{cases}
	\eeqs
	or, if $M\ge2$,
	\beqs\label{(C)}\begin{cases}
		Y_1(x,\xi)=\vp'_{1,\xi}(x,N_{1}(x,\xi)) 
		\\
		Y_j(x,\xi)=\vp'_{j,\xi}(Y_{j-1}(x,\xi),N_{j}(x,\xi)), & j=2,\ldots,M
		\\
		N_j(x,\xi)=\vp'_{j+1, x}(Y_j(x,\xi),N_{j+1}(x,\xi)), & j=1,\ldots,M-1
		\\
		N_M(x,\xi)=\vp'_{M+1, x}(Y_M(x,\xi),\xi) .
	\end{cases}
	\eeqs
	In the sequel we will only refer to the system \eqref{(C)},
	tacitly meaning \eqref{(C1)} when $M=1$.

	\begin{definition}[Multi-product of SG phase functions]\label{mprod}
		If, for every fixed $(x,\xi)\in \R^{2n}$, the system \eqref{(C)}
		admits a unique solution $(Y,N)=(Y,N)(x,\xi)$, we define
		\beqs\label{3.4}
		\phi(x,\xi)=(\vp_1\ \sharp\ \cdots\ \sharp\ \vp_{M+1})(x,\xi):=\psi(x,Y(x,\xi),N(x,\xi),\xi).
		\eeqs 
		The function $\phi$ is called multi-product of the SG phase functions $\vp_1,\ldots,\vp_{M+1}.$
	\end{definition}
	
	\par
	
	The following properties of the multi-product SG phase functions can be found in \cite{AsCor}.
	
	\begin{proposition}\label{stime}
		Under the assumptions \eqref{hyp} and \eqref{1/4}, the system \eqref{(C)} 
		admits a unique solution $(Y,N)$, satisfying
		\beqs\label{324}
		\{(Y_j-Y_{j-1})/\tau_j\}_{j\in\N}\ is\ bounded\ in\ \SG^{1,0}(\R^{2n}),
		\\
		\{(N_j-N_{j+1})/\tau_{j+1}\}_{j\in\N}\ is\ bounded\ in\ \SG^{0,1}(\R^{2n}).
		\eeqs
	\end{proposition}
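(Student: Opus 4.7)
The natural approach is a contraction/Picard argument for the system \eqref{(C)}, set up so that the smallness of $\tau_0$ forces convergence and the bounds \eqref{324} fall out of the fixed-point identity. First I would rewrite each phase function as $\vp_j(x,\xi)=x\cdot\xi+J_j(x,\xi)$, so that $\vp'_{j,\xi}(x,\xi)=x+(J_j)'_\xi(x,\xi)$ and $\vp'_{j,x}(x,\xi)=\xi+(J_j)'_x(x,\xi)$. In these variables the system \eqref{(C)} turns into the fixed-point problem
\begin{align*}
Y_j &= Y_{j-1}+(J_j)'_\xi(Y_{j-1},N_j), \qquad j=1,\dots,M,\\
N_j &= N_{j+1}+(J_{j+1})'_x(Y_j,N_{j+1}), \qquad j=1,\dots,M,
\end{align*}
with $Y_0=x$ and $N_{M+1}=\xi$, to be solved in the unknowns $(T,\Theta)=(Y_1,\dots,Y_M,N_1,\dots,N_M)$ viewed as tuples of symbols in $(x,\xi)$.

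I would then introduce a map $\mathcal{T}$ sending a tuple $(T,\Theta)$ of symbols with $Y_j\in\SG^{1,0}(\R^{2n})\otimes\R^n$ and $N_j\in\SG^{0,1}(\R^{2n})\otimes\R^n$ to its right-hand side, and look for a fixed point in a closed ball in the product Fréchet space, defined by the normalized semi-norms of $(Y_j-Y_{j-1})/\tau_j$ in $\SG^{1,0}$ and of $(N_j-N_{j+1})/\tau_{j+1}$ in $\SG^{0,1}$. Starting from $Y_j^{(0)}=x$, $N_j^{(0)}=\xi$, and using \eqref{marr} together with the composition Lemma \ref{compositionwithe1symbol} (applied to $(J_j)'_\xi$ and $(J_{j+1})'_x$, which belong to $\SG^{1,0}$ and $\SG^{0,1}$ respectively), each iteration stays within such a ball: the estimates $\|J_j\|_0\leq\tau_j$ give $|Y_{j-1}-x|\lesssim \overline{\tau}_{j-1}\jb{x}\leq \tau_0\jb{x}$ and similarly $|N_{j+1}-\xi|\lesssim \tau_0\jb{\xi}$, so the condition $\tau_0<1/4$ ensures $\jb{Y_{j-1}}\asymp\jb{x}$ and $\jb{N_{j+1}}\asymp\jb{\xi}$ throughout the iteration, allowing Lemma \ref{compositionwithe1symbol} to be iterated. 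The same estimates, together with the mean-value theorem applied to $(J_j)'_\xi$ and $(J_{j+1})'_x$, produce a Lipschitz constant bounded by a multiple of $\tau_0$ for $\mathcal{T}$ in the normalized semi-norms, hence $\mathcal{T}$ is a contraction and the Banach fixed-point theorem yields existence and uniqueness of $(Y,N)$.

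Once the fixed point is identified, the bounds \eqref{324} are read off from the defining identities
\[
\frac{Y_j-Y_{j-1}}{\tau_j}=\frac{(J_j)'_\xi(Y_{j-1},N_j)}{\tau_j},\qquad \frac{N_j-N_{j+1}}{\tau_{j+1}}=\frac{(J_{j+1})'_x(Y_j,N_{j+1})}{\tau_{j+1}},
\]
since by \eqref{marr} the rescaled functions $(J_j)'_\xi/\tau_j$ and $(J_{j+1})'_x/\tau_{j+1}$ form bounded families in $\SG^{1,0}$ and $\SG^{0,1}$ respectively, and Lemma \ref{compositionwithe1symbol} (together with $\jb{Y_{j-1}}\asymp\jb{x}$, $\jb{N_{j+1}}\asymp\jb{\xi}$ at the fixed point) preserves these classes under composition with $(Y_{j-1},N_j)$ and $(Y_j,N_{j+1})$.

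\textbf{Main obstacle.} The delicate part is obtaining the SG estimates at every order $\ell$ uniformly in $j\in\N$. This requires a Faà di Bruno--type expansion of the derivatives of the compositions $(J_j)'_\xi(Y_{j-1},N_j)$, combined with the inductive control of arbitrarily high semi-norms of $Y_{j-1}$ and $N_j$ provided by \eqref{marr}; one has to verify that the factor $\tau_j$ can be factored out cleanly at every derivative order without polynomial loss in $j$. The smallness $\tau_0<1/4$ is then used both to close the fixed-point argument and to guarantee that the geometric sums arising from the iterated compositions converge uniformly in $M$.
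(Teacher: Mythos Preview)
Your proposal is correct and follows the standard contraction/Picard approach due to Kumano-go, adapted to the SG setting; this is essentially the argument given in \cite{AsCor}, to which the paper defers for the proof of this proposition rather than proving it in the text. The rewriting via $J_j$, the use of Lemma~\ref{compositionwithe1symbol} to control compositions, and the reading-off of \eqref{324} from the fixed-point identities together with \eqref{marr} are exactly the ingredients of that proof.
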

	
	\par
	
\begin{proposition}\label{properties}
Under the assumptions \eqref{hyp} and \eqref{1/4},
the multi-product $\phi(x,\xi)$ in Definition \ref{mprod} is well defined
for every $M\geq 1$ and has the following properties.
\begin{enumerate}
\item There exists $k\geq 1$ such that
$\phi(x,\xi)=(\varphi_1\ \sharp\ \cdots\ \sharp\ \varphi_{M+1})(x,\xi)\in\mathcal P_r(k\bar\tau_{M+1})$ and,
setting 
$$
J_{M+1}(x,\xi):=(\varphi_1\ \sharp\ \cdots\ \sharp\ \varphi_{M+1})(x,\xi)-x\cdot\xi,
$$
the sequence $\{J_{M+1}/\bar\tau_{M+1}\}_{M\geq1}$ is bounded in $\SG^{1,1}(\R^{2n})$.
\item The following relations hold:
\[\begin{cases}
\phi'_x(x,\xi)=\varphi'_{1, x}(x,N_1(x,\xi))
\\
\phi'_\xi(x,\xi)=\varphi'_{M+1, \xi}(Y_M(x,\xi),\xi),
\end{cases}
\]
	where $(Y,N)$ are the critical points \eqref{(C)}.
\item The associative law holds:
$\varphi_1\ \sharp\ (\varphi_2\ \sharp\ \cdots\ \sharp\ \varphi_{M+1})
=(\varphi_1\ \sharp\ \cdots\ \sharp\ \varphi_{M})\ \sharp\ \varphi_{M+1}$.
\item For any $\ell\geq 0$ there exist $0<\tau^\ast<1/4$ and $c^\ast\geq 1$ such that,
if $\varphi_j\in \mathcal P_r(\tau_j,\ell)$ for all $j$ and $\tau_0\leq \tau^\ast$,
then $\phi\in \mathcal P_r(c^\ast\bar\tau_{M+1},\ell)$.
\end{enumerate}
\end{proposition}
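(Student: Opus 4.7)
The plan is to establish the four claims in the natural order (2), (1), (3), (4), since (2) yields a derivative formula that is the key computational tool in the others. The central ingredients are the critical point equations \eqref{(C)}, the SG-estimates on $(Y_j,N_j)$ provided by Proposition \ref{stime}, the composition Lemma \ref{compositionwithe1symbol}, and the smallness $\bar\tau_{M+1}\le\tau_0<1/4$ from \eqref{1/4}. Property (2) is an envelope-theorem style computation: since $\psi'_T(x,Y,N,\xi)=\psi'_\Theta(x,Y,N,\xi)=0$ by \eqref{(C)}, differentiating $\phi(x,\xi)=\psi(x,Y(x,\xi),N(x,\xi),\xi)$ in the outer variables $x$ and $\xi$ annihilates all contributions coming from $Y'_x,Y'_\xi,N'_x,N'_\xi$, leaving $\phi'_x=\psi'_{x_0}=\varphi'_{1,x}(x,N_1)$ and $\phi'_\xi=\psi'_{\xi_{M+1}}=\varphi'_{M+1,\xi}(Y_M,\xi)$.

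For (1), writing $\varphi_j(x,\xi)=x\cdot\xi+J_j(x,\xi)$, setting $Y_0:=x$, $N_{M+1}:=\xi$, and performing an Abel summation on the bilinear portion of $\psi(x,Y,N,\xi)$, one obtains
\[
J_{M+1}(x,\xi)=\sum_{j=1}^{M}(Y_j-x)\cdot(N_{j+1}-N_j)+\sum_{j=1}^{M+1}J_j(Y_{j-1},N_j).
\]
Telescoping the bounds in Proposition \ref{stime} gives that $(Y_j-x)/\bar\tau_{M+1}$ is uniformly (in $j$ and $M$) bounded in $\SG^{1,0}$, and $(N_{j+1}-N_j)/\tau_{j+1}$ uniformly bounded in $\SG^{0,1}$, whence the first sum has $\SG^{1,1}$-norm $O(\bar\tau_{M+1}^2)=O(\bar\tau_{M+1})$ using $\bar\tau_{M+1}<1/4$. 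For the second sum, Lemma \ref{compositionwithe1symbol} (and its analogue for the other slot) applied to each composition $J_j(Y_{j-1},N_j)$ yields a term of $\SG^{1,1}$-norm $O(\tau_j)$, summing to $O(\bar\tau_{M+1})$. This gives the boundedness of $\{J_{M+1}/\bar\tau_{M+1}\}_{M\ge 1}$ in $\SG^{1,1}$ and in particular the estimate $\|J_{M+1}\|_0\le k\bar\tau_{M+1}$ required for $\phi\in\mathcal P_r(k\bar\tau_{M+1})$. The determinant condition follows from property (2): $\phi''_{x\xi}=\varphi''_{1,x\xi}(x,N_1)\cdot\partial_\xi N_1$, where $\varphi''_{1,x\xi}=I+(J_1)''_{x\xi}$ is close to $I$ (since $\tau_1<1/4$) and $\partial_\xi N_1=I+O(\bar\tau_{M+1})$, the latter obtained by linearizing \eqref{(C)} around the trivial solution $Y_j\equiv x$, $N_j\equiv\xi$ (realized when all $J_j\equiv 0$).

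Property (3) follows from the uniqueness of critical points: the two-phase critical-point system defining $\varphi_1\,\sharp\,(\varphi_2\,\sharp\,\cdots\,\sharp\,\varphi_{M+1})$, once the inner multi-product is expanded through its own critical points, becomes equivalent to the full $2M$-variable system \eqref{(C)}; the same identification works for the other grouping, so both sides coincide with $\phi$. Property (4) is obtained by repeating the argument of (1) while tracking higher order derivatives: under the stronger hypothesis $\varphi_j\in\mathcal P_r(\tau_j,\ell)$, the same Abel-summation identity, together with the refined version of Proposition \ref{stime} and with Lemma \ref{compositionwithe1symbol} applied up to order $\ell$, yields $\|J_{M+1}\|_\ell\le c^\ast\bar\tau_{M+1}$ for a suitable $c^\ast\ge 1$ and $\tau^\ast\in(0,1/4)$ sufficiently small.

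The main technical obstacle is the careful propagation of SG estimates through the nested compositions $J_j(Y_{j-1}(x,\xi),N_j(x,\xi))$: one must combine the $\SG^{1,0}$ behavior of $Y_j$ and the $\SG^{0,1}$ behavior of $N_j$ from Proposition \ref{stime} with the $\SG^{1,1}$ control of the $J_j$ via repeated applications of Lemma \ref{compositionwithe1symbol}, verifying en route the hypothesis $\langle N_j\rangle\sim\langle\xi\rangle$ (and analogously for $Y_{j-1}$), and ensuring that all constants remain uniform in $M$ so that the final bound on $J_{M+1}$ is linear in $\bar\tau_{M+1}$ regardless of the number of factors.
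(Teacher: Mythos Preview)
The paper does not actually supply a proof of this proposition: it is stated with the remark that ``the following properties of the multi-product SG phase functions can be found in \cite{AsCor}'', so there is no in-paper argument to compare against. Your sketch is the standard route taken in \cite{AsCor} and, before that, in Kumano-go \cite{Kumano-go:1}: the envelope computation for (2), the Abel-summation identity for $J_{M+1}$, the telescoping use of Proposition~\ref{stime} together with Lemma~\ref{compositionwithe1symbol} for (1) and (4), and the uniqueness-of-critical-points argument for (3). The identity you wrote,
\[
J_{M+1}(x,\xi)=\sum_{j=1}^{M}(Y_j-x)\cdot(N_{j+1}-N_j)+\sum_{j=1}^{M+1}J_j(Y_{j-1},N_j),
\]
is indeed correct (with $Y_0=x$, $N_{M+1}=\xi$), and your derivation of $|\det\phi''_{x\xi}|\ge r$ from (2) via $\partial_\xi N_1=I+O(\bar\tau_{M+1})$ is the right one.

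One point worth making explicit: in your treatment of $\sum_j J_j(Y_{j-1},N_j)$ you invoke that each $J_j/\tau_j$ is bounded in $\SG^{1,1}$. Hypothesis~\eqref{hyp} alone only controls derivatives of order $\le 2$; the control of higher derivatives by $c_\ell\tau_j$ is exactly the content of \eqref{marr}, which the paper records as a standing assumption on the sequence $\{\varphi_j\}$. So your argument is fine provided you cite \eqref{marr} (or, for item (4), the stronger hypothesis $\varphi_j\in\mathcal P_r(\tau_j,\ell)$) rather than \eqref{hyp} when bounding those composed terms in the full $\SG^{1,1}$ topology.
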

	
	\par
	
	Passing to regular SG Fourier integral operators, one can prove the following algebra properties.
	
	\begin{proposition}\label{thm:main}
		Let $\fy_j\in\Phr(\tau_j)$, $j=1,2, \dots, M$, $M\ge 2$, 
		be such that $\tau_1+\cdots+\tau_M\leq\tau\leq\frac{1}{4}$
		for some sufficiently small $\tau>0$, and set
		\begin{align*}
		\Phi_0(x,\xi)&=x\cdot\xi, 
		\\
		\Phi_1&=\fy_1, 
		\\
		\Phi_j&=\fy_1\sharp\cdots\sharp\fy_j, \; j= 2, \dots, M
		\\ 
		\Phi_{M,j}&=\fy_j\sharp\fy_{j+1}\sharp\cdots\sharp\fy_{M}, j=1,\dots,M-1,
		\\
		\Phi_{M,M}&=\fy_{M}, 
		\\
		\Phi_{M,M+1}(x,\xi)&=x\cdot\xi.
		\end{align*}
		Assume also $a_j\in \SG^{m_j,\mu_j}(\R^{2n})$, and set $A_j=\Op_{\fy_j}(a_j)$, $j=1,\dots,M$. 
		Then, the following properties hold true.
		\begin{enumerate}
			\item Given $q_j, q_{M,j}\in \SG^{0,0}(\R^{2n})$, $j=1,\dots, M$, such that
			\[
			\Op^*_{\Phi_j}(q_j)\circ I_{\Phi_j}=I, \quad I^*_{\Phi_{M,j}}\circ\Op_{\Phi_{M,j}}(q_{M,j})=I,
			\] 
			set $Q_j^*=\Op^*_{\Phi_j}(q_j)$, $Q_{M,j}=\Op_{\Phi_{M,j}}(q_{M,j})$, and
			\[
			R_j=I_{\Phi_{j-1}}\circ A_j\circ Q^*_j, \quad
			R_{M,j}=Q_{M,j}\circ A_j\circ I^*_{\Phi_{M,j+1}}, \quad j=1,\dots,M.
			\]
			Then, $R_j,R_{M,j}\in\Op(\SG^{0,0}(\R^{2n}))$, $j=1,\dots,M$, and
			\begin{equation}\label{eq:prodAj}
			A=A_1\circ\cdots\circ A_M=R_1\circ\cdots\circ R_{M}\circ I_{\Phi_M}
			=I^*_{\Phi_{M,1}}\circ R_{M,1}\circ\cdots\circ R_{M,M}.
			\end{equation}
			\item There exists $a\in \SG^{m,\mu}(\R^{2n})$, $m=m_1+\cdots+m_M$,
			$\mu=\mu_1+\cdots+\mu_M$ such that, setting $\phi=\fy_1\sharp\cdots\sharp\fy_M$,
			\[
			A=A_1\circ\cdots\circ A_M=\Op_{\phi}(a).
			\]
			\item For any $l\in\Z_+$ there exist $l^\prime\in\Z_+$, $C_l>0$ such that 
			\begin{equation}\label{eq:estsna}
			\vvvert a \vvvert_l^{m,\mu} \le C_l\prod_{j=1}^M \vvvert a_j \vvvert_{l^\prime}^{m_j,\mu_j}.
			\end{equation}
		\end{enumerate}
	\end{proposition}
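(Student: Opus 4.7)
The approach has three main ingredients, corresponding to the three parts of the statement.

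First, I would verify the existence of the parametrices $Q^*_j$ and $Q_{M,j}$ and establish the factorization \eqref{eq:prodAj}. Under the smallness assumption on $\tau$, Proposition \ref{properties}(1) guarantees that each multi-product $\Phi_j$ (and $\Phi_{M,j}$) is itself a regular SG phase function in $\mathcal P_r(k\bar\tau_M)$. Since the constant $1$ is trivially $\SG^{0,0}$-elliptic, the existence of $q_j, q_{M,j}\in\SG^{0,0}$ satisfying the stated parametrix identities follows directly from \eqref{eq:parami}--\eqref{eq:paramii}. The factorization \eqref{eq:prodAj} is then a pure telescoping computation: setting $I_{\Phi_0}=I$ (since $\Phi_0(x,\xi)=x\cdot\xi$) and using $Q^*_j\circ I_{\Phi_j}=I$ modulo $\Op(\SG^{-\infty,-\infty})$, one writes
\begin{equation*}
R_1\circ\cdots\circ R_M\circ I_{\Phi_M}=I_{\Phi_0}\circ A_1\circ(Q^*_1\circ I_{\Phi_1})\circ A_2\circ\cdots\circ(Q^*_{M-1}\circ I_{\Phi_{M-1}})\circ A_M\circ(Q^*_M\circ I_{\Phi_M}),
\end{equation*}
and each bracketed factor collapses to the identity; the right-hand identity is obtained symmetrically using $I^*_{\Phi_{M,j}}\circ Q_{M,j}=I$ and $I^*_{\Phi_{M,M+1}}=I$.

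Second, to see that each $R_j$ is a pseudo-differential operator and to deduce part (2), the key step is the composition of type I SG FIOs with compatible phases. The product $I_{\Phi_{j-1}}\circ A_j$ of type I FIOs with phases $\Phi_{j-1}$ and $\varphi_j$ is, by the binary instance of the multi-product construction recalled in Section \ref{sec:mpsgphf}, a type I FIO with phase $\Phi_{j-1}\sharp\varphi_j=\Phi_j$ and symbol in $\SG^{m_j,\mu_j}$. Composing then with the type II operator $Q^*_j$ of phase $\Phi_j$, Theorem \ref{thm:compii} reduces this to a pseudo-differential operator; thus $R_j\in\Op(\SG^{m_j,\mu_j})$. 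For part (2), the finite product $R_1\circ\cdots\circ R_M$ belongs to $\Op(\SG^{m,\mu})$ by the SG pseudo-differential calculus, and the final composition with $I_{\Phi_M}$ yields, via Theorem \ref{thm:compi}, an operator $\Op_\phi(a)$ with $a\in\SG^{m,\mu}$ and $\phi=\varphi_1\sharp\cdots\sharp\varphi_M$.

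The main obstacle is the seminorm bound \eqref{eq:estsna}. The symbols of $I_{\Phi_{j-1}}$ and $Q^*_j$ depend only on the phase $\Phi_j$, whose derivatives are controlled uniformly in $M$ by Proposition \ref{stime} and Proposition \ref{properties}(1) (the boundedness of $\{J_{M+1}/\bar\tau_{M+1}\}$ in $\SG^{1,1}(\R^{2n})$), so those operators contribute only fixed multiplicative constants. The asymptotic composition formulae from Theorems \ref{thm:compi} and \ref{thm:compii} then produce, for the symbol of each $R_j$, a bound of the form $C\vvvert a_j\vvvert_{l'}^{m_j,\mu_j}$ modulo $\SG^{-\infty,-\infty}$. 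The delicate step is then passing from these individual estimates to the product estimate \eqref{eq:estsna} for the entire chain $R_1\cdots R_M$ without the constants growing factorially in $M$: here Theorem \ref{thm:mpsgpdosymestim} is precisely the tool required, providing the uniform multiplicative control of the symbol of the iterated composition of $M+1$ SG pseudo-differential operators in terms of the individual factors. Combining this with the aforementioned uniform control of the phases and the composition formulae yields \eqref{eq:estsna}, after possibly enlarging the seminorm index $l'$.
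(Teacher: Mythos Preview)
The paper does not actually prove this proposition: it is explicitly \emph{recalled} from \cite{AsCor} at the start of Section~\ref{sec:mpsgphf}, so there is no in-paper proof to compare against. Your sketch follows precisely the natural line of argument one finds in that reference: existence of the parametrices $Q^*_j,Q_{M,j}$ from \eqref{eq:parami}--\eqref{eq:paramii}, the telescoping identity for \eqref{eq:prodAj}, reduction of each $R_j$ to a pseudo-differential operator via the two-factor FIO composition (phase $\Phi_{j-1}\sharp\varphi_j=\Phi_j$) combined with Theorem~\ref{thm:compii}, and finally the uniform-in-$M$ seminorm control via Theorem~\ref{thm:mpsgpdosymestim} and the boundedness of $\{J_{M+1}/\bar\tau_{M+1}\}$ from Proposition~\ref{properties}. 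This is the standard approach and it is sound.

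Two small remarks. First, your order count $R_j\in\Op(\SG^{m_j,\mu_j})$ is what the calculus actually gives; the statement as printed claims $R_j\in\Op(\SG^{0,0})$, which is only literally correct when all $(m_j,\mu_j)=(0,0)$ (the case used later in Section~\ref{sec:fundsol}). You should flag this discrepancy rather than silently correct it. Second, be careful about the apparent circularity when you invoke ``the binary instance of the multi-product construction'' to compose $I_{\Phi_{j-1}}\circ A_j$: this two-factor composition with resulting phase $\Phi_{j-1}\sharp\varphi_j$ must be established independently (it is, in \cite{AsCor}, by direct stationary-phase analysis) before it can serve as the inductive engine for general $M$.
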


	\par
	\subsection{Eikonal equations and Hamilton-Jacobi systems in SG classes}\label{subsec:eik}
	
	\par
	
	To study evolution equations within the SG environment,
	we need first to introduce parameter-dependent symbols,
	where the parameters give rise to bounded families in $\SG^{m,\mu}$.
	The proof of the next technical Lemma \ref{lem:invariantcomsymbol}
	can be found in \cite{Abdeljawadthesis}.

\par
	
\begin{definition}
Let $\Omega\subseteq\R^N$. We write
$f\in C^k\left(\Omega;\SG^{m,\mu}(\R^{2n})\right)$,
with $m,\,\mu\in \R$ and $k\in\Z_+$ or $k=\infty$, if
\begin{enumerate}[(i)]
\item $f=f(\omega;x,\xi)$, $\omega\in\Omega,\,x,\,\xi\in\R^n$;
\item for any $\omega\in \Omega$,
$\partial^\a_\omega f(\omega)\in \SG^{m,\mu}(\R^{2n})$,
for all $\a\in\Z_+^N,\,|\a|\leq k$;
\item $\{\partial^\a_\omega f(\omega)\}_{\omega\in\Omega}$ is
bounded in $\SG^{m,\mu}(\R^{2n})$,
for all $\a\in\Z_+^N,\,|\a|\leq k$.
\end{enumerate}
\end{definition}
	
\begin{lemma}\label{lem:invariantcomsymbol}
Let $\Omega\subseteq\R^N$,
$a\in C^k\left(\Omega;\SG^{m,\mu}(\R^{2n})\right)$
and $h\in C^k\left(\Omega;\SG^{0,0}(\R^{2n})\otimes\R^N\right)$ 
such that $k\in\Z_+$ or $k=\infty$.
Assume also that, for any $\omega\in \Omega$, $x,\xi\in\R^n$,
the function $h(\omega;x,\xi)$
takes value in $\Omega$.
Then, setting $b(\omega)=a(h(\omega))$,
that is, $b(\omega;x,\xi)=a(h(\omega;x,\xi);x,\xi)$,
we find $b\in C^k\left(\Omega;\SG^{m,\mu}(\R^{2n})\right)$.
\end{lemma}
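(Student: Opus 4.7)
The approach is to apply the multivariate Faà di Bruno chain rule to the composition $b(\omega;x,\xi)=a(h(\omega;x,\xi);x,\xi)$ and then verify the SG symbol estimates term by term, exploiting the fact that the bounds on $a$ and $h$ provided by the hypotheses are \emph{uniform} in the parameter, which allows one to freely substitute $h(\omega;x,\xi)\in\Omega$ into the first slot of $a$ without losing any estimate.

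First, fix $\omega\in\Omega$. Since $h(\omega;\cdot,\cdot)$ is smooth, takes values in $\Omega$, and $a$ is jointly smooth in the relevant orders, $b(\omega;\cdot,\cdot)$ is smooth in $(x,\xi)$. Expanding $\partial_x^\beta\partial_\xi^\gamma b$ by Faà di Bruno yields a finite linear combination of terms of the form
\[
\bigl(\partial_\omega^{\alpha''}\partial_x^{\beta'}\partial_\xi^{\gamma'} a\bigr)\!\bigl(h(\omega;x,\xi);x,\xi\bigr)\prod_{j=1}^{|\alpha''|}\bigl(\partial_x^{\sigma_j}\partial_\xi^{\rho_j}h_{l_j}\bigr)(\omega;x,\xi),
\]
with $\beta'\leq\beta$, $\gamma'\leq\gamma$, $\sum_j\sigma_j=\beta-\beta'$, $\sum_j\rho_j=\gamma-\gamma'$, and each $(\sigma_j,\rho_j)\neq 0$. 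I would then bound each factor separately: the first factor is bounded by $C\langle x\rangle^{m-|\beta'|}\langle\xi\rangle^{\mu-|\gamma'|}$ uniformly in $\omega'\in\Omega$ by the hypothesis on $a$, and substituting $\omega'=h(\omega;x,\xi)\in\Omega$ preserves this bound uniformly in $(\omega,x,\xi)$; each factor $\partial_x^{\sigma_j}\partial_\xi^{\rho_j}h_{l_j}$ is bounded by $C\langle x\rangle^{-|\sigma_j|}\langle\xi\rangle^{-|\rho_j|}$ uniformly in $\omega$, thanks to $h\in\SG^{0,0}$ uniformly. Multiplying and summing the exponents gives
\[
|\partial_x^\beta\partial_\xi^\gamma b(\omega;x,\xi)|\leq C_{\beta\gamma}\langle x\rangle^{m-|\beta|}\langle\xi\rangle^{\mu-|\gamma|},
\]
uniformly in $\omega\in\Omega$, so $\{b(\omega)\}_{\omega\in\Omega}$ is a bounded family in $\SG^{m,\mu}(\R^{2n})$.

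To upgrade this to $b\in C^k(\Omega;\SG^{m,\mu})$, I would apply the same chain-rule expansion but including $\omega$-differentiations. For $|\alpha|\leq k$, $\partial_\omega^\alpha b$ decomposes into finitely many products analogous to the one above, where the first factor is now $(\partial_\omega^{\alpha''+\tilde\alpha}\partial_x^{\beta'}\partial_\xi^{\gamma'} a)$ for some $|\tilde\alpha|\leq|\alpha|\leq k$ (which lies in $\SG^{m,\mu}$ uniformly by hypothesis), and some of the $h$-factors additionally carry $\omega$-derivatives of order at most $k$ (still in $\SG^{0,0}$ uniformly). The same multiplicative bound argument then gives uniform $\SG^{m,\mu}$-estimates for $\partial_\omega^\alpha b$, yielding the claim.

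The main obstacle is the combinatorial bookkeeping of the multivariate Faà di Bruno expansion, together with the check that the multi-index constraints match the symbol exponents correctly. The key mechanism making this work is that each $(x,\xi)$-differentiation of $b$ either lowers the symbol order of $a$ directly in the corresponding variable, or produces via the chain rule a $\partial h$-factor of order $(-1,0)$ or $(0,-1)$; in both cases the net decay gain is $\langle x\rangle^{-1}$ or $\langle\xi\rangle^{-1}$, which is exactly what is needed to conclude that $b$ lies in the same symbol class $\SG^{m,\mu}$ as $a$, uniformly in $\omega$.
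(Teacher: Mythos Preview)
The paper does not actually prove this lemma; it only cites \cite{Abdeljawadthesis}. Your Fa\`a di Bruno approach is exactly the natural one and, for $k=\infty$, it is correct and complete: every $\omega'$-derivative of $a$ that the chain rule produces is covered by the hypothesis, and the multiplicative bookkeeping of the exponents is right.

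There is, however, a genuine gap in the finite-$k$ case. When you expand $\partial_x^\beta\partial_\xi^\gamma b$, the multi-index $\alpha''$ of $\omega'$-derivatives landing on $a$ can have $|\alpha''|$ as large as $|\beta|+|\gamma|$, not merely $\le k$. Your sentence ``the first factor is bounded by $C\langle x\rangle^{m-|\beta'|}\langle\xi\rangle^{\mu-|\gamma'|}$ uniformly in $\omega'\in\Omega$ by the hypothesis on $a$'' silently assumes $\partial_{\omega'}^{\alpha''}a$ is uniformly in $\SG^{m,\mu}$, but the hypothesis only controls $\partial_{\omega'}^{\alpha}a$ for $|\alpha|\le k$. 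The same issue recurs in your second paragraph, where you bound $|\tilde\alpha|\le k$ but not $|\alpha''+\tilde\alpha|$. This is not a mere bookkeeping slip: take $N=1$, $k=0$, $a(\omega';x,\xi)=f(\omega')$ with $f$ bounded continuous but not $C^1$, and $h(\omega;x,\xi)=g(x)$ with $g$ smooth, bounded, non-constant; then $a\in C^0(\Omega;\SG^{0,0})$, $h\in C^\infty(\Omega;\SG^{0,0})$, yet $b=f\circ g$ is not even $C^1$ in $x$, hence not in $\SG^{0,0}$. So the lemma as literally stated appears to require either $k=\infty$ or an additional joint-smoothness hypothesis on $a$.

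Since every application of the lemma in the paper uses $k=\infty$, your argument suffices for all the purposes of the paper; but you should flag the finite-$k$ case explicitly rather than claim it follows from the same estimate.
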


\par
	
	Given a real-valued symbol $a\in C([0,T]; \SG^{1,1}(\R^{2n}))$,
	consider the so-called \textit{eikonal equation}
	\beqs\label{eik}
	\begin{cases}
		\partial_t\varphi(t,s;x,\xi)=a(t;x,\varphi'_x(t,s;x,\xi)),& t\in [0,T'],
		\\
		\varphi(s,s;x,\xi)=x\cdot\xi,& s\in [0,T'],
	\end{cases}
	\eeqs
	with $0<T'\leq T$. By an extension of the theory developed  in \cite{Coriasco:998.2}, it is possible to
	prove that the following Proposition \ref{trovala!} holds true.
\begin{proposition}\label{trovala!}
Let $a\in C([0,T]; \SG^{1,1}(\R^{2n}))$ be real-valued. Then, for a small enough $T'\in(0,T]$, 
equation \eqref{eik} admits a unique solution
$\varphi\in C^1([0,T']^2;\SG^{1,1}(\R^{2n}))$,
satisfying $J\in C^1([0,T']^2;\SG^{1,1}(\R^{2n}))$
and
\beqs%\label{eiks}
\partial_s\varphi(t,s;x,\xi)=-a(s;\varphi'_\xi(t,s;x,\xi),\xi),
\eeqs
for any $t,s\in[0,T']$. Moreover, 
for every $h\geq 0$ there exists $c_h\geq 1$ and $T_h\in[0,T']$
such that $\varphi(t,s;x,\xi)\in\mathcal P_r(c_h|t-s|)$, 
with $\| J\|_{2,h}\leq c_h |t-s|$ for all $0\leq s\leq t\leq T_h$.
\end{proposition}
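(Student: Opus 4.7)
The plan is to implement the Hamilton--Jacobi method of characteristics in the SG framework, extending the construction carried out in \cite{Coriasco:998.2} in the homogeneous case. First I would associate to the real-valued symbol $a \in C([0,T];\SG^{1,1}(\R^{2n}))$ the Hamilton system
\begin{equation*}
\begin{cases}
\dot q(t) = -a'_\xi(t;q(t),p(t)), & q(s)=y, \\
\dot p(t) = a'_x(t;q(t),p(t)), & p(s)=\eta,
\end{cases}
\end{equation*}
and solve it for $(q,p)$ depending on $(t,s;y,\eta)$ by Picard iteration. Since $a'_\xi \in C([0,T];\SG^{1,0}(\R^{2n}))$ and $a'_x \in C([0,T];\SG^{0,1}(\R^{2n}))$, a careful bookkeeping of symbol orders along the iteration (using Lemma \ref{compositionwithe1symbol} to handle compositions and the boundedness of $\{\partial^\alpha_t a\}_{t\in[0,T]}$ in $\SG^{1,1}(\R^{2n})$) shows that, for $T'\in(0,T]$ small enough, a unique solution exists on $[0,T']^2$ satisfying $q-y \in C^1([0,T']^2;\SG^{1,0}(\R^{2n}))$ and $p-\eta \in C^1([0,T']^2;\SG^{0,1}(\R^{2n}))$, with seminorms controlled by $O(|t-s|)$.

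Next, for $|t-s|$ sufficiently small, the differential $\partial_y q(t,s;y,\eta)$ is a perturbation of the identity of size $O(|t-s|)$, so the map $y \mapsto q(t,s;y,\eta)$ is a global diffeomorphism of $\R^n$ onto itself in the SG sense. Denote by $y = y(t,s;x,\eta)$ its inverse and define the phase function via the generating function formula
\begin{equation*}
\varphi(t,s;x,\xi) := y(t,s;x,\xi)\cdot\xi + \int_s^t \bigl[a(r;q^*(r),p^*(r)) + \dot q^*(r)\cdot p^*(r)\bigr]\,dr,
\end{equation*}
where $q^*(r) = q(r,s;y(t,s;x,\xi),\xi)$, $p^*(r) = p(r,s;y(t,s;x,\xi),\xi)$, so that $p^*(t) = \varphi'_x(t,s;x,\xi)$ and $q^*(s) = \varphi'_\xi(t,s;x,\xi)$ hold as standard consequences of the symplectic structure. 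Differentiating under the integral and using the Hamilton equations yields the eikonal equation $\partial_t\varphi = a(t;x,\varphi'_x)$, and the same computation performed at the lower endpoint, together with $\varphi'_\xi(t,s;x,\xi) = y(t,s;x,\xi)$, gives the companion identity $\partial_s\varphi = -a(s;\varphi'_\xi,\xi)$.

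The SG estimates for $J = \varphi - x\cdot\xi$ follow by decomposing $J = (y(t,s;x,\xi)-x)\cdot\xi + \int_s^t [\cdots]\,dr$ and using the $O(|t-s|)$ bounds on $q-y$ and $p-\eta$ (and therefore on $y-x$) in their respective SG classes: each factor in the integrand is of order $(1,1)$ and the integration over an interval of length $|t-s|$ produces the bound $\|J\|_{2,h} \le c_h |t-s|$ for every $h\ge 0$, after possibly reducing $T'$ to some $T_h$; the non-degeneracy condition $|\det\varphi''_{x\xi}| \ge r$ then follows automatically for $|t-s|$ small, placing $\varphi$ in $\mathcal{P}_r(c_h|t-s|)$. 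Uniqueness is obtained by a standard Gronwall/characteristic argument on the difference of two solutions, which satisfies a linear homogeneous first-order PDE with vanishing Cauchy data. The main technical obstacle is the control of SG seminorms through the nonlinear Picard iteration: because $a'_\xi$ and $a'_x$ belong to different SG classes, one must carefully separate the $\x$-growth from the $\csi$-growth at every iteration step, which is precisely where the extension of \cite{Coriasco:998.2} requires the refined composition estimates recalled in Section \ref{sec:sgcalc}.
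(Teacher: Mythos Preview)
Your proposal is correct and follows essentially the same route as the paper: the Hamilton--Jacobi construction you outline is precisely the content of Subsection~\ref{subsec:eik}, culminating in Proposition~\ref{mainprop}, where $\varphi$ is built from the Hamiltonian flow $(q,p)$, the inverse $\overline{q}$ of $y\mapsto q(t,s;y,\xi)$, and the generating-function formula \eqref{integralsolform}--\eqref{eq:phfundefbyhamsyssol} (your integrand $a+\dot q^*\cdot p^*$ coincides with the paper's $a-a'_\xi\cdot p$ since $\dot q=-a'_\xi$). The only cosmetic difference is that the paper first defines $u(t,s;y,\eta)$ in the $(y,\eta)$ variables and then substitutes $y=\overline{q}(t,s;x,\xi)$, whereas you write the composed object directly; the SG estimates, the identities $\varphi'_\xi=\overline{q}$, $\varphi'_x=p\circ\overline{q}$, and the companion equation \eqref{svarphiequal} are obtained in the same way.
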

	
	\noindent
	In the sequel we will sometimes write $\varphi_{ts}(x,\xi):=\varphi(t,s;x,\xi)$, for a solution $\varphi$ of \eqref{eik}.
	
	\par
	
	\begin{remark}
		Note that the eikonal equation \eqref{eik} appears in the so-called geometric optics
		approach to the solution of $\cL u=f,\, u(0)=u_0$ for the hyperbolic operator
		\begin{equation}\label{hypoprcortoeikeq}
		\cL=D_t-a(t;x,D_x) \qquad \text{ on } [0,T].
		\end{equation}
%		where $D_t=-i\partial_t$.
		%
	\end{remark}

	\par
	As a simple realization of a sequence of phase functions satisfying \eqref{hyp} and \eqref{1/4},
	we recall the following example, see \cite{AsCor} and \cite{Kumano-go:1}.
\begin{example}
	Let  $\varphi(t,s;x,\xi)$ be the solution of the eikonal equation \eqref{eik}.
	Choose the partition $\Delta_{M+1}(T_1)\equiv\Delta(T_1)$ of
	the interval $[s,t]$, $0\leq s\leq t\leq T_1$, given by
	\begin{equation}\label{partitionof[s,t]}
	s=t_{M+1}\leq t_M\leq\cdots\leq t_1\leq t_0=t,
	\end{equation}
	and define the sequence of phase functions
	\[\chi_j(x,\xi)=
	\begin{cases}
	\varphi(t_{j-1}, t_j;x,\xi) & 1\leq j\leq M+1
	\\
	x\cdot\xi & j\geq M+2.
	\end{cases}\]
	From Proposition \ref{mainprop} we know that $\chi_j\in\mathcal P_r(\tau_j)$
	with $\tau_j=c_0(t_{j-1}-t_j)$ for $1\leq j\leq M+1$ and with $\tau_j=0$ for $j\geq M+2$. 
	Condition \eqref{1/4} is fulfilled by the choice of a small enough positive constant $T_1$, since
	\[\ds\sum_{j=1}^\infty \tau_j=\ds\sum_{j=1}^{M+1} c_0(t_{j-1}-t_j)=c_0(t-s)\leq c_0T_1<\frac14\]
	if $T_1<(4c_0)^{-1}$. Moreover, again from Proposition \ref{mainprop},
	we know that $\| J_j\|_{2,0}\leq c_0 |t_j-t_{j-1}|=\tau_j$ for all $1\leq j\leq M+1$
	and $J_j=0$ for $j\geq M+2,$ so each one of the $J_j$ satisfies \eqref{hyp}. 
\end{example}
	\par
	We now focus on the Hamilton-Jacobi system corresponding to the real-valued Hamiltonian
	$a\in C([0,T];\SG^{1,1}(\R^{2n}))$, namely,
	\par
	\begin{eqnarray}\label{hameq}
	\left\{\begin{array}{ll}
	\partial_t{q}(t,s;y,\eta)&=- a^\prime_\xi\left(t;q(t,s;y,\eta),p(t,s;y,\eta)\right),
	\\
	\partial_t{p}(t,s;y,\eta)&=\phantom{-} a^\prime_x(t;q(t,s;y,\eta),p(t,s;y,\eta)),
	\end{array}
	\right.
	\end{eqnarray}
	where $t,s\in[0,T]$,  $T>0$, and the Cauchy data
	\begin{eqnarray}
	\label{hameqCauchyData}\left\{\begin{array}{cc}
	q(s,s;y,\eta)=y,\\
	p(s,s;y,\eta)=\eta.
	\end{array}\right.
	\end{eqnarray}

	\par

%	\begin{proof}
%		The proof of the lemma can be found in \cite{Abdeljawadthesis}.
		%% % % % % % % % % % % % % % % % % % % % % % % % % % %
		% % % % %
		% % % % %------>Keep this for the Thesis
		% % % % %
		% % % % % % % % % % % % % % % % % % % % % % % % % % % %
		%	\begin{enumerate}[(i)]
		%		\item $b$ depends smoothly on the variables  $(\omega;x,\xi)$ immediately by hypothesis and definitions.
		%		\item For any fixed $\omega\in\Omega $, $b(\omega)\in\SG^{m,\mu}$, indeed, by Fa\'a di Bruno formula for $\sum |\gamma_j| +|\gamma|=|\a|$ and $\sum |\delta_j|+|\delta|=|\b|$, we get
		%			\begin{align*}
		%				\left|\partial_x^\a\partial_\xi^\b b(\omega;x,xi)\right|
		%				&\lesssim   \left\{\left|(\partial_x^\gamma\partial_\xi^\delta\partial_\omega^\kappa a)(h(\omega;x,\xi);x,\xi)\prod_{j}\partial_x^{\gamma_j}\partial_\xi^{\delta_j}h(\omega;x,\xi)\right|\right\}
		%				\\
		%				&\lesssim   \jb{x}^{m-|\gamma|}\jb{\xi}^{\mu-\delta}\left(\prod_{j}\jb{x}^{-|\gamma_j|}\jb{\xi}^{-\delta_j}\right)=\jb{x}^{m-|\a|}\jb{\xi}^{\mu-\b}.
		%			\end{align*}
		%			  Notice that the same argument shows that $(\partial_\omega^\a a)(h(\omega;x,\xi);x,\xi)\in \SG^{m,\mu}$ for any $\a\in \Z_{+}^N,\,|\a|\leq k$ and all $\omega\in \Omega$.	
		%			
		%		\item By Fa\'a di Bruno formula, for any $\a\in\Z_+^N$, such that $|\a|\leq k$, we get that $(\partial_\omega^\a b)(\omega)$ is in the span of  $\left\{(\partial_\omega^\kappa a)(h(\omega))\prod_{j}(\partial_\omega^{\kappa_j}h)(\omega)\right\}\subset\SG^{m,\mu}$ and is bounded, 
		%		since that is true for all the elements in the right hand side.
		%	\end{enumerate}
%	\end{proof}

\par
	
We will now recall how the solution of \eqref{hameq},
\eqref{hameqCauchyData} is related
with the solution of \eqref{eik} in the SG context.
We here mainly refer to known results from \cite[Ch. 6]{CO}
and \cite{Coriasco:998.2}.
\par
\begin{proposition}\label{prop:equizeroder}
Let $a\in C\left([0,T];\SG^{1,1}(\R^{2n})\right)$ be real-valued. 
Then, the solution $(q,p)(t,s;y,\eta)$ of the Hamilton-Jacobi
system \eqref{hameq} with the Cauchy data \eqref{hameqCauchyData} satisfies
\par
\begin{equation}\label{eq:equiv}
\jb{q(t,s;y,\eta)}\sim \jb{y},\qquad \jb{p(t,s;y,\eta)}\sim \jb{\eta}.
\end{equation}
\end{proposition}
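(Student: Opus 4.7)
The plan is to derive both bounds in \eqref{eq:equiv} from ODE comparison applied to $\langle q\rangle^2$ and $\langle p\rangle^2$, using the SG estimates on the first derivatives of the Hamiltonian, and then to obtain the reverse inequalities from the reversibility of the Hamiltonian flow.

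First, I would extract from $a\in C([0,T];\SG^{1,1}(\R^{2n}))$ the key pointwise bounds
$$
|a'_\xi(t;x,\xi)|\le C\langle x\rangle,\qquad |a'_x(t;x,\xi)|\le C\langle \xi\rangle,
$$
uniformly in $t\in[0,T]$, which are immediate consequences of \eqref{eq:disSG} with $(m,\mu)=(1,1)$ and $|\alpha|=1$, $|\beta|=0$ (resp.\ $|\alpha|=0$, $|\beta|=1$). Next I would differentiate along the trajectory: for fixed $(s,y,\eta)$,
$$
\frac{d}{dt}\langle q(t,s;y,\eta)\rangle^2 = -2\,q(t,s;y,\eta)\cdot a'_\xi(t;q(t,s;y,\eta),p(t,s;y,\eta)),
$$
so Cauchy--Schwarz and the first derivative bound give
$$
\Big|\frac{d}{dt}\langle q\rangle^2\Big|\le 2|q|\cdot C\langle q\rangle\le 2C\langle q\rangle^2.
$$
An identical computation for $p$, using $\dot p=a'_x(t;q,p)$ and $|a'_x|\le C\langle p\rangle$, yields $|\tfrac{d}{dt}\langle p\rangle^2|\le 2C\langle p\rangle^2$. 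Gronwall's lemma then produces
$$
\langle q(t,s;y,\eta)\rangle\le e^{C|t-s|}\langle y\rangle,\qquad \langle p(t,s;y,\eta)\rangle\le e^{C|t-s|}\langle \eta\rangle,
$$
and since $t,s\in[0,T]$ the exponential factors are uniformly bounded, giving the $\lesssim$ halves of \eqref{eq:equiv}.

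For the reverse inequalities $\langle y\rangle\lesssim \langle q\rangle$ and $\langle \eta\rangle\lesssim \langle p\rangle$, I would exploit reversibility of the flow. Since $a$ is real-valued and the Hamilton--Jacobi system \eqref{hameq} is autonomous in form, the map $(y,\eta)\mapsto(q(t,s;y,\eta),p(t,s;y,\eta))$ is a diffeomorphism whose inverse is $(y,\eta)\mapsto(q(s,t;y,\eta),p(s,t;y,\eta))$; concretely,
$$
q(s,t;q(t,s;y,\eta),p(t,s;y,\eta))=y,\qquad p(s,t;q(t,s;y,\eta),p(t,s;y,\eta))=\eta.
$$
Applying the upper bounds just proved to the pair $(s,t)$ in place of $(t,s)$ and to the initial data $(q(t,s;y,\eta),p(t,s;y,\eta))$ gives
$$
\langle y\rangle\le e^{C|t-s|}\langle q(t,s;y,\eta)\rangle,\qquad \langle \eta\rangle\le e^{C|t-s|}\langle p(t,s;y,\eta)\rangle,
$$
which are precisely the missing bounds. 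Combining both directions yields the equivalences \eqref{eq:equiv}.

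The only nontrivial point is ensuring that the Hamiltonian flow is globally defined on $[0,T]\times[0,T]\times\R^{2n}$, so that the reversibility argument makes sense. This is not really an obstacle, since the linear-in-$\langle(q,p)\rangle$ growth of the right-hand side of \eqref{hameq} rules out blow-up in finite time, guaranteeing global existence on $[0,T]$; this is a standard consequence of the differential inequalities derived above. Everything else is routine Gronwall and the symplectic (in particular, bijective) nature of the Hamiltonian flow.
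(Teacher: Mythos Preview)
Your argument is correct and is precisely the standard one: SG bounds on $a'_\xi$ and $a'_x$ feed into Gronwall-type differential inequalities for $\langle q\rangle^2$ and $\langle p\rangle^2$, and the reverse bounds follow from the group property of the flow. The paper does not give its own proof of this proposition; it simply records it as a known result and refers to \cite[Ch.~6]{CO} and \cite{Coriasco:998.2}, where exactly this Gronwall-plus-reversibility argument appears.

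One minor wording issue: the system \eqref{hameq} is \emph{not} autonomous, since $a$ depends explicitly on $t$. What you actually use is the flow property $\Phi(s,t)\circ\Phi(t,s)=\mathrm{id}$, which holds for any (non-autonomous) ODE with unique solutions; autonomy is irrelevant here. You may wish to rephrase that sentence accordingly.
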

\par
\begin{proposition}
Under the same hypotheses of Proposition \ref{prop:equizeroder},
the maximal solution of the Hamilton-Jacobi system \eqref{hameq} with
the Cauchy data \eqref{hameqCauchyData} is defined on the whole
product of intervals $[0,T]\times [0,T]$.
\end{proposition}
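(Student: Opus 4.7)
The plan is to combine the standard theory of ordinary differential equations with the a priori estimates supplied by Proposition \ref{prop:equizeroder}. First, for any fixed $(s, y, \eta) \in [0,T] \times \R^{2n}$, the right-hand side of \eqref{hameq}, namely the vector field $F(t; x, \xi) = (-a'_\xi(t;x,\xi), a'_x(t;x,\xi))$, is continuous in $t$ and smooth in $(x,\xi)$ with derivatives bounded on compact subsets of the phase space, uniformly in $t \in [0,T]$ (this follows from $a \in C([0,T]; \SG^{1,1}(\R^{2n}))$). The Picard-Lindel\"of theorem therefore yields a unique smooth local solution $(q,p)(\cdot, s; y, \eta)$ defined on some maximal open interval $J_{\max}(s,y,\eta) \subseteq [0,T]$ containing $s$. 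Uniqueness together with smooth dependence on initial data gives joint smoothness in all arguments.

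Next, I would argue by contradiction that $J_{\max}(s,y,\eta) = [0,T]$. Suppose, on the contrary, that there exists an endpoint $t^\ast \in [0,T]$ with, say, $s < t^\ast$, such that $[s,t^\ast) \subseteq J_{\max}$ but the solution cannot be extended past $t^\ast$. By the standard blow-up alternative for ODEs with right-hand side smooth in the unknowns, this forces
\begin{equation*}
|q(t,s;y,\eta)| + |p(t,s;y,\eta)| \longrightarrow \infty \quad \text{as } t \to t^{\ast -}.
\end{equation*}
Proposition \ref{prop:equizeroder}, however, asserts that throughout $J_{\max}$ one has $\jb{q(t,s;y,\eta)} \sim \jb{y}$ and $\jb{p(t,s;y,\eta)} \sim \jb{\eta}$, with comparison constants depending only on $T$ and on seminorms of $a$ (hence uniformly in $t, s \in [0,T]$ on the common interval of existence). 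This furnishes an a priori bound
\begin{equation*}
|q(t,s;y,\eta)| + |p(t,s;y,\eta)| \le C_T (\jb{y} + \jb{\eta}), \qquad t \in [s, t^\ast),
\end{equation*}
which contradicts the blow-up. The same reasoning, applied backward in time (or by the substitution $t \mapsto T-t$, which transforms the problem into a Hamilton-Jacobi system of the same type), handles the left endpoint.

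Therefore $J_{\max}(s, y, \eta) = [0,T]$ for every $(s,y,\eta) \in [0,T] \times \R^{2n}$, and smooth dependence on $(s, y, \eta)$ gives that $(q,p)$ is well-defined as a smooth function on the full product $[0,T] \times [0,T] \times \R^{2n}$. The main (and really only) obstacle is ensuring the uniform-in-$t$ character of the bounds provided by Proposition \ref{prop:equizeroder}: these rest on a Gronwall estimate using $|a'_\xi(t;x,\xi)| \lesssim \jb{x}$ and $|a'_x(t;x,\xi)| \lesssim \jb{\xi}$, valid with a constant independent of $t$ because $\{a(t,\cdot,\cdot)\}_{t \in [0,T]}$ is bounded in $\SG^{1,1}(\R^{2n})$. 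Once that uniformity is secured, the continuation argument closes immediately.
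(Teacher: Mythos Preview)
Your argument is correct. The paper does not actually supply a proof of this proposition; it is stated as a known fact, with the surrounding results attributed to \cite[Ch.~6]{CO} and \cite{Coriasco:998.2}. Your approach---local existence via Picard--Lindel\"of, then continuation to all of $[0,T]$ by ruling out blow-up using the a priori equivalences $\jb{q}\sim\jb{y}$, $\jb{p}\sim\jb{\eta}$ from Proposition~\ref{prop:equizeroder}---is precisely the standard argument one finds in those references, and you have correctly identified that the key point is the uniformity in $t$ of the constants, which indeed follows from a Gronwall estimate exploiting $|a'_\xi|\lesssim\jb{x}$ and $|a'_x|\lesssim\jb{\xi}$ with constants depending only on the $C([0,T];\SG^{1,1})$-seminorms of $a$.
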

\par
\begin{proposition}\label{prop:classofqandp}
The solution  $(q,p)$ of the Hamilton-Jacobi system \eqref{hameq}
with $a\in C^\infty([0,T];\SG^{1,1}(\R^{2n}))$ real-valued, and the
Cauchy data \eqref{hameqCauchyData}, satisfies
\begin{description}
\item[i] {$q$ belongs to $C^\infty([0,T]^2;\SG^{1,0}(\R^{2n}))$,}
\item[ii] {$p$ belongs to $C^\infty([0,T]^2;\SG^{0,1}(\R^{2n}))$.}
\end{description}
\end{proposition}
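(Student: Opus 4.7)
The plan is to derive the symbol estimates by differentiating the Hamilton-Jacobi system with respect to the initial data and applying Gronwall-type arguments, combined with the weight equivalences from Proposition \ref{prop:equizeroder}. The base case follows directly from that proposition: since $\jb{q(t,s;y,\eta)}\sim\jb{y}$ we immediately get $|q|\lesssim\jb{y}\jb{\eta}^0$, and similarly $|p|\lesssim\jb{y}^0\jb{\eta}$, giving the zero-order bounds required for $q\in\SG^{1,0}$ and $p\in\SG^{0,1}$ uniformly in $(t,s)\in[0,T]^2$.

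For the first-order estimates, I would differentiate \eqref{hameq} with respect to the components of $(y,\eta)$. This produces the linear variational system
\[
\partial_t\begin{pmatrix}\partial_w q\\ \partial_w p\end{pmatrix}
=\begin{pmatrix}-a''_{\xi x}(t;q,p) & -a''_{\xi\xi}(t;q,p)\\ a''_{xx}(t;q,p) & a''_{x\xi}(t;q,p)\end{pmatrix}\begin{pmatrix}\partial_w q\\ \partial_w p\end{pmatrix},
\]
with $w\in\{y_i,\eta_j\}$ and initial condition the identity at $t=s$. Since $a\in\SG^{1,1}$, we have $a''_{xx}\in\SG^{-1,1}$, $a''_{x\xi},a''_{\xi x}\in\SG^{0,0}$, $a''_{\xi\xi}\in\SG^{1,-1}$; Lemma \ref{lem:invariantcomsymbol} together with the equivalences $\jb{q}\sim\jb{y}$, $\jb{p}\sim\jb{\eta}$ allows one to view these second derivatives composed with $(q,p)$ as bounded families, in the variables $(y,\eta)$, in the corresponding SG classes. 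A direct Gronwall estimate applied to each component then yields $\partial_{y_i} q\in\SG^{0,0}$, $\partial_{\eta_j} q\in\SG^{1,-1}$, $\partial_{y_i} p\in\SG^{-1,1}$, $\partial_{\eta_j} p\in\SG^{0,0}$, uniformly in $(t,s)$, which is exactly what the SG orders $(1,0)$ for $q$ and $(0,1)$ for $p$ demand.

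The higher-order estimates proceed by induction on $k=|\alpha|+|\beta|$. Applying $\partial^\alpha_\eta\partial^\beta_y$ to the system yields a linear ODE for the $k$-th order derivatives of $(q,p)$ whose homogeneous part is the same variational matrix above, and whose source term is a polynomial expression in lower-order derivatives of $(q,p)$ and derivatives of $a$ of order between $2$ and $k+1$ evaluated at $(q,p)$, combined via Faà di Bruno. Each such term, by the inductive hypothesis, the SG class of the corresponding derivative of $a$ and the weight equivalences, lies in the SG class demanded by the target orders $(1-|\beta|,-|\alpha|)$ for $\partial^\alpha_\eta\partial^\beta_y q$ and $(-|\beta|,1-|\alpha|)$ for $\partial^\alpha_\eta\partial^\beta_y p$. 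Another application of Gronwall closes the induction. The main bookkeeping obstacle here is verifying that, at each step, the weights really match the announced SG orders; the key is that each differentiation of the composed symbol $a^{(r)}(t;q,p)$ either costs a power of $\jb{y}^{-1}$ or $\jb{\eta}^{-1}$ exactly balanced by the SG order of $\partial q$ or $\partial p$, which is made precise by Lemma \ref{lem:invariantcomsymbol}.

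Finally, smoothness in $(t,s)$ follows by a standard bootstrap: $t$-derivatives are read off the equation $\partial_t q=-a'_\xi(t;q,p)$, $\partial_t p=a'_x(t;q,p)$, and since $a\in C^\infty([0,T];\SG^{1,1})$, repeated differentiation in $t$ (with interior substitution from the equation itself and the estimates already obtained in $(y,\eta)$) keeps the bounded family structure in the appropriate SG class. Smoothness in $s$ is obtained from the group-like reverse-flow identity $(q,p)(s,t;q(t,s;y,\eta),p(t,s;y,\eta))=(y,\eta)$, which, via the implicit function theorem combined with the already-established regularity in $t$ and in $(y,\eta)$, transports the smoothness in $t$ to smoothness in $s$, yielding $q\in C^\infty([0,T]^2;\SG^{1,0})$ and $p\in C^\infty([0,T]^2;\SG^{0,1})$ as claimed.
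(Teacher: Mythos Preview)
The paper does not include a proof of this proposition; it is quoted from the literature (see the sentence preceding Proposition~\ref{prop:equizeroder}, which refers to \cite[Ch.~6]{CO} and \cite{Coriasco:998.2}). Your argument is precisely the standard one found in those references: differentiate the Hamilton equations in the initial data, control the resulting linear variational system via a weighted Gronwall inequality, and induct on the order of differentiation using Fa\`a di Bruno, with the weight equivalences of Proposition~\ref{prop:equizeroder} allowing the substitution $(x,\xi)\mapsto(q,p)$ in the second derivatives of $a$.

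Two small remarks. First, ``a direct Gronwall estimate applied to each component'' slightly understates what is needed: since the off-diagonal coefficients $a''_{\xi\xi}(t;q,p)$ and $a''_{xx}(t;q,p)$ are of orders $(1,-1)$ and $(-1,1)$ respectively, one must apply Gronwall to a suitably weighted sum such as $|\partial_w q|+\jb{y}\jb{\eta}^{-1}|\partial_w p|$ (or its reciprocal-weighted counterpart, depending on $w$) to make all coefficients bounded; this is routine but worth stating. Second, the lemma you want for the composition $a^{(r)}(t;q,p)$ is Lemma~\ref{compositionwithe1symbol} (and its $x$-variable analogue mentioned immediately after it), rather than Lemma~\ref{lem:invariantcomsymbol}, which concerns parameter-dependence. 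With these adjustments your proof is correct and complete.
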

	
\par
	
\begin{lemma}\label{lem:bndedlemhamsol}
The following statements hold true.
\begin{description}
\item[i\label{lem:classofpandqfirstpart}]
{Let $a\in C([0,T];\SG^{1,1}(\R^{2n}))$ be real-valued.
Then, the solution $(q,p)(t,s;y,\eta)$ of the Hamilton-Jacobi
system \eqref{hameq} with Cauchy data \eqref{hameqCauchyData} satisfies,
for a sufficently small $T'\in (0,T]$ and a fixed $t$ such that
$0\leq s,t\leq T',\ s\neq t$,}
\begin{eqnarray}\label{SGboundednessOfHamiltonSolution}
\left\{\begin{array}{ccc} 
{(q(t,s;y,\eta)-y)/(t-s)} &\text{ is bounded in }&\SG^{1,0}(\R^{2n})
\\[1ex]
{(p(t,s;y,\eta)-\eta)/(t-s)} &\text{ is bounded in }&\SG^{0,1}(\R^{2n})
\end{array}	\right.
\end{eqnarray}
and
\begin{eqnarray}\label{continuityofp-yandpinbanachspace}
\left\{\begin{array}{cc}
q(t,s;y,\eta),q(t,s;y,\eta)-y \in C^1(I(T');\SG^{1,0}(\R^{2n})),
\\[1ex]
p(t,s;y,\eta),p(t,s;y,\eta)-\eta \in C^1(I(T');\SG^{0,1}(\R^{2n})),
\end{array}	\right.
\end{eqnarray}
where, for $T>0$, $I(T)=\{(t,s)\colon 0\leq t,s\leq T\}$.
\item[ii\label{lem:classofpandqsecondpart}]
{
	Furthermore,
if, additionally, $a$ belongs to $C^\infty([0,T];\SG^{1,1}(\R^{2n}))$,
then, $q(t,s;y,\eta)-y\in C^\infty(I(T');\SG^{1,0}(\R^{2n}))$
and $p(t,s;y,\eta)-\eta\in C^\infty(I(T');\SG^{0,1}(\R^{2n}))$.
}
\end{description}
\end{lemma}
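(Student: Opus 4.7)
My strategy is to integrate the Hamilton--Jacobi system \eqref{hameq}--\eqref{hameqCauchyData} and exploit the $\SG$-symbol structure of the Hamiltonian together with the equivalences established in Proposition \ref{prop:equizeroder} and the composition Lemma \ref{compositionwithe1symbol}.

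First, rewriting the system in integrated form gives
\begin{align*}
q(t,s;y,\eta) - y &= -\int_s^t a'_\xi\bigl(r; q(r,s;y,\eta), p(r,s;y,\eta)\bigr)\,dr,
\\
p(t,s;y,\eta) - \eta &= \phantom{-}\int_s^t a'_x\bigl(r; q(r,s;y,\eta), p(r,s;y,\eta)\bigr)\,dr.
\end{align*}
Since $a \in C([0,T]; \SG^{1,1}(\R^{2n}))$, one has $a'_\xi \in C([0,T]; \SG^{1,0}(\R^{2n}))$ and $a'_x \in C([0,T]; \SG^{0,1}(\R^{2n}))$. For $T'$ small enough, Proposition \ref{prop:equizeroder} supplies the uniform equivalences $\langle q \rangle \sim \langle y \rangle$ and $\langle p \rangle \sim \langle \eta \rangle$ on $I(T')$, which combined with Lemma \ref{compositionwithe1symbol} (applied in each slot, viewing $q$ as a perturbation of $y$ and $p$ as a perturbation of $\eta$) ensure that the two integrands are families bounded in $\SG^{1,0}(\R^{2n})$ and $\SG^{0,1}(\R^{2n})$ respectively, uniformly in $r \in [s,t] \subseteq [0,T']$.

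Dividing by $t-s$ then realizes $(q-y)/(t-s)$ and $(p-\eta)/(t-s)$ as averages of such bounded families, yielding \eqref{SGboundednessOfHamiltonSolution}. The $C^1$-statements in \eqref{continuityofp-yandpinbanachspace} follow directly from the Hamilton--Jacobi equations: $\partial_t q = -a'_\xi(t;q,p) \in C(I(T'); \SG^{1,0})$ and $\partial_t p = a'_x(t;q,p) \in C(I(T'); \SG^{0,1})$ by the same composition argument, while $\partial_s q$ and $\partial_s p$ are recovered by differentiating along the diagonal the Cauchy identity $q(s,s;\cdot) = y$, $p(s,s;\cdot) = \eta$, equivalently via the group property of the Hamiltonian flow, which expresses the $s$-derivative as an evaluation of the forward variational system and again lands in the correct $\SG$-class. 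Part (ii) is then an immediate corollary of Proposition \ref{prop:classofqandp}: $q - y$ and $p - \eta$ differ from $q$ and $p$ by the terms $y,\eta$, constant in $(t,s)$ and already lying in $\SG^{1,0}$ and $\SG^{0,1}$, so the $C^\infty$-regularity in $(t,s)$ transfers without change, the $C^\infty$-dependence through the composition with $(q,p)$ being guaranteed by Lemma \ref{lem:invariantcomsymbol}.

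The main obstacle is the uniformity as $|t-s| \to 0$: each $\SG^{m,\mu}$ semi-norm of the composed symbol $a'_\xi\bigl(r;q(r,s;\cdot),p(r,s;\cdot)\bigr)$ produced by Lemma \ref{compositionwithe1symbol} must be bounded \emph{uniformly} in $r,s,t \in [0,T']$, so that dividing by $t-s$ is harmless. This is precisely the reason $T'$ must be shrunk: the equivalences of Proposition \ref{prop:equizeroder} and the underlying fixed-point estimates behind Proposition \ref{prop:classofqandp} hold with universal constants only on a sufficiently small interval, which is secured by choosing $T'$ appropriately small at the start of the argument.
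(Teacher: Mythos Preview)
The paper does not actually prove this lemma: immediately after the statement it says ``The proof of Lemma \ref{lem:bndedlemhamsol} combines techniques and results similar to those used in \cite{CO}, \cite{Coriasco:998.2} and \cite{Kumano-go:1}, so we omit the details.'' Your sketch is precisely the standard argument one finds in those references---integrate the Hamilton system, feed the known equivalences $\jb{q}\sim\jb{y}$, $\jb{p}\sim\jb{\eta}$ into the composition lemma, and read off the uniform $\SG$-bounds of the integrands before dividing by $t-s$---so there is nothing to compare.

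One point deserves more care than you give it. To invoke Lemma \ref{compositionwithe1symbol} on $a'_\xi(r;q,p)$ you need not only $\jb{q}\sim\jb{y}$ and $\jb{p}\sim\jb{\eta}$ but the full symbol estimates $q\in\SG^{1,0}$, $p\in\SG^{0,1}$ with bounds uniform in $(r,s)$; Proposition \ref{prop:equizeroder} only supplies the zeroth-order equivalence. In part (i) you cannot appeal to Proposition \ref{prop:classofqandp}, which assumes $a\in C^\infty$. The way this is handled in the cited sources is either a Picard-iteration argument showing the approximants stay uniformly bounded in the symbol class, or a direct induction on $|\alpha|+|\beta|$: differentiate the Hamilton system in $(y,\eta)$, obtain a linear ODE for $\partial_y^\alpha\partial_\eta^\beta(q,p)$ with coefficients controlled by lower-order derivatives, and close with Gronwall. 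You allude to ``the underlying fixed-point estimates behind Proposition \ref{prop:classofqandp}'' in your final paragraph, so you seem aware of this, but as written the appeal to Lemma \ref{compositionwithe1symbol} is circular until that bootstrap is made explicit.
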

	\par
The proof of Lemma \ref{lem:bndedlemhamsol} combines techniques and results
similar to those used in \cite{CO}, \cite{Coriasco:998.2} and \cite{Kumano-go:1},
so we omit the details.
\par
Now, we observe that there exists a constant 
$T_1\in( 0, T']$ such that $q(t,s;y,\eta)$ is invertible
with respect to $y$ for any $(t,s)\in I(T_1)$ and any $\eta\in \R^n$.
Indeed, this holds by continuity and the fact that
$$
q(s,s;y,\eta)=y\Rightarrow\displaystyle\frac{\partial q}{\partial y}(s,s;y,\eta)=I_n.
$$
\par
We denote the inverse function by $\overline{q}$,
that is $$
y=\overline{q}(t,s)=\overline{q}(t,s;x,\eta) \Leftrightarrow x=q(t,s;y,\eta),
$$ 
which exists on $I(T_1)$. Moreover,  $\overline{q}\in C^\infty(I(T_1);\SG^{1,0}(\R^{2n}))$,
cf. \cite{CO,Coriasco:998.2}.
\par
Observe now that, in view of Lemma \ref{lem:bndedlemhamsol},
$\|\frac{\partial q}{\partial y}-I_n\|\rightarrow 0$ when $t\rightarrow s$,
uniformly on $I(T_1)$. Then, one can deduce the following result,
which is an extension to the analogous ones that can be found e.g., in \cite{CO,Kumano-go:1}.
\par
\begin{lemma}\label{lem:bndedleminversehamsol}
Assume that $a\in C([0,T];\SG^{1,1}(\R^{2n}))$ is real-valued,
and let $T_1\in (0,T']$, $\epsilon_1\in(0,1]$
be constants such that on $I(T_1)$ we have
\begin{equation}\label{contractioncont}
\left\|\frac{\partial q}{\partial y}-I_n\right\|\leq 1-\e_1.
\end{equation}
Then, the mapping
$x=q(t,s;y,\xi)\,:\,\R_y^n\ni y\longmapsto x\in\R_x^n$
with $(t,s,\xi)$
understood as parameter, has the inverse function
$y=\overline{q}(t,s;x,\xi)$ satisfying
\begin{align}\label{bndoftheinvfunc}
\left\{\begin{array}{ll}
\overline{q}(t,s;x,\xi)-x\; \text{belongs to } C^1(I(T_1);\SG^{1,0}(\R^{2n})),
\\
\{(\overline{q}(t,s;x,\xi)-x)/|t-s|\}\text{ is bounded in } \SG^{1,0}(\R^{2n}),
\text{ whenever } 0\leq s,t\leq T_1; s\neq t.
\end{array}
\right.
\end{align}
Moreover, if, additionally, $a\in C^\infty([0,T];\SG^{1,1}(\R^{2n}))$,
we also have $\overline{q},\overline{q}(t,s;x,\xi)-x\in C^\infty(I(T_1);\SG^{1,0}(\R^{2n}))$. 
\end{lemma}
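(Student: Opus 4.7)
The plan is to regard $\overline{q}$ as the solution of an implicit fixed-point equation derived from $q(t,s;\overline{q}(t,s;x,\xi),\xi) = x$, and then to transfer the $\SG^{1,0}$ estimates already available for $q-y$ (Lemma \ref{lem:bndedlemhamsol}) onto $\overline{q}-x$. Set $R(t,s;y,\xi) := q(t,s;y,\xi) - y$, so that $R/(t-s)$ is bounded in $\SG^{1,0}(\R^{2n})$ by \eqref{SGboundednessOfHamiltonSolution}, and rewrite the inversion relation as $\overline{q} = x - R(t,s;\overline{q},\xi)$. The contractivity condition \eqref{contractioncont} says precisely that $y \mapsto y + R(t,s;y,\xi)$ has Jacobian uniformly at distance at most $1-\varepsilon_1$ from the identity; hence by Hadamard's global inverse theorem (with $(t,s,\xi)$ frozen as parameters) this map is a $C^1$-diffeomorphism of $\R^n$ onto itself, which defines $\overline{q}(t,s;x,\xi)$ uniquely.

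For the pointwise $(1,0)$ bound, estimate $|\overline{q} - x| = |R(t,s;\overline{q},\xi)| \leq C|t-s|\,\jb{\overline{q}}$. Combining with $\jb{\overline{q}} \leq \jb{x} + |\overline{q}-x|$ and absorbing the small term for $|t-s| \leq T_1$ small enough yields $|\overline{q} - x| \leq C'|t-s|\,\jb{x}$, and in particular $\jb{\overline{q}} \asymp \jb{x}$ uniformly. To obtain the estimates on derivatives, I differentiate the identity $\overline{q}(t,s;x,\xi) = x - R(t,s;\overline{q},\xi)$ in $x$ and $\xi$: for the first $x$-derivative one gets
\[
\partial_x \overline{q} = \bigl(I_n + (\partial_y R)(t,s;\overline{q},\xi)\bigr)^{-1},
\]
which is well defined and bounded by a Neumann series thanks to \eqref{contractioncont}, while $\partial_\xi \overline{q} = -(I_n+\partial_y R)^{-1}(\partial_\xi R)(t,s;\overline{q},\xi)$. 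Higher-order derivatives are produced by iterated chain rule: each one is a polynomial in derivatives of $R$ evaluated at $(t,s;\overline{q},\xi)$ and in entries of $(I_n + \partial_y R)^{-1}$. Because $R/(t-s) \in \SG^{1,0}(\R^{2n})$, the derivative $\partial^\alpha_\xi \partial^\beta_y R$ is bounded by $C|t-s|\jb{y}^{1-|\beta|}\jb{\xi}^{-|\alpha|}$, and after substituting $y = \overline{q}$ and using $\jb{\overline{q}} \asymp \jb{x}$, one gets the correct $\SG^{1,0}$-type behaviour. An induction on $|\alpha+\beta|$, combined with Lemma \ref{compositionwithe1symbol} to control the composition $R(t,s;\overline{q}(t,s;x,\xi),\xi)$, proves that $(\overline{q}-x)/(t-s)$ is bounded in $\SG^{1,0}(\R^{2n})$ uniformly for $(t,s) \in I(T_1)$, $s \neq t$.

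For the regularity in $(t,s)$, I would apply the parameter-dependent implicit function theorem to $F(t,s,x,\xi,y) := q(t,s;y,\xi) - x$, whose $y$-Jacobian $\partial_y q$ is invertible on $I(T_1)$ by \eqref{contractioncont}. Since Lemma \ref{lem:bndedlemhamsol}\textbf{i} gives $q - y \in C^1(I(T');\SG^{1,0}(\R^{2n}))$, the implicit function theorem yields $\overline{q}(t,s;\cdot,\cdot) \in C^1(I(T_1);\SG^{1,0}(\R^{2n}))$, and similarly $\overline{q}-x \in C^1(I(T_1);\SG^{1,0}(\R^{2n}))$. In the $C^\infty$ case, Lemma \ref{lem:bndedlemhamsol}\textbf{ii} upgrades the ingredient $R$ to $C^\infty$ in $(t,s)$, and the same implicit differentiation scheme, applied inductively, produces $C^\infty$ regularity for $\overline{q}$ and $\overline{q}-x$ with values in $\SG^{1,0}(\R^{2n})$.

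The hard part will be the inductive bookkeeping in the derivative bounds: each application of the chain rule to $R(t,s;\overline{q},\xi)$ produces a growing tree of terms mixing derivatives of $R$ (at the composed argument $\overline{q}$) with the matrix $(I_n+\partial_y R)^{-1}$ and derivatives of $\overline{q}$ itself, so one must carefully verify that the SG orders add up correctly at every node of the tree, that the factor $|t-s|$ is preserved (coming from $R$ alone and not lost through $(I+\partial_y R)^{-1}$), and that all bounds are uniform in the parameters $(t,s,\xi)$. The composition lemma \ref{compositionwithe1symbol} together with the equivalence $\jb{\overline{q}} \asymp \jb{x}$ obtained in the first step makes this a routine, though lengthy, induction on $|\alpha+\beta|$.
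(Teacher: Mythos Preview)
Your proposal is correct and follows exactly the standard argument the paper has in mind: the paper does not actually prove this lemma but states it as ``an extension to the analogous ones that can be found e.g., in \cite{CO,Kumano-go:1}'', and your fixed-point/implicit-function scheme (writing $\overline q = x - R(t,s;\overline q,\xi)$ with $R=q-y$, using \eqref{contractioncont} for the Neumann-series inverse of $I_n+\partial_yR$, establishing $\jb{\overline q}\asymp\jb{x}$, and then inducting on $|\alpha+\beta|$ via the chain rule and Lemma~\ref{compositionwithe1symbol}) is precisely the Kumano-go argument being cited. There is nothing missing; the bookkeeping you flag as ``the hard part'' is indeed routine once $\jb{\overline q}\asymp\jb{x}$ is in hand.
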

	\par
	The proof of the next Proposition \ref{mainprop} follows by slight modifications of the classical
	arguments given, e.g., in \cite{Kumano-go:1} (see \cite{Abdeljawadthesis} for details).
	\begin{proposition}\label{mainprop}
		Let $a\in C([0,T];\SG^{1,1}(\R^{2n}))$ be real-valued, $q(t,s;y,\eta),\,p(t,s;y,\eta)$
		and $\overline{q}(t,s;x,\xi)$ be the symbols constructed in the previous
		Lemmas \ref{lem:bndedlemhamsol}  and \ref{lem:bndedleminversehamsol}. 
		We define $u(t,s;y,\eta)$ by
		\begin{multline}\label{integralsolform}
		u(t,s;y,\eta)=y\cdot\eta + 
		\\
		\int_s^t
		\left(a(\tau;q(\tau,s;y,\eta),p(\tau,s;y,\eta))-a'_\xi(\tau;q(\tau,s;y,\eta),p(\tau,s;y,\eta))\cdot p(\tau,s;y,\eta)\right)\,d\tau,
		\end{multline}
		and   
		\begin{equation}\label{eq:phfundefbyhamsyssol}
		\varphi(t,s;x,\xi)=u(t,s;\overline{q}(t,s;x,\xi),\xi).
		\end{equation}
		Then, $\varphi(t,s;x,\xi)$ is a solution of the eikonal equation \eqref{eik} and satisfies
		\begin{align}
		\label{xivarphiequal}\varphi'_\xi(t,s;x,\xi)&=\phantom{-}\overline{q}(t,s;x,\xi),
		\\
		\label{xvarphiequal} \varphi'_x(t,s;x,\xi)&=\phantom{-}p(t,s;\overline{q}(t,s;x,\xi),\xi),
		\\
		\label{svarphiequal}\partial_s\varphi(t,s;x,\xi)&=-a(s;\varphi'_\xi(t,s;x,\xi),\xi),
		\\
		\label{equivonderivofvarphi}\jb{\varphi'_x(t,s;x,\xi)}&\sim\jb{\xi}
		\text{ and }\jb{\varphi'_\xi(t,s;x,\xi)}\sim\jb{x}.
		\end{align}
		Moreover, for any $l\geq0$ there exists a constant
		$c_l\geq 1$ and $T_l\in(0,T_1]$ such that $c_lT_l<1,\, \varphi(t,s;x,\xi)$
		belongs to $\Phr(c_l|t-s|,l)$ and $\{J(t,s)/|t-s|\}$ is bounded in $\SG^{1,1}(\R^{2n})$
		for $0\leq t,s\leq T_l\leq T_1,\,t\neq s,$ where $J(t,s;x,\xi)=\varphi(t,s;x,\xi)-x\cdot\xi$.
	\end{proposition}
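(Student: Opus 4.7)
The plan is to verify, in order, the eikonal equation \eqref{eik}, the identities \eqref{xivarphiequal}--\eqref{equivonderivofvarphi}, and finally the SG regularity $\varphi\in\Phr(c_l|t-s|,l)$ together with the boundedness of $\{J(t,s;x,\xi)/|t-s|\}$ in $\SG^{1,1}(\R^{2n})$. The scheme is the one of \cite{Kumano-go:1}, but every step has to be realized inside the SG calculus, using the preparatory Lemmas \ref{compositionwithe1symbol}, \ref{lem:bndedlemhamsol} and \ref{lem:bndedleminversehamsol}.

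First I would compute, for $(t,s)$ fixed, the partial derivatives of $u(t,s;y,\eta)$ from \eqref{integralsolform} by differentiation under the integral sign. The key identity is
\[
\partial_\tau\!\bigl(p\cdot\partial_{y_k} q\bigr)
=\partial_{y_k}\!\bigl(a(\tau;q,p)-a'_\xi(\tau;q,p)\cdot p\bigr),
\]
which drops out upon inserting the Hamilton equations \eqref{hameq} (so that $\dot q=-a'_\xi$, $\dot p=a'_x$), expanding by chain rule, and using the symmetry of mixed second derivatives of $a$: the term $a'_\xi\cdot\partial_{y_k} p$ cancels exactly. Integrating from $s$ to $t$ and using the initial data $q(s,s)=y$, $p(s,s)=\eta$, I obtain
\[
u'_y=(q'_y)^T p,\qquad u'_\eta=y+(q'_\eta)^T p,\qquad \partial_s u=-a(s;y,\eta)+p\cdot\partial_s q,
\]
where the last formula comes from an analogous $\partial_s$-computation in which the boundary term $-F(s)=-a(s;y,\eta)+a'_\xi(s;y,\eta)\cdot\eta$ combines with the integrated $\partial_s F$.

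Next I would compose with $y=\overline{q}(t,s;x,\xi)$ and differentiate the inverse-function relation $q(t,s;\overline{q}(t,s;x,\xi),\xi)=x$ with respect to $x$, $\xi$, $t$, $s$, so as to express $\overline{q}'_x$, $\overline{q}'_\xi$, $\partial_t\overline{q}$, $\partial_s\overline{q}$ in terms of $(q'_y)^{-1}$ and the corresponding derivatives of $q$. Applying the chain rule to $\varphi(t,s;x,\xi)=u(t,s;\overline{q},\xi)$ then yields, after one clean cancellation in each case, \eqref{xivarphiequal} (the $(q'_\eta)^T p$ piece inside $u'_\eta$ is annihilated by the $u'_y\cdot\overline{q}'_\xi$ contribution), \eqref{xvarphiequal} (via the matrix identity $q'_y\cdot\overline{q}'_x=I_n$), the eikonal equation \eqref{eik} (via $\partial_t q=-a'_\xi$), and \eqref{svarphiequal}. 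The initial datum $\varphi(s,s;x,\xi)=x\cdot\xi$ is immediate since $u(s,s;y,\eta)=y\cdot\eta$ and $\overline{q}(s,s;\cdot)$ is the identity.

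Finally I would deduce the SG estimates. The equivalences \eqref{equivonderivofvarphi} are immediate from \eqref{xivarphiequal}, \eqref{xvarphiequal} together with $\jb{q}\sim\jb{y}$ and $\jb{p}\sim\jb{\eta}$ (Proposition \ref{prop:equizeroder}), evaluated at $y=\overline{q}$, $\eta=\xi$. For the regularity, I would use the eikonal equation to rewrite
\[
J(t,s;x,\xi)=\int_s^t a(\tau;x,\varphi'_x(\tau,s;x,\xi))\,d\tau,
\]
observe via Lemma \ref{lem:bndedlemhamsol} that $\varphi'_x(\tau,s;x,\xi)=p(\tau,s;\overline{q},\xi)$ is bounded in $\SG^{0,1}(\R^{2n})$ uniformly on $I(T')$, and invoke Lemma \ref{compositionwithe1symbol} to conclude that $\{J(t,s)/|t-s|\}$ is bounded in $\SG^{1,1}(\R^{2n})$; the bounds $\|J\|_\ell\leq c_l|t-s|$ then follow for each $\ell\geq 0$. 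The nondegeneracy $|\det(\varphi''_{x\xi})|\geq r$ follows from $\varphi''_{x\xi}(s,s;x,\xi)=I_n$ and continuity in $(t-s)$ uniform in $(x,\xi)$, so the determinant stays close to $1$ on a small enough window $[0,T_l]^2$. The main obstacle is not conceptual but technical bookkeeping: the identity $\partial_\tau(p\cdot\partial_y q)=\partial_y F$ and its $\partial_s$ analogue must be checked by full expansion, and the SG semi-norms in the last step require systematically invoking Lemma \ref{compositionwithe1symbol} to keep control of all composed symbols uniformly in $(t,s)\in I(T_l)$; this is precisely the SG adaptation of the classical Kumano-go argument.
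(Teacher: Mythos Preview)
Your proposal is correct and follows precisely the approach indicated by the paper, which does not give an explicit proof but refers to ``slight modifications of the classical arguments given, e.g., in \cite{Kumano-go:1} (see \cite{Abdeljawadthesis} for details).'' The computations you outline (the identity $\partial_\tau(p\cdot\partial_{y_k}q)=\partial_{y_k}(a-a'_\xi\cdot p)$, the formulae $u'_y=(q'_y)^Tp$, $u'_\eta=y+(q'_\eta)^Tp$, the chain-rule cancellations via the inverse-function relations for $\overline{q}$, and the integral representation of $J$) are exactly the Kumano-go scheme, carried out with the SG composition Lemma~\ref{compositionwithe1symbol} and the uniform bounds from Lemmas~\ref{lem:bndedlemhamsol} and~\ref{lem:bndedleminversehamsol} supplying the adaptation to the SG setting.
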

	\par

	\begin{corollary}\label{CinftyclassofJ}
		Let $a\in C([0,T];\SG^{1,1}(\R^{2n}))$ be real-valued, and let $q,\,p$ and $\overline{q}$ be the symbols
		constructed in Lemma \ref{lem:bndedlemhamsol}  and Lemma  \ref{lem:bndedleminversehamsol},
		respectively.
		Then, $J\in C^1(I(T_1);\SG^{1,1}(\R^{2n}))$. Moreover, if, additionally, $a\in C^\infty([0,T];\SG^{1,1}(\R^{2n}))$,
		we find $J\in C^\infty(I(T_1);\SG^{1,1}(\R^{2n}))$.
	\end{corollary}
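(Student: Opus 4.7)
The plan is to argue directly from the explicit representation of $\varphi$ provided by Proposition \ref{mainprop}, namely
$$
\varphi(t,s;x,\xi) = u(t,s;\overline{q}(t,s;x,\xi),\xi)
$$
with $u$ given by the integral formula \eqref{integralsolform}. Subtracting $x\cdot\xi$ and adding/subtracting $\overline{q}\cdot\xi$, I decompose
$$
J(t,s;x,\xi) = \bigl(\overline{q}(t,s;x,\xi)-x\bigr)\cdot\xi + \int_s^t F(\tau,s;\overline{q}(t,s;x,\xi),\xi)\,d\tau,
$$
where $F(\tau,s;y,\eta) = a(\tau;q(\tau,s;y,\eta),p(\tau,s;y,\eta)) - a'_\xi(\tau;q(\tau,s;y,\eta),p(\tau,s;y,\eta))\cdot p(\tau,s;y,\eta)$. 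It then suffices to prove each of the two summands lies in $C^1(I(T_1);\SG^{1,1}(\R^{2n}))$, and in $C^\infty$ under the stronger hypothesis.

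For the first summand, Lemma \ref{lem:bndedleminversehamsol} gives $\overline{q}-x\in C^1(I(T_1);\SG^{1,0})$ (and in $C^\infty$ if $a\in C^\infty([0,T];\SG^{1,1})$). Multiplying by the factor $\xi\in\SG^{0,1}$, which is independent of $(t,s)$, yields a term in $C^1(I(T_1);\SG^{1,1})$, respectively $C^\infty$, as required.

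For the integral term, the strategy is to show $F\in C(I(T');\SG^{1,1}(\R^{2n}_{y,\eta}))$ and that $F$ is $C^1$ in $s$ with values in the same space. The bounded membership in $\SG^{1,1}$ follows from Lemma \ref{lem:invariantcomsymbol} (and its stated extension to mixed SG orders): since $q\in C^1(I(T');\SG^{1,0})$, $p\in C^1(I(T');\SG^{0,1})$ with $\langle q\rangle\sim\langle y\rangle$, $\langle p\rangle\sim\langle\eta\rangle$ by Proposition \ref{prop:equizeroder}, the compositions $a(\tau;q,p)$ and $a'_\xi(\tau;q,p)$ belong to the appropriate SG classes in $(y,\eta)$, and the product with $p$ gives $F$ in $\SG^{1,1}$. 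For the $s$-differentiability of $F$, I use the chain rule: $\partial_s F$ involves $a'_x$, $a'_\xi$, $a''$ evaluated at $(q,p)$ multiplied by $\partial_s q\in \SG^{1,0}$ and $\partial_s p\in\SG^{0,1}$, and this is legitimate because the dependence of $a$ on the space-momentum variables is smooth (it belongs to $\SG^{1,1}$ which by definition is $C^\infty$ in $(x,\xi)$), even though only $C^0$ in $\tau$. Consequently $u(t,s;y,\eta)-y\cdot\eta=\int_s^t F\,d\tau$ is in $C^1(I(T');\SG^{1,1}(\R^{2n}_{y,\eta}))$: differentiation in $t$ gives the continuous integrand $F(t,s;\cdot)$, while differentiation in $s$ produces $-F(s,s;\cdot)$ plus $\int_s^t \partial_s F\,d\tau$. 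Finally, composing with $\overline{q}$, using Lemma \ref{lem:invariantcomsymbol} once more (together with $\jb{\overline{q}}\sim\jb{x}$ inherited from Proposition \ref{prop:equizeroder} via the inverse function), preserves the regularity and delivers the second summand in $C^1(I(T_1);\SG^{1,1})$.

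The $C^\infty$ statement under $a\in C^\infty([0,T];\SG^{1,1})$ follows by iterating the same argument: under this hypothesis Proposition \ref{prop:classofqandp} and Lemma \ref{lem:bndedlemhamsol}\,\hyperref[lem:classofpandqsecondpart]{ii} upgrade $q,p,\overline{q}-x$ to $C^\infty(I(T_1);\SG^{*,*})$, and all $\tau$- and $s$-derivatives of $F$ are then well defined and bounded in $\SG^{1,1}$ by repeated applications of the chain rule and Lemma \ref{lem:invariantcomsymbol}. The main technical obstacle I anticipate is the careful bookkeeping when differentiating $\partial_s$ through the integral, where $s$ appears simultaneously as a lower limit of integration, as a parameter of the flow inside $F$, and inside $\overline{q}(t,s;x,\xi)$; to control it one must verify that the chain-rule expressions produced by all three occurrences remain bounded families in $\SG^{1,1}(\R^{2n})$, which is precisely where the compatibility of the composition Lemma \ref{lem:invariantcomsymbol} with the equivalences $\jb{q}\sim\jb{y},\jb{p}\sim\jb{\eta},\jb{\overline{q}}\sim\jb{x}$ is essential.
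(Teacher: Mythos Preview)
Your proposal is correct and follows the natural route implicit in the paper, which states the corollary without proof as an immediate consequence of the explicit formula \eqref{integralsolform}--\eqref{eq:phfundefbyhamsyssol} in Proposition \ref{mainprop} together with the regularity of $q,p,\overline{q}$ from Lemmas \ref{lem:bndedlemhamsol} and \ref{lem:bndedleminversehamsol}. One minor correction: where you invoke Lemma \ref{lem:invariantcomsymbol} for the compositions $a(\tau;q,p)$ and $a'_\xi(\tau;q,p)$, the relevant tool is actually Lemma \ref{compositionwithe1symbol} (and its $x$-variable counterpart stated just after it), since Lemma \ref{lem:invariantcomsymbol} handles substitution of an $\SG^{0,0}$-valued map into the \emph{parameter} slot $\omega$, not substitution of $\SG^{1,0}\times\SG^{0,1}$ maps into the $(x,\xi)$-slots; the conclusion you need is the same and the argument is unaffected.
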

	
	\subsection{Classical symbols of SG type}\label{subsec:zpower}
In the last part of Section \ref{sec:sginvcp} we will focus on the subclass of symbols and operators which are SG-classical, that is,
 $a\in \SG^{m,\mu}_\cl\subset \SG^{m,\mu}$. In this subsection
we recall their definition and summarize some of their main properties, using materials coming from \cite{BC10a} (see, e.g., \cite{EgorovSchulze} for additional details and proofs).

In the sequel of this subsection we denote  by $\widetilde{\mathscr{H}}_\xi^{m}(\R^n)$ the space of homogeneous functions of order $m$ with respect to the variable $\xi$, smooth with respect to the variable $x$,  and by $\widetilde{\mathscr{H}}_x^{\mu}(\R^n)$ the space of homogeneous functions of order $\mu$ with respect to the variable $x$, smooth with respect to the variable $\xi$.
\begin{definition}
\label{def:sgclass-a}
\begin{itemize}
\item[i)]A symbol $a(x, \xi)$ belongs to the class $\SG^{m,\mu}_{\cl(\xi)}(\R^{2n})$ if there exist $a_{m-i, \cdot} (x, \xi)\in \widetilde{\mathscr{H}}_\xi^{m-i}(\R^n)$, $i=0,1,\dots$, such that, for a $0$-excision function $\omega$,
\[
a(x, \xi) - \sum_{i=0}^{N-1}\omega(\xi) \, a_{m-i, \cdot} (x, \xi)\in \SG^{m-N, \mu}(\R^n), \quad N=1,2, \ldots;
\]
\item[ii)]A symbol $a(x, \xi) $ belongs to the class $\SG_{\cl(x)}^{m,\mu}(\R^{2n})$ if there exist $a_{\cdot, \mu-k}(x, \xi)\in \widetilde{\mathscr{H}}_x^{\mu-k}(\R^n)$, $k=0,\,\dots$, such that, for a $0$-excision function $\omega$,
\[
a(x, \xi)- \sum_{k=0}^{N-1}\omega(x) \, a_{\cdot, \mu-k}(x,\xi) \in \SG^{m, \mu-N}(\R^n), \quad N=1,2, \ldots
\]
\end{itemize}
\end{definition}
\begin{definition}
\label{def:sgclass-b}
A symbol $a(x,\xi)$ is SG-classical, and we write $a \in \SG_{\cl(x,\xi)}^{m,\mu}(\R^{2n})=\SG_{\cl}^{m,\mu}(\R^{2n})=\SG_{\cl}^{m,\mu}$, if
\begin{itemize}
\item[i)] there exist $a_{m-j, \cdot} (x, \xi)\in \widetilde{\mathscr{H}}_\xi^{m-j}(\R^n)$ such that, 
for a $0$-excision function $\omega$, $\omega(\xi) \, a_{m-j, \cdot} (x, \xi)\in \SG_{\cl(x)}^{m-j, \mu}(\R^n)$ and
\[
a(x, \xi)- \sum_{j=0}^{N-1} \omega(\xi) \, a_{m-j, \cdot}(x, \xi) \in \SG^{m-N, \mu}(\R^n), \quad N=1,2,\dots;
\]
\item[ii)] there exist $a_{\cdot, \mu-k}(x, \xi)\in \widetilde{\mathscr{H}}_x^{\mu-k}(\R^n)$ such that, 
for a $0$-excision function $\omega$, $\omega(x)\,a_{\cdot, \mu-k}(x, \xi)\in \SG_{\cl(\xi)}^{m, \mu-k}(\R^n)$ and
\[
a(x, \xi) - \sum_{k=0}^{N-1} \omega(x) \, a_{\cdot, \mu-k} \in \SG^{m, \mu-N}(\R^n), \quad N=1,2,\dots
\] 
\end{itemize}
We set $L_{\cl(x, \xi)}^{m,\mu}(\R^{2n})=L_{\cl}^{m,\mu}(\R^{2n})=\Op(\SG^{m,\mu}_{\cl}(\R^{2n}))$.
\end{definition}

\noindent
The next two results are especially useful when dealing with SG-classical symbols.

\begin{theorem}
	\label{thm4.6}
	Let $a_{k} \in \SG_{\cl}^{m-k,\mu-k}(\R^{2n})$, $k =0,1,\dots$,
	be a sequence of SG-classical symbols
	and $a \sim \sum_{k=0}^\infty a_{k}$
	its asymptotic sum in the general SG-calculus.
	Then, $a \in \SG_{\cl}^{m,\mu}(\R^{2n})$.
\end{theorem}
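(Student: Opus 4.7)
My plan is to construct explicit candidate homogeneous components for $a$ in both the $\xi$- and the $x$-variable by aggregating, order by order, the corresponding components of the summands $a_k$, and then to verify both remainder estimates together with the two cross-classicality requirements of Definition \ref{def:sgclass-b}. Since $a_k\in\SG_{\cl}^{m-k,\mu-k}$, Definition \ref{def:sgclass-b} provides homogeneous components $a_{k;m-j,\cdot}\in\widetilde{\mathscr{H}}_\xi^{m-j}(\R^n)$ for each $j\geq k$, satisfying
\[
\omega(\xi)\,a_{k;m-j,\cdot}\in \SG_{\cl(x)}^{m-j,\mu-k}, \qquad a_k - \sum_{j=k}^{N-1}\omega(\xi)\,a_{k;m-j,\cdot}\in \SG^{m-N,\mu-k},
\]
and, analogously, $x$-homogeneous components $a_{k;\cdot,\mu-l}\in\widetilde{\mathscr{H}}_x^{\mu-l}(\R^n)$ for $l\geq k$ with the symmetric properties. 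I set
\[
a_{m-j,\cdot}(x,\xi):=\sum_{k=0}^{j}a_{k;m-j,\cdot}(x,\xi),\qquad a_{\cdot,\mu-l}(x,\xi):=\sum_{k=0}^{l}a_{k;\cdot,\mu-l}(x,\xi),
\]
both being finite sums of homogeneous functions of the prescribed orders, hence again members of $\widetilde{\mathscr{H}}_\xi^{m-j}(\R^n)$ and $\widetilde{\mathscr{H}}_x^{\mu-l}(\R^n)$, respectively.

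For the first remainder estimate, exchanging the order of summation in $\sum_{j=0}^{N-1}\sum_{k=0}^{j}=\sum_{k=0}^{N-1}\sum_{j=k}^{N-1}$ yields
\[
a - \sum_{j=0}^{N-1}\omega(\xi)\,a_{m-j,\cdot} = \Bigl(a-\sum_{k=0}^{N-1}a_k\Bigr) + \sum_{k=0}^{N-1}\Bigl(a_k - \sum_{j=k}^{N-1}\omega(\xi)\,a_{k;m-j,\cdot}\Bigr).
\]
The first parenthesis lies in $\SG^{m-N,\mu-N}$ by the defining property of the asymptotic sum $a\sim\sum a_k$, while each term of the outer sum lies in $\SG^{m-N,\mu-k}$ by the $\xi$-classicality of $a_k$ applied with $N-k$ retained terms; both inclusions feed into $\SG^{m-N,\mu}$, as required by Definition \ref{def:sgclass-b}(i). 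The $x$-side estimate $a-\sum_{l=0}^{N-1}\omega(x)\,a_{\cdot,\mu-l}\in \SG^{m,\mu-N}$ follows by the entirely symmetric computation.

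To verify the cross-classicality $\omega(\xi)\,a_{m-j,\cdot}\in \SG_{\cl(x)}^{m-j,\mu}$, I note that for each $k\leq j$ the hypothesis on $a_k$ gives $\omega(\xi)\,a_{k;m-j,\cdot}\in \SG_{\cl(x)}^{m-j,\mu-k}$, and the trivial inclusion $\SG_{\cl(x)}^{m-j,\mu-k}\subseteq \SG_{\cl(x)}^{m-j,\mu}$ (obtained by prepending zero $x$-homogeneous components of orders $\mu,\mu-1,\dots,\mu-k+1$ to the existing $x$-expansion) places each summand in $\SG_{\cl(x)}^{m-j,\mu}$. Since the latter class is closed under finite sums (the $x$-homogeneous expansions add term by term), the sum $\omega(\xi)\,a_{m-j,\cdot}=\sum_{k=0}^{j}\omega(\xi)\,a_{k;m-j,\cdot}$ also belongs to it; the symmetric statement $\omega(x)\,a_{\cdot,\mu-l}\in\SG_{\cl(\xi)}^{m,\mu-l}$ is obtained identically. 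This verifies all four conditions of Definition \ref{def:sgclass-b}, whence $a\in\SG_{\cl}^{m,\mu}$. The main technical point is the double-summation rearrangement together with the harmless enlargement of the secondary classicality parameter from $\mu-k$ to $\mu$; once these are in place, the proof reduces to the classicality already known for each $a_k$.
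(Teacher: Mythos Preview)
The paper states this theorem without proof, merely citing it as a known background result (the surrounding references point to \cite{BC10a} and \cite{EgorovSchulze} for the theory of SG-classical symbols). So there is no proof in the paper to compare against.

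Your argument is correct. The key observation that only finitely many of the $a_k$ can contribute to the $\xi$-homogeneous component of any fixed order $m-j$ (namely $k=0,\dots,j$) is exactly what makes the candidate $a_{m-j,\cdot}$ well defined, and your double-sum rearrangement
\[
a-\sum_{j=0}^{N-1}\omega(\xi)\,a_{m-j,\cdot}
=\Bigl(a-\sum_{k=0}^{N-1}a_k\Bigr)+\sum_{k=0}^{N-1}\Bigl(a_k-\sum_{j=k}^{N-1}\omega(\xi)\,a_{k;m-j,\cdot}\Bigr)
\]
cleanly separates the ``asymptotic sum'' remainder from the individual $\xi$-classicality remainders, each landing in $\SG^{m-N,\mu}$. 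The inclusion $\SG_{\cl(x)}^{m-j,\mu-k}\subseteq\SG_{\cl(x)}^{m-j,\mu}$ by padding with zero components, together with closure under finite sums, then handles the cross-classicality condition, and the $x$-side is symmetric. This is the standard direct verification one finds in the references.
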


\begin{theorem}
	\label{thm4.6.1.1}
	Let $\mathbb{B}^n= \{ x \in \R^n: |x| \le 1 \}$  and let $\chi$ be a 
	diffeomorphism from the interior of $\mathbb{B}^n$ to $\R^{n}$ such that
	\[ 
	\chi(x) = \displaystyle \frac{x}{|x|(1-|x|)} \quad \mbox{for} \quad |x| > 2/3.
	\]
	Choosing a smooth function $[x]$ on $\R^n$ such that
	$1 - [x] \not= 0$ for all $x$ in the interior of $\mathbb{B}^n$
	and $|x| > 2/3 \Rightarrow [x] = |x|$,
	for any $a \in \SG^{m,\mu}_{\cl}(\R^{2n})$ denote by $(D^\m a)(y, \eta)$,
	$\m=(m,\mu)$, the function
	\begin{equation}
		\label{eq4.26.1}
		b(y,\eta) = (1-[\eta])^{m_{1}} (1-[y])^{m_{2}}
		a(\chi(y), \chi(\eta)).
	\end{equation}
	Then, $D^\m$ extends to a homeomorphism from $\SG^{m,\mu}_{\cl}(\R^{2n})$ to 
	$C^\infty(\mathbb{B}^n \times \mathbb{B}^n)$.
\end{theorem}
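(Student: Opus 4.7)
The plan is to show that $D^\m$ is well-defined, linear, bijective, and bicontinuous between the Fr\'echet space $\SG^{m,\mu}_{\cl}(\R^{2n})$ and $C^\infty(\mathbb{B}^n\times\mathbb{B}^n)$. The key geometric observation is that for $|x|>2/3$ one has $|\chi(x)|=(1-|x|)^{-1}$ and $\chi(x)/|\chi(x)|=x/|x|$, so the boundary $|y|=1$ corresponds to $|\chi(y)|=\infty$, and $(1-[y])=|\chi(y)|^{-1}$ in this regime. Hence the factor $(1-[\eta])^{m}(1-[y])^{\mu}$ is precisely tailored to cancel the homogeneous growth $|\xi|^{m}|x|^{\mu}$ of the principal parts of an SG-classical symbol.

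First I would check that $D^\m a$ admits a smooth extension to the closed polydisk. By Definition \ref{def:sgclass-b}, for every $N$ one has
\[
a(x,\xi)=\sum_{j=0}^{N-1}\omega(\xi)\,a_{m-j,\cdot}(x,\xi)+r_N(x,\xi), \qquad r_N\in \SG^{m-N,\mu}(\R^{2n}),
\]
with each $a_{m-j,\cdot}$ homogeneous of order $m-j$ in $\xi$ for $|\xi|\geq 1$ and itself $\SG^{m-j,\mu}_{\cl(x)}$-classical in $x$. Pulling back through $\chi$ and multiplying by $(1-[\eta])^{m}$, the $j$-th term becomes $(1-[\eta])^{j}\,a_{m-j,\cdot}(\chi(y),\eta/|\eta|)$ near $|\eta|=1$, which extends smoothly up to $|\eta|=1$ by homogeneity; multiplying additionally by $(1-[y])^{\mu}$ and applying the parallel classical expansion in $x$ gives a smooth extension across $|y|=1$ as well. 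The remainder $r_N$ acquires a factor $(1-[\eta])^{N}$, so by taking $N$ large any prescribed derivative is controlled at $|\eta|=1$; the symmetric argument in $y$ handles $|y|=1$.

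Next I would establish continuity of $D^\m$ by bounding each seminorm $\sup_{\mathbb{B}^n\times\mathbb{B}^n}|\partial_y^\alpha\partial_\eta^\beta b|$ by finitely many SG-seminorms $\vvvert a\vvvert^{m,\mu}_\ell$ together with seminorms controlling the remainders in the classical expansion of $a$. This is a chain-rule computation using $|\partial^\alpha\chi(y)|\lesssim(1-|y|)^{-1-|\alpha|}$ near the boundary, matched by derivatives of $(1-[y])^{\mu}$. For the inverse, given $b\in C^\infty(\mathbb{B}^n\times\mathbb{B}^n)$ I would define, on the region where $\chi^{-1}$ is available,
\[
(Eb)(x,\xi)=(1-[\chi^{-1}(\xi)])^{-m}(1-[\chi^{-1}(x)])^{-\mu}\,b(\chi^{-1}(x),\chi^{-1}(\xi)),
\]
and extend to all of $\R^{2n}$ via a smooth partition of unity. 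Taylor-expanding $b$ at the boundary faces $|y|=1$ and $|\eta|=1$ produces the homogeneous components $a_{m-j,\cdot}$ and $a_{\cdot,\mu-k}$ required by Definition \ref{def:sgclass-b}, with SG-type remainders obtained from the smooth remainders in the Taylor expansions. Reversing the seminorm bounds then shows that $E$ is continuous, and $E\circ D^\m=\mathrm{id}$, $D^\m\circ E=\mathrm{id}$ follow directly from the definitions.

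The main obstacle is the regularity at the corner $\{|y|=1\}\cap\{|\eta|=1\}$: one must ensure that the two homogeneous expansions (in $x$ and in $\xi$) align so that the extension is smooth \emph{jointly} in $(y,\eta)$ rather than merely separately in each variable. This is precisely what the compatibility built into Definition \ref{def:sgclass-b} guarantees, since each $a_{m-j,\cdot}$ is itself classical in $x$ (and symmetrically for $a_{\cdot,\mu-k}$), yielding matching double expansions $a_{m-j,\mu-k}$ which produce the correct cross-derivatives at the corner. Verifying this matching, and showing that the seminorms of the full classical expansion control the mixed boundary derivatives, is the one piece where the argument genuinely uses the strong notion of classicality and not merely separate homogeneous expansions; once this is carried out, combining it with Theorem \ref{thm4.6} to close the asymptotic sums yields the desired homeomorphism.
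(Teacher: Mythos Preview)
The paper does not prove this theorem: it is stated in Subsection~\ref{subsec:zpower} as a recalled background result (the subsection explicitly says the material is taken from \cite{BC10a}, see also \cite{EgorovSchulze}), and no argument is given. So there is no ``paper's own proof'' to compare against.

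Your outline is the standard one for this compactification statement and is essentially correct in spirit: the identification $(1-[y])\sim |\chi(y)|^{-1}$ near the boundary, the use of the homogeneous expansions to extend smoothly across each face, and the appeal to the bi-classicality (the compatibility $a_{m-j,\mu-k}=\sigma_\psi^{m-j}\sigma_e^{\mu-k}(a)=\sigma_e^{\mu-k}\sigma_\psi^{m-j}(a)$) to handle the corner are exactly the ingredients that make the result work. The one place where your sketch is thin is precisely the corner analysis: you correctly flag it as the main obstacle, but what is actually needed is a Borel-type double asymptotic argument showing that the iterated Taylor expansions at $\{|y|=1\}$ and $\{|\eta|=1\}$ glue to a single smooth function on $\mathbb{B}^n\times\mathbb{B}^n$, and conversely that any smooth function on the closed bi-ball produces compatible homogeneous components. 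This is not automatic from separate smoothness in each variable and requires the joint control that Definition~\ref{def:sgclass-b} encodes; the details can be found in \cite{EgorovSchulze} or \cite{BC10a}.
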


\noindent
Note that the definition of SG-classical symbol implies a condition of compatibility for the terms of the expansions with respect to $x$ and $\xi$. 
In fact, defining $\sigma_\psi^{m-j}$ and $\sigma_e^{\mu-i}$ on $\SG_{\cl(\xi)}^{m,\mu}$ and $\SG_{\cl(x)}^{m,\mu}$, respectively,  as
\begin{align*}
	\sigma_\psi^{m-j}(a)(x, \xi) &= a_{m-j, \cdot}(x, \xi),\quad j=0, 1, \ldots, 
	\\
	\sigma_e^{\mu-i}(a)(x, \xi) &= a_{\cdot, \mu-i}(x, \xi),\quad i=0, 1, \ldots,
\end{align*}
it possibile to prove that
\[
\begin{split}
a_{m-j,\mu-i}=\sigma_{\psi e}^{m-j,\mu-i}(a)=\sigma_\psi^{m-j}(\sigma_e^{\mu-i}(a))= \sigma_e^{\mu-i}(\sigma_\psi^{m-j}(a)), \\
j=0,1, \ldots, \; i=0,1, \ldots
\end{split}
\]
Moreover, the algebra property of SG-operators and Theorem \ref{thm4.6} imply that the composition of two SG-classical operators is still classical. 
For $A=\Op(a)\in L^{m,\mu}_\cl$ the triple $\sigma(A)=(\sigma_\psi(A),\sigma_e(A),\sigma_{\psi e}(A))=(a_{m,\cdot}\,,\,
a_{\cdot,\mu}\,,\, a_{m,\mu})$ is called the \textit{principal symbol of $A$}. This definition keeps the usual multiplicative behaviour, that is, for any $A\in L^{r,\varrho}_\cl$, $B\in L^{s,\s}_\cl$, 
$r,\varrho,s,\s\in\R$,
$\sigma(AB)=\sigma(A)\,\sigma(B)$, with componentwise product in the right-hand side. We also set 
\begin{align*}
	\Symp{A}(x,\xi) = &\;\,\Symp{a}(x,\xi) =\\
	=
	&\;\;\omega(\xi) a_{m,\cdot}(x,\xi) +
	\omega(x)(a_{\cdot,\mu}(x,\xi) - \omega(\xi) a_{m,\mu}(x,\xi))
\end{align*}
for a $0$-excision function $\omega$. Theorem \ref{thm:ellclass} below allows to express the ellipticity
of SG-classical operators in terms of their principal symbol.
\begin{theorem}
	 \label{thm:ellclass}
	An operator $A\in L^{m,\mu}_\cl(\R^{2n})$ is elliptic if and only if each element of the triple $\sigma(A)$ is 
	everywhere non-vanishing on its domain of definition.
\end{theorem}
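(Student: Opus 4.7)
The plan is to prove the two implications separately, using two different tools. For necessity, the asymptotic expansions of Definition \ref{def:sgclass-b} let me isolate each principal component from the full symbol along suitably chosen sequences going to infinity. For sufficiency, Theorem \ref{thm4.6.1.1} reduces ellipticity to non-vanishing of a smooth function on the compact manifold-with-corners $\mathbb{B}^n\times\mathbb{B}^n$, where the three principal symbols correspond precisely to the restrictions of $b = D^{\mathbf{m}} a$ to the three boundary strata.

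For ($\Rightarrow$) I would assume $|a(x,\xi)|\geq C\langle x\rangle^m\langle\xi\rangle^\mu$ for $|x|+|\xi|\geq R$. To handle $\sigma_\psi(A) = a_{m,\cdot}$, fix a point $(x_0,\xi_0)$ in its domain (so $\xi_0\neq 0$) and apply Definition \ref{def:sgclass-b}(i) with $N=1$ to write $a(x_0,t\xi_0) = \omega(t\xi_0)\,a_{m,\cdot}(x_0,t\xi_0) + r(x_0,t\xi_0)$ with $r$ of strictly smaller order in $\xi$. Letting $t\to\infty$, the ellipticity bound gives a lower bound on $|a(x_0,t\xi_0)|$ of the leading power of $t$, while $r(x_0,t\xi_0)$ contributes only lower-order terms; after dividing by the appropriate power of $t$ and using the $\xi$-homogeneity of $a_{m,\cdot}$, I obtain $a_{m,\cdot}(x_0,\xi_0)\neq 0$. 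The argument for $\sigma_e(A) = a_{\cdot,\mu}$ is entirely symmetric, using Definition \ref{def:sgclass-b}(ii) and letting $|x|\to\infty$ along a fixed ray. For $\sigma_{\psi e}(A) = a_{m,\mu}$ I approach a corner point $(x_0,\xi_0)$ with $|x_0|=|\xi_0|=1$ via $(\lambda x_0,t\xi_0)$, $\lambda,t\to\infty$; bi-homogeneity then isolates $a_{m,\mu}(x_0,\xi_0)$ as the leading coefficient, and the compatibility $a_{m,\mu} = \sigma_\psi(\sigma_e(a)) = \sigma_e(\sigma_\psi(a))$ recorded in Subsection \ref{subsec:zpower} guarantees that the limit is independent of the order in which $\lambda$ and $t$ are sent to infinity.

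For ($\Leftarrow$), suppose each of $\sigma_\psi(A)$, $\sigma_e(A)$, $\sigma_{\psi e}(A)$ is non-vanishing on its domain. Apply Theorem \ref{thm4.6.1.1} with $\mathbf{m}=(m,\mu)$ to identify $a$ with a smooth function $b = D^{\mathbf{m}}a\in C^\infty(\mathbb{B}^n\times\mathbb{B}^n)$. Under the diffeomorphism $\chi$, the three boundary strata $\mathring{\mathbb{B}^n}\times\partial\mathbb{B}^n$, $\partial\mathbb{B}^n\times\mathring{\mathbb{B}^n}$, and $\partial\mathbb{B}^n\times\partial\mathbb{B}^n$ correspond respectively to the domains of $\sigma_\psi$, $\sigma_e$, and $\sigma_{\psi e}$, and $b$ restricts on each stratum (up to smooth non-vanishing factors coming from the weights $(1-[\eta])^{m}$, $(1-[y])^{\mu}$ of \eqref{eq4.26.1}) to the corresponding principal component precomposed with $\chi$. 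The hypotheses therefore give $b\neq 0$ on the entire compact boundary $\partial(\mathbb{B}^n\times\mathbb{B}^n)$. By continuity of $b$, there exist an open neighborhood $U$ of this boundary and a constant $c>0$ with $|b|\geq c$ on $U$. Undoing \eqref{eq4.26.1} and the diffeomorphism $\chi$ translates this estimate into $|a(x,\xi)|\geq c'\langle x\rangle^m\langle\xi\rangle^\mu$ for $|x|+|\xi|\geq R$, with $R$ chosen large enough that $(\chi^{-1}(x),\chi^{-1}(\xi))\in U$.

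The main obstacle I anticipate is the bookkeeping needed to match the three principal components $\sigma_\psi$, $\sigma_e$, $\sigma_{\psi e}$ with the corresponding boundary traces of $b$: in particular, one must verify that the corner value of $b$ recovers $a_{m,\mu}$ itself rather than an iterated boundary limit of one of the other two components, which is where the compatibility relation in Subsection \ref{subsec:zpower} is essential. Once this identification is in place, both implications reduce to standard continuity and compactness arguments on a compact manifold-with-corners.
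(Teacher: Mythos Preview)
The paper does not actually prove Theorem \ref{thm:ellclass}; it is stated in Subsection \ref{subsec:zpower} as a recalled fact, with the surrounding material attributed to \cite{BC10a} and \cite{EgorovSchulze}. There is therefore no ``paper's own proof'' to compare against.

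Your proposal is a correct outline of the standard argument. The necessity direction via asymptotic expansions along rays is the usual way to isolate each homogeneous principal component, and the sufficiency direction via Theorem \ref{thm4.6.1.1} (the compactification to $\mathbb{B}^n\times\mathbb{B}^n$) is precisely how this is done in, e.g., \cite{EgorovSchulze}: the triple $\sigma(A)$ corresponds to the restrictions of $D^{\mathbf{m}}a$ to the three boundary strata, and non-vanishing on the compact boundary plus continuity yields the elliptic estimate on a collar neighbourhood. Your anticipated ``obstacle'' about the corner value is handled exactly as you say, by the compatibility relation $\sigma_\psi^{m}\circ\sigma_e^{\mu}=\sigma_e^{\mu}\circ\sigma_\psi^{m}$ recorded before the statement of the theorem, so there is no genuine difficulty there. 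One minor point to watch when filling in details: keep careful track of the paper's ordering convention for the biorder $(m,\mu)$ when matching homogeneity degrees to the weights in \eqref{eq4.26.1}.
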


\begin{remark}
	The composition results in the previous Subsection \ref{sec:fioprelim} have \textit{classical counterparts}. Namely,
	when all the involved starting elements are SG-classical, the resulting objects (multi-products, amplitudes, etc.) are
	SG-classical as well.
\end{remark}

	\section{Commutative law for multi-products of regular SG-phase functions}\label{sec:sgphfcomlaw}
	\setcounter{equation}{0}
	%%%%%%%%%%%%%%%%%%%%%%%%%%%%%%%%%%%%%%%%%%%%%%%%%%%
	%
	%                   Commutative law 
	%
	%%%%%%%%%%%%%%%%%%%%%%%%%%%%%%%%%%%%%%%%%%%%%%%%%%%
	\par
	In this section we prove a commutative law for the multi-products of regular SG phase functions.
	Through this result, we further expand the theory of SG Fourier integral operators,
	which we will be able to apply to obtain the solution of Cauchy problems
	for weakly hyperbolic linear differential operators, with polynomially bounded coefficients,
	variable multiplicities and involutive characteristics.
	
	More precisely, we focus on the $\sharp$-product of regular SG phase functions
	obtained as solutions to eikonal equations. Namely, let $[\varphi_j(t,s)](x,\xi)=\varphi_j(t,s;x,\xi)$ be the phase functions
	defined by the eikonal equations \eqref{eik}, with $\varphi_j$
	in place of $\varphi$ and $a_j$ in place of $a$, 
	where $a_j \in C([0,T];\SG^{1,1})$, $a_j$ real-valued, $j\in\N$. 
	Moreover, let $I_{\varphi_j}(t,s)=I_{\varphi_j(t,s)}$ be the SG Fourier integral operator
	with phase function $\varphi_j(t,s)$ and symbol identically equal to $1$.
	\par

	Assume that $\{a_j\}_{j\in\N}$ is bounded in $C([0,T];\SG^{1,1})$.
	Then, by the above Proposition \ref{mainprop},
	there exists a constant $c$, independent of $j$, such that 
	\begin{equation*}
	\varphi_j(t,s)\in\Phr(c|t-s|),\, j\in\N.
	\end{equation*}
	We make a choice of $T_1$, once and for all, such that $cT_1\leq\tau_0$
	for the constant $\tau_0$ defined in \eqref{1/4}. 
	Moreover, for convenience below, we define, for $M\in\Z_+$,
\begin{equation}\label{eq:bft}
\begin{aligned}
	\bt_{M+1}&=(t_0,\dots,t_{M+1})\in\Delta(T_1),
	\\[1ex]
	\bt_{M+1,j}(\tau)&=(t_0,\dots,t_{j-1},\tau,t_{j+1},\dots,t_{M+1}), \; 0\le j \le M+1,
	\end{aligned}
\end{equation} 
where $\Delta(T_1) =\Delta_{M+1}(T_1)= \{(t_0,\dots,t_{M+1})\colon 0\leq t_{M+1}\leq t_{M}\leq\dots\leq t_0\leq T_1\}$.
	\par
\subsection{Parameter-dependent multi-products of regular SG phase functions}\label{subsec:parpdepphase}
	\par
	Let $M\geq 1$ be a fixed integer. We have the following well defined multi-product
	\begin{equation}\label{sharpprodofphi}
	\phi(\bt_{M+1};x,\xi)=[\varphi_1(t_0,t_1)\sharp\varphi_2(t_1,t_2)
	\sharp\dots\sharp\varphi_{M+1}(t_M,t_{M+1})](x,\xi),
	\end{equation}
	where we will often set 
	$t_0=t$, $t_{M+1}=s$, % $[\varphi_j(t_{j-1},t_j)](x,\xi)=\varphi_j(t_{j-1},t_j;x_{j-1},\xi_j)$, $j=1,\dots,M+1$,
	for
	$\bt_{M+1}\in\Delta(T_1)$ from \eqref{eq:bft}.
	Explicitly, $\phi$ is defined as in \eqref{3.4},
	by means of the critical points $(Y,N)=(Y,N)(\bt_{M+1};x,\xi)$,
	obtained, when $M\ge2$, as solutions of the system
	\begin{equation}\label{Ct}\begin{cases}
	Y_1(\bt_{M+1};x,\xi)
	=\vp'_{1,\xi}(t_0,t_1;x,N_{1}(\bt_{M+1};x,\xi)) 
	\\
	Y_j(\bt_{M+1};x,\xi)
	=\vp'_{j,\xi}(t_{j-1},t_j;Y_{j-1}(\bt_{M+1};x,\xi),N_{j}(\bt_{M+1};x,\xi)), & j=2,\ldots,M
	\\
	N_j(\bt_{M+1};x,\xi)
	=\vp'_{j+1, x}(t_j,t_{j+1};Y_j(\bt_{M+1};x,\xi),N_{j+1}(\bt_{M+1};x,\xi)), & j=1,\ldots,M-1
	\\
	N_M(\bt_{M+1};x,\xi)
	=\vp'_{M+1, x}(t_M,t_{M+1}; Y_M(\bt_{M+1};x,\xi),\xi),
	\end{cases}
	\end{equation}
	namely,
	\begin{equation}\label{theM+1sharpproductphi}
	\begin{aligned}
	\phi&(\bt_{M+1};x,\xi):=\\
	&\sum_{j=1}^M\left[
	\vp_j(t_{j-1},t_j;Y_{j-1}(\bt_{M+1};x,\xi),N_j(\bt_{M+1};x,\xi))
		-Y_j(\bt_{M+1};x,\xi)\cdot N_j(\bt_{M+1};x,\xi)
	\right]
	\\
	+&\phantom{\sum_{j=1}^M}\vp_{M+1}(t_M,t_{M+1};Y_M(\bt_{M+1};x,\xi),\xi).
	\end{aligned}
	\end{equation}
	Next, we give some properties of the multi-product $\phi$, whose proofs can be found in \cite{Abdeljawadthesis}.
	\begin{proposition}\label{prop:tderivofmultiph}
		Let $\phi$ be the multi-product \eqref{theM+1sharpproductphi}, with real-valued $a_j\in C([0,T];\SG^{1,1}(\R^{2n}))$, $j=1,\dots,M+1$.
		Then, the following properties hold true.
		\begin{description}
			\item[i\label{i.propofsharprod}] 
			\begin{align}\label{tderofmultprodphasefunc}
			\begin{split}
			\left\{\begin{array}{cl}
			\partial_{t_0}\phi(\bt_{M+1};x,\xi)&=\phantom{-}a_1(t_0;x,\phi'_x(\bt_{M+1};x,\xi))\\
			\partial_{t_j}\phi(\bt_{M+1};x,\xi)&=\phantom{-}a_{j+1}(t_j;Y_j(\bt_{M+1};x,\xi),N_j(\bt_{M+1};x,\xi))
			\\
			& \phantom{=}-a_j(t_j;Y_j(\bt_{M+1};x,\xi),N_j(\bt_{M+1};x,\xi)),\; j=1,\dots,M\\
			\partial_{t_{M+1}}\phi(\bt_{M+1};x,\xi)&=-a_{M+1}(t_{M+1},\phi'_\xi(\bt_{M+1};x,\xi),\xi).
			\end{array}\right.
			\end{split}
			\end{align}
			\item[ii\label{ii.propofsharprod}] For any $\varphi_j$ solution
			to the eikonal equation associated with the Hamiltonian $a_j$, we have
			\begin{align*}
			\begin{split}
			\left\{\begin{array}{cc}				
			\varphi_i(t,s)\sharp\varphi_j(s,s)&=\varphi_i(t,s)\\
			\varphi_i(s,s)\sharp\varphi_j(t,s)&=\varphi_j(t,s),
			\end{array}\right.
			\end{split}
			\end{align*}
			for all $i,j=1,\dots,M+1$.
			%				which imply that the phase function $x\cdot\xi$ is the identity element for the sharp product.
		\end{description}
	\end{proposition}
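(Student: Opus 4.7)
The strategy for part (i) is the envelope theorem applied to the function $\psi$ of \eqref{serveilnome} at the critical points defined by \eqref{Ct}. Since $\phi(\bt_{M+1};x,\xi)=\psi(\bt_{M+1};x,Y(\bt_{M+1};x,\xi),N(\bt_{M+1};x,\xi),\xi)$, and $\psi$ is stationary in $(T,\Theta)$ exactly at $(Y,N)$, the chain rule in $t_j$ collapses to
$$
\partial_{t_j}\phi(\bt_{M+1};x,\xi)=(\partial_{t_j}\psi)(\bt_{M+1};x,Y,N,\xi),\qquad j=0,\dots,M+1.
$$
The regularity needed to differentiate the implicitly defined $(Y,N)$ in the $t$-parameters, and thus to apply this identity rigorously in parameter-dependent SG classes, is supplied by Propositions \ref{mainprop} and \ref{properties} together with Lemmas \ref{lem:bndedlemhamsol}--\ref{lem:bndedleminversehamsol}. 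What remains is to differentiate the explicit formula for $\psi$ and feed in the eikonal equation \eqref{eik} satisfied by each $\varphi_k$.

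For $t_0$, only the first summand of $\psi$ depends on it, and \eqref{eik} gives $\partial_{t_0}\varphi_1(t_0,t_1;x,N_1)=a_1(t_0;x,\varphi'_{1,x}(t_0,t_1;x,N_1))$; Proposition \ref{properties}(2) then identifies $\varphi'_{1,x}(t_0,t_1;x,N_1)$ with $\phi'_x(\bt_{M+1};x,\xi)$, yielding the first identity in \eqref{tderofmultprodphasefunc}. The case $t_{M+1}$ is dual: only the last summand contributes, and the companion relation $\partial_s\varphi(t,s)=-a(s;\varphi'_\xi,\xi)$ from Proposition \ref{trovala!}, combined with $\phi'_\xi=\varphi'_{M+1,\xi}(Y_M,\xi)$ from Proposition \ref{properties}(2), gives the last identity. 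In the intermediate cases $1\leq j\leq M$, the variable $t_j$ enters $\psi$ twice — as the second time argument of $\varphi_j$ and as the first of $\varphi_{j+1}$ — so $\partial_{t_j}\psi$ splits in two contributions. Applying \eqref{eik} to the latter and Proposition \ref{trovala!} to the former, and then reading off the critical-point system \eqref{Ct} to rewrite $\varphi'_{j+1,x}(t_j,t_{j+1};Y_j,N_{j+1})=N_j$ and $\varphi'_{j,\xi}(t_{j-1},t_j;Y_{j-1},N_j)=Y_j$, both Hamiltonians end up evaluated at $(Y_j,N_j)$ and the middle identity follows.

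Part (ii) is a direct consequence of the initial condition in \eqref{eik}: $\varphi_k(s,s;x,\xi)=x\cdot\xi$ for every $k$. Taking the case $M=1$ with $\varphi_j(s,s)$ as the right factor, \eqref{serveilnome} becomes $\psi(x,x_1,\xi_1,\xi)=\varphi_i(t,s;x,\xi_1)-x_1\cdot\xi_1+x_1\cdot\xi$, and the critical system \eqref{(C1)} collapses to $N_1=\xi$ and $Y_1=\varphi'_{i,\xi}(t,s;x,\xi)$; substituting into \eqref{theM+1sharpproductphi} the $x_1$-terms cancel exactly and $\phi(x,\xi)=\varphi_i(t,s;x,\xi)$. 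The companion identity with $\varphi_i(s,s)$ as the left factor is proved in the same way, the $\xi_1$-terms cancelling instead. The main (but still essentially routine) obstacle is the bookkeeping of the envelope argument in part (i): since $(Y,N)$ is defined only implicitly through \eqref{Ct}, one must verify carefully that all contributions from $\partial_{t_j}(Y,N)$ cancel, which is exactly where the stationarity of $\psi$ at the critical points is used.
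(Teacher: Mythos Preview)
Your argument is correct and is the natural route: the envelope theorem reduces $\partial_{t_j}\phi$ to $(\partial_{t_j}\psi)|_{(Y,N)}$, and then the eikonal equation \eqref{eik} (for the first time argument) together with the companion relation from Proposition \ref{trovala!} (for the second time argument) and the critical-point identities \eqref{Ct} give exactly the formulae in \eqref{tderofmultprodphasefunc}; part (ii) is the straightforward computation you describe. The paper does not actually supply its own proof here---it defers to \cite{Abdeljawadthesis}---but the argument there follows the same envelope-plus-eikonal scheme you have written, so there is nothing to compare.
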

\begin{proposition}\label{propcritpntbound}
Let $\{a_j\}_{j\in\N}$ be a family of parameter-dependent, real-valued symbols, bounded in $C^\infty([0,T];\SG^{1,1}(\R^{2n}))$,
and $(Y,N)$ be the solution of \eqref{Ct}.
Then, for $\g_k\in\Z_+$, $k=0,1\dots,M+1$, the following properties hold true.
\begin{description}
\item[i\label{bndoncritpnts}] 	%Boundedness results on the solution $(Y,N)$ of the critical points problem \eqref{critpntprob}, 
\begin{align}\label{eq:bndcritpnts}
\begin{cases}
\{\partial_{t_0}^{\g_0}\dots\partial_{t_{M+1}}^{\g_{M+1}}
\left(Y_j-Y_{j-1}\right)\}_{j\in\N}\text{  is bounded in }\SG^{1,0}(\R^{2n}),
\\
\{\partial_{t_0}^{\g_0}\dots\partial_{t_{M+1}}^{\g_{M+1}}
\left(N_j-N_{j+1}\right)\}_{j\in\N}\text{  is bounded in }\SG^{0,1}(\R^{2n}).
\end{cases}
\end{align}
\item[ii\label{bndonJfunc}] For $J_{M+1}(\bt_{M+1};x,\xi)=\phi(\bt_{M+1};x,\xi)-x\cdot\xi$,
we have
\begin{equation*}
\{\partial_{t_0}^{\g_0}\dots\partial_{t_{M+1}}^{\g_{M+1}}J_{M+1}\}
\text{ is bounded in }\SG^{1,1}(\R^{2n}),
\end{equation*}
\end{description}
\end{proposition}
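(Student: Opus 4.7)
The plan is to proceed by induction on the total order of time differentiation $|\gamma| := \gamma_0 + \cdots + \gamma_{M+1}$, exploiting the implicit structure of the defining system \eqref{Ct} and the smooth-in-time regularity of each phase $\varphi_j$. The base case $|\gamma| = 0$ of (i) is exactly Proposition \ref{stime} (since $\tau_j \le c T_1$ uniformly in $j$, the bounded family $\{(Y_j - Y_{j-1})/\tau_j\}$ can be multiplied through by $\tau_j$ without leaving $\SG^{1,0}$), and the $|\gamma|=0$ case of (ii) is Proposition \ref{properties}(1) for the same reason.

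Before carrying out the induction, I would verify the following preparatory fact: under the standing hypothesis, every family
\[
\bigl\{ \partial_{t_0}^{k_0}\, \partial_{t_1}^{k_1}\, \varphi_j(t_0, t_1; x, \xi) \bigr\}_{j \in \N,\ 0 \le t_1 \le t_0 \le T_1},\qquad (k_0, k_1) \in \Z_+^2,
\]
is bounded in $\SG^{1,1}(\R^{2n})$. This follows by iterating the eikonal relations $\partial_{t_0} \varphi_j = a_j(t_0; x, \varphi'_{j, x})$ from \eqref{eik} and $\partial_{t_1} \varphi_j = -a_j(t_1; \varphi'_{j, \xi}, \xi)$ from Proposition \ref{trovala!}, invoking Lemma \ref{lem:invariantcomsymbol} together with $\jb{\varphi'_{j, x}} \sim \jb{\xi}$ and $\jb{\varphi'_{j, \xi}} \sim \jb{x}$ to keep every composition inside $\SG^{1,1}$.

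For the induction step of (i), I would differentiate the implicit system \eqref{Ct} with respect to $t_k$. Since each $\varphi_i$ depends only on $(t_{i-1}, t_i)$, only two of the equations of \eqref{Ct} are affected directly by $\partial_{t_k}$. For example, differentiating $Y_j = \varphi'_{j, \xi}(t_{j-1}, t_j; Y_{j-1}, N_j)$ yields
\[
\partial_{t_k} Y_j = (\partial_{t_k} \varphi'_{j, \xi})(\cdots) + \varphi''_{j, \xi x}(\cdots)\, \partial_{t_k} Y_{j-1} + \varphi''_{j, \xi \xi}(\cdots)\, \partial_{t_k} N_j,
\]
and analogously for $\partial_{t_k} N_j$. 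The resulting system in the unknowns $(\partial_{t_k} Y, \partial_{t_k} N)$ has the same coupling structure as the one producing $(Y, N)$ themselves; by \eqref{contractioncont} together with the choice $c T_1 \le \tau_0 < 1/4$, its linear operator is a contraction on the appropriate Banach space of bounded SG-symbol-valued families, and a fixed-point argument paralleling the $|\gamma|=0$ proof of Proposition \ref{stime} in \cite{AsCor} gives $\SG^{1,0}$-boundedness of $\partial_{t_k}(Y_j - Y_{j-1})$ and $\SG^{0,1}$-boundedness of $\partial_{t_k}(N_j - N_{j+1})$. The SG orders are preserved because $\varphi''_{j, \xi x} \in \SG^{0,0}$, $\varphi''_{j, \xi \xi} \in \SG^{1, -1}$, and $\varphi''_{j, xx} \in \SG^{-1, 1}$; mixed higher-order time derivatives are obtained by iterating the scheme via Fa\`a di Bruno.

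For (ii), applying $\partial_{t_0}^{\gamma_0} \cdots \partial_{t_{M+1}}^{\gamma_{M+1}}$ to the compact identities \eqref{tderofmultprodphasefunc} of Proposition \ref{prop:tderivofmultiph} (with \eqref{theM+1sharpproductphi} as the $|\gamma|=0$ input) reduces the claim to controlling compositions of the families $\{a_i\}$, $\{\partial_{t_0}^{k_0}\partial_{t_1}^{k_1} \varphi_j\}$, and the critical-point symbols $(Y_j, N_j)$ and their time derivatives; these are respectively controlled by the standing hypothesis, by the preparatory step above, and by (i) with multi-indices of strictly smaller length, and Lemma \ref{lem:invariantcomsymbol} closes the argument with values in $\SG^{1,1}(\R^{2n})$. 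The main technical obstacle is to maintain, at every induction level, the \emph{telescoping form} $Y_j - Y_{j-1}$ and $N_j - N_{j+1}$ rather than working with $Y_j, N_j$ individually: only the consecutive differences carry the smallness produced by the contractive system, and organising the linearised system so that this telescoping structure is preserved by each further $\partial_{t_k}$ is precisely what mirrors the $|\gamma|=0$ treatment in \cite{AsCor}.
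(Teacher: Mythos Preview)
The paper does not prove this proposition in-text; it defers the argument to \cite{Abdeljawadthesis}, so there is no proof in the paper to compare against directly. That said, your scheme---induction on the total time-derivative order, with Propositions \ref{stime} and \ref{properties}(1) as the base case, differentiation of the implicit system \eqref{Ct} producing a linearised system with the same contractive coupling, and then feeding (i) into the identities \eqref{tderofmultprodphasefunc} for (ii)---is exactly the natural extension of the $|\gamma|=0$ argument in \cite{AsCor} and is almost certainly what the omitted proof does.

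Two small points. First, your appeal to \eqref{contractioncont} is a misattribution: that estimate concerns the Hamilton--Jacobi flow $q$, not the critical-point system. The contraction you actually need comes from the smallness \eqref{hyp} of each $J_j$, which makes $\varphi''_{j,\xi\xi}$, $\varphi''_{j,xx}$ small and $\varphi''_{j,\xi x}$ close to the identity; this is the mechanism behind Proposition \ref{stime} and is what survives after differentiating \eqref{Ct} in $t_k$. Second, your remark about preserving the telescoping differences $Y_j-Y_{j-1}$, $N_j-N_{j+1}$ is the right emphasis: the uniformity in $j$ (and in $M$) is carried by these consecutive differences, and the linearised system must be organised in the same block form to close the induction. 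With those adjustments the argument is sound.
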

	
	\par
	
\begin{corollary}
Under the hypotheses of Proposition \ref{propcritpntbound}, we have, for some $T_1\in(0,T]$ as in Proposition \ref{mainprop}, and $j=0,\dots,M+1$,
\begin{align*}
\begin{cases}
Y_j\text{ belongs to }C^\infty\left(\Delta(T_1);\SG^{1,0}(\R^{2n})\right),
\\
N_j\text{ belongs to } C^\infty\big(\Delta(T_1);\SG^{0,1}(\R^{2n})\big).
\end{cases}
\end{align*}

\end{corollary}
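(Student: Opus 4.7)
The plan is to deduce the claim from item \textbf{i} of Proposition \ref{propcritpntbound} by telescoping, combined with the Fr\'echet completeness of the symbol spaces.

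Using the natural boundary conventions $Y_0 := x$ and $N_{M+1} := \xi$ (which are consistent with the diagonal behaviour of the system \eqref{Ct}, since $\vp_j(s,s;x,\xi) = x\cdot\xi$ forces $Y_j = x$ and $N_j = \xi$ whenever $t_0 = \cdots = t_{M+1}$), I can write, for each admissible $j$,
\[
Y_j(\bt_{M+1};x,\xi) = x + \sum_{k=1}^{j}(Y_k - Y_{k-1})(\bt_{M+1};x,\xi),
\qquad
N_j(\bt_{M+1};x,\xi) = \xi + \sum_{k=j}^{M}(N_k - N_{k+1})(\bt_{M+1};x,\xi).
\]
The components of $x$ and $\xi$ lie in $\SG^{1,0}(\R^{2n})$ and $\SG^{0,1}(\R^{2n})$, respectively, and are $\bt_{M+1}$-independent, so it suffices to establish the claimed regularity for each summand separately.

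Next, I would invoke item \textbf{i} of Proposition \ref{propcritpntbound}: for every multi-index $\g = (\g_0,\dots,\g_{M+1})\in\Z_+^{M+2}$ and every admissible $k$, the family $\{\partial_{\bt_{M+1}}^{\g}(Y_k - Y_{k-1})(\bt_{M+1};\cdot,\cdot)\}_{\bt_{M+1}\in\Delta(T_1)}$ is bounded in $\SG^{1,0}(\R^{2n})$, and likewise $\{\partial_{\bt_{M+1}}^{\g}(N_k - N_{k+1})(\bt_{M+1};\cdot,\cdot)\}_{\bt_{M+1}\in\Delta(T_1)}$ is bounded in $\SG^{0,1}(\R^{2n})$. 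In particular, the first-order $\bt$-derivatives of each $\partial^{\g}(Y_k - Y_{k-1})$ are themselves uniformly bounded in the Fr\'echet space $\SG^{1,0}(\R^{2n})$; applying the vector-valued fundamental theorem of calculus semi-norm by semi-norm relative to \eqref{semi_norm_MSG} shows that $\partial^{\g}(Y_k - Y_{k-1})$ is Lipschitz, hence continuous, as a map $\Delta(T_1)\to\SG^{1,0}(\R^{2n})$. Iterating this observation over $\g$ yields $C^{\infty}$-regularity in $\bt_{M+1}$ for each $Y_k - Y_{k-1}$, and the argument for $N_k - N_{k+1}$ is identical.

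Since the telescoping sums are finite, summation preserves both the target symbol-class membership and the $C^{\infty}$-dependence on $\bt_{M+1}\in\Delta(T_1)$, yielding the corollary. The only point requiring care, rather than any genuinely new analytic input, is checking that the boundary conventions $Y_0 = x$ and $N_{M+1} = \xi$ are compatible with the indexing of the differences produced by Proposition \ref{propcritpntbound}; once this bookkeeping is settled, the argument is a clean consequence of the previous results.
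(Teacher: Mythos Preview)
Your proposal is correct and is precisely the argument the paper has in mind: the corollary is stated without proof as an immediate consequence of Proposition \ref{propcritpntbound}\,(i), and your telescoping from the boundary values $Y_0=x$, $N_{M+1}=\xi$ together with the uniform symbol bounds on all $\bt_{M+1}$-derivatives of the differences is exactly how one reads off the $C^\infty$-regularity. The only minor bookkeeping you flag---the conventions at the endpoints $j=0$ and $j=M+1$---is handled in the paper in the same way (cf.\ the conventions $Y_0=x$, $N_0=\phi'_x$, etc.\ used later in the proof of Theorem \ref{cruthm}).
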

	
\par
	
For any parameter-dependent, real-valued family $\{a_j\}_{j\in\N}\subset C^\infty([0,T];\SG^{1,1})$, we consider
the solutions $(q_j,p_j)(t,s;y,\eta)$ of the Hamilton-Jacobi system \eqref{hameq},
with the Hamiltonian $a_j$ in place of $a$, $j\in\N$.
We define the trajectory
$(\tilde{q}_{j},\tilde{p}_{j})(\bt_{j-1},\s;y,\eta)$,
for $(y,\eta)\in\R^n\times\R^n$, $\bt_{j-1}\in\Delta_{j-1}(T_1)$, $\s\in[t_j, t_{j-1}]$, $j\in\N$, by
\begin{equation}\label{trajectory}
\begin{alignedat}[t]{3}
(\tilde{q}_1,\tilde{p}_1)(t_0,\s;y,\eta)&=(q_1,p_1)(\s,t_0;y,\eta),
	&& t_1\leq\s\leq t_0,
\\
(\tilde{q}_{j},\tilde{p}_{j})(\bt_{j-1},\s;y,\eta)&=(q_j,p_j)(\s,t_{j-1};(\tilde{q}_{j-1},\tilde{p}_{j-1})(\bt_{j-1};y,\eta)), \;
&& t_j\leq\s\leq t_{j-1}, j\geq 2.
\end{alignedat}
\end{equation}

\par

\begin{proposition}\label{prop:trajec}
Let $(Y,N)=(Y,N)(\bt_{M+1};x,\xi)$ be the solution of \eqref{Ct} under the hypotheses of Proposition \ref{propcritpntbound}.
Then, we have
\beqs\label{trajequal1}
\begin{cases}
	q_1(t_1,t_0;x,\phi'_x(\bt_{M+1};x,\xi))=Y_1(\bt_{M+1};x,\xi)\\
	p_1(t_1,t_0;x,\phi'_x(\bt_{M+1};x,\xi))=N_1(\bt_{M+1};x,\xi),
\end{cases}
\eeqs
\begin{equation}\label{trajequal2}
\begin{cases}
q_j(t_j,t_{j-1};Y_{j-1}(\bt_{M+1};x,\xi),N_{j-1}
(\bt_{M+1};x,\xi))=Y_j(\bt_{M+1};x,\xi)
\\[1ex]
p_j(t_j,t_{j-1};Y_{j-1}(\bt_{M+1};x,\xi),N_{j-1}
(\bt_{M+1};x,\xi))=N_j(\bt_{M+1};x,\xi),\qquad 2\leq j\leq M,
\end{cases}
\end{equation}
and, for any $j\leq M$,
\begin{equation}\label{subtaged}
(\tilde{q}_{j},\tilde{p}_{j})
		(\bt_{j};x,\phi'_x(\bt_{M+1};x,\xi))
	\tag{\theequation$_j$}
=(Y_j,N_j)(\bt_{M+1};x,\xi).
\end{equation}
\end{proposition}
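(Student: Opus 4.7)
The plan is to derive the three claims in sequence, exploiting the interplay between the critical-point system \eqref{Ct} defining the multi-product $\phi$ and the identities relating the eikonal solutions $\varphi_j$ to the Hamilton--Jacobi solutions $(q_j,p_j)$ given by \eqref{xivarphiequal}--\eqref{xvarphiequal} in Proposition \ref{mainprop}, together with the reversibility of the Hamilton flow (i.e.\ $(q_j,p_j)(s,t;\cdot,\cdot)$ is the inverse of $(q_j,p_j)(t,s;\cdot,\cdot)$, which holds by the uniqueness theorem for the smooth Hamiltonian system \eqref{hameq}--\eqref{hameqCauchyData}).

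First I would prove \eqref{trajequal1}. Applied in the parameter-dependent form, Proposition \ref{properties}(2) gives
\[
\phi'_x(\bt_{M+1};x,\xi)=\varphi'_{1,x}(t_0,t_1;x,N_1(\bt_{M+1};x,\xi)).
\]
Combining the first line of \eqref{Ct} with \eqref{xivarphiequal} yields $Y_1=\overline{q}_1(t_0,t_1;x,N_1)$, which by the defining property of $\overline{q}_1$ is equivalent to $x=q_1(t_0,t_1;Y_1,N_1)$. Inserting this into \eqref{xvarphiequal} gives $\phi'_x=p_1(t_0,t_1;Y_1,N_1)$. Hence the pair $(x,\phi'_x)$ is precisely the image of $(Y_1,N_1)$ under the Hamilton flow of $a_1$ from time $t_1$ to time $t_0$; applying the inverse flow $(q_1,p_1)(t_1,t_0;\cdot,\cdot)$ produces \eqref{trajequal1}.

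For \eqref{trajequal2} with $2\le j\le M$, the same argument is run locally on the subinterval $[t_j,t_{j-1}]$. The $Y_j$-line of \eqref{Ct} combined with \eqref{xivarphiequal} gives $Y_{j-1}=q_j(t_{j-1},t_j;Y_j,N_j)$; the $N_{j-1}$-line of \eqref{Ct} (the third block read with index $j-1$, which involves $\varphi'_{j,x}$) combined with \eqref{xvarphiequal} gives $N_{j-1}=p_j(t_{j-1},t_j;Y_j,N_j)$. Thus $(Y_j,N_j)$ at time $t_j$ is transported by the $j$-th Hamilton flow to $(Y_{j-1},N_{j-1})$ at time $t_{j-1}$, and inverting the flow yields \eqref{trajequal2}. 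Finally, \eqref{subtaged} is proved by induction on $j$: the base case $j=1$ is obtained from the first line of \eqref{trajectory} at $\sigma=t_1$ together with \eqref{trajequal1}; for the inductive step, \eqref{trajectory} at $\sigma=t_j$ gives
\[
(\tilde q_j,\tilde p_j)(\bt_j;x,\phi'_x)=(q_j,p_j)(t_j,t_{j-1};(\tilde q_{j-1},\tilde p_{j-1})(\bt_{j-1};x,\phi'_x)),
\]
and the inductive hypothesis together with \eqref{trajequal2} produces $(Y_j,N_j)(\bt_{M+1};x,\xi)$.

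The main obstacle is almost entirely notational bookkeeping: one has to carefully match the times $(t_{j-1},t_j)$ to the $j$-th eikonal/Hamilton--Jacobi system, keeping in mind that $N_{j-1}$ arises in \eqref{Ct} through a relation that still involves $\varphi'_{j,x}$ (and therefore $p_j$), not $\varphi'_{j-1,x}$. Once one observes that \eqref{Ct} encodes precisely the statement ``the $j$-th Hamilton flow transports $(Y_j,N_j)$ at $t_j$ to $(Y_{j-1},N_{j-1})$ at $t_{j-1}$'', all remaining manipulations are direct consequences of Proposition \ref{mainprop} and flow reversibility.
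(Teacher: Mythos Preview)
Your proof is correct and follows what is essentially the only natural route: read off from \eqref{Ct} and the eikonal/Hamilton--Jacobi identities \eqref{xivarphiequal}--\eqref{xvarphiequal} that $(Y_{j-1},N_{j-1})=(q_j,p_j)(t_{j-1},t_j;Y_j,N_j)$, then invert the flow; the induction for \eqref{subtaged} is then immediate from the recursive definition \eqref{trajectory}. The paper itself does not spell out a proof of this proposition (it defers details to \cite{Abdeljawadthesis}), but the argument there is the same combination of Proposition \ref{mainprop} with the group property of the Hamilton flow that you describe.
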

	
\par
		
Notice that, from the Propositions \ref{propcritpntbound} and \ref{prop:trajec},
we obtain the following one.
	
\par
	
\begin{proposition}\label{trajclass}
Let $\{a_j\}_{j\in\N}$ be a family of parameter-dependent, real-valued symbols, bounded in $C^\infty([0,T];\SG^{1,1}(\R^{2n}))$.
Then we have, for the trajectory
$\left(\tilde{q}_{j},\tilde{p}_{j}\right)
(\bt_{j-1},\s;y,\eta)$
defined in \eqref{trajectory},
\begin{align*}
\begin{cases}
\{\partial_{t_0}^{\g_0}\dots\partial_{t_{j-1}}^{\g_{j-1}}
\partial_\s^{\g_{j}}\tilde{q}_{j}\}_{j\in\N,\,\bt_{j-1}\in\Delta_{j-1}(T_1),\, \s\in[t_j, t_{j-1}]}
\text{ is bounded in }\SG^{1,0}(\R^{2n}),
\\[1ex]
\{\partial_{t_0}^{\g_0}\dots\partial_{t_{j-1}}^{\g_{j-1}}
\partial_\s^{\g_{j}}\tilde{p}_{j}\}_{j\in\N,\,\bt_{j-1}\in\Delta_{j-1}(T_1),\, \s\in[t_j, t_{j-1}]}
\text{ is bounded in }\SG^{0,1}(\R^{2n}),
\end{cases}
\end{align*}
where $\g_k\in\Z_+$ for $0\leq k\leq j$.
\end{proposition}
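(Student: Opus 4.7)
The plan is to argue by induction on $j \in \N$, combining the Hamilton-Jacobi estimates in Lemma \ref{lem:bndedlemhamsol} and Proposition \ref{prop:classofqandp} with the SG-composition lemmas (Lemma \ref{compositionwithe1symbol} and Lemma \ref{lem:invariantcomsymbol}), applied uniformly with respect to the bounded family $\{a_j\}_{j\in\N}$ in $C^\infty([0,T]; \SG^{1,1}(\R^{2n}))$.

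For the base case $j=1$, the identity $(\tilde{q}_1, \tilde{p}_1)(t_0, \sigma; y, \eta) = (q_1, p_1)(\sigma, t_0; y, \eta)$ together with Lemma \ref{lem:bndedlemhamsol}(ii) and Proposition \ref{prop:classofqandp} yields, for the already fixed $T_1$, the required smoothness and SG-boundedness of $\tilde{q}_1$ in $\SG^{1,0}$ and of $\tilde{p}_1$ in $\SG^{0,1}$, together with all derivatives in the parameters $(t_0, \sigma)$. Since the SG-seminorms of $q_1, p_1$ appearing in those estimates depend only on finitely many SG-seminorms of $a_1$, the uniform boundedness of $\{a_j\}$ transfers to a uniform bound in $j$.

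For the inductive step, I would assume the conclusion at level $j-1$ and use the defining recursion \eqref{trajectory} in the form
\[
\tilde{q}_j(\bt_{j-1}, \sigma; y, \eta) = q_j\!\left(\sigma, t_{j-1}; \tilde{q}_{j-1}(\bt_{j-1}; y, \eta), \tilde{p}_{j-1}(\bt_{j-1}; y, \eta)\right),
\]
with the analogous expression for $\tilde{p}_j$. The inductive hypothesis gives $\tilde{q}_{j-1}$ as a bounded family in $\SG^{1,0}$ and $\tilde{p}_{j-1}$ as a bounded family in $\SG^{0,1}$; iterated application of Proposition \ref{prop:equizeroder} along the trajectory yields $\langle \tilde{q}_{j-1}\rangle \sim \langle y\rangle$ and $\langle \tilde{p}_{j-1}\rangle \sim \langle \eta\rangle$, which legitimises the two-slot version of Lemma \ref{compositionwithe1symbol} applied to $q_j \in \SG^{1,0}$ and $p_j \in \SG^{0,1}$, so that $\tilde{q}_j \in \SG^{1,0}$ and $\tilde{p}_j \in \SG^{0,1}$ pointwise in the parameters. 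The parameter-dependence is then controlled via Lemma \ref{lem:invariantcomsymbol}, taking $\omega = (\bt_{j-1}, \sigma)$ ranging over $\Delta_{j-1}(T_1)\times [t_j, t_{j-1}]$: chain-rule differentiation produces a finite sum of terms, each a product of elements of bounded families (coming either from the base-case analysis of $(q_j, p_j)$ or from the inductive hypothesis on $(\tilde{q}_{j-1}, \tilde{p}_{j-1})$), yielding the claimed boundedness of $\partial_{t_0}^{\gamma_0}\cdots\partial_{t_{j-1}}^{\gamma_{j-1}}\partial_\sigma^{\gamma_j}\tilde{q}_j$ in $\SG^{1,0}$ and of the analogous derivatives of $\tilde{p}_j$ in $\SG^{0,1}$.

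The main technical obstacle I expect is bookkeeping the uniformity of the constants in $j \in \N$, that is, verifying that no step of the induction introduces multipliers growing with $j$. This should rest on the facts that (i) the SG-estimates of Lemma \ref{lem:bndedlemhamsol} for each $(q_j, p_j)$ depend only on finitely many SG-seminorms of $a_j$, uniformly controlled by hypothesis, and (ii) the total time window $[0, T_1]$ is fixed, so that iterated flows remain under uniform control regardless of how many factors are composed. Once these two points are secured, the chain-rule expansions at each step introduce only bounded factors, and the induction closes; as noted in the paper, the argument closely parallels the one used for Proposition \ref{propcritpntbound}, so the routine details can safely be omitted.
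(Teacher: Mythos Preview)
Your proposal is correct and aligns with the paper's approach: the paper obtains Proposition~\ref{trajclass} as a consequence of Propositions~\ref{propcritpntbound} and~\ref{prop:trajec}, which amounts to repeating the inductive compositional argument behind Proposition~\ref{propcritpntbound} (details in~\cite{Abdeljawadthesis}) along the recursive definition~\eqref{trajectory}---precisely your strategy. Your identification of the uniformity-in-$j$ issue, and its resolution via the fixed time window $[0,T_1]$ together with the $O(|t-s|)$-smallness of $q_j(t,s;\cdot,\cdot)-\mathrm{id}$ from Lemma~\ref{lem:bndedlemhamsol}, is on point.
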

	
\par
	
\subsection{A useful quasilinear auxiliary PDE}\label{subs:auxpde}
%%%%%%%%%%%%%%%%%%%%%%%%%%%%%%%%%%%%%%%%%%%%%%%%%%%%%%%%
Consider the following quasi-linear partial differential equation
	\begin{align}\label{quasi}
	\begin{cases}
	\partial_{t_{j-1}} \Upsilon(\bt_{M+1})-H(\Upsilon(\bt_{M+1}),\bt_{M+1})
		\cdot \Upsilon^\prime_x(\bt_{M+1})-G(\bt_{M+1,j}(\Upsilon(\bt_{M+1})))=0,
		\\[1ex]
		\Upsilon|_{t_{j-1}=t_j}=t_{j+1}
	\end{cases}	
\end{align}
where, for $s\in \R$, $\bt_{M+1,j}(s)$ is defined in \eqref{eq:bft}, 
$\Upsilon(\bt_{M+1})=\Upsilon(\textbf{t}_{M+1};x,\xi)\in C^\infty(\Delta(T_1);\SG^{0,0})$,
and $H(\tau,\bt_{M+1})=L(\tau,\textbf{t}_{M+1};x,\xi)$ is a vector-valued family of symbols
of order $(1,0)$ such that
$H\in C^\infty([t_{j+1},t_{j-1}]\times\Delta({T_1});\SG^{1,0}\otimes\R^n)$.
We also assume that 
$G(\bt_{M+1,j}(\tau))=G(\textbf{t}_{M+1,j}(\tau);x,\xi)$
belongs to $C^\infty(\Delta(T_1);\SG^{0,0})$, and satisfies 
\[
	G(\bt_{M+1};x,\xi)>0,	\quad G(\bt_{M+1};x,\xi)|_{t_{j}=t_{j-1}}\equiv 1,
\]
for any $\bt_{M+1}\in\Delta(T_1)$, $(x,\xi)\in\R^{2n}$.
	
\par
	
The following Lemma \ref{lem:exitOfSolForOrdin} (cf. \cite{Taniguchi02}) is the key step to prove
our first main result, Theorem \ref{cruthm}.
In fact, it gives the solution of the characteristics system
\begin{align}\label{ordin}
\left\{\begin{array}{ll}
&\partial_{t_{j-1}} R(\bt_{M+1})=
	-H(K(\bt_{M+1}),\bt_{M+1};R(\bt_{M+1}),\xi)
\\[1ex]
&\partial_{t_{j-1}}K(\bt_{M+1})=\phantom{-}G(\bt_{M+1,j}
	(K(\bt_{M+1}));R(\bt_{M+1}),\xi)
\\[1ex]
&R|_{t_{j-1}=t_j}=y,\,K|_{t_{j-1}=t_j}=t_{j+1},
\end{array}
\right.
\end{align}
which then easily provides the solution to the quasi-linear equation \eqref{quasi}. The latter,
in turn, is useful to simplify the computations in the proof ot Theorem \ref{cruthm}. In view of this,
we then give a mostly complete proof of Lemma \ref{lem:exitOfSolForOrdin}.

\par
	
\begin{lemma}\label{lem:exitOfSolForOrdin}
There exists a constant $T_2\in(0, T_1]$ such that \eqref{ordin} admits a unique solution
$(R,K)=(R,K)(\bt_{M+1};y,\xi)\in C^\infty(\Delta(T_2);(\SG^{1,0}(\R^{2n})\otimes\R^n)\times \SG^{0,0}(\R^{2n}))$,
$t_{j-1}\in[t_j,T_2]$, which satisfies, for any $\bt_{M+1}\in\Delta(T_2)$, $(y,\xi)\in\R^{2n}$,
\begin{equation}\label{estODE}
	\left\|\frac{\partial R}{\partial y}(\bt_{M+1};y,\xi)-I \right\|
	\leq C(t_{j-1}-t_{j+1}),
\end{equation} 
for a suitable constant $C>0$ independent of $M$, and 
\begin{align}\label{equivODE}
	\left\{\begin{array}{ll}
		&t_{j+1}\leq K(\bt_{M+1};y,\xi) \leq t_{j-1}
	\\[1ex]	
		&K_{|t_{j-1}=t_{j}}=t_{j+1}.
	\end{array}
	\right.
\end{align}
\end{lemma}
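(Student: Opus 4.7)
I view \eqref{ordin} as a nonlinear initial-value problem for an ODE system in the variable $\tau := t_{j-1} \in [t_j, T_2]$, with all the other components of $\bt_{M+1}$ and the covariable $\xi$ frozen as parameters and $(y, t_{j+1}) \in \R^n \times \R$ as initial data. Denoting by $\bt(\s)$ the tuple $\bt_{M+1}$ with its $(j-1)$-th entry replaced by $\s$, \eqref{ordin} is equivalent to the coupled integral system
\begin{equation*}
\begin{aligned}
R(\bt(\tau)) &= y - \int_{t_j}^{\tau} H\bigl(K(\bt(\s)), \bt(\s); R(\bt(\s)), \xi\bigr)\, d\s, \\
K(\bt(\tau)) &= t_{j+1} + \int_{t_j}^{\tau} G\bigl(\bt_{M+1,j}(K(\bt(\s)))\big|_{t_{j-1}=\s}; R(\bt(\s)), \xi\bigr)\, d\s.
\end{aligned}
\end{equation*}
I would solve this by Banach contraction in the closed ball $\mathcal B_\delta$, with small $\delta > 0$, of
$C^0([t_j, T_2]; (\SG^{1,0}(\R^{2n}) \otimes \R^n) \times \SG^{0,0}(\R^{2n}))$
cut out by $\vvvert R - y \vvvert_l^{1,0} \leq \delta$ and $\vvvert K - t_{j+1} \vvvert_l^{0,0} \leq \delta$, uniformly in $\tau$, for an arbitrarily fixed $l \in \Z_+$. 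That the fixed-point operator induced by the integral system maps $\mathcal B_\delta$ to itself is ensured by the uniform boundedness of the families $\{H\}$ and $\{G\}$ in their respective parameter-dependent SG classes, combined with Lemma~\ref{compositionwithe1symbol} and Lemma~\ref{lem:invariantcomsymbol}, which guarantee that the compositions $H(K, \ldots; R, \xi)$ and $G(\ldots K \ldots; R, \xi)$ remain in $\SG^{1,0}$ and $\SG^{0,0}$ respectively, with seminorm bounds depending only on finitely many seminorms of $H$, $G$; the factor $\tau - t_j \leq T_2 - t_j$ gained from the integration makes the operator a contraction provided $T_2 - t_j$ is small enough, and this smallness can be chosen independently of $M$ because of the uniform bounds.

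\textbf{Regularity and the estimate~\eqref{estODE}.} The unique fixed point gives a continuous solution $(R, K)$; its $C^\infty$-dependence on the auxiliary parameters (the remaining components of $\bt_{M+1}$, together with $y$ and $\xi$) and hence the membership in $C^\infty(\Delta(T_2); (\SG^{1,0} \otimes \R^n) \times \SG^{0,0})$ is obtained by the usual bootstrap: formal differentiation of the integral system produces further contractive integral equations for the derivatives, whose unique solutions constitute the classical derivatives of $(R, K)$ and inherit symbol estimates of the same order. Specializing to differentiation with respect to $y$ yields a coupled linear integral equation
\begin{equation*}
\frac{\partial R}{\partial y}(\tau) = I_n - \int_{t_j}^\tau \Bigl( \partial_K H \otimes \frac{\partial K}{\partial y}(\s) + \partial_x H \cdot \frac{\partial R}{\partial y}(\s) \Bigr) d\s,
\end{equation*}
and analogously for $\partial K / \partial y$; since $\partial_K H$, $\partial_x H$, $\partial_K G$, $\partial_x G$ are uniformly bounded on the range of the solution, Gronwall's inequality delivers $\bigl\| \tfrac{\partial R}{\partial y}(\tau) - I_n \bigr\| \leq C(\tau - t_j) \leq C(t_{j-1} - t_{j+1})$, which is exactly \eqref{estODE}.

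\textbf{Two-sided bound on $K$.} The identity $K|_{t_{j-1} = t_j} = t_{j+1}$ is built into the initial condition, and the lower bound $K \geq t_{j+1}$ is immediate from the integral representation combined with $G > 0$. For the upper bound $K \leq t_{j-1}$, I set $\rho(\tau) := \tau - K(\tau)$, so $\rho(t_j) = t_j - t_{j+1} \geq 0$. Using the normalization $G|_{t_j = t_{j-1}} \equiv 1$ through a first-order Taylor expansion in the $t_j$-slot, one writes
\begin{equation*}
1 - G\bigl(\bt_{M+1,j}(K)|_{t_{j-1} = \tau}; R, \xi\bigr) = (\tau - K)\, \widetilde{G} = \rho\, \widetilde{G},
\end{equation*}
with $\widetilde{G}$ uniformly bounded, so that $\rho$ satisfies the scalar linear ODE $\rho' = \widetilde{G}\, \rho$ with nonnegative initial datum. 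Therefore $\rho(\tau) = \rho(t_j)\exp\bigl(\int_{t_j}^\tau \widetilde{G}\, d\s\bigr) \geq 0$ throughout, yielding $K \leq t_{j-1}$ and completing \eqref{equivODE}.

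\textbf{Main obstacle.} The fixed-point argument and the Gronwall step are standard; the real work lies in the bookkeeping required to check that all compositions and substitutions preserve the SG-symbol structure with estimates uniform in the partition size $M$, so that the radius $T_2 - t_j$ on which contraction holds does not shrink as $M \to \infty$. This uniformity is exactly what the hypotheses on $\{H\}$, $\{G\}$ and the composition lemmas are tailored to provide.
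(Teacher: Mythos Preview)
Your overall strategy is sound and close in spirit to the paper's, but there is a genuine gap in the contraction step that the paper's argument is specifically designed to avoid.

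\textbf{The gap.} The ball $\mathcal B_\delta$ you work in is cut out only by seminorm conditions $\vvvert K-t_{j+1}\vvvert_l^{0,0}\le\delta$, which give $|K-t_{j+1}|\le\delta$ pointwise and hence allow $K<t_{j+1}$ or $K>\s$. But $H$ is only defined for its first argument in $[t_{j+1},t_{j-1}]$, and $G$ is only defined on the simplex $\Delta(T_1)$, so that $G(\bt_{M+1,j}(K)|_{t_{j-1}=\s};\cdot)$ requires $t_{j+1}\le K(\s)\le \s$. Without this, the compositions $H(K,\ldots;R,\xi)$ and $G(\ldots,K,\ldots;R,\xi)$ on which you invoke Lemmas \ref{compositionwithe1symbol} and \ref{lem:invariantcomsymbol} are not even defined, so the fixed-point operator does not map $\mathcal B_\delta$ into itself. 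Your neat ODE argument $\rho'=\widetilde G\,\rho$ establishes $K\le\tau$ only \emph{after} the solution exists; it cannot be used to set up the contraction, and applying it to Picard iterates fails because $\partial_\tau K_{l+1}$ depends on $K_l$, not on $K_{l+1}$, so one gets $\rho_{l+1}'=\widetilde G_l\,\rho_l$ rather than a closed equation for $\rho_{l+1}$. The fix is either to extend $G,H$ smoothly to a slightly larger parameter region (and then your a posteriori argument shows the solution never leaves the simplex), or to build the pointwise constraint $t_{j+1}\le K(\s)\le\s$ into the closed convex set on which you contract and verify preservation directly.

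\textbf{Comparison with the paper.} The paper runs explicit Picard iteration starting from the particular seed $K_0(\s)=\s-t_j+t_{j+1}$ and proves \emph{inductively} that each $K_l$ stays in $[t_{j+1},\s]$. The lower bound follows from $G>0$ as you note; for the upper bound the paper does \emph{not} use your linear-ODE trick, but instead shows that $K_{l+1}(\bt_{M+1})$ is monotone decreasing in $t_j$, computes the extreme values $K_{l+1}|_{t_j=t_{j+1}}=t_{j-1}$ and $K_{l+1}|_{t_j=t_{j-1}}=t_{j+1}$ by another induction (using the normalization $G|_{t_j=t_{j-1}}\equiv1$ and the seed), and concludes. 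Convergence of the iterates is obtained not by a contraction estimate but by Taylor expansion in $t_{j-1}$ to arbitrary order $N$, showing via Fa\`a di Bruno that all Taylor coefficients of $K_{l+1}-K_l$ and $R_{l+1}-R_l$ at $t_{j-1}=t_j$ vanish, leaving only a remainder of size $(t_{j-1}-t_j)^{N+1}/(N+1)!$. Your contraction/Gronwall route is shorter for existence and for \eqref{estODE}; the paper's inductive route is heavier but handles the domain constraint cleanly at each step, which is precisely the point you leave unaddressed.
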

	
\par
	
\begin{proof}
First, we notice that, as a consequence of Lemma \ref{lem:invariantcomsymbol},
the compositions in the right-hand side of \eqref{ordin} are well defined, and
produce symbols of order $(1,0)$ and $(0,0)$, respectively, provided that 
$(R,K)\in C^\infty(\Delta(T_2);\SG^{1,0}(\R^{2n})\times \SG^{0,0}(\R^{2n}))$
and $K(\bt_{M+1})\in[t_{j+1},t_{j-1}]$ for any $\bt_{M+1}\in\Delta(T_2)$.
\par
We focus only on the variables $(t_{j-1},t_j,t_{j+1};y,\xi)$,
since all the others here play the role of (fixed) parameters, on which the solution clearly
depends smoothly. We then omit them in the next computations. We will also write, to shorten some of the formulae,
$(R,K)(s)=(R,K)(\bt_{{M+1},{j-1}}(s);y,\xi)$, $s\in[t_j,T_2]$, $T_2\in(0,T_1]$ sufficiently
small, to be determined below, $\bt_{M+1}\in\Delta(T_2)$, $(y,\xi)\in\R^{2n}$.
		
We rewrite \eqref{ordin} in integral form, namely
\begin{align}\label{equiordin}
\left\{\begin{array}{ll}
& R(s)=y-{\displaystyle \int_{t_j}^{s}}H(K(\s);\s,t_j,t_{j+1};R(\s),\xi)\,d\s
\\[1ex]
&K(s)=t_{j+1}+{\displaystyle\int_{t_j}^{s}}G(\s,K(\s),t_{j+1};R(\s),\xi)\,d\s,
\\[1ex]
\end{array}
\right.
\end{align}
 $s\in[t_j,T_2]$, $\bt_{M+1}\in\Delta(T_2)$, $(y,\xi)\in\R^{2n}$, and solve \eqref{equiordin} by the customary Picard method of successive approximations.
That is, we define the sequences 
%%%
\begin{align}\label{PicSequence}
\begin{cases}
&R_{l+1}(s)=y-{\displaystyle \int _{t_j}^{s}}H(K_l(\s);\s,t_j,t_{j+1};R_l(\s),\xi)\,d\s
\\[1ex]
&K_{l+1}(s)=t_{j+1}+{\displaystyle \int _{t_j}^{s}}G(\s,K_l(\s),t_{j+1};R_l(\s),\xi)\,d\s,
\end{cases}
\end{align}
for $l=1,2,\dots$,  $s\in[t_j,T_2]$, $\bt_{M+1}\in\Delta(T_2)$, $(y,\xi)\in\R^{2n}$, with
\begin{align*}%\label{InCoOfPicMeth}
R_0(s)=y,\qquad K_0(s)=s-t_{j}+t_{j+1}.
\end{align*}
		
\par

We start by showing that $\{R_{l}\}_l$ and $\{K_l\}_l$
are bounded in $ C^\infty(\Delta(T_2);\SG^{1,0})$
and $ C^\infty(\Delta(T_2);\SG^{0,0})$, respectively, 
and that \eqref{estODE} and \eqref{equivODE} hold true for $R_l$ and $K_l$ in place of $R$ and $K$, respectively,
for any $l\in\Z_+$, uniformly with respect to $l,j,M$. This follows by induction. 

Namely, notice that all the stated properties are
true for $l=0$. Indeed, it is clear that 
$R_0\in C^\infty(\Delta(T_2);\SG^{1,0})$ and $K_0\in C^\infty(\Delta(T_2);\SG^{0,0})$, with seminorms
bounded by $\max\{1,2T_2\}$. \eqref{estODE} with $R_0$ in place of $R$
is trivial, while \eqref{equivODE} with $K_0$ in place of $K$ follows immediatly, by inserting $s=t_{j-1}$ in $K_0(s)$ and recalling that
$\bt_{M+1}\in\Delta(T_2)\Rightarrow t_{j-1}-t_{j}+t_{j+1}\in[t_{j+1},t_{j-1}]$. 

Assume now that \eqref{estODE} and \eqref{equivODE} hold true for $(R_\ell,K_\ell)$ for all the values of the index $\ell$ up to $l\ge0$.
We then find, by the same composition argument mentioned above, 
$R_{l+1}\in C^\infty(\Delta(T_2);\SG^{1,0})$ and $K_{l+1}\in C^\infty(\Delta(T_2);\SG^{0,0})$, with seminorms 
uniformly bounded with respect to $l$, since they depend only on the seminorms of $L$, $H$, $R_l$, $K_l$, 
and $T_2$. It follows that \eqref{estODE} holds true also for $R_{l+1}$ in place of $R$, since
\begin{align*}
	\left\|\frac{\partial R_{l+1}}{\partial y}(\bt_{M+1};y,\xi)-I \right\|
	&=
	\left\| 
		\int _{t_j}^{t_{j-1}}\frac{\partial}{\partial y}
		\left[H(K_l(\s);\s,t_j,t_{j+1};R_l(\s),\xi)\right]d\s 
	\right\|
	\\
	&\le C(t_{j-1}-t_{j}) \le C(t_{j-1}-t_{j+1}),
\end{align*}
for a suitable constant $C>0$ independent of $l$. 
Of course, $K_{l+1}(\bt_{M+1};y,\xi)|_{t_{j-1}=t_{j}}=t_{j+1}$ is clear, by the definition of $K_{l+1}$, $l\ge0$. 
It is also immediate that the hypotheses and the definition of $K_{l+1}$, $l\ge0$, imply that $K_{l+1}(\bt_{M+1};y,\xi)$
is, for any fixed $(y,\xi)\in\R^{2n}$, $0\le t_{M+1}\le \cdots t_{j+1}\le t_{j-1}\le \cdots t_0\le T_2\le T_1$,
a monotonically decreasing function with respect to $t_j\in[t_{j+1},t_{j-1}]$. \eqref{equivODE} with $K_{l+1}$ in place of $K$,
$l\ge0$, follows by such property and the hypotheses on $G$, observing that, for any $l\ge0$, $\bt_{M+1}\in\Delta(T_2)$,
$(y,\xi)\in\R^{2n}$, $s\in[t_{j+1},t_{j-1}]$,
\begin{equation}\label{eq:extrKl}
	K_{l+1}(\bt_{M+1,j-1}(s);y,\xi)|_{t_j=t_{j+1}}=s, \quad K_{l+1}(\bt_{M+1,j-1}(s);y,\xi)|_{t_j=s}=t_{j+1},
\end{equation}
which, in particular, also shows
\begin{equation}\label{eq:extrKlbis}
	K_{l+1}(\bt_{M+1};y,\xi)|_{t_j=t_{j+1}}=t_{j-1}, \quad K_{l+1}(\bt_{M+1};y,\xi)|_{t_j=t_{j-1}}=t_{j+1}.
\end{equation}
Notice that \eqref{eq:extrKlbis}, together with the monotonicity property of $K_{l+1}(\bt_{M+1};y,\xi)$
with respect to $t_j\in[t_{j+1},t_{j-1}]$, $l\ge0$, $\bt_{M+1}\in\Delta(T_2)$, $(y,\xi)\in\R^{2n}$, 
completes the proof of \eqref{equivODE} with $K_{l+1}$ in place of $K$ and the argument. Then, it just remains to prove \eqref{eq:extrKl}.

We again proceed by induction. \eqref{eq:extrKl} is manifestly true for $K_0$. Assume then that it holds
true for $K_\ell$ for all the values of the index $\ell$ up to $l\ge0$. We find, in view of the hypotheses on $G$ and the
inductive hypothesis,
\begin{align*}
	K_{l+1}(\bt_{M+1,j-1}(s);&y,\xi)|_{t_j=t_{j+1}}
	\\
	&=t_{j+1}+\int_{t_{j+1}}^s G(\sigma, K_l(\bt_{M+1,j-1}(\s),t_{j+1},t_{j+1};y,\xi),R_l(\sigma),\xi)\,d\sigma
	\\
	&=t_{j+1}+\int_{t_{j+1}}^s G(\sigma,\sigma,t_{j+1};R_l(\sigma),\xi)\,d\sigma=t_{j+1}+\int_{t_{j+1}}^s d\sigma=s,
	\\
	K_{l+1}(\bt_{M+1,j-1}(s);&y,\xi)|_{t_j=s}
	\\
	&=t_{j+1}+\int_{s}^s G(\sigma, K_l(\bt_{M+1,j-1}(\s),s,t_{j+1};y,\xi),R_l(\sigma),\xi)\,d\sigma=t_{j+1},
\end{align*}
which completes the proof of \eqref{eq:extrKl}.

\par

In order to show that $\{R_l\}$ and $\{K_l\}$ converge, we employ
Taylor formula with respect to the variable $t_{j-1}$.
For an arbitrary $N\in\Z_+$ we can write
\begin{align}\label{tayforKl}
K_{l+1}(t_{j-1})-K_l(t_{j-1})&=
  \sum\limits_{k<N}\frac{\left(\left(\partial_{t_{j-1}}^kK_{l+1}\right)(t_j)-
	\left(\partial_{t_{j-1}}^kK_{l}\right)(t_j)\right)(t_{j-1}-t_j)^k}{k!}
\\[1ex]
&+\frac{1}{N!}\int_{t_j}^{t_{j-1}}(t_{j-1}-\s)^N
 \left(\left(\partial_{t_{j-1}}^{N+1}K_{l+1}\right)(\s)-
	\left(\partial_{t_{j-1}}^{N+1}K_{l}\right)(\s)\right)d\s\nonumber
\end{align}
and 
\begin{align}\label{tayforRl}
R_{l+1}(t_{j-1})-R_l(t_{j-1})&=
  \sum\limits_{k<N}\frac{\left(\left(\partial_{t_{j-1}}^kR_{l+1}\right)(t_j)-
	\left(\partial_{t_{j-1}}^kR_{l}\right)(t_j)\right)(t_{j-1}-t_j)^k}{k!}
\\[1ex]
&+\frac{1}{N!}\int_{t_j}^{t_{j-1}}(t_{j-1}-\s)^N
 \left(\left(\partial_{t_{j-1}}^{N+1}R_{l+1}\right)(\s)-
  \left(\partial_{t_{j-1}}^{N+1}R_{l}\right)(\s)\right)d\s,\nonumber
\end{align}
respectively.
The summations in the above equalities \eqref{tayforKl}
and \eqref{tayforRl} are actually identically vanishing.
To prove this assertion, one proceeds by induction on $N$, employing a
standard argument involving the Fa\'a di Bruno formula for the derivatives of composed functions
(see \cite{Taniguchi02} and the references quoted therein for more details).
		
		\par
Now, using standard inequalities for
the remainder, together with the fact that
$\{R_l\}_l$ is bounded in $C^\infty(\Delta(T_2);\SG^{1,0}\otimes\R^n)$, while
$\{K_l\}_l$ is bounded in $C^\infty(\Delta(T_2);\SG^{0,0})$,
from \eqref{tayforKl}
and \eqref{tayforRl} we get, for any $\a,\b\in \Z_+$,
\begin{multline}\label{K_lEstim}
\sup_{(y,\xi)\in\R^{2n}}\left|\partial_y^\a \partial_\xi^\b
\left(K_{l+1}(\bt_{{M+1},{j-1}}(t_{j-1});y,\xi)-K_l(\bt_{{M+1},{j-1}}
(t_{j-1});y,\xi)\right)
\jb{y}^{|\a|}\jb{\xi}^{|\b|}\right|
\\[1ex]
\leq C_{\alpha\beta}\frac{\left(t_{j-1}-t_j\right)^{N+1}}{(N+1)!},
\end{multline}
with $C_{\alpha\beta}$ independent of $j,\,N$, and, similarly,
\begin{multline}\label{R_lEstim}
\sup_{(y,\xi)\in\R^{2n}}\left|\partial_y^\a \partial_\xi^\b
\left(R_{l+1}(\bt_{{M+1},{j-1}}(t_{j-1});y,\xi)-R_l(\bt_{{M+1},{j-1}}(t_{j-1});y,\xi)\right)
\jb{y}^{-1+|\a|}\jb{\xi}^{|\b|}\right|
\\[1ex]
%	.
\leq \widetilde{C}_{\alpha\beta}\frac{\left(t_{j-1}-t_j\right)^{N+1}}{(N+1)!}.
\end{multline}
where $\widetilde{C}_{\alpha\beta}$ is independent of $j,\,N$.
		
Writing $l$ in place of $N$ in the right-hand side of 
\eqref{K_lEstim} and \eqref{R_lEstim}, it easily follows that $(R_l,K_l)$ converges, for $l\to+\infty$, to a unique fixed point $(R,K)$, which satisfies the
stated symbol estimates.
Since, as we showed above, the properties \eqref{estODE} and \eqref{equivODE} hold true for $(R_l,K_l)$ in place of $(R,K)$, $l\ge0$, uniformly with respect to
$M,j,l$, they also hold true for 
the limit $(R,K)$. The proof is complete. 
\end{proof}
	
The next Corollary \ref{cor:qlcpsol} is a standard result in the theory of Cauchy problems for quasilinear PDEs of the form \eqref{quasi}.
Its proof is based on the hypotheses on $L$ and $H$, and the properties of the solution of \eqref{ordin}.

\begin{corollary}\label{cor:qlcpsol}
	Under the same hypotheses of Lemma \ref{lem:exitOfSolForOrdin}, denoting by $\bar{R}(\bt_{M+1};x,\xi)$ the solution of the equation
	\[
		R(\bt_{M+1};y,\xi)=x, \quad \bt_{M+1}\in\Delta(T_2),x,\xi\in\R^n,
	\]
	the function
	\[
		\Upsilon(\bt_{M+1}; x,\xi)=K(\bt_{M+1};\bar{R}(\bt_{M+1};x,\xi),\xi)
	\]
	solves the Cauchy problem \eqref{quasi} for $x,\xi\in\R^n$, $\bt_{M+1}\in\Delta(T_2)$, for a sufficiently small $T_2\in(0,T_1]$.
\end{corollary}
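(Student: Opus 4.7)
The plan is to realize $\Upsilon$ as the value of $K$ along the characteristic curves associated with the first-order symbol $H$, following the classical method of characteristics adapted to the SG framework. The work to be done is: (i) invert $R$ in the $y$-variable to obtain $\bar R$, (ii) check the initial condition, (iii) verify the quasi-linear PDE via the chain rule together with the two ODEs in \eqref{ordin}.

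First I would shrink $T_2$ further, if needed, so that the estimate \eqref{estODE} forces $\|\partial R/\partial y - I\|\le 1/2$ uniformly on $\Delta(T_2)\times\R^{2n}$. Together with Lemma \ref{lem:bndedleminversehamsol} applied parameter-wise (treating $\bt_{M+1}$ and $\xi$ as parameters), this yields a unique inverse $\bar R(\bt_{M+1};x,\xi)$ with $\bar R - x\in C^\infty(\Delta(T_2);\SG^{1,0})$. By Lemma \ref{lem:invariantcomsymbol}, the composition $K(\bt_{M+1};\bar R(\bt_{M+1};x,\xi),\xi)$ is then a well-defined element of $C^\infty(\Delta(T_2);\SG^{0,0})$, so $\Upsilon$ makes sense as an $\SG^{0,0}$-symbol.

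For the initial condition, evaluating the first equation of \eqref{ordin} at $t_{j-1}=t_j$ gives $R(\bt_{M+1};y,\xi)|_{t_{j-1}=t_j}=y$, whence $\bar R(\bt_{M+1};x,\xi)|_{t_{j-1}=t_j}=x$. Inserting this into the definition of $\Upsilon$ and using $K|_{t_{j-1}=t_j}=t_{j+1}$ from \eqref{ordin} yields $\Upsilon|_{t_{j-1}=t_j}=t_{j+1}$, as required.

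For the PDE, I would differentiate the identity $R(\bt_{M+1};\bar R(\bt_{M+1};x,\xi),\xi)=x$ in both $t_{j-1}$ and $x$. Using \eqref{ordin}, this gives $\partial_{t_{j-1}}\bar R = (\partial_y R)^{-1}\,H(K,\bt_{M+1};R,\xi)\big|_{y=\bar R} = (\partial_y R)^{-1}\,H(\Upsilon,\bt_{M+1};x,\xi)$ and $\partial_x\bar R = (\partial_y R)^{-1}$. Then applying the chain rule to $\Upsilon=K(\bt_{M+1};\bar R,\xi)$ and again invoking the second ODE of \eqref{ordin} gives
\begin{equation*}
\partial_{t_{j-1}}\Upsilon = G(\bt_{M+1,j}(\Upsilon);x,\xi)+\bigl(\partial_y K\bigr)\big|_{y=\bar R}\cdot (\partial_y R)^{-1}\cdot H(\Upsilon,\bt_{M+1};x,\xi),
\end{equation*}
while $\Upsilon'_x = \bigl(\partial_y K\bigr)|_{y=\bar R}\cdot(\partial_y R)^{-1}$, so the above collapses to exactly \eqref{quasi}.

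The only delicate point is the invertibility step and the need to keep all estimates uniform in the free parameters $(t_0,\dots,t_{j-2},t_j,t_{j+1},\dots,t_{M+1})$ and in $\xi$; this is handled because \eqref{estODE} is uniform in $M$ and $(\bt_{M+1};y,\xi)$ and because Lemma \ref{lem:bndedleminversehamsol} already provides the corresponding SG-symbol estimates for the inverse. The rest is a direct chain-rule computation.
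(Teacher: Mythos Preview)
Your proposal is correct and follows the same approach the paper indicates: the paper does not spell out a proof but simply states that the result is standard for quasilinear first-order Cauchy problems of the form \eqref{quasi}, obtained from the properties of the characteristic system \eqref{ordin}, which is exactly the method-of-characteristics argument you carry out in detail. One small remark: Lemma~\ref{lem:bndedleminversehamsol} concerns the specific inverse of the Hamilton--Jacobi flow $q$, not a general inversion result, so it is more accurate to invoke the standard SG-symbol inversion argument referenced in Remark~\ref{rem:invJR} (and the sources cited there) for the existence and symbol estimates of $\bar R$; the underlying mechanism, however, is identical.
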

\begin{remark}\label{rem:invJR}
	Notice that \eqref{estODE} implies that, for all $y,\xi\in\R^n$, $\bt_{M+1}\in\Delta(T_2)$, $T_2\in(0,T_1]$ suitably small,
	the Jacobian matrix $\dfrac{\partial R}{\partial y}(\bt_{M+1};y,\xi)$ belongs to a suitably small open neighbourhood of the
	identity matrix, so it is invertible, with norm in an interval of the form $[1-\varepsilon,1+\varepsilon]$, for positive, arbitrarily 
	small $\varepsilon$. A standard argument in the SG symbol theory (see, e.g., \cite{CO, Coriasco:998.1, Cothesis}), 
	then shows $\bar{R}\in C^\infty(\Delta(T_2),\SG^{1,0}(\R^{2n})\otimes\R^n)$ and 
	\[
		\jb{{R}(\bt_{M+1};y,\xi)}\sim\jb{y}, \quad \jb{\bar{R}(\bt_{M+1};x,\xi)}\sim\x,
	\]
	with constants independent of $\bt_{M+1}\in\Delta(T_2)$, $\xi\in\R^n$. This also implies 
	that $\Upsilon$ satisfies $t_{j+1}\le \Upsilon(\bt_{M+1}; x,\xi)\le t_{j-1}$, $\bt_{M+1}\in\Delta(T_2)$, $x,\xi\in\R^n$ 
	and $\Upsilon\in C^\infty(\Delta(T_2); \SG^{0,0}(\R^{2n}))$.
\end{remark}

\subsection{Commutative law for multi-products of SG phase functions given by solutions of eikonal equations}\label{subsec:commlaw}
Let $\{a_j\}_{j\in\N}$  be a bounded family of parameter-dependent, real-valued symbols in
$C^\infty([0,T];\SG^{1,1})$ and let $\{\varphi_j\}_{j\in\N}$ be the corresponding
family of phase functions in $\Phr(c|t-s|)$, obtained as solutions to the eikonal equations associated
with $a_j$, $j\in\N$.
In the aforementioned multi-product \eqref{sharpprodofphi},
we commute $\varphi_j$ and $\varphi_{j+1}$, defining a new multi-product $\phi_j$,
namely
\begin{equation}\label{comsharpmultprod}
 \begin{aligned}
 \phi_j(\bt_{M+1};x,\xi)=\left(\varphi_1(t_0,t_1)\right.&\!\!\left.\sharp\varphi_2(t_1,t_2)
  \sharp\dots\sharp\varphi_{j-1}(t_{j-2},t_{j-1})\sharp\right.
\\[1ex]
&\!\!\left.\sharp\varphi_{j+1}(t_{j-1},t_j)\sharp\varphi_j(t_j,t_{j+1})\sharp\right.
\\
&\!\!\left.
  \sharp\varphi_{j+2}(t_{j+1},t_{j+2})\sharp\dots\sharp
    \varphi_{M+1}(t_M,t_{M+1})\right)(x,\xi),
    \end{aligned}
\end{equation}
where $\bt_{M+1}=(t_0,t_1,\dots,\dots t_{M+1})\in \Delta(T_1)$.

\par

\begin{assumption}[Involutiveness of symbol families]\label{assumpt:invo}
Given the family of parameter-dependent, real-valued symbols $\{a_j\}_{j\in\N}\subset C^\infty([0,T];\SG^{1,1}(\R^{2n})$,
there exist families of parameter-dependent, real-valued symbols $\{b_{jk}\}_{j,k\in\N}$ and $\{d_{jk}\}_{j,k\in\N}$, such that
$b_{jk},d_{jk}\in C^\infty([0,T];\SG^{0,0}(\R^{2n}))$, $j,k\in\N$, and the Poisson brackets
\begin{align*}
\{\tau-a_j(t;x,\xi),\tau-a_{k}(t;x,\xi)\}
&:=\partial_ta_j(t;x,\xi)-\partial_ta_{k}(t;x,\xi)
\\
&\phantom{:}+ a'_{j,\xi}(t;x,\xi)
\cdot a'_{k,x}(t;x,\xi)-a'_{j,x}(t;x,\xi)\cdot  a'_{k,\xi}(t;x,\xi)
\end{align*}
satisfy
\begin{equation}\label{pbrack}
\{\tau-a_j(t;x,\xi),\tau-a_{k}(t;x,\xi)\}
=
b_{jk}(t;x,\xi)\cdot(a_j-a_{k})(t;x,\xi)+d_{jk}(t;x,\xi),
\end{equation}
for all $j,k\in\N$, $t\in[0,T]$, $x,\xi\in\R^n$.
\end{assumption}
We can now state our first main theorem.

\par

\begin{theorem}\label{cruthm}
Let $\{a_j\}_{j\in\N}$ be a family of parameter-dependent, real-valued symbols, bounded in
$C^\infty\left([0,T];\SG^{1,1}(\R^{2n})\right)$, and let $\varphi_j\in\Phr(c|t-s|)$, for some $c>0$, be the phase functions
obtained as solutions to \eqref{eik} with $a_j$ in place of $a$, $j\in\N$.
Consider $\phi(\bt_{M+1})$ and $\phi_j(\bt_{M+1})$ defined
in \eqref{sharpprodofphi} and \eqref{comsharpmultprod}, respectively,
for any $M\ge2$ and $j\leq M$.
Then, Assumption \ref{assumpt:invo} implies that there exists $T^\prime\in(0,T_2]$,
independent of $M$, such that we can find a symbol family
$Z_j\in C^\infty(\Delta(T');\SG^{0,0}(\R^{2n}))$ satisfying, for all $\bt_{M+1}\in\Delta(T^\prime)$, $x,\xi\in\R^n$,
\begin{equation}\label{eq:Zprop1}
\begin{aligned}
&t_{j+1}\leq Z_j(\bt_{M+1};x,\xi)\leq t_{j-1},
\\[1ex]
&Z_j|_{{t_j=t_{j-1}}}=t_{j+1},\text{ and } Z_j|_{{t_j=t_{j+1}}}=t_{j-1},
\\[1ex]
&\partial_{t_j}Z_j(\bt_{M+1};x,\xi)\asymp -1.
\end{aligned}
\end{equation}	
Moreover, we have
\begin{equation*}%\label{Zprop3}
\phi_j(\bt_{M+1};x,\xi)=\phi(\bt_{M+1,j}(Z_j(\bt_{M+1};x,\xi));x,\xi)
+\Psi_j(\bt_{M+1};x,\xi), \bt_{M+1}\in\Delta(T^\prime),x,\xi\in\R^n,
\end{equation*}
where $\Psi_j \in C^\infty(\Delta(T');\SG^{0,0}(\R^{2n}))$ satisfies
\begin{equation*}%\label{proponPsi2}
\Psi_j\equiv 0\quad\text{ if }\quad d_j\equiv 0.
\end{equation*}
\end{theorem}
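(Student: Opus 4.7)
The plan is to build $Z_j$ as the solution of a Cauchy problem that fits the framework of Subsection \ref{subs:auxpde}, and to read off $\Psi_j$ as a by-product. The identity to be verified, $\phi_j(\bt_{M+1}) = \phi(\bt_{M+1,j}(Z_j)) + \Psi_j$, is imposed \emph{a priori}: I would use its boundary values to fix initial conditions, and differentiate it in $t_j$ to extract the PDE that $Z_j$ and $\Psi_j$ must satisfy. At $t_j = t_{j-1}$, Proposition \ref{prop:tderivofmultiph}\textbf{ii} makes $\varphi_{j+1}(t_{j-1},t_{j-1})$ collapse to $x\cdot\xi$, so $\phi_j|_{t_j=t_{j-1}}$ coincides with the multi-product of $\phi$ evaluated at $\bt_{M+1,j}(t_{j+1})$, forcing $Z_j|_{t_j=t_{j-1}} = t_{j+1}$ and $\Psi_j|_{t_j=t_{j-1}}=0$; an analogous collapse at $t_j = t_{j+1}$ delivers $Z_j|_{t_j=t_{j+1}} = t_{j-1}$.

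Differentiating in $t_j$ and applying \eqref{tderofmultprodphasefunc} to both $\phi_j$ and $\phi$ (at the shifted time vector $\bt_{M+1,j}(Z_j)$) gives, via the chain rule,
\[
\bigl(a_j - a_{j+1}\bigr)\bigl(Z_j;\widehat Y_j, \widehat N_j\bigr)\,\partial_{t_j}Z_j + \partial_{t_j}\Psi_j
  \;=\;
\bigl(a_j - a_{j+1}\bigr)\bigl(t_j;\widetilde Y_j, \widetilde N_j\bigr),
\]
where $\widehat Y_j,\widehat N_j$ and $\widetilde Y_j,\widetilde N_j$ are the critical points of $\phi(\bt_{M+1,j}(Z_j))$ and $\phi_j(\bt_{M+1})$, respectively. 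Applying Assumption \ref{assumpt:invo} along the Hamilton trajectories from Proposition \ref{prop:trajec}, the Poisson bracket identity \eqref{pbrack} lets me rewrite the right-hand side as a sum of a term of the form $b_{j,j+1}\cdot(a_j-a_{j+1})$ (evaluated at appropriate trajectory points) and a remainder of order $(0,0)$ proportional to $d_{j,j+1}$. Normalizing so that the coefficient of $\partial_{t_j}Z_j$ becomes a function $G$ with $G|_{t_j=t_{j-1}}\equiv 1$ and $G>0$, the system decouples into a quasi-linear equation of the form \eqref{quasi} for $Z_j$ with a vector field $H \in C^\infty(\Delta(T_1);\SG^{1,0}\otimes\R^n)$ built from the $a_k$ and $b_{jk}$, and a separate integral formula for $\Psi_j$ whose integrand is proportional to $d_{j,j+1}$.

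Once the equation is in this form, Lemma \ref{lem:exitOfSolForOrdin} and Corollary \ref{cor:qlcpsol}, together with Remark \ref{rem:invJR}, yield a unique $Z_j \in C^\infty(\Delta(T'); \SG^{0,0}(\R^{2n}))$ on a sufficiently small strip $T'\in(0,T_2]$, satisfying $t_{j+1}\le Z_j\le t_{j-1}$ and the two stipulated boundary values; the monotonicity property \eqref{eq:extrKl} in the proof of Lemma \ref{lem:exitOfSolForOrdin} then gives $\partial_{t_j}Z_j \asymp -1$. The uniformity of $T'$ in $M$ follows because every estimate in the construction depends only on the uniform SG-seminorm bounds of the families $\{a_j\}$, $\{b_{jk}\}$, $\{d_{jk}\}$, which are independent of $j,k$. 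Tracing $\Psi_j$ through the integral representation (with $\Psi_j|_{t_j=t_{j-1}}=0$), one finds $\Psi_j\equiv 0$ whenever $d_j:=d_{j,j+1}\equiv 0$.

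The hard part will be the bookkeeping in the second step: the critical points of $\phi$ and of $\phi_j$ live on trajectories arising from the Hamilton flows of $a_j$ and $a_{j+1}$ taken in \emph{opposite} orders (cf.\ \eqref{trajectory} and Proposition \ref{prop:trajec}), and comparing the symbols evaluated on these two different sets of points requires a careful Taylor expansion combined with Propositions \ref{propcritpntbound} and \ref{trajclass}. It is precisely here that the involutiveness condition \eqref{pbrack} is used to bring the difference into the additive form $b_{j,j+1}\cdot(a_j - a_{j+1}) + d_{j,j+1}$; the first piece is absorbed into the transport term $H$ of \eqref{quasi}, while the second becomes the source producing $\Psi_j$. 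Showing that, after this manipulation, the resulting symbol $H$ genuinely lies in $C^\infty\!\big([t_{j+1},t_{j-1}]\times\Delta(T_1);\SG^{1,0}\otimes\R^n\big)$ and $G$ in $C^\infty(\Delta(T_1);\SG^{0,0})$ with $G|_{t_j=t_{j-1}}\equiv 1$ and $G>0$, uniformly in $M$, is the main technical obstacle — once it is in place, the conclusions follow from the already-established auxiliary machinery.
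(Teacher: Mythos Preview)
Your overall strategy—define $\Psi_j$ by the stated identity, differentiate, exploit involutiveness to decouple into a quasilinear problem for $Z_j$ and a transport equation for $\Psi_j$—matches the paper's. But two concrete choices in your execution would not go through as written.

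First, you differentiate in $t_j$, whereas the paper (and the auxiliary machinery of Subsection~\ref{subs:auxpde}) works in $t_{j-1}$. The reason this matters is structural: at level $j-1$, the first $j-1$ factors of $\phi$ and $\phi_j$ coincide, so by Proposition~\ref{prop:trajec} the critical points $(Y_{j-1},N_{j-1})$ and $(\tilde Y_{j-1},\tilde N_{j-1})$ lie on the \emph{same} trajectory $(\tilde q_{j-1},\tilde p_{j-1})$, merely launched from $\phi'_x$ versus $\phi'_{j,x}$. This is exactly what makes the Taylor expansion in the last argument produce a clean transport term $T_j\cdot\Psi'_{j,x}$ and the matching $T_j\cdot(\partial_{t_j}\phi)(Z_j)\,Z'_{j,x}$ piece needed for the bracket in \eqref{psiComp}. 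At level $j$, by contrast, the two sets of critical points are reached through \emph{different} Hamilton flows ($a_j$ versus $a_{j+1}$), so no single Taylor step connects them, and Lemma~\ref{lem:exitOfSolForOrdin}/Corollary~\ref{cor:qlcpsol} (which are set up for the $t_{j-1}$-evolution with datum at $t_{j-1}=t_j$) do not apply directly.

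Second, your use of involutiveness is not right as stated. The coefficient of $\partial_{t_j}Z_j$ in your displayed relation is $(a_j-a_{j+1})(Z_j;\hat Y_j,\hat N_j)$, a symbol of order $(1,1)$ that may vanish; one cannot ``normalize'' it to a positive $(0,0)$-function $G$. In the paper, the positive $G_j$ is not obtained by dividing but arises as the exponential $\exp\!\big(\int_{t_j}^{t_{j-1}}b_{j,j+1}\,d\tau\big)$, after one observes that $\tilde\alpha_j(\sigma)=(a_j-a_{j+1})(\sigma;(q_j,p_j)(\sigma,t_{j-1}))$ satisfies the linear ODE $\partial_\sigma\tilde\alpha_j=b_{j,j+1}\tilde\alpha_j+d_{j,j+1}$ along the flow of $a_j$ (this is where \eqref{pbrack} enters, via the Poisson-bracket computation, not by directly rewriting $a_j-a_{j+1}$). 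Solving this ODE expresses $(a_j-a_{j+1})$ at time $t_{j-1}$ as $G_j$ times $(a_j-a_{j+1})$ at time $t_j$—which \emph{is} $-(\partial_{t_j}\phi)$ by \eqref{aphaTildaPartialPhi}—plus the $d_j$-remainder $F_j$. That is what produces the factorized form \eqref{psiComp}, after which one sets the bracket to zero to obtain the quasilinear problem \eqref{quasiZj} for $Z_j$ (with genuinely positive $G_j$ of order $(0,0)$), and the residual linear transport equation for $\Psi_j$ whose source is $F_j$, hence vanishes with $d_j$. If you redo your computation with $\partial_{t_{j-1}}$ and insert this ODE step, the argument closes.
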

	
\par

The argument originally given in \cite{Taniguchi02} extends to the SG setting, in view of Lemma \ref{lem:exitOfSolForOrdin} above, and allows
to prove Theorem \ref{cruthm}. For the sake of completeness, we include  
it here (see \cite{Abdeljawadthesis} for additional details).

\par
	
\begin{proof}[Proof of Theorem \ref{cruthm}]
Let $\{(Y_1,\dots,Y_M,N_1,\dots,N_M)\}(\bt_{M+1};x,\xi)$
be the solution of the critical point system 
\begin{align*}\begin{cases}
x_j&=\varphi'_{j,\xi}(t_{j-1},t_j;x_{j-1},\xi_j)
\\[1ex]
\xi_j&=\varphi'_{j+1,x}(t_j,t_{j+1};x_j,\xi_{j+1}),
\end{cases}
\end{align*}
such that $x_j,\xi_j\in\R^n$,  $x_0=x$, and $\xi_{M+1}=\xi$
(cf. \cite{Abdeljawadthesis,AsCor,Kumano-go:1}).
\par
In view of \eqref{comsharpmultprod},
let $\left(\tilde{Y}_1,
\dots,\tilde{Y}_M,\tilde{N}_1,\dots,\tilde{N}_M\right)(\bt_{M+1};x,\xi)$
be the solution to the critical points problem,
for the phase functions in modified order, namely,
\begin{align*}
\begin{cases}
x_k&=\varphi'_{k,\xi}(t_{k-1},t_k;x_{k-1},\xi_k)\qquad\text{ if }k\in\{1,\dots,j-1,j+2,\dots,M\}
\\[1ex]
x_j&=\varphi'_{j+1,\xi}(t_{j-1},t_j;x_j,\xi_j),
\\[1ex]
x_{j+1}&=\varphi'_{j,\xi}(t_j,t_{j+1};x_j,\xi_{j+1}),
\end{cases}
\end{align*}
\par
\begin{align*}
\begin{cases}
\xi_k&=\varphi'_{k+1,x}(t_k,t_{k+1};x_k,\xi_{k+1})\qquad\text{ if }k\in\{1,\dots,j-2,j+1,\dots,M\}
\\[1ex]
\xi_{j-1}&=\varphi'_{j+1,x}(t_{j-1},t_j;x_{j-1},\xi_j),
\\[1ex]
\xi_j&=\varphi'_{j,x}(t_j,t_{j+1};x_j,\xi_{j+1}),
\end{cases}
\end{align*}
where $x_0=x$ and $\xi_{M+1}=\xi$. For convenience below, we also set
\begin{align*}
\begin{cases}
a_0(t;x,\xi)&\equiv 0,
\\[1ex]
Y_0=\tilde{Y}_0&=x,
\\[1ex]
N_0=\phi'_x,\;&\tilde{N}_0=\phi'_{j,x}.
\end{cases}
\end{align*}
\par
Let $\Psi_j$ be defined as 
\begin{equation}\label{Psidef}
\Psi_j(\bt_{M+1};x,\xi)=\phi_j(\bt_{M+1};x,\xi)
  -\phi(\bt_{M+1,j}(Z_j(\bt_{M+1};x,\xi));x,\xi).
\end{equation}
Here we look for a symbol $Z_j = Z_j(\bt_{M+1};x,\xi)$ satisfying \eqref{eq:Zprop1}
such that $\Psi_j\in C^\infty(\Delta(T^\prime);\SG^{0,0})$, $T^\prime\in(0,T_2]$.
In view of Proposition \ref{prop:tderivofmultiph} and \eqref{comsharpmultprod}, we find
\begin{equation}\label{PsiTDer}
	\begin{aligned}
		\partial_{t_{j-1}}\Psi_j(\bt_{M+1};x,\xi)&=(\partial_{t_{j-1}}\phi_j)(\bt_{M+1};x,\xi)
		-(\partial_{t_{j-1}}\phi)(\bt_{M+1,j}(Z_j(\bt_{M+1};x,\xi));x,\xi)
		\\[1ex]
		&-(\partial_{t_j}\phi)(\bt_{M+1,j}(Z_j(\bt_{M+1};x,\xi));x,\xi)\cdot\partial_{t_{j-1}}Z_j(\bt_{M+1};x,\xi)
		\\[1ex]
		&=a_{j+1}(t_{j-1};Y_{j-1}(\bt_{M+1};x,\xi),N_{j-1}(\bt_{M+1};x,\xi))
		\\[1ex]
		&-a_{j-1}(t_{j-1};Y_{j-1}(\bt_{M+1};x,\xi),N_{j-1}(\bt_{M+1};x,\xi))
		\\[1ex]
		&-\left[a_j(t_{j-1};Y_{j-1}(\bt_{M+1};x,\xi),N_{j-1}(\bt_{M+1};x,\xi))\right]_{t_{j}=Z_j(\bt_{M+1};x,\xi)}
		\\[1ex]
		&+\left[a_{j-1}(t_{j-1};Y_{j-1}(\bt_{M+1};x,\xi),N_{j-1}(\bt_{M+1};x,\xi))\right]_{t_{j}=Z_j(\bt_{M+1};x,\xi)}
		\\[1ex]		
		&-(\partial_{t_j}\phi)(\bt_{M+1,j}(Z_j(\bt_{M+1};x,\xi));x,\xi)\cdot\partial_{t_{j-1}}Z_j(\bt_{M+1};x,\xi).
		\end{aligned}
		\end{equation}
		\par
		When $j\geq 2$, we use the trajectory $\left(\tilde{q}_{j-1},\tilde{p}_{j-1}\right)(\bt_{j-2},\s;y,\eta)$ defined in  \eqref{trajectory}.
		Then (cf. Proposition \ref{prop:trajec}), we have, for $\s=t_{j-1}$, the equalities
		\begin{align*}
		\begin{cases}
		(\tilde{q}_{j-1},\tilde{p}_{j-1})(\bt_{j-1};x,\phi'_x(\bt_{M+1};x,\xi))&=(Y_{j-1},N_{j-1})(\bt_{M+1};x,\xi),
		\\[1ex]
		(\tilde{q}_{j-1},\tilde{p}_{j-1})(\bt_{j-1};x,\phi'_{j,x}(\bt_{M+1};x,\xi))&=(\tilde{Y}_{j-1},\tilde{N}_{j-1})(\bt_{M+1};x,\xi).
		\end{cases}
		\end{align*}
		\par
		Next, we set
		\begin{align}\label{eq:alphaTrajectory}
		\left\{\begin{array}{rl}
		\a_1(\s;z,\zeta)&=a_2(\s;z,\zeta),
		\\[1ex]
		\a_j(\s;\bt_{j-2};z,\zeta)&=\left(a_{j+1}-a_{j-1}\right)
		\big(\s;\left(\tilde{q}_{j-1},\tilde{p}_{j-1}\right)
		(\bt_{j-2},\s;z,\zeta)\big),\; j\geq 2.
		\end{array}
		\right.
		\end{align}
		In \eqref{eq:alphaTrajectory} the compositions are well defined,
		in view of the properties of the symbols  $(\tilde{q}_{j-1},\tilde{p}_{j-1})$
		in Proposition \ref{trajclass}, which imply that the conditions
		of Lemma \ref{lem:invariantcomsymbol} are satisfied.
		Thus, $\a_j\in C^\infty(\Delta_{j-1}(T_1);\SG^{1,1})$, $j=1,\dots,M$. 
		Moreover, $\a_j$ satisfies
		\begin{equation}\label{eq:alphaDef}
		\begin{cases}
		\a_j(\bt_{j-2},\s;x,\phi'_x(\bt_{M+1};x,\xi))\!\!\!\!\!&=(a_{j+1}-a_{j-1})\left(\s;Y_{j-1}(\bt_{M+1};x,\xi),N_{j-1}(\bt_{M+1};x,\xi)\right),
		\\[1ex]
		\a_j(\bt_{j-2},\s;x,\phi'_{j,x}(\bt_{M+1};x,\xi))\!\!\!\!\!&=(a_{j+1}-a_{j-1})(\s;\tilde{Y}_{j-1}(\bt_{M+1};x,\xi),\tilde{N}_{j-1}(\bt_{M+1};x,\xi)),
		\end{cases}
		\end{equation}
		and when $j=1$ the variables $(\bt_{j-2},\s)$  reduce to $\s$.
		\par
		Finally, let $[T_j(\tau)](\bt_{M+1};x,\xi)= T_j(\tau,\bt_{M+1};x,\xi)$ be defined as
\[
T_j(\tau)=\int_{0}^{1}\a'_{j,\xi}\bigg(\bt_{j-1};x,\rho\phi'_{j,x}
(\bt_{M+1};x,\xi)+(1-\rho) \phi'_x(\bt_{M+1,j}
(\tau);x,\xi)\bigg)\,d\rho.
\]
		Notice that, again by Lemma \ref{lem:invariantcomsymbol} and the properties of the involved symbols,
		we find $T_j\in C^\infty([t_{j+1},t_{j-1}]\times\Delta(T_1);\SG^{1,0}\otimes\R^n)$. Indeed, in view of the fact that both $\phi$ and $\phi_j$
		are regular SG phase functions, for all $\rho\in[0,1]$, $\tau\in[t_{j+1},t_{j-1}]$,
		\[
			\jb{ \rho\phi'_{j,x}
			(\bt_{M+1};x,\xi)+(1-\rho) \phi'_x(\bt_{M+1,j}
			(\tau);x,\xi)}\sim\csi,
		\]
		uniformly with respect to all the involved parameters.
		\par
		We now show that $\phi_j$ satisfies a certain PDE, whose form we will simplify using the results in Subsection \ref{subs:auxpde}.
%		We subsequently show that the hypotheses of Lemma \ref{lem:exitOfSolForOrdin} hold true.		
		First of all, we observe that 
		\begin{equation}\label{eq:alphaDifference}
		\begin{aligned}
		\a_j(\mathbf t_{j-1};&x,\phi'_{j,x}(\mathbf t_{M+1};x,\xi))-	
			\a_j(\mathbf t_{j-1};x,\phi'_x(\mathbf t_{M+1,j}(\tau);x,\xi))
		\\[1ex]
		&=T_j(\tau)\cdot\bigg(\phi'_{j,x}(\mathbf t_{M+1};x,\xi)-
		  \phi'_x(\mathbf t_{M+1,j}(\tau);x,\xi)\bigg).
		\end{aligned}
		\end{equation}
		From \eqref{Psidef} it follows
		\begin{equation}\label{PsiXDer}
		\Psi'_{j,x}(\bt_{M+1};x,\xi)=\phi'_{j,x}(\mathbf t_{M+1};x,\xi)- 
		 (\partial_{t_j}\phi)(\mathbf t_{M+1,j}(Z_j);x,\xi)\cdot Z'_{j,x}(\bt_{M+1};x,\xi).
		\end{equation}
		\par
		Now, we rewrite $\partial_{t_{j-1}}\Psi_j$ from \eqref{PsiTDer},
		using \eqref{eq:alphaDef}, \eqref{eq:alphaDifference} and \eqref{PsiXDer}:
		\begin{equation}\label{newPsiTDer}
		\begin{aligned}
		\partial_{t_{j-1}}\Psi_j(\bt_{M+1};x,\xi)&=\a_j\left(\mathbf t_{j-1};x,\phi'_{j,x}(\mathbf t_{M+1};x,\xi)\right)
		\\[1ex]
		&-\a_j\left(\mathbf t_{j-1};x,\phi'_x(\mathbf t_{M+1,j}(Z_j(\bt_{M+1};x,\xi));x,\xi)\right)
		\\[1ex]
		&-(\partial_{t_{j}}\phi)(\mathbf t_{M+1,j}(Z_j(\bt_{M+1};x,\xi));x,\xi)\cdot(\partial_{t_{j-1}}Z_j)(\bt_{M+1};x,\xi)
		\\[1ex]
		&-\left[\left(a_j-a_{j+1}\right)(t_{j-1};Y_{j-1}(\bt_{M+1};x,\xi),N_{j-1}(\bt_{M+1};x,\xi))\right]_{t_j=Z_j(\bt_{M+1};x,\xi)}
		\\[1ex]
		&=T_j(Z_j(\bt_{M+1};x,\xi))\cdot\Psi'_{j,x}(\bt_{M+1};x,\xi)
		\\[1ex]
		&-(\partial_{t_j}\phi)(\mathbf t_{M+1,j}(Z_j(\bt_{M+1};x,\xi));x,\xi)\cdot
		\\[1ex]
		&\cdot\left(\partial_{t_{j-1}}Z_j(\bt_{M+1};x,\xi)-T_j(Z_j(\bt_{M+1};x,\xi))\cdot Z'_{j,x}(\bt_{M+1};x,\xi))\right) 
		\\[1ex]
		&- \left[\left(a_j-a_{j+1}\right)(t_{j-1};Y_{j-1}(\bt_{M+1};x,\xi),N_{j-1}(\bt_{M+1};x,\xi))\right]_{t_j=Z_j(\bt_{M+1};x,\xi)}.
		\end{aligned}
		\end{equation}
		\par
		Once more, we use the solution $\left(q_j,p_j\right)(t,s;y,\eta)$ of the Hamilton-Jacobi system \eqref{hameq}, 
		with $a$ replaced by $a_j$, and define $\tilde{\a}_j$ as
		\[
		\tilde{\a}_j(\s,t;y,\eta)=\left(a_j-a_{j+1}\right)(\s;\left(q_j,p_j\right)(\s,t;y,\eta)).
		\]
		Then, after a differentiation with respect to $\s$, we have
		\begin{align*}
		\partial_\s\tilde{\a}_j&(\s,t;y,\eta)=
		\\[1ex]
		&=\partial_\s\left(\left(a_j-a_{j+1}\right)
		(\s;\left(q_j,p_j\right)(\s,t;y,\eta)\right)
		\\[1ex]
		&=(\partial_ta_j)\left(\s;\left(q_j,p_j\right)(\s,t;y,\eta)\right)-
		(\partial_ta_{j+1})(\s;\left(q_j,p_j\right)(\s,t;y,\eta))
		\\[1ex]
		&+\bigg	(a'_{j,x}(\s;\left(q_j,p_j\right)(\s,t;y,\eta))-a'_{j+1,x}
		(\s;\left(q_j,p_j\right)(\s,t;y,\eta))\bigg)\cdot\partial_tq_j(\s,t;y,\eta)
		\\[1ex]
		&+\bigg( a'_{j,\xi}(\s;\left(q_j,p_j\right)(\s,t;y,\eta))- a'_{j+1,\xi}
		(\s;\left(q_j,p_j\right)(\s,t;y,\eta))\bigg)\cdot\partial_tp_j(\s,t;y,\eta),
		\end{align*}
		and we use \eqref{hameq} to write
		\begin{align*}
		\partial_\s\tilde{\a}_j(\s,t;y,\eta)&=\left[\partial_\s(a_j-a_{j+1})\right]
		(\s;\left(q_j,p_j\right)(\s,t;y,\eta))
		\\[1ex]
		&+a'_{j,\xi}(\s;\left(q_j,p_j\right)(\s,t;y,\eta))\cdot a'_{j+1,x}
		(\s;\left(q_j,p_j\right)(\s,t;y,\eta))
		\\[1ex]
		&- a'_{j,x}(\s;\left(q_j,p_j\right)(\s,t;y,\eta))\cdot a'_{j+1,\xi}
		(\s;\left(q_j,p_j\right)(\s,t;y,\eta))
		\\[1ex]
		&=\{\tau-a_j,\tau-a_{j+1}\}(\s;\left(q_j,p_j\right)(\s,t;y,\eta)).
		\end{align*}
		Assumption \ref{assumpt:invo} then implies
		\begin{equation}\label{tildeAlphaEqu}
			\partial_\s\tilde{\a}_j(\s,t;y,\eta)=b_{j,j+1}(\s;\left(q_j,p_j\right)(\s,t;y,\eta))\cdot\tilde{\a}_j(\s,t;y,\eta)+d_{j,j+1}(\s;\left(q_j,p_j\right)(\s,t;y,\eta)).
		\end{equation}
		Solving \eqref{tildeAlphaEqu} as a first order linear ODE in $\s$ with unknown $\tilde{\a}_j(\s,t;y,\eta)$, and writing $b_j$ in place of $b_{j,j+1}$,
		$d_j$ in place of $d_{j,j+1}$, respectively, we see that
		\begin{align*}
		\tilde{\a}_j(\s,t;y,\eta)&=\exp\left(\int_{t_j}^{\s} b_j
		(\tau;\left(q_j,p_j\right)(\tau,t;y,\eta))d\tau\right)
		\cdot\left[\tilde{\a}_j(t_{j},t;y,\eta)+
		\phantom{
		\int_{t_j}^\s
		d_j\left(\nu;\left(q_j,p_j\right)(\nu,t;y,\eta)\right)
		\cdot \exp\left(\int_{\nu}^{\s}b_j\left(\varsigma;\left(q_j,p_j\right)
		(\varsigma,t;y,\eta)\right)d\varsigma\right)d\nu
		}
		\right.
		\\[1ex]
		&\left.+{\displaystyle{\int_{t_j}^\s}}d_j\left(\nu;\left(q_j,p_j\right)(\nu,t;y,\eta)\right)
		\cdot \exp\left(-\int_{\nu}^{\s}b_j\left(\varsigma;\left(q_j,p_j\right)
		(\varsigma,t;y,\eta)\right)d\varsigma\right)d\nu.\right]
		\end{align*}
		Once again, notice that all the composition performed so far are well defined, and produce SG symbols, in view of Lemma \ref{lem:invariantcomsymbol},
		\eqref{eq:equiv}, and recalling that $h\in\SG^{0,0}\Rightarrow \exp(h)\in\SG^{0,0}$ (see, e.g., \cite{Coriasco:998.1,Cothesis}).
		\par
		As stated in Proposition \ref{prop:trajec}, we can write $\tilde{\a}_j$ in terms of the solution to the critical points problem \eqref{critpntprob}.
		Indeed, by \eqref{trajequal1}, \eqref{trajequal2} we get
		\begin{align*}
		\tilde{\a}_j(t_j,t_{j-1};&\left(q_j,p_j\right)(t_j,t_{j-1};Y_{j-1}(\bt_{M+1};x,\xi),N_{j-1}(\bt_{M+1};x,\xi)))=
		\\[1ex]
		&=\tilde{\a}_j(t_j,t_{j-1};Y_{j}(\bt_{M+1};x,\xi),N_{j}(\bt_{M+1};x,\xi))	
		\\[1ex]
		&=\left(a_j-a_{j+1}\right)(t_j;Y_j(\bt_{M+1};x,\xi),N_j(\bt_{M+1};x,\xi)).
		\end{align*}
		Moreover, using Proposition \ref{prop:tderivofmultiph}, we obtain the equality
		\begin{align}\label{aphaTildaPartialPhi}
		\tilde{\a}_j(t_j,t_{j-1};\left(q_j,p_j\right)(t_j,t_{j-1};Y_{j-1}(\bt_{M+1};x,\xi),N_{j-1}(\bt_{M+1};x,\xi)))=
			-\partial_{t_{j}}\phi(\bt_{M+1};x,\xi).
		\end{align}
		Define also
		\begin{align*}\label{GDef}
		G_j(t_j)&\equiv G_j(\bt_{M+1};x,\xi)=
		\\[1ex]		
		&=\exp\left[\int_{t_j}^{t_{j-1}} b_j(\tau;\left(q_j,p_j\right)
			(\tau,t_{j-1};Y_{j-1}(\bt_{M+1};x,\xi),N_{j-1}(\bt_{M+1};x,\xi)))d\tau\right]
		\end{align*}
		and
		\begin{align*}%\label{FDef}
		F_j(t_j)&\equiv F_j(\bt_{M+1};x,\xi)=
		\\[1ex]
		&=\left[\int_{t_j}^{t_{j-1}}d_j\left(\nu;\left(q_j,p_j\right)
		(\nu,t_{j-1};y,\eta)\right) \cdot \right.
		\\[1ex]
		&\phantom{=(\int_{t_j}^{t_{j-1}}}\cdot\left.\exp\left(-\int_{\nu}^{t_{j-1}}b_j
		\left(\varsigma;\left(q_j,p_j\right)(\varsigma,t_{j-1};y,\eta)\right)
		d\varsigma\right)d\nu\right]_{(y,\eta)=(Y_{j-1},N_{j-1})(\bt_{M+1};x,\xi)},
		\end{align*}
		where both $G_j$ and $F_j$, as a consequence of Lemma \ref{lem:invariantcomsymbol} and the properties of
		$b_j$, $d_j$, $(q_j,p_j)$ and $(Y_{j-1},N_{j-1})$, are symbols belonging to $C^\infty(\Delta(T_1);\SG^{0,0})$.
		\par
		Then, using the formulae \eqref{newPsiTDer} and \eqref{aphaTildaPartialPhi} above, we find that $\Psi_j$ must fullfill
		\begin{equation}\label{psiComp}
		\partial_{t_{j-1}}\Psi_j=T_j(Z_j)\cdot \Psi'_{j,x}-F_j(Z_j)
		  -(\partial_{t_j}\phi)(Z_j)\bigg(\partial_{t_{j-1}}Z_j-T_j(Z_j)
		     \cdot Z'_{j,x}-G_j(Z_j)\bigg),
		\end{equation}
		where we omitted everywhere the dependence on $(\bt_{M+1};x,\xi)$,
		$(\partial_{t_j}\phi)(Z_j)$ stands for $(\partial_{t_j}\phi)(\bt_{M+1,j}(Z_j(\bt_{M+1};x,\xi));x,\xi)$,
		and
		\[
			F_j(Z_j)=F_j(\bt_{M+1,j}(Z_j(\bt_{M+1};x,\xi));x,\xi), G_j(Z_j)=G_j(\bt_{M+1,j}(Z_j(\bt_{M+1};x,\xi));x,\xi).
		\]
		\par
		Now, in order to simplify \eqref{psiComp}, we choose $Z_j$ as solution to the quasilinear Cauchy problem,
		\begin{align}\label{quasiZj}
		\begin{cases}
		\partial_{t_{j-1}}Z_j&=T_j(Z_j)\cdot Z'_{j,x}+G_j(Z_j)
		\\[1ex]
		Z|_{t_j={t_{j-1}}}&=t_{j+1}.
		\end{cases}
		\end{align}
		\par
		It is easy to see that \eqref{quasiZj} is a quasilinear Cauhcy problem of the type considered in
		Subsection \ref{subs:auxpde}. In view of Lemma \ref{lem:exitOfSolForOrdin},
		we can solve \eqref{quasiZj} through its characteristic system \eqref{ordin}, with $T_j$ in place of $L$ and $G_j$ in place of $H$,
		choosing a sufficiently small parameter interval $[0,T^\prime]$, $T^\prime\in(0,T_2]$. Indeed, 
		by Corollary \ref{cor:qlcpsol} and Remark \ref{rem:invJR} (cfr. \cite{Taniguchi02}), defining 
		\[
			Z_j(\bt_{M+1};x,\xi)=K(\bt_{M+1};\bar{R}(\bt_{M+1};x,\xi),\xi), \quad \bt_{M+1}\in\Delta(T^\prime), x,\xi\in\R^n,
		\]
		gives a solution of \eqref{quasiZj} with all the desired properties. 
		
		With such choice of $Z_j$, \eqref{psiComp} is then reduced to the linear, non-homohgeneous PDE 
		\begin{align}\label{newPsiComp}
			\partial_{t_{j-1}}\Psi_j=T_j(Z_j)\cdot\Psi'_{j,x}-F_j(Z_j),
		\end{align}
		with the initial condition
		\begin{align}\label{inConNewPsi}
			{\Psi_j}|_{t_{j-1}=t_j}=0.
		\end{align}
		Notice that \eqref{inConNewPsi} holds true since we have
		$Z_j|_{{t_j=t_{j-1}}}=t_{j+1}$, that, together with \eqref{ii.propofsharprod}
		in Proposition \ref{prop:tderivofmultiph}, gives
		\begin{multline*}
		\Psi_j(\bt_{M+1,j-1}(t_j);x,\xi)=
		\\[1ex]
		\phi_j(\bt_{M+1,j-1}(t_j);x,\xi)
		-\phi(t_0,\dots,t_{j-2},t_j,{Z_j(\bt_{M+1};x,\xi)}|_{t_j=t_{j-1}},t_{j+1},\dots,t_{M+1};x,\xi)
		\\[1ex]
		=\left[\varphi_1(t_0,t_1)\sharp\dots\sharp\varphi_{j-1}(t_{j-2},t_j)
		\sharp\right.\underbrace{(\varphi_{j+1}(t_j,t_j)\sharp\varphi_j(t_j,t_{j+1}))}_{=\varphi_j(t_j,t_{j+1})}\sharp
		\\[1ex]
		\left.\sharp\varphi_{j+2}(t_{j+1},t_{j+2})\sharp\dots\varphi_{M+1}(t_M,t_{M+1})\right](x,\xi)
		\\[1ex]
		-\left[\varphi_1(t_0,t_1)\sharp\dots\sharp\varphi_{j-1}(t_{j-2},t_{j})
		\sharp\right.\underbrace{(\varphi_j(t_j,t_{j+1})\sharp\varphi_{j+1}(t_{j+1},t_{j+1}))}_{=\varphi_j(t_j,t_{j+1})}\sharp
		\\[1ex]
		\left.\sharp\varphi_{j+2}(t_{j+1},t_{j+2})\sharp\dots\sharp\varphi_{M+1}(t_M,t_{M+1})\right](x,\xi)
		=0.
		\end{multline*}
		\par
		Then, the method of characteristics, applied to the linear, non-homogeneous PDE \eqref{newPsiComp}, shows that we can write $\Psi_j$ in the form
		\begin{equation}\label{finalFormOfPsi_j}
			\Psi_j(\bt_{M+1};x,\xi)=\int_{t_j}^{t_{j-1}}\widetilde{F}_j(\bt_{M+1,j-1}(\tau);
		\theta(\tau;\tilde{\theta}(\tau;x,\xi),\xi),\xi)d\tau,
		\end{equation}
		where 
		\begin{align*}
			\widetilde{F}_j(\bt_{M+1};x,\xi)&=-F_j(\bt_{M+1,j}(Z_j(\bt_{M+1};x,\xi));x,\xi),
			\\[1ex]
%			\label{theta_j}
			\theta(\tau;y,\xi)&=\phantom{-}\theta(\bt_{M+1,j}(\tau);y,\xi),
			\\[1ex]
%			\label{inverO}	
			\tilde{\theta}(\tau;x,\xi)&=\phantom{-}\tilde{\theta}(\bt_{M+1,j-1}(\tau);x,\xi),
		\end{align*}
		for suitable vector-valued functions $\theta,\tilde{\theta}$.
		By arguments similar to those in Subsection \ref{subs:auxpde} (cf. \cite{Taniguchi02}), both $\theta$ and $\tilde{\theta}$
		turn out to be elements of $C^\infty(\Delta(T^\prime);\SG^{1,0}\otimes\R^n)$, satisfying
		\[
			\jb{\theta(\bt_{M+1,j}(\tau);y,\xi)}\sim\jb{y},\quad
			\jb{\tilde{\theta}(\bt_{M+1,j-1}(\tau);x,\xi)}\sim\x,
		\]
		with constants independent of $\bt_{M+1}\in\Delta(T^\prime)$, $x,\xi\in\R^n$.
		Such result, together with the properties of $Z_j$ and another application of Lemma \ref{lem:invariantcomsymbol}, allows to conclude that
		$\Psi_j\in C^\infty(\Delta(T');\SG^{0,0})$, and it is identically zero when $d_j\equiv0$, as claimed.
		The proof is complete.
\end{proof}
	\par

\begin{remark}\label{rem:commiterint}
	As we will see in the next Section \ref{sec:fundsol}, the fundamental solution of a system of the form \eqref{sysint}
	involves iterated integrals (on subintervals of $[0,T^\prime]$) of multi-products
	of SG FIOs with matrix-valued symbols of order $(0,0)$, and regular,
	parameter-dependent SG phase functions $\varphi_{N_j}$, $N_j=1,\dots,N$, $j=1,\dots, \nu$, $\nu\in\N$, obtained as solutions to eikonal equations,
	associated with an involutive family of Hamiltonians $\{\lambda_j\}_{j=1}^N$. 
	In view of \cite{AsCor}, such multi-product is a SG FIO, with symbol of order $(0,0)$ and regular phase fuction
	given by the multi-product $\phi=\varphi_{N_1}\#\cdots\#\varphi_{N_\nu}$.
	Theorem \ref{cruthm}, together with some standard manipulation of multiple integrals and change of time-parameters of the form
	$t_j=Z_j(\bt_{\nu+1,j}(\widetilde{t}_j);x,\xi)$, implies that it is possible to change
	the order of two (or more) factors in the multi-product $\varphi_{N_1}\#\cdots\#\varphi_{N_\nu}$ within the iterated integral.
	The result is still an iterated integral, on the same subintervals, of a SG FIOs with regular phase function $\widetilde{\phi}$,
	given by a differently sorted multi-product of the same phase functions $\varphi_{N_j}$, $N_j=1,\dots,N$, $j=1,\dots, \nu$, $\nu\in\N$,
	and symbol of order $(0,0)$. Indeed, the symbol is altered only by the change of the $t_j$ parameter
	and (products of) factors of the form $\partial_{t_j}Z_j\cdot\exp(-i\Psi_j)$,
	which are of order $(0,0)$ and elliptic, by Theorem \ref{cruthm} and Lemma \ref{lem:invariantcomsymbol},
	again taking into account that $h\in\SG^{0,0}\Rightarrow \exp(h)\in\SG^{0,0}$ (see \cite{Abdeljawadthesis, Coriasco:998.1, Morimoto, Taniguchi02}
	for details). In particular, if the original symbol is elliptic, the same property holds true for the transformed symbol.
\end{remark}

	\section{Cauchy problems for weakly SG-hyperbolic systems}\label{sec:fundsol}
	\setcounter{equation}{0}
	%%%%%%%%%%%%%%%%%%%%%%%%%%%%%%%%%%%%%%
	%                                    %
	%    reduced fundamental solution    %
	%                                    %
	%%%%%%%%%%%%%%%%%%%%%%%%%%%%%%%%%%%%%%
	
	In the present section we deal with the Cauchy problem 
	\begin{equation}\label{sys}
	\begin{cases}
	\bL U(t,s) = F(t), & (t,s)\in\Delta_T,
	\\[1ex]
	U(s,s)  = G, & s\in[0,T), 
	\end{cases}
	\end{equation}
	
	on the simplex $\Delta_{T}:=\{(t,s)\vert\ 0\leq s\leq t\leq T\}$, where
	\begin{equation}\label{L}
	\bL(t,D_t;x,D_x) = D_t + \Lambda(t;x,D_x) +R(t;x,D_x),
	\end{equation}
	 $\Lambda$ is a ($N\times N$)-dimensional, diagonal operator matrix, whose entries
	 $\lambda_j(t;x,D_x)$, $j=1,\dots, N$, are pseudo-differential operators
	 with real-valued, parameter-dependent symbols $\lambda_j(t;x,\xi)\in C^\infty([0,T]; \SG^{1,1})$,
	 $R$ is a parameter-dependent, ($N\times N$)-dimensional operator matrix of pseudo-differential operators
	 with symbols in $C^\infty([0,T];\SG^{0,0})$,
	 $F\in C^\infty([0,T],H^{r,\varrho}\otimes\R^N)$, $G\in H^{r,\varrho}\otimes\R^N$, $r,\varrho\in\R$.
	
	The system \eqref{L} is then of hyperbolic type, since
	the principal symbol part $\diag(\lambda_j(t;x,\xi))_{j=1,\dots,N}$ of the coefficient matrix is
	diagonal and real-valued.
	Then, its fundamental solution $E(t,s)$ exists (see \cite{CO}), and can be obtained as an
	infinte sum of matrices of Fourier integral operatos (see \cite{Kumano-go:1,Taniguchi02} and Section 5 of \cite{AsCor}
	for the SG case).
	Here we are going to show that if \eqref{sys} is of involutive type,
	then its fundamental solution $E(t,s)$ can be reduced to a finite sum expression,
	modulo a smoothing remainder, in the same spirit of \cite{Kumano-go:1, Taniguchi02},
	by applying the results from Section \ref{sec:sgphfcomlaw} above.	
		
The fundamental solution of 
	\eqref{sys} is a family $\{E(t,s)\vert (t,s)\in \Delta_{T'}\}$ of operators satisfying 
	\begin{equation}\label{tocheck}
	\begin{cases}
	\bL E(t,s) = 0, 	& (t,s)\in \Delta_{T'},
	\\[1ex]
	E(s,s)=I,						& s\in[0,T'),
	\end{cases}
	\end{equation}
	for $0< T'\leq T$.
	For $T'$ small enough, see Section 5 of \cite{AsCor},
	it is possible to express $\{E(t,s)\}$ in the form
	\begin{equation}\label{eq:E}
	E(t,s) = I_\varphi(t,s) + \int_s^t I_\varphi(t,\theta)\sum_{\nu=1}^\infty W_\nu(\theta,s)\,d\theta,
	\end{equation}
	where $I_\varphi(t,s)$ is the operator matrix defined by 
	 \[
	 I_\varphi(t,s) = \begin{pmatrix} I_{\varphi_1}(t,s) & & 0 
	 \\[1ex]
	  & \ddots & 
	  \\[1ex]
	   0 & & I_{\varphi_N}(t,s)\end{pmatrix}
	 \]
	and $I_{\varphi_j}:=Op_{\varphi_j}(1)$, $1\leq j\leq N$.
	The phase functions $\varphi_j = \varphi_j(t,s;x,\xi),\ 1\leq j\leq N$,
	defined on $\Delta_{T'}\times\R^{2n}$,
	are solutions to the eikonal equations \eqref{eik} with $-\lambda_j$ in place of $a$.
	The sequence of ($N\times N$)-dimensional matrices of SG \fios\
	$\{W_\nu(t,s);(t,s)\in\Delta({T'})\}_{\nu\in\N}$ is defined recursively as 
	\begin{equation}\label{eq:Wn+1}
	W_{\nu+1}(t,s;x,D_x) = \int_s^t W_1(t,\theta;x,D_x)W_\nu(\theta,s;x,D_x)d\theta,
	\end{equation}
	starting with $W_1$ defined as
	\begin{equation}\label{heart}
	\bL I_{\varphi}(t,s) = i W_1(t,s).
	\end{equation}
	We also set
		\begin{equation}\label{omega_j}
				w_{j}(t,s;x,\xi)=\sigma(W_{j}(t,s;x,D_x)),\quad j=1,\dots,\nu+1,\ldots .
		\end{equation}
		the (matrix-valued) symbol of $W_{j}$,

	\par
	
The following result about existence and uniqueness of a solution $U(t,s)$ 
to the Cauchy problem \eqref{sys} is a SG variant of the classical Duhamel formula, see \cite{AsCor,CO,Coriasco:998.2}.

\par

\begin{proposition}\label{wp}
For $F\in C^\infty([0,T]; H^{r,\varrho}(\R^n)\otimes\R^N)$ and $G\in H^{r,\varrho}(\R^n)\otimes\R^N$,
the solution $U(t,s)$ of the Cauchy problem \eqref{sys}, under the SG-hyperbolicity assumptions
explained above, exists uniquely for $(t,s)\in\Delta_{T'}$, $T'\in(0,T]$ suitably small,
it belongs to the class $\displaystyle\bigcap_{k\in\Z_+}C^k(\Delta_{T'}; H^{r-k,\varrho-k}(\R^n)\otimes\R^N)$, and is given by
$$
U(t,s)=E(t,s)G+i\ds\int_s^t E(t,\s)F(\s)d\s,\quad (t,s)\in\Delta_{T'}, s\in[0,T').
$$	
\end{proposition}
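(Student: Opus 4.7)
The plan is to apply Duhamel's principle with the fundamental solution $E(t,s)$ whose construction \eqref{eq:E}--\eqref{heart} is based on the SG FIO calculus recalled in Section \ref{sec:sgcalc}.

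First I would check directly that the candidate $U(t,s)=E(t,s)G+i\int_s^t E(t,\sigma)F(\sigma)\,d\sigma$ solves \eqref{sys}. The initial condition $U(s,s)=G$ is immediate from $E(s,s)=I$ in \eqref{tocheck}. For the equation, recalling $D_t=-i\partial_t$, differentiation under the integral sign produces the boundary contribution $E(t,t)F(t)=F(t)$, while the remaining integrand reduces, after writing $\partial_t=iD_t$, to $i\,\bL E(t,\sigma)F(\sigma)$, which vanishes by \eqref{tocheck}. Since also $\bL(E(t,s)G)=(\bL E(t,s))G=0$, one concludes $\bL U(t,s)=F(t)$ on $\Delta_{T'}$.

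For the regularity claim, I would observe that every operator appearing in \eqref{eq:E} is a matrix of SG FIOs with amplitudes in $\SG^{0,0}$ and regular SG phase functions given by the solutions of the eikonal equations \eqref{eik}. Theorem \ref{thm:fiocont} then ensures that each such operator continuously maps $H^{r,\varrho}(\R^n)\otimes\R^N$ into itself, uniformly in $(t,s)\in\Delta_{T'}$ for $T'$ sufficiently small, provided the series $\sum_\nu W_\nu$ converges in the relevant operator topology. This convergence follows from the factorial decay of the symbol seminorms of $W_\nu$ established in Section 5 of \cite{AsCor}, together with the algebra estimate \eqref{eq:estsna}. Hence $U\in C^0(\Delta_{T'};H^{r,\varrho}(\R^n)\otimes\R^N)$. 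Higher regularity is obtained by bootstrapping directly from the identity $D_t U=F-(\Lambda+R)U$: each application trades one additional time derivative for a loss of order $(1,1)$ in the Sobolev-Kato scale, since $\Lambda\in C^\infty([0,T];\Op(\SG^{1,1}))$ and $R\in C^\infty([0,T];\Op(\SG^{0,0}))$. An induction on $k\in\Z_+$ then gives the claimed membership in $C^k(\Delta_{T'};H^{r-k,\varrho-k}(\R^n)\otimes\R^N)$.

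Uniqueness is handled by linearity: the difference $V$ of two solutions solves the homogeneous Cauchy problem with vanishing datum. Applying the standard SG energy inequality for first-order hyperbolic systems with real-valued diagonal principal part (cf. \cite{CO, Coriasco:998.2}) yields $\|V(t,s)\|_{r,\varrho}\le C\int_s^t\|V(\tau,s)\|_{r,\varrho}\,d\tau$, from which $V\equiv 0$ follows by Gronwall's lemma. The only delicate point in the whole argument is guaranteeing the convergence of the Neumann-type series in \eqref{eq:E} with mapping bounds uniform on $\Delta_{T'}$, which is precisely what forces the smallness requirement on $T'$ and is carried out in \cite{AsCor}; once this is granted, the three steps above are essentially a direct verification.
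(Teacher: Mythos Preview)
Your proposal is correct and follows precisely the standard Duhamel argument that the paper itself does not spell out but defers to \cite{AsCor,CO,Coriasco:998.2}: verify the formula solves \eqref{sys} using \eqref{tocheck}, read off $C^0$-regularity from the $H^{r,\varrho}$-boundedness of the FIOs in \eqref{eq:E} (with the convergence of $\sum_\nu W_\nu$ coming from \cite{AsCor}), bootstrap time regularity from $D_tU=F-(\Lambda+R)U$, and conclude uniqueness via the energy inequality and Gronwall. There is nothing to add; the paper treats this proposition as known and your sketch is exactly the argument those references supply.
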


		 Notice that, since the phase functions $\varphi_j$ are solutions of eikonal equations
	\eqref{eik} associated with the Hamiltonians $-\lambda_j$,  we have the relation
	
	\beqsn
	D_t \Op_{\varphi_j(t,s)}(1) + \Op(\lambda_j(t))\Op_{\varphi_j(t,s)}(1)
	=
	\Op_{\varphi_j(t,s)}(b_{0,j}(t,s)),\quad b_{0,j}(t,s)\in  \SG^{0,0}(\R^{2n}),
	\eeqsn
	$j=1,\dots,N$. Then, 
	\beqs\label{eq:W_1}
	W_1(t,s):= -i\left( 
	\begin{pmatrix} B_{0,1}(t,s) & & 0 
		\\[1ex]
		 & \ddots & 
		 \\[1ex]
		  0 & & B_{0,N}(t,s)
	  \end{pmatrix} + R(t) I_{\varphi}(t,s)\right),
	\eeqs
	with $B_{0,j}(t,s)=\Op_{\varphi_j(t,s)}(b_{0,j}(t,s))$ and $b_{0,j}(t,s)\in S^{0,0}$, $j=1,\dots, N$. 

	\par
	
	By \eqref{eq:W_1} and Theorem \ref{thm:compi}, one can rewrite equation \eqref{heart} as
	\begin{equation}\label{heartbis}
	LI_{\varphi}(t,s) =  \sum_{j=1}^{N} \widetilde{W}_{\varphi_j}(t,s),
	\end{equation}
	where $ \widetilde{W}_{\varphi_j}(t,s)$ are ($N\times N$)-dimensional matrices,
	 with entries given by Fourier integral operators with parameter-dependent phase function $\varphi_j$ and 
	 symbol in $S^{0,0}$, $1\leq j\leq N$.
	Thus, if we set $M_\nu=\{\mu=(N_1,\dots,N_\nu):N_k=1,\dots,N, \ k=1,\dots,\nu\}$ for $\nu \geq 2$,
	the operator matrix $W_\nu(t,s)$ can be written in the form of iterated integrals, namely
	\begin{align}
	\int_s^t\int_s^{\theta_1}\ldots\int_s^{\theta_{\nu-2}}  \sum\limits_{\mu\in M_\nu}
	W ^{(\mu)}(t,\theta_1,\dots,\theta_{\nu-1},s)d\theta_{\nu-1}\ldots d\theta_1,
	\end{align}
	where 
	$$
	W ^{(\mu)}(t,\theta_1,\dots,\theta_{\nu-1},s)
	=
	W_{\varphi_{N_1}}(t,\theta_1) W_{\varphi_{N_2}}(\theta_1,\theta_2)
		\dots W_{\varphi_{N_\nu}}(\theta_{\nu-1},s)
	$$
	is the product of $\nu$ Fourier integral operators matrices with regular phase functions $\varphi_{N_j}$
	and elliptic symbols $\s(W_{\varphi_{N_j}}(\theta_{j-1},\theta_{j}))
	=-i\s(\widetilde{W}_{\varphi_{N_j}}(\theta_{j-1},\theta_{j}))\in S^{0,0}$.
	By (2) in Proposition \ref{thm:main}, $W ^{(\mu)}(t,\theta_1,\dots,\theta_{\nu-1},s)$
	is a matrix of Fourier integral operators with phase function
	$\phi ^{(\mu)}=\varphi_{N_1}\sharp\dots\sharp\varphi_{N_\nu}$ and parameter-dependent elliptic symbol
	$\omega ^{(\mu)}(t,\theta_1,\dots,\theta_{\nu-1},s)$ of order $(0,0)$. Consequently, we can write
	\begin{equation}\label{eq:fundsol}
	\begin{aligned}
	E(t,s)&= I_\varphi(t,s) + \int_s^t I_\varphi(t,\theta)\bigg\{\sum_{j=1}^{N} W_{\varphi_j}(\theta,s)
	\\[1ex]
	&+\sum\limits_{\nu=2}^\infty \sum\limits_{\mu\in M_\nu}\int_s^\theta\int_s^{\theta_1}
	\!\!\!\!\ldots\int_s^{\theta_{\nu-2}}  
	\!\!\!W ^{(\mu)}(\theta,\theta_1,\dots,\theta_{\nu-1},s)d\theta_{\nu-1}\ldots d\theta_1\bigg\}d\theta.
	\end{aligned}
	\end{equation}
	
	Given the commutative properties of the product of the Fourier integral operators appearing in the
	espression of $E(t,s)$ under Assumption \ref{assumpt:invo}, which follow from the results proved
	in Section \ref{sec:sgphfcomlaw}, our second main result, the next Theorem \ref{thm:mainthm}, can be proved by an argument
	analogous to the one illustrated in \cite{Taniguchi02} (the details of the proof in the SG symbol classes can be found in \cite{Abdeljawadthesis}).
	
	\begin{theorem}\label{thm:mainthm}
		Let \eqref{sys} be an involutive SG-hyperbolic system, that is, Assumption \ref{assumpt:invo} is fulfilled by the family
		$\{\lambda_j\}_{j=1}^N$. 
		Then, the fundamental solution \eqref{eq:fundsol} can be reduced, modulo smoothing terms, to
		\begin{equation}\label{eq:reducedfundsol}
		\begin{aligned}
		E(t,s)=&I_\varphi(t,s)+\sum\limits_{j=1}^{N}W_{\varphi_j}^\nmid(t,s)
		\\[1ex]
		&+\sum\limits_{j=2}^N \sum\limits_{\mu\in                    
			M_j^\nmid}\int_s^t\int_s^{\theta_1}\ldots\int_s^{\theta_{j-2}}  
		{W}^{(\mu^\nmid)}(t,\theta_1,\dots,\theta_{j-1},s)d\theta_{j-1}\ldots d\theta_1,
		\end{aligned}
		\end{equation}
		where the symbol of $W_{\varphi_j}^\nmid(t,s)$ is $\displaystyle{\int_s^t} w_{j}(\theta,s)\,d\theta$,
		with $w_j$ in  \eqref{omega_j},
		 $\mu^\nmid=(N_1,\dots,N_j)\in M_j^\nmid:=\{N_1<\dots<N_j:\,N_k=1,\dots,N,\ k=1,\dots,j\}$,
		and ${W}^{(\mu^\nmid)}(t,\theta_1,\dots,\theta_{j-1},s)$ is a ($N\times N$)-dimensional matrix of SG \fios\ 
		with regular phase function ${\phi}^{(\mu^\nmid)}=\varphi_{N_1}\sharp\dots\sharp\varphi_{N_j}$ 
		and matrix-valued, parameter-dependent symbol ${\omega}^{(\mu^\nmid)}(t,\theta_1,\dots,\theta_{j-1},s)\in \SG^{0,0}(\R^{2n})$.
	\end{theorem}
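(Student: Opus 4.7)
My plan is to begin from the expansion \eqref{eq:fundsol} and, for each fixed $\nu\ge 2$ and each multi-index $\mu=(N_1,\dots,N_\nu)\in M_\nu$, to rewrite the inner iterated integral of $W^{(\mu)}$ modulo a smoothing operator by systematic application of Theorem \ref{cruthm}. Recall that $W^{(\mu)}(\theta,\theta_1,\dots,\theta_{\nu-1},s)$ is a matrix of SG FIOs of order $(0,0)$ whose phase is the multi-product $\phi^{(\mu)}=\varphi_{N_1}\sharp\cdots\sharp\varphi_{N_\nu}$; by Remark \ref{rem:commiterint}, any adjacent transposition $N_k\leftrightarrow N_{k+1}$ may be performed by the change of the integration parameter $\theta_k\mapsto Z_k(\bt_{\nu+1,k}(\widetilde\theta_k);x,\xi)$ — whose Jacobian $\partial_{t_k}Z_k\asymp -1$ is elliptic of order $(0,0)$ — and the introduction of a further elliptic factor $\exp(-i\Psi_k)\in\SG^{0,0}$. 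The resulting symbol remains in $C^\infty(\Delta(T');\SG^{0,0})$ and keeps its ellipticity, so I may sort the indices of each $\mu$ into non-decreasing order while staying in the same operator class.

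After sorting, whenever two adjacent factors share the same phase function, say $\varphi_N(\theta_{k-1},\theta_k)$ and $\varphi_N(\theta_k,\theta_{k+1})$, I invoke the flow-semigroup identity
\[
\varphi_N(\theta_{k-1},\theta_k)\sharp\varphi_N(\theta_k,\theta_{k+1})=\varphi_N(\theta_{k-1},\theta_{k+1}),
\]
which follows from the uniqueness of solutions to the eikonal equation \eqref{eik} and from Proposition \ref{properties}(2), to collapse the two factors into a single one (modulo smoothing, via the composition rules of Theorem \ref{thm:compi} and the parametrix relations \eqref{eq:parami}--\eqref{eq:paramii}). The integration over $\theta_k$ is then performed explicitly, absorbing that variable into the remaining symbol, which stays in $\SG^{0,0}$ by Theorem \ref{thm:mpsgpdosymestim} and Proposition \ref{thm:main}. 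Iterating the sorting and collapsing procedures, every term with $\nu\ge 2$ factors in \eqref{eq:fundsol} is reduced, modulo smoothing, to a term whose indices are strictly increasing; such reduced indices necessarily form a subset of $\{1,\dots,N\}$ of cardinality $j\le N$, so only finitely many multi-indices $\mu^\nmid\in M_j^\nmid$, $j=1,\dots,N$, survive.

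Finally, I would regroup the (absolutely convergent) series: the contributions that, after this reduction, yield a given $\mu^\nmid=(N_1<\cdots<N_j)$ are summed to produce the matrix-valued symbol $\omega^{(\mu^\nmid)}\in C^\infty(\cdot;\SG^{0,0}(\R^{2n}))$ in \eqref{eq:reducedfundsol}, while the $\nu=1$ contribution (after composition with $I_\varphi(t,\theta)$ from \eqref{eq:fundsol} and integration in $\theta$) yields the single-index term $W^\nmid_{\varphi_j}(t,s)$ with symbol $\int_s^t w_j(\theta,s)\,d\theta$. The main obstacle I expect is precisely the control of this regrouping: one must guarantee absolute convergence, uniform in the symbol seminorms, of the original infinite series and of all its intermediate rearrangements. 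This follows from the uniform $\SG^{0,0}$-bounds on the $\omega^{(\mu)}$ supplied by Theorem \ref{thm:mpsgpdosymestim} and Proposition \ref{thm:main}(3), combined with the volume $|t-s|^{\nu-1}/(\nu-1)!$ of the integration simplex, but keeping track of the constants through the sorting and collapsing operations is the delicate bookkeeping step, for which I would follow the classical blueprint of \cite{Taniguchi02} adapted to the SG setting as in \cite{Abdeljawadthesis}.
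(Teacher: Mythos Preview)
Your proposal is correct and follows essentially the same route as the paper: the paper does not give a self-contained proof of Theorem \ref{thm:mainthm} but states that it follows from the commutative law of Theorem \ref{cruthm} and Remark \ref{rem:commiterint} by ``an argument analogous to the one illustrated in \cite{Taniguchi02} (the details of the proof in the SG symbol classes can be found in \cite{Abdeljawadthesis})'', which is precisely the sort--collapse--regroup scheme you describe, with convergence controlled by the simplex volume $|t-s|^{\nu-1}/(\nu-1)!$ and the uniform $\SG^{0,0}$ bounds from Proposition \ref{thm:main}(3).
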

	
	\begin{remark}\label{rem:fundsol}
		Theorem \ref{thm:mainthm} can clearly be applied to the case of a $N\times N$ system such that $\Lambda$ is diagonal and its symbol entries
		$\lambda_j$, $j=1,\dots, N$, coincide with the (repeated) elements of a family of real-valued, parameter-dependent 
		symbols $\{\tau_j\}_{j=1}^m$, $1 < m < N$, satisfying Assumption \ref{assumpt:invo}.
		In such situation, working initially ``block by block''
		of coinciding elements, and then performing the reduction of \eqref{eq:fundsol} to \eqref{eq:reducedfundsol}, 
		through further applications of the commutative properties proved above, as well as of the associative properties of the multi-products 
		mentioned in Proposition \ref{prop:tderivofmultiph}, we see that \eqref{eq:reducedfundsol} can be further reduced to
		\begin{equation}\label{eq:reducedfundsolbis}
		\begin{aligned}
		E(t,s)&=I_\varphi(t,s)+\sum\limits_{j=1}^{m}W_{\varphi_j}^\nmid(t,s)
		\\[1ex]
		&+\sum\limits_{j=2}^m \sum\limits_{\mu\in                    
			M_j^\nmid}\int_s^t\int_s^{\theta_1}\ldots\int_s^{\theta_{j-2}}  
		{W}^{(\mu^\nmid)}(t,\theta_1,\dots,\theta_{j-1},s)d\theta_{j-1}\ldots d\theta_1,
		\end{aligned}
		\end{equation}
		with $\mu^\nmid=(m_1,\dots,m_j)\in M_j^\nmid:=\{m_1<\dots<m_j:\,m_k=1,\dots,m,\ k=1,\dots,j\}$,
		and  ($N\times N$)-dimensional matrices of \fios\ 
		with phase function ${\phi}^{(\mu^\nmid)}=\varphi_{m_1}\sharp\dots\sharp\varphi_{m_j}$ 
		and matrix-valued, parameter-dependent symbol as above. 
	\end{remark}

\section{Cauchy problems for weakly SG-hyperbolic linear operators}
\label{sec:sginvcp}\setcounter{equation}{0}

Here we employ the results from the previous section to the study of Cauchy problems associated with linear hyperbolic
differential operators of SG type. After obtaining the fundamental solution, we study the propagation of singularities
in the case of SG-classical coefficients.
We recall here just a few basic definitions, see \cite{CO, Coriasco:998.1, Coriasco:998.2, Cothesis, CoPa} for more details.

\begin{definition}\label{def:4.13}
Let $m\in \N$,  $T>0$,
and $L$ be a differential operator of order $m$, that is
\begin{equation}\label{eq:4.69}
L(t,D_t;x,D_x)= D_{t}^m + \sum _{j=1}^{m}p_j(t;x,D_x)D_t^{m-j}
= D_{t}^m + \sum _{j=1}^{m}\sum_{|\alpha|\le j}c_{j\alpha}(t;x)D_x^\alpha D_t^{m-j}
\end{equation}
with symbol $p_{j}(t;x,\xi)=\displaystyle\sum_{|\alpha|\le j} c_{j\alpha}(t;x)\xi^\alpha$
such that $p_j\in C^\infty([0,T];\SG^{j,j}(\R^{2n}))$, that is
\[
	|\partial_t^k\partial^\beta_x c_{j\alpha}(t;x)|\lesssim \x^{j-|\beta|}, \quad \beta\in\Z_+^{n}, j=1,\dots,m,\alpha\in\Z_+^n, |\alpha|\le j .
\]
We denote by
\[
L(t,\tau;x,\xi)  = \tau^m + \sum _{j=1}^{m}p_j(t;x,\xi)\tau ^{m-j}
\] the symbol of the operator $L$, and by
\[
L_m(t,\tau;x,\xi)  = \tau^m + \sum _{j=1}^{m}q_j(t;x,\xi)\tau ^{m-j}
\] 
its principal symbol, where, for $j=1,\dots,m$, $q_{j}(t;x,\xi)=\displaystyle \sum_{|\alpha|=j}\widetilde{c}_{j\alpha}(t;x) \xi^\alpha$ belongs to $C^\infty([0,T];\SG^{j,j}(\R^{2n}))$ and is such that 
\begin{equation*}\label{eq:4.72}
(L-L_m)(t,\tau;x,\xi) = \sum_{j=1}^m r_{j}(t;x,\xi) \tau^{m-j},
\quad  r_{j} \in C^\infty([0,T];\SG^{j-1,j-1}(\R^{2n})).
\end{equation*}
\end{definition}

\begin{definition}\label{def:sghyp}
An operator $L$ of the type introduced in Definition \ref{def:4.13} is called hyperbolic if 
\begin{equation}\label{roots}
	L_m(t,\tau;x,\xi)=\prod_{j=1}^m\left(\tau-\tau_j(t,x,\xi)\right),
\end{equation}
with real-valued, smooth roots $\tau_j$, $j=1,\dots,m$. The roots $\tau_j$ are usually called \textit{characteristics}.
More precisely, $L$ is called:

\begin{enumerate}
\item {\em strictly SG-hyperbolic}, if $L_m$ satisfies
\eqref{roots} with real-valued, distinct and separated roots $\tau_j$, $j=1,\dots,m$, in the sense that there exists a constant $C>0$ such that 
\begin{equation}\label{def:strict}
	|\tau_j(t;x,\xi)-\tau_k(t;x,\xi)|\geq C\x\csi,\quad \forall j\neq k,\ (t;x,\xi)\in[0,T]\times\R^{2n};
\end{equation}
\item {\em (weakly) SG-hyperbolic with (roots of) constant multiplicities}, if $L_m$ satisfies
\eqref{roots} and the real-valued, characteristic roots can be divided into $\mu$ groups ($1\leq \mu\leq m$) of distinct and separated roots, in the sense that, possibly
after a reordering of the $\tau_j$, $j=1,\dots, m$, there exist 
$l_1,\ldots l_\mu\in\N$ with $l_1+\ldots+l_\mu=m$ and $n$ sets
\[ G_1=\{\tau_1=\cdots=\tau_{l_1}\}, G_2=\{\tau_{l_1+1}=\cdots=\tau_{l_1+l_2}\}, \ldots  
G_\mu=\{\tau_{m-l_\mu+1}=\cdots=\tau_{m}\},\]
satisfying, for a constant $C>0$,
%,
\begin{equation}\label{def:constmult}\tau_j\in G_p,\tau_k\in G_q ,\ p\neq q,\ 1\leq p,q\leq \mu\Rightarrow |\tau_j(t,x,\xi)-\tau_k(t,x,\xi)|\geq C\x\csi,\end{equation}
for all $(t,x,\xi)\in[0,T]\times\R^{2n}$; notice that, in the case $n=1$, we have only one group of $m$ coinciding roots, that is, $L_m$ 
admits a single real root of multiplicity $m$,
while for $n=m$ we recover the strictly hyperbolic case; the number $l=\max_{j=1,\dots,n}l_j$ is the \textit{maximum multiplicity of the roots of $L_m$};
\item {\em (weakly) SG-hyperbolic with involutive roots (or SG-involutive)}, if $L_m$ satisfies \eqref{roots} with 
real-valued characteristic roots such that the family $\{\tau_j\}_{j=1}^m$ satisfies Assumption \ref{assumpt:invo}. 
\end{enumerate} 
\end{definition}

\par

\begin{remark}\label{rem:4.14} 
	Roots of constant multiplicities are involutive. The converse statement is not true in general, see e.g., \cite{AsCoSu:1,Morimoto}.
\end{remark}

\par

%%%%%%%%%%%%%%%%%%%%%%%%%%%%%%%%%%%%%%%%%%%%
\subsection{Fundamental solution for linear SG-involutive operators of order $m\in\N$}%
%%%%%%%%%%%%%%%%%%%%%%%%%%%%%%%%%%%%%%%%%%%%

We will focus here on SG-involutive operators, see the references quoted above for the known results about SG-hyperbolic operators with constant
multiplicities. In particular, we assume that there is no couple of characteristic roots $\tau_j$, $\tau_k$, $j,k=1,\dots,m$, $k\not=j$, satisfying \eqref{def:constmult}. 
It is possible to translate the Cauchy problem
\begin{equation}
\label{eq:4.74}
\left\{
\begin{array}{ll}
L u(t,s) = f(t),          & (t,s)\in \Delta_T,
\\[1ex]
D_{t}^k u(s,s) = g_{k}, & k=0,\dots,m-1, s\in[0,T),
\end{array}
\right.
\end{equation}
for a SG-involutive operator $L$ in the sense of Definition \ref{def:sghyp},
into a Cauchy problem for an involutive system \eqref{sys} with suitable initial 
conditions, under an appropriate factorization condition, see below.

\par

We write $\Theta _{j}=\Op(\tau_j)$, and also set, for convenience below,
$\Gamma_{j} = D_{t} - \Theta _{j}$, $j=1,\dots,m$.
Moreover, with the permutations $\perm_{k}$ of $k$ elements of the set $\{1, \dots, m\}$ from Section \ref{sec:fundsol}, and their sorted counterparts
$\perms_k$, $1 \leq k\leq m$, we introduce the notation
\[
\perm_{0}  =   \{ \emptyset  \}, \quad
\perm      = \bigcup_{k=0}^{m-1} \perm_{k}, \quad
\perms    =  \bigcup_{k=1}^{m} \perms_{k}.
\]
For $\alpha \in \perm_{k}$, $0 \leq k\leq m$, we define 
$\card{\alpha} = k$ and
\[
\tD _{\emptyset}    =  I, \qquad
\tD_{\alpha}          =  \tD _{\alpha_{1}}\dots \tD _{\alpha_{k}},\,
\alpha =(\alpha _{1}, \dots, \alpha _{k} )\in M_k, k\ge1.
\]

\par

The proof of the following Lemma \ref{lemma:4.16.2.1} can be found in \cite{Cothesis}. 
Analogous results are used in \cite{Kumano-go:1} and \cite{Morimoto}.

\par

\begin{lemma}
	\label{lemma:4.16.2.1}
	When $\{ \lambda _{j} \}$ is an involutive system, for all
	$\alpha \in \perm_{m}$ we have
	\begin{equation}
	\label{eq:4.112.7}
	\Gamma _{\alpha} = \Gamma _{1} \dots \Gamma _{m} + 
	\sum_{\beta \in \perm}
	\Op(q^{\alpha}_{\beta}(t)) \Gamma _{\beta},
	\end{equation}
	where $q^{\alpha}_{\beta} \in C^\infty([0,T];\SG^{0,0}(\R^{2n}))$.
\end{lemma}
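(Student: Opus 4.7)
The plan is to prove, by induction on $k=\card{\alpha}$, the following strengthening of the lemma: for every $k\le m$ and every $\alpha\in\perm_k$,
\[
\Gamma_\alpha=\Gamma_{\widetilde{\alpha}}+\sum_{\beta\in\bigcup_{l<k}\perm_l}\Op(q^\alpha_\beta(t))\,\Gamma_\beta,
\]
where $\widetilde{\alpha}$ is the non-decreasing reordering of $\alpha$ and $q^\alpha_\beta\in C^\infty([0,T];\SG^{0,0}(\R^{2n}))$. When $\alpha\in\perm_m$ is a permutation of $(1,\ldots,m)$ one has $\widetilde{\alpha}=(1,\ldots,m)$ and $\Gamma_{\widetilde{\alpha}}=\Gamma_1\cdots\Gamma_m$, so the assertion of the lemma falls out.

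The analytic input is the commutator identity for two adjacent factors. Since the full symbol of $\Gamma_j=D_t-\Op(\tau_j)$ is $\tau-\tau_j(t;x,\xi)$, the SG composition asymptotics \eqref{eq:comp} together with $[D_t,\Op(\tau_k)]=\Op(-i\partial_t\tau_k)$ give
\[
[\Gamma_j,\Gamma_k]=\tfrac{1}{i}\,\Op\bigl(\{\tau-\tau_j,\tau-\tau_k\}\bigr)\quad\mod\ \Op(\SG^{0,0}(\R^{2n})).
\]
Assumption~\ref{assumpt:invo} identifies the Poisson bracket with $b_{jk}(\tau_j-\tau_k)+d_{jk}$, and since $\Op(\tau_j-\tau_k)=\Theta_j-\Theta_k=\Gamma_k-\Gamma_j$ the $\Gamma$-factors can be pulled out to yield
\[
[\Gamma_j,\Gamma_k]=\Op(\widetilde{b}^{\,1}_{jk})\,\Gamma_j+\Op(\widetilde{b}^{\,2}_{jk})\,\Gamma_k+\Op(e_{jk}),\qquad\widetilde{b}^{\,i}_{jk},\,e_{jk}\in C^\infty([0,T];\SG^{0,0}(\R^{2n})).
\]
Hence swapping two neighbouring $\Gamma$'s costs only an $\Op(\SG^{0,0})$-prefactor times a $\Gamma$-product shorter by one or two factors.

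The induction is carried out by bubble sort. Given $\alpha\in\perm_k$, pick an adjacent inversion and replace $\Gamma_{\alpha_i}\Gamma_{\alpha_{i+1}}$ by $\Gamma_{\alpha_{i+1}}\Gamma_{\alpha_i}+[\Gamma_{\alpha_i},\Gamma_{\alpha_{i+1}}]$: the leading term brings $\alpha$ a transposition closer to $\widetilde{\alpha}$, while the three commutator terms contribute $\Gamma$-products of length $k-1$ or $k-2$. To recast each correction in the canonical form $\Op(q)\,\Gamma_\beta$, the $\Op(\SG^{0,0})$-prefactor is transported past the initial segment $\Gamma_{\alpha_1}\cdots\Gamma_{\alpha_{i-1}}$ by iterating $\Op(a)\,\Gamma_\ell=\Gamma_\ell\,\Op(a)+\Op(r)$ with $a,r\in\SG^{0,0}(\R^{2n})$, an identity that follows from Theorem~\ref{thm:compi} applied to $\Theta_\ell\in\Op(\SG^{1,1})$ together with $[D_t,\Op(a)]=\Op(-i\partial_t a)\in\Op(\SG^{0,0})$. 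Each such passage either preserves or strictly decreases the length of the residual $\Gamma$-product, so the inductive hypothesis absorbs every correction after finitely many swaps.

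The main obstacle is the order bookkeeping: one must verify that no step ever produces a coefficient outside $C^\infty([0,T];\SG^{0,0}(\R^{2n}))$. This is guaranteed precisely by Assumption~\ref{assumpt:invo}, which pins $b_{jk}$ and $d_{jk}$ into the correct class, and by the fact that the subprincipal term in the composition of two symbols in $\SG^{1,1}$ drops one order in each index, landing in $\SG^{0,0}$. With this invariant preserved at every step, the bubble sort terminates and expresses $\Gamma_\alpha$ as $\Gamma_{\widetilde{\alpha}}$ plus a finite sum of terms of the required form, which is the desired representation \eqref{eq:4.112.7}.
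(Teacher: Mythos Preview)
Your argument is correct and follows the standard route (the paper itself does not give a proof but refers to \cite{Cothesis} and to the analogous results in \cite{Kumano-go:1,Morimoto}, where exactly this commutator-plus-bubble-sort scheme is used). One cosmetic point: when you ``transport the $\Op(\SG^{0,0})$-prefactor past the initial segment $\Gamma_{\alpha_1}\cdots\Gamma_{\alpha_{i-1}}$'' you actually use the identity $\Gamma_\ell\,\Op(a)=\Op(a)\,\Gamma_\ell+\Op(r)$ (moving the coefficient to the \emph{left}), not the version you wrote; and the reference for the needed commutator $[\Theta_\ell,\Op(a)]\in\Op(\SG^{0,0})$ is the pseudodifferential composition formula~\eqref{eq:comp} rather than Theorem~\ref{thm:compi}, which concerns FIOs.
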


\par

A systemization and well-posedness (with loss of decay and regularity)
theorem can be stated for the 
Cauchy problem \eqref{eq:4.74} under a suitable condition for the 
operator $L$. This result is due, in its original local form,
to Morimoto \cite{Morimoto} and it has been extended to the SG case
in \cite{Coriasco:998.2}, where the proof of the next result, based on Lemma \ref{lemma:4.16.2.1}, can be found. 

\par

\begin{proposition}\label{thm:4.20}
Assume the SG-hyperbolic operator $L$ to be of the form
\begin{equation}\label{eq:4.117}
L = \Gamma _{1} \cdots \Gamma _{m} +
\sum_{\alpha \in \perms}
\Op(p_{\alpha}(t)) \Gamma _{\alpha} \; \mod \op (C^\infty([0,T]; \SG^{-\infty,-\infty}(\R^{2n}))),
\end{equation}
with $p_{\alpha} \in C^\infty([0,T];\SG^{0,0}(\R^{2n}))$. 
Moreover, assume that the family of its characteristic roots $\{\tau_j\}_{j=1}^m$ sastisfies Assumption \ref{assumpt:invo}.
Then, the Cauchy problem \eqref{eq:4.74} for $L$ is equivalent to
a Cauchy problem for a suitable first order system \eqref{sys}
with diagonal principal part, of the form
\begin{equation}
\label{eq:4.119}
\left\{
\begin{array}{l}
\ds\left(D_t +K(t)\right) U(t,s)  = F(t),
\mbox{  } (t,s) \in \Delta_T,
\\[1ex]
U(s,s) = G, \hspace*{2.5cm} s\in[0,T),
\end{array}
\right.
\end{equation}
where $U$, $F$ and $G$ are $N$-dimensional vector, $K$ a ($N\times N$)-dimensional matrix,
with $N$ given by \eqref{eq:4.120.1}. $U$ is
defined in \eqref{eq:4.120}, \eqref{eq:4.121}, and \eqref{eq:4.122}. Namely,
\begin{equation}
\label{eq:4.120.1}
N = \sum_{j=0}^{m-1} \frac{m!}{(m-j)!},
\end{equation}
\begin{equation}\label{eq:4.120}
U = \ ^t\left(u_{\emptyset}\equiv u, u_{(1)}, \dots, u_{(m)},
u_{(1,2)}, u_{(2,1)}, \dots, u_{\alpha}, \dots\right),
\end{equation}
with $\alpha \in \perm$, and
\begin{itemize}
	\item[-] for $\alpha \in \perm_{k}$, $0\le k \le m - 2$ and
	$j=\max \{ 1, \dots, m \} \setminus \alpha$, we set
	\begin{equation}
	\label{eq:4.121}
	\tD_{j} u_{\alpha} = u_{\alpha_{j}}
	\end{equation}
	with $\alpha_{j}= (j, \alpha_{1}, \dots, \alpha_{k})
	\in \perm_{k+1}$;
	\item[-] for $\alpha \in \perm_{m-1}$ and $j \notin \{ \alpha \}$,
	we set
	\begin{equation}
	\label{eq:4.122}
	\tD_{j} u_{\alpha} =   f
	- \sum_{\beta \in \perms}
	\Op(p_{\beta}(t)) u_{\beta}
	+ \sum_{\beta \in \perm}
	\Op(q^{\alpha_{j}}_{\beta}(t))
	u_{\beta},
	\end{equation}
	with $\alpha_{j}= (j, \alpha_{1}, \dots, \alpha_{k})
	\in \perm_{m}$ and the symbols $p_\beta$, $q^{\alpha}_\beta$  from \eqref{eq:4.112.7} and
	\eqref{eq:4.117}.
\end{itemize}
\end{proposition}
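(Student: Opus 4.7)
My plan is to reduce \eqref{eq:4.74} to the first order system \eqref{eq:4.119} by introducing, for each $\alpha\in\perm$, the auxiliary unknown $u_\alpha := \tD_\alpha u$, with the convention $u_\emptyset = u$. These $u_\alpha$, $\alpha\in\perm$, form the components of the $N$-dimensional vector $U$ prescribed in \eqref{eq:4.120}; the size $N = \sum_{j=0}^{m-1}m!/(m-j)!$ is simply $|\perm|=\sum_{j=0}^{m-1}|\perm_j|$, since each $\perm_j$ has $m!/(m-j)!$ elements. With this choice, the rows \eqref{eq:4.121} are tautological: for $\alpha\in\perm_k$, $0\le k\le m-2$, and $j=\max\bigl(\{1,\dots,m\}\setminus\alpha\bigr)$, the definition of $u_\alpha$ yields $\tD_j u_\alpha = \tD_{\alpha_j}u = u_{\alpha_j}$. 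Rewriting $\tD_j = D_t - \Theta_j$ gives $D_t u_\alpha = \Theta_j u_\alpha + u_{\alpha_j}$, which contributes to the system a row with $-\Theta_j$ on the diagonal (real principal symbol $-\tau_j$ of order $(1,1)$) and, off-diagonal, only an identity coupling of order $(0,0)$ to $u_{\alpha_j}$.

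The nontrivial rows are those indexed by $\alpha\in\perm_{m-1}$ and $j\notin\alpha$, for which $\alpha_j\in\perm_m$. Here Lemma \ref{lemma:4.16.2.1} applied to $\alpha_j$ gives
\[
u_{\alpha_j} = \tD_{\alpha_j}u = \Gamma_1\cdots\Gamma_m u + \sum_{\beta\in\perm}\Op(q^{\alpha_j}_\beta(t))\,u_\beta,
\]
while the factorization \eqref{eq:4.117} combined with $Lu=f$ furnishes, modulo a smoothing term applied to $u$,
\[
\Gamma_1\cdots\Gamma_m u \equiv f - \sum_{\beta\in\perms}\Op(p_\beta(t))\,u_\beta.
\]
Eliminating $\Gamma_1\cdots\Gamma_m u$ between these identities reproduces exactly \eqref{eq:4.122}, and once more $\tD_j = D_t - \Theta_j$ yields a row of $(D_t+K)U=F$ with $-\Theta_j$ on the diagonal and zeroth order SG couplings off diagonal, the forcing $f$ feeding the corresponding entry of $F$. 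Collecting all rows, the principal part of $K$ is diagonal with entries drawn from $\{-\tau_j\}_{j=1}^m$ (with multiplicities dictated by the indexing), and this family satisfies Assumption \ref{assumpt:invo} just like $\{\tau_j\}_{j=1}^m$, so the reduced system is SG-involutive in the sense of Section \ref{sec:fundsol}, cf.\ Remark \ref{rem:fundsol}.

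For the initial data, $u_\alpha(s,s)$ is obtained by expanding $\tD_\alpha u$ as a triangular linear combination of $D_t^k u$, $0\le k\le|\alpha|\le m-1$, with $t$-smooth SG pseudo-differential coefficients in $x$, and evaluating at $t=s$ against the prescribed $g_0,\dots,g_{m-1}$; this produces an explicit $G$ in the natural Sobolev--Kato space. Conversely, starting from a solution $U$ of \eqref{eq:4.119}, setting $u:=u_\emptyset$ and using \eqref{eq:4.121} inductively identifies each $u_\alpha$ with $\tD_\alpha u$; then reversing the derivation of \eqref{eq:4.122} via Lemma \ref{lemma:4.16.2.1} and \eqref{eq:4.117} recovers $Lu = f$ modulo smoothing, while $D_t^k u(s,s)=g_k$ is read off the corresponding entries of $G$. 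The main subtlety I anticipate is the bookkeeping of the smoothing residues, which enter already through \eqref{eq:4.117} and must be carried consistently throughout the derivation; they can be absorbed either into $F$ or into a smoothing perturbation of $K$, without affecting well-posedness in the Sobolev--Kato scale, in view of Theorem \ref{thm:fiocont} and the continuity properties of SG pseudo-differential operators between those spaces.
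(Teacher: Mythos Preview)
Your proposal is correct and follows precisely the approach the paper points to: the paper does not give a self-contained proof of this proposition but states that it is ``based on Lemma \ref{lemma:4.16.2.1}'' and refers to \cite{Coriasco:998.2} for the details; your argument is exactly that reduction, carried out explicitly. The forward direction (defining $u_\alpha=\tD_\alpha u$, using \eqref{eq:4.121} tautologically for $|\alpha|\le m-2$, and combining Lemma \ref{lemma:4.16.2.1} with \eqref{eq:4.117} for $|\alpha|=m-1$) matches the intended construction, and your identification of the diagonal principal part with entries $-\tau_j$ and of the lower-order structure is accurate.

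One small remark on the converse: your inductive claim that ``\eqref{eq:4.121} identifies each $u_\alpha$ with $\tD_\alpha u$'' only directly reaches the $u_\alpha$ along the chain where at each step the appended index is the maximum of the complement (so e.g.\ $u_{(m)},u_{(m-1,m)},\dots$), not every $\alpha\in\perm$. The clean way to close the converse is the one the paper implicitly uses in Theorem \ref{thm:mainLastSecThm}: uniqueness for the symmetric SG-hyperbolic system \eqref{eq:4.119} ensures that the $U$ built from a solution $u$ of \eqref{eq:4.74} is the only solution with the prescribed $G$, so the two problems are equivalent at the level of solutions. Your handling of the smoothing remainders (absorbing them into $F$ or $K$) is fine and consistent with how the paper treats them.
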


\par

\begin{remark}\label{rem:Levi}
	We call the SG-hyperbolic operators $L$ satisfying the factorization condition \eqref{eq:4.117}  ``operators of Levi type''.
\end{remark}

\par

\begin{remark}\label{rem:incond}
	Since, for $\alpha \in \perm_{k}$, $k\ge1$, we have
\[
%\label{eq:4.123}
\tD_{\alpha} = D^k_{t} + \sum_{j=0}^{k-1} \Op(\Upsilon^j_{\alpha}(s)) D^j_{t},
\mbox{     }
\Upsilon^j_{\alpha} \in C^\infty([0,T];\SG^{k-j,k-j}(\R^{2n})),
\]
the initial conditions $G$ for $U$ can be expressed as
\begin{equation}
\label{eq:4.124}
\left\{
\begin{array}{l}
G_{\emptyset}(s,s) = g_{0},
\\
G_{\alpha}(s,s) =   g_{\card{\alpha}}
+ \ssum{j=0}{\card{\alpha}-1}
\Op(\Upsilon^j_{\alpha}(s)) g_{j}, \quad \alpha\in M, \card{\alpha}>0.
\end{array}
\right.
\end{equation}
Notice that, in view of the continuity properties of the SG pseudo-differential operators and of the orders of the $\Upsilon^i_\alpha$,
\eqref{eq:4.124} implies 
\begin{equation}\label{eq:Gaord}
	G_{\alpha}\in H^{m-1-\card{\alpha},\mu-1-\card{\alpha}}(\R^n), \quad \alpha\in M.
\end{equation}
\end{remark}

\par

The next Theorem \ref{thm:mainLastSecThm} is our third main result, namely, a well-posedness result,
with decay and regularity loss, for SG-involutive operators of the form \eqref{eq:4.119}. It is a consequence of
Proposition \ref{thm:4.20} in combination with the main results of Section \ref{sec:fundsol}.

\par

\begin{theorem}\label{thm:mainLastSecThm}
Let the operator $L$ in \eqref{eq:4.74} be SG-involutive, of the form considered in Proposition \ref{thm:4.20}.
Let $f\in C^\infty([0,T]; H^{r,\varrho}(\R^n))$ and
$g_k\in H^{r+m-1-k,\varrho+m-1-k}(\R^n)$, $k=0,\dots,m-1$.
Then, for a suitable $T^\prime\in(0,T]$,
the Cauchy problem \eqref{eq:4.74}  admits a unique solution
$u(t,s)$,
belonging to $\displaystyle\bigcap_{k\in\Z_+}C^k(\Delta_{T^\prime}; H^{r-k,\varrho-k}(\R^n))$, given, modulo
elements in $C^\infty(\Delta_{T^\prime};\cS(\R^n))$, by
    \begin{equation}\label{eq:soleqordm}
	u(t,s)  = \sum_{\alpha\in M}W_{\alpha}(t,s) G_{\alpha}+
	\sum_{\alpha\in M_{m-1}}\int_s^t W_{\alpha}(t,\s)  f(\s)\,d\s,    \quad (t,s)\in\Delta_{T^\prime}, s\in[0,T^\prime),
    \end{equation}
    for suitable linear combinations of parameter-dependent families of (iterated integrals of) 
    regular SG Fourier integral operators $W_{\alpha}(t,s)$, $\alpha\in M$, $(t,s)\in\Delta_{T^\prime}$,
    with phase functions and matrix-valued symbols determined through the characteristic roots of $L$.
\end{theorem}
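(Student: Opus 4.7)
The plan is to reduce the higher-order problem \eqref{eq:4.74} to the first-order system \eqref{eq:4.119} via Proposition \ref{thm:4.20}, solve the system using the Duhamel-type formula from Proposition \ref{wp} with the fundamental solution $E(t,s)$, rewrite $E(t,s)$ in finite closed form via the involutive reduction of Theorem \ref{thm:mainthm} (in the variant of Remark \ref{rem:fundsol}), and finally read off $u$ as the first component $u_\emptyset$ of the vector $U$.

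First, since $L$ is SG-involutive of Levi type, Proposition \ref{thm:4.20} produces an equivalent $N\times N$ diagonal-principal-part first order system of the form \eqref{eq:4.119}, with $N$ in \eqref{eq:4.120.1}, unknown vector $U$ given by \eqref{eq:4.120}--\eqref{eq:4.122}, right-hand side $F$ whose entries are either $0$ or $f$ (up to order-zero SG operators acting on components of $U$ that have already been shuffled into the system), and initial datum $G=(G_\alpha)_{\alpha\in M}$ determined by \eqref{eq:4.124}. Using the orders of the $\Upsilon^j_\alpha$ and the mapping properties of SG pseudo-differential operators between Sobolev--Kato spaces, the hypothesis $g_k\in H^{r+m-1-k,\varrho+m-1-k}$ implies $G_\alpha\in H^{r+m-1-\card{\alpha},\varrho+m-1-\card{\alpha}}$, as already recorded in \eqref{eq:Gaord}. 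Up to a mild relabelling of orders (which can be absorbed by picking the worst index, or by working component-wise), the datum $G$ thus lies in the vector-valued Sobolev--Kato space required by Proposition \ref{wp}, and $F\in C^\infty([0,T];H^{r,\varrho}\otimes\R^N)$.

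Second, since $\{\tau_j\}_{j=1}^m$ satisfies Assumption \ref{assumpt:invo} and each $\tau_j$ is inserted as a (possibly repeated) diagonal entry of $\Lambda$ in the first order system, the system \eqref{eq:4.119} is an involutive SG-hyperbolic system in the sense of Section \ref{sec:fundsol}. Proposition \ref{wp} therefore yields, for a suitable $T^\prime\in(0,T]$, a unique
$$U\in\bigcap_{k\in\Z_+}C^k(\Delta_{T^\prime};H^{r-k,\varrho-k}(\R^n)\otimes\R^N),$$
given by
$$U(t,s)=E(t,s)G+i\int_s^t E(t,\sigma)F(\sigma)\,d\sigma,$$
and Theorem \ref{thm:mainthm}, combined with Remark \ref{rem:fundsol}, allows one to replace the infinite series defining $E(t,s)$ by the finite sum \eqref{eq:reducedfundsolbis}, modulo a smoothing remainder mapping $\cS^\prime\to\cS$; the associativity and commutativity of multi-products of regular SG phase functions (Proposition \ref{properties} and Theorem \ref{cruthm}) is what collapses the series into a finite combination of iterated integrals of SG Fourier integral operators with matrix-valued order-$(0,0)$ symbols and regular phases $\varphi_{m_1}\sharp\cdots\sharp\varphi_{m_j}$ built from the characteristic roots of $L$.

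Third, setting $u=u_\emptyset$ (the component of $U$ indexed by $\emptyset\in M$) recovers the unique solution of \eqref{eq:4.74}, by construction of the system in Proposition \ref{thm:4.20}. Expanding the first row of $E(t,s)G$ in terms of the components $G_\alpha$, $\alpha\in M$, produces the sum $\sum_{\alpha\in M}W_\alpha(t,s)G_\alpha$ in \eqref{eq:soleqordm}; the Duhamel term contributes $\sum_{\alpha\in M_{m-1}}\int_s^t W_\alpha(t,\sigma)f(\sigma)\,d\sigma$, since by construction of $F$ in \eqref{eq:4.122} the forcing $f$ enters only into those entries of $F$ corresponding to indices $\alpha\in M_{m-1}$. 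The operators $W_\alpha(t,s)$ are the corresponding linear combinations of iterated integrals of regular SG FIOs extracted from the first row of \eqref{eq:reducedfundsolbis}. The regularity assertion $u\in\bigcap_k C^k(\Delta_{T^\prime};H^{r-k,\varrho-k})$ then follows from the vector-valued regularity of $U$ together with Theorem \ref{thm:fiocont}, which controls the action of each SG FIO component on Sobolev--Kato spaces, and uniqueness is inherited from the uniqueness for the equivalent system.

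The main technical obstacle is the bookkeeping in Step 3: one must verify that the first-component structure of $E(t,s)$ really produces the clean decomposition \eqref{eq:soleqordm}, that the index set $M_{m-1}$ is exactly the one contributing to the Duhamel integral (this uses \eqref{eq:4.122}), and that the losses of decay/smoothness coming from the orders of the $\Upsilon^j_\alpha$ in \eqref{eq:4.124} are balanced by the shifts in the Sobolev--Kato indices of the $g_k$; all other steps are routine applications of the theorems already proved in Sections \ref{sec:sgphfcomlaw} and \ref{sec:fundsol}.
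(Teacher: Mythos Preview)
Your proposal is correct and follows essentially the same route as the paper: reduce \eqref{eq:4.74} to the first-order system \eqref{eq:4.119} via Proposition \ref{thm:4.20}, apply the Duhamel formula of Proposition \ref{wp} with the fundamental solution in the reduced form \eqref{eq:reducedfundsolbis} from Theorem \ref{thm:mainthm}/Remark \ref{rem:fundsol}, and then read off $u=U_\emptyset$ by setting $W_\alpha=E_{\emptyset\alpha}$. The paper's proof differs only in presentation, making the regularity argument slightly more explicit by computing $\partial_t$ of an SG FIO to exhibit the order-one increase at each differentiation, and by invoking the Sobolev--Kato embeddings together with \eqref{eq:Gaord} to handle the first sum in \eqref{eq:soleqordm}; your ``worst index/component-wise'' remark covers the same ground.
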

\begin{proof}
	By the procedure explained in Proposition \ref{thm:4.20} and Remark \ref{rem:incond}, we can switch from the Cauchy problem
	\eqref{eq:4.74} to an equivalent Cauchy problem \eqref{eq:4.119}, with $u\equiv U_\emptyset$. 
	The uniqueness of the solution is then a consequence of known results about symmetric SG-hyperbolic systems, see \cite{CO},
	of which \eqref{eq:4.119} is a special case.
 
	The fundamental solution of \eqref{eq:4.119} is given by \eqref{eq:reducedfundsolbis}, in view of Theorem \ref{thm:mainthm}
	and Remark \ref{rem:fundsol}.
	It is a matrix-valued, parameter-dependent operator family $E(t,s)=(E_{\mu\mu^\prime})_{\mu,\mu^\prime\in M}(t,s)$, whose elements 
	$E_{\mu\mu^\prime}(t,s)$, $\mu,\mu^\prime\in M$, are, modulo elements with kernels in $C^\infty(\Delta_{T^\prime};\cS)$, linear combinations of
	parameter-dependent families of (iterated integrals of) regular SG Fourier operators, with phase functions of the type
	\begin{align*}
		\phi^{(\mu^\nmid)}&=\varphi_{m_1}, && \mu^\nmid=(m_1)\in M^\nmid_1,
		\\
		\phi^{(\mu^\nmid)}&=\varphi_{m_1}\sharp\dots\sharp\varphi_{m_j}, &&\mu^\nmid=(m_1,\dots,m_j)\in M^\nmid_j,j\ge2, 
	\end{align*}
	$\varphi_k$ solution of the eikonal equation
	associated with the characteristic root $\tau_k$ of $L$, $k=1,\dots,m$, and parameter-dependent, matrix-valued symbols 
	of the type
	\[
		{\omega}^{(\mu^\nmid)}(t,\theta_1,\dots,\theta_{j-1},s)\in \SG^{0,0}, \quad \mu \in M^\nmid_j,
	\]
	$j=1,\dots,m$. 
	Then,  the component $U_\emptyset\equiv u$ of the solution $U$ of \eqref{sys} has the form
	\eqref{eq:soleqordm}, with $W_\alpha=E_{\varnothing\alpha}$, taking into account \eqref{eq:4.122} and \eqref{eq:4.124}. 
	
	We observe that the $k$-th order
	$t$-derivatives of the operators $W_\alpha$, $\alpha\in M$, map continuously $H^{r,\varrho}$ to $H^{r-k,\varrho-k}$, $k\in\Z_+$,
	in view of Theorem \ref{thm:fiocont} and of the fact that, of course,
	\[
		\partial_t[\Op_{\phi^{(\mu^\nmid)}(t,s)}(w^{(\mu^\nmid)}(t,s))] = \Op_{\phi^{(\mu^\nmid)}(t,s)}(i(\partial_t\phi^{(\mu^\nmid)})(t,s)\cdot w^{(\mu^\nmid)}(t,s)
		+\partial_t w^{(\mu^\nmid)}(t,s)),
	\]
	obtaining a symbol of orders $1$-unit higher in both components at any $t$-derivative step.
	This fact, together with the hypothesis on $f$, implies that the second sum 
	in \eqref{eq:soleqordm} belongs to $\displaystyle\bigcap_{k\in\Z_+}C^k(\Delta_{T^\prime}; H^{r-k,\varrho-k}(\R^n))$.
	
	The same is true for the elements of the first sum. In fact, recalling the embedding among the Sobolev-Kato spaces and \eqref{eq:Gaord}, 
	since $\alpha\in M\Rightarrow 0\le\card{\alpha}\le m-1$, we find
	\begin{align*}
		W_\alpha(t,s)G_\alpha \in& \bigcap_{k\in\Z_+} C^k(\Delta_{T^\prime}; H^{r+m-1-\card{\alpha}-k,\varrho+m-1-\card{\alpha}-k})
		\\
		\hookrightarrow &\bigcap_{k\in\Z_+} C^k(\Delta_{T^\prime}; H^{r-k,\varrho-k}), \quad \alpha\in M,
	\end{align*}
	and this concludes the proof.
\end{proof}

\subsection{Propagation of singularities for classical SG-involutive operators}
Theorem \ref{thm:mainLastSecThm}, together with the propagation results proved in \cite{CJT4},
implies our fourth main result, Theorem \ref{thm:propwfs} below, about the global wave-front set
of the solution of the Cauchy problem \eqref{eq:4.74}, in the case of a classical SG-involutive operator $L$
of Levi type.
We first recall the necessary definitions, adapting some materials appeared in \cite{CJT2,CJT4,CoMa}.

\begin{definition}\label{def:admspace}
Let $\cB$ be a topological vector space
of distributions on $\R^n$ such that
$$
\cS(\R^n)\subseteq \cB  \subseteq \cS^\prime(\R^n)
$$
with continuous embeddings. Then $\cB$ is called SG-admissible
when $\Op _t(a)$ maps $\cB$ continuously
into itself, for every $a\in \SG ^{0,0}(\R^{2n})$.
If $\cB$ and $\cC$ are SG-admissible, then the pair $(\cB ,\cC )$ is
called SG-ordered (with respect to $(m,\mu)\in\R^2$), when the mappings
$$
\Op _t(a)\, :\, \cB \to \cC \quad \text{and}\quad \Op _t(b)\, :\, \cC \to \cB
$$
are continuous for every $a\in \SG ^{m,\mu}(\R^{2n})$ and
$b\in \SG ^{-m,-\mu}(\R^{2n})$.
\end{definition}

\begin{remark}
	$\cS(\R^n)$, $H^{r,\varrho}(\R^n)$, $r,\varrho\in\R$, and $\cS^\prime(\R^n)$ are SG-admissible.
	$(\cS(\R^n),\cS(\R^n))$, $(H^{r,\varrho}(\R^n),H^{r-m,\varrho-\mu}(\R^n))$, $r,\varrho\in\R$, $(\cS^\prime(\R^n),\cS^\prime(\R^n))$ are 
	SG-ordered (with respect to any $(m,\mu)\in\R^2$). The same holds true for (suitable couples of) modulation spaces, see \cite{CJT2}.
\end{remark}

\begin{definition}\label{admspacesdef}
Let $\fy \in \Phr$ be a regular phase function,
 $\cB$, $\cB_1$, $\cB_2$, $\cC$, $\cC_1$, $\cC_2$, be
SG-admissible and $\Omega \subseteq \R^n$ be open. Then
the pair $(\cB ,\cC )$ is called weakly-I SG-ordered (with respect to
$(m,\mu,\varphi,\Omega )$), when the mapping
$$
\op_\varphi(a)\, :\, \cB \to \cC
$$
is continuous for every $a\in \SG ^{m,\mu}(\R^{2n})$ with support
such that the projection on the $\xi$-axis does not intersect $\R^n\setminus \Omega$. Similarly, the pair $(\cB ,\cC )$ is called 
weakly-II SG-ordered (with respect to
$(m,\mu,\varphi,\Omega )$), when the mapping
$$
\op_\varphi^*(b)\, :\, \cC \to \cB
$$
is continuous for every $b\in \SG ^{m,\mu}(\R^{2n})$ with support
such that the projection on the $x$-axis does not intersect $\R^n\setminus \Omega$. Furthermore,
$(\cB_1, \cC_1, \cB_2, \cC_2)$ are called SG-ordered
(with respect to $m_1,\mu_1, m_2,\mu_2$, $\varphi$,
and $\Omega$), when $(\cB_1,\cC_1)$ is a weakly-I 
SG-ordered pair with respect to
$(m_1,\mu_1,\varphi ,\Omega )$, and
$(\cB_2,\cC_2)$ is a weakly-II 
SG-ordered pair with respect to
$(m_2,\mu_2,\varphi ,\Omega )$.
\end{definition}

\begin{remark}
	$(\cS(\R^n),\cS(\R^n))$, $(H^{r,\varrho}(\R^n),H^{r-m,\varrho-\mu}(\R^n))$, $r,\varrho\in\R$, $(\cS^\prime(\R^n),\cS^\prime(\R^n))$ are 
	weakly-I and weakly-II SG-ordered pairs (with respect to any $(m,\mu)\in\R^2$, $\varphi\in\Phr$, and $\Omega=\emptyset$). The situation is more delicate
	in the case of modulation spaces, even just on Sobolev-Kato spaces modeled on $L^p(\R^n)$, $p\in[1,\infty)$, $p\not=2$, see \cite{CJT2} and the references
	quoted therein.
\end{remark}

Now we recall the definition given in \cite{CJT2} of global
wave-front sets for temperate distributions with respect to Banach
or Fr\'echet spaces and state some of their properties. First of all, we recall the definitions of set of
characteristic points that we use in this setting.

We need to deal with the situations where \eqref{eq:ellcond} holds only in certain
(conic-shaped) subset of $\R^n \times \R^n$. Here we let $\Omega _m$,
$m=1,2,3$, be the sets
\begin{equation}\label{omegasets}
\begin{aligned}
\Omega _1= \R^n\times (&\R^n\setminus \{0\}),\qquad
\Omega _2 = (\R^n \setminus \{0\})\times \R^n,
\\[1ex]
\Omega _3 &= (\R^n \setminus \{0\})\times (\R^n \setminus \{0\}),
\end{aligned}
\end{equation}

\par

\begin{definition}\label{defchar}
Let $\Omega _k$, $k=1,2,3$ be as in \eqref{omegasets},
and let $a\in \SG ^{m,\mu}(\R^{2n})$.

\par

\begin{enumerate}

\item $a$ is called \emph{locally} or \emph{type-$1$ invertible} 
with respect to $m,\mu$ at the
point $(x_0,\xi_0)\in \Omega _1$, if
there exist a neighbourhood $X$ of $x_0$, an open conical
neighbourhood $\Gamma$ of $\xi _0$  and a positive constant $R$
such that \eqref{eq:ellcond} holds for $x\in X$, $\xi\in \Gamma$ and
$|\xi|\ge R$.

\par

\item  $a$ is called
\emph{Fourier-locally} or \emph {type-$2$ invertible} with respect to
$m,\mu$ at the point $(x_0,\xi_0)\in \Omega _2$, if
there exist an open conical neighbourhood $\Gamma$  of $x_0$, a
neighbourhood $X$ of $\xi _0$ and a positive constant $R$ such
that \eqref{eq:ellcond} holds for $x\in \Gamma$, $|x|\ge R$ and  $\xi\in
X$.

\par

\item  $a$ is called
\emph{oscillating} or \emph{type-$3$ invertible} with respect to
$m,\mu$ at the point $(x_0,\xi_0)\in \Omega _3$, if
there exist open conical neighbourhoods $\Gamma _1$  of $x_0$ and
$\Gamma _2$ of $\xi _0$, and a positive constant $R$
such that \eqref{eq:ellcond} holds for $x\in \Gamma _1$, $|x|\ge R$,
$\xi \in \Gamma _2$ and $|\xi |\ge R$.

\end{enumerate}

\par

If $k\in \{ 1,2,3\}$ and  $a$ is \emph{not} type-$k$ invertible
with respect to $m,\mu$ at $(x_0,\xi_0)\in \Omega _k$,
then $(x_0,\xi_0)$ is called \emph{type-$k$ characteristic} for $a$ with
respect to $m,\mu$. The set of type-$k$
characteristic points for $a$ with respect to $m,\mu$ is denoted by
$\Char _{m,\mu}^k(a)$.

\par

The \emph{(global) set of characteristic points} (the characteristic set), for a
symbol $a\in \SG^{m,\mu}(\R^{2n})$ with respect to $m,\mu$ is defined as
$$
\Char (a)=\Char _{m,\mu}(a)=\Char ^1_{m,\mu}(a)\bigcup\Char ^2 _{m,\mu}(a)\bigcup\Char ^3_{m,\mu}(a).
$$
\end{definition}

\par

In the next Definition \ref{cuttdef} we introduce different classes of cutoff
functions (see also Definition 1.9 in \cite{CJT1}).

\par

\begin{definition}\label{cuttdef}
Let $X\subseteq \R^n$ be open, $\Gamma \subseteq \R^n
\setminus \{0\}$ be an open cone, $x_0\in X$ and $\xi _0\in \Gamma$.

\begin{enumerate}
\item A smooth function $\fy$ on $\R^n$ is called a cutoff
(function) with respect to $x_0$ and $X$, if $0\le \fy \le 1$, $\fy \in
C_0^\infty (X)$ and $\fy =1$ in an open neighbourhood of $x_0$. The
set of cutoffs with respect to $x_0$ and $X$ is denoted by
$\mathscr C_{x_0}(X)$ or $\mathscr C_{x_0}$.

\par

\item  A smooth function $\psi$ on $\R^n$ is called a
directional cutoff (function) with respect to $\xi_0$ and
$\Gamma$, if there is a constant $R>0$ and open conical neighbourhood
$\Gamma _1\subseteq \Gamma$ of $\xi _0$ such that the following is
true:
\begin{itemize}
\item $0\le \psi \le 1$ and $\supp \psi \subseteq \Gamma$;

\par

\item  $\psi (t\xi )=\psi (\xi )$ when $t\ge 1$ and $|\xi |\ge R$;

\par

\item $\psi (\xi )=1$ when $\xi \in \Gamma _1$ and $|\xi |\ge R$.
\end{itemize}

\par
\noindent 
The set of directional cutoffs with respect to $\xi _0$ and
$\Gamma$ is denoted by $\mathscr C^\direz  _{\xi _0}(\Gamma )$ or
$\mathscr C^\direz  _{\xi _0}$.
\end{enumerate}
\end{definition}

\par

\begin{remark}\label{psiinvremark}
Let $X\subseteq \R^n$ be open and $\Gamma ,\Gamma _1,\Gamma _2
\subseteq \R^n\back 0$
be open cones. Then the following is true.
\begin{enumerate}
\item if $x_0\in X$, $\xi _0\in \Gamma$, $\fy \in \mathscr
C _{x _0}(X)$ and $\psi \in \mathscr C ^{\direz} _{\xi _0}(\Gamma )$,
then $c_1=\fy \otimes \psi$ belongs to $\SG^{0,0}(\R^{2n})$,
and is type-$1$ invertible at $(x_0,\xi _0)$;

\par

\item if $x_0\in \Gamma$, $\xi _0\in X$, $\psi \in \mathscr C ^{\direz}
_{x_0}(\Gamma )$ and $\fy \in \mathscr C _{\xi  _0}(X)$,
then $c_2=\fy \otimes \psi$ belongs to $\SG^{0,0}(\R^{2n})$,
and is type-$2$ invertible at $(x_0,\xi _0)$;

\par

\item if $x_0\in \Gamma _1$, $\xi _0\in \Gamma _2$, $\psi _1\in
\mathscr C ^{\direz} _{x_0}(\Gamma _1)$ and $\psi _2\in
\mathscr C ^{\direz} _{\xi _0}(\Gamma _2)$, then $c_3=\psi _1 \otimes
\psi _2$ belongs to $\SG^{0,0}(\R^{2n})$, and is type-$3$
invertible at $(x_0,\xi _0)$.
\end{enumerate}
\end{remark}

\par

The next Proposition \ref{charequiv} shows that $\op _t(a)$ for
$t\in \mathbb R$ satisfies
convenient invertibility properties of the form
\begin{equation}\label{locinvop}
\op _t(a)\op _t(b) = \op _t(c) + \op _t(h),
\end{equation}
outside the set of characteristic
points for a symbol $a$. Here $\op _t(b)$, $\op _t(c)$ and $\op _t(h)$ have
the roles of ``local inverse'', ``local identity'' and smoothing operators
respectively. From these statements it also follows that our set of
characteristic points in Definition \ref{defchar} are related to those
in \cite{CoMa,Ho1}. 

\par

\begin{proposition}\label{charequiv}
Let $k\in \{1,2,3\}$, $m,\mu\in\R$,
 and let $a\in \SG ^{m,\mu}(\R^{2n})$. Also let $\Omega _k$ be as in \eqref{omegasets}, $(x_0,\xi _0)\in
\Omega _k$, when $k$ is equal to $1$, $2$ and $3$, respectively.
Then the following conditions are equivalent, $k=1,2,3$:

\medspace

\begin{enumerate}

\item $(x_0,\xi _0)\notin \Char _{m,\mu}^k (a)$;

\par

\item there is an element $c\in \SG^{0,0}(\R^{2n})$ which is
type-$k$ invertible at $(x_0,\xi _0)$, and an element $b\in
\SG^{-m,-\mu}(\R^{2n})$ such that $ab=c$;

\par

\item \eqref{locinvop} holds for some $c\in \SG^{0,0}(\R^{2n})$ which is
type-$k$ invertible at $(x_0,\xi _0)$, and some elements  $h\in
\SG^{-1,-1}(\R^{2n})$ and $b\in \SG^{-m,-\mu}(\R^{2n})$;

\par

\item \eqref{locinvop} holds for some $c_k\in \SG^{0,0}(\R^{2n})$
in Remark \ref{psiinvremark} which is
type-$k$ invertible at $(x_0,\xi _0)$, and some elements  $h$ and $b\in
\SG^{-m,-\mu}(\R^{2n})$, where $h\in \cS$ when $k\in
\{ 1,3\}$ and $h\in \SG ^{-\infty ,0}(\R^{2n})$ when $k=2$.

\par

Furthermore, if $t=0$, then the supports of $b$ and $h$ can be chosen to be
contained in $X\times \R^n$ when $k=1$, in $\Gamma \times \R^n$ when
$k=2$, and in $\Gamma _1\times \R^n$ when $k=3$.
\end{enumerate}
\end{proposition}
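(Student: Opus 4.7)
The plan is to establish the cyclic chain $(1)\Rightarrow(4)\Rightarrow(3)\Rightarrow(2)\Rightarrow(1)$, treating the three cases $k=1,2,3$ in parallel, since they differ only in whether the spatial coordinate, the frequency coordinate, or both are controlled by an ordinary cutoff versus a directional cutoff. The final support statement for $t=0$ will follow by tracking the support of the constructed symbols $b$ and $h$ throughout the argument.

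For the main step $(1)\Rightarrow(4)$, I would fix a point $(x_0,\xi_0)\notin \Char^k_{m,\mu}(a)$ and pick cutoff data as in Remark \ref{psiinvremark}, supported strictly inside the region where the ellipticity bound $|a(x,\xi)|\geq C\jb{x}^m\jb{\xi}^\mu$ holds. Then define the candidate quasi-inverse by $b(x,\xi):=c_k(x,\xi)/a(x,\xi)$ on the set where $a$ does not vanish, extended by zero elsewhere. Verification that $b\in\SG^{-m,-\mu}(\R^{2n})$ proceeds by induction on the order of the derivatives via Leibniz and Fa\`a di Bruno, using the lower bound on $|a|$ on $\supp(c_k)$ to control the denominators. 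Applying the composition formula \eqref{eq:comp} to $\Op(a)\circ\Op(b)$ then produces $ab=c_k$ as principal term with remainder in $\SG^{-1,-1}(\R^{2n})$; iterating the asymptotic expansion through a Borel-type summation yields a remainder $h$ which, thanks to the specific support of $c_k$, is Schwartz when $k\in\{1,3\}$ (the support is bounded in the $x$-variable or in both) and belongs to $\SG^{-\infty,0}(\R^{2n})$ when $k=2$ (bounded in $\xi$ but only conic in $x$).

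The implication $(4)\Rightarrow(3)$ is immediate since the $c_k$'s from Remark \ref{psiinvremark} already have the required form and $\cS,\SG^{-\infty,0}\subset\SG^{-1,-1}$. For $(3)\Rightarrow(2)$, the composition formula yields $\Op(a)\Op(b)=\Op(ab)\bmod\Op(\SG^{m-1,\mu-1})$, so combined with the hypothesis we get $ab=c\bmod\SG^{-1,-1}$; since type-$k$ invertibility at $(x_0,\xi_0)$ is preserved under perturbation by a symbol of lower order, we may replace $c$ by $ab$ itself, obtaining the exact equality required in (2). Finally, $(2)\Rightarrow(1)$ is the easy direction: from $ab=c$ with $|b(x,\xi)|\lesssim\jb{x}^{-m}\jb{\xi}^{-\mu}$ and $|c(x,\xi)|\geq C_0>0$ on the appropriate (conic) neighborhood of $(x_0,\xi_0)$, one concludes $|a(x,\xi)|\gtrsim\jb{x}^m\jb{\xi}^\mu$ there, which is exactly type-$k$ invertibility of $a$ at the point.

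The main technical obstacle is the symbolic verification in $(1)\Rightarrow(4)$: showing that $c_k/a$ belongs to $\SG^{-m,-\mu}$ with uniform seminorm control, and, more subtly, that the remainder $h$ in the asymptotic summation really lies in $\cS$ (respectively $\SG^{-\infty,0}$) and not merely in $\SG^{-\infty,-\infty}$. The support claim for $t=0$ is then transparent, since the $x$-projections of $\supp b$ and $\supp h$ lie inside $\supp_x(c_k)$, which by construction sits in $X$, $\Gamma$, or $\Gamma_1$ according to whether $k=1,2,3$.
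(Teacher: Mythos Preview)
The paper does not actually give a proof of Proposition~\ref{charequiv}; it is stated without proof and is essentially quoted from~\cite{CJT1,CJT2}, so there is no ``paper's own proof'' to compare your attempt against.

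Your overall strategy---the cycle $(1)\Rightarrow(4)\Rightarrow(3)\Rightarrow(2)\Rightarrow(1)$ via an explicit microlocal parametrix $b=c_k/a$---is the standard one and is correct in outline. The steps $(4)\Rightarrow(3)\Rightarrow(2)\Rightarrow(1)$ are essentially fine as you wrote them.

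There is, however, a genuine inaccuracy in your justification of $(1)\Rightarrow(4)$ concerning the asymmetry between $k\in\{1,3\}$ and $k=2$. You write that for $k\in\{1,3\}$ ``the support is bounded in the $x$-variable or in both'', but for $k=3$ the cutoff $c_3=\psi_1\otimes\psi_2$ has \emph{both} factors directional, so its support is unbounded (conic) in \emph{both} variables, not bounded. Thus the reason $h\in\cS$ for $k=3$ cannot be a simple support argument of the kind you sketch. The correct mechanism is that for $k\in\{1,3\}$ the ellipticity region of $a$ is conic in $\xi$ (and, for $k=3$, also in $x$), so the parametrix iteration can be carried out to arbitrary order in \emph{both} components of the biorder, driving the remainder into $\SG^{-N,-N}$ for all $N$ and hence into $\cS$. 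For $k=2$ the ellipticity region is a bounded $\xi$-neighbourhood, which together with the way the left-composition formula spreads supports, obstructs the full iteration; one obtains only $h\in\SG^{-\infty,0}$. Your proof sketch would need to be revised to reflect this, and the ``Borel-type summation'' step should be made precise enough to track the biorder of the successive remainders separately in the $x$- and $\xi$-components.
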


\par

We can now introduce the complements of the wave-front sets.
More precisely, let $\Omega _k$, $k\in \{ 1,2,3\}$, be given by
\eqref{omegasets}, $\cB$ be a Banach or Fr{\'e}chet space such
that $\cS(\R^n)\subseteq \cB \subseteq \cS^\prime(\R^n)$,
and let $f\in \cS^\prime(\R^n)$. Then the point $(x_0,\xi _0)\in
\Omega _k$ is called \emph{type-$k$} regular for $f$ with
respect to $\cB$, if
\begin{equation}\label{ImageOpcm}
\Op (c_k)f\in \cB ,
\end{equation}
for some $c_k$ in Remark \ref{psiinvremark}, $k=1,2,3$. The set of all type-$k$
regular points for $f$ with respect to $\cB$, is denoted by
$\Theta ^k_{\cB}(f)$.

\par

\begin{definition}\label{def:wfsMB}
Let $k\in \{ 1,2,3\}$, $\Omega _k$ be as in \eqref{omegasets}, and
let $\cB$ be a Banach or Fr\'echet space such that
$\cS(\R^n)\subseteq \cB \subset \cS^\prime(\R^n)$.
\begin{enumerate}
\item The \emph{type-$k$ wave-front set} of $f\in \cS^\prime(\R^n)$ with
respect to $\cB$ is the complement of $\Theta ^k_{\cB}(f)$ in $\Omega
_k$, and is denoted by $\WFF ^k_{\cB}(f)$;

\par

\item The \emph{global wave-front set} $\WFgB(f)\subseteq (\R^n\times\R^n)\back 0$ is the set
\begin{equation*}
\WFgB(f) \equiv \WFF ^1_\cB (f) \bigcup \WFF ^2_\cB (f) \bigcup \WFF ^3_\cB (f).
\end{equation*}
\end{enumerate}
\end{definition}

\par

The sets $\WFF ^1_{\cB}(f)$, $\WFF ^2_{\cB}(f)$ and $\WFF ^3_{\cB}(f)$
in Definition \ref{def:wfsMB}, are also called the \emph{local},
\emph{Fourier-local} and \emph{oscillating} wave-front set of $f$ with
respect to $\cB$.

\par

\begin{remark}
	In the special case when $\cB=H^{r,\varrho}(\R^n)$, $r,\varrho\in\R$, we write $\WFF ^k_{r,\varrho}(f)$,
	$k=1,2,3$. In this situation, $\WFF_{r,\varrho}(f) \equiv \WFF ^1_{r,\varrho} (f) \bigcup \WFF ^2_{r,\varrho} (f) \bigcup \WFF ^3_{r,\varrho} (f)$
	coincides with the scattering wave front set of $f\in\cS^\prime(\R^n)$ introduced by Melrose \cite{Me}. In the case when $\cB=\cS(\R^n)$,
	$\WFgB(f)$ coincides with the $\cS$-wave-front set considered in \cite{CoMa} (see also \cite{Schulz}).
\end{remark}

\par

\begin{remark}\label{rem:wfcon}
        Let $\Omega _m$, $m=1,2,3$ be the same as in \eqref{omegasets}. 
	\begin{enumerate}
		\item If $\Omega \subseteq \Omega _1$, and
		$(x_0,\xi_0)\in \Omega \ \Longleftrightarrow \ 
			(x_0,\sigma \xi_0)\in \Omega$ for $\sigma \ge 1$,
			then $\Omega$ is called \emph{$1$-conical};

		\item If $\Omega \subseteq \Omega _2$, and $(x_0,\xi_0)\in
		\Omega \ \Longleftrightarrow \ 
			(sx_0,\xi_0)\in\Omega$ for $s \ge 1$,
			then $\Omega$ is called \emph{$2$-conical};

		\item If $\Omega \subseteq \Omega _3$, and $(x_0,\xi_0)
		\in \Omega\ \Longleftrightarrow \ 
			(sx_0,\sigma\xi_0)\in \Omega$ for $s,\sigma \ge 1$,
		then $\Omega$ is called \emph{$3$-conical}.
	\end{enumerate}
        By \eqref{ImageOpcm} and the paragraph before Definition \ref{def:wfsMB},
	it follows that if $m=1,2,3$, then $\Theta^m_\cB (f)$ is $m$-conical.
	The same holds for $\WFF^m_\cB(f)$, $m=1,2,3$, 
	by Definition \ref{def:wfsMB}, noticing that, for any $x_0\in
	\R^{r} \setminus \{0\}$, any open cone $\Gamma\ni x_0$,
	and any $s>0$, $\mathscr C ^{\direz} _{x_0}(\Gamma )
	=\mathscr C ^{\direz} _{s x_0}(\Gamma )$.
	For any $R>0$ and $m\in \{1,2,3\}$, we set
\begin{gather*}
\Omega_{1,R} \equiv \sets {(x,\xi )\in \Omega _1}{|\xi |\geq R},
\quad
\Omega_{2,R} \equiv \sets {(x,\xi )\in \Omega _2}{|x |\geq R},
\\[1ex]
\Omega_{3,R} \equiv \sets {(x,\xi )\in \Omega _3}{|x|, |\xi |\geq R}
\end{gather*}
	Evidently, $\Omega_m^R$ is $m$-conical for every $m\in \{ 1,2,3\}$.
\end{remark}

\par

From now on we assume that $\cB$ in Definition \ref{def:wfsMB} is
SG-admissible, and recall
that Sobolev-Kato spaces and, more generally, modulation spaces, and
$\mathscr S(\R^d)$ are SG-admissible, see \cite{CJT2, CJT4}.

\par

The next result describes the relation between ``regularity with respect to
$\cB$\,'' of temperate distributions and global wave-front sets.

\par

\begin{proposition}\label{mainthm4}
Let $\cB$ be SG-admissible, and let $f\in\cS^\prime (\R^ d)$. Then
\begin{equation*}
f\in\cB \quad \Longleftrightarrow \quad
\WFgB(f)=\emptyset.
\end{equation*}
\end{proposition}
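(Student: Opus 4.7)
The plan is to prove the two implications separately, the forward direction being straightforward, the backward one requiring a (global) parametrix construction based on a finite cover of the ``boundary at infinity'' of $\R^n\times\R^n$.

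For the implication $f\in\cB\Rightarrow \WFgB(f)=\emptyset$, the argument is essentially definitional. Fix $k\in\{1,2,3\}$ and any $(x_0,\xi_0)\in\Omega_k$. By Remark \ref{psiinvremark} there exists a symbol $c_k\in\SG^{0,0}(\R^{2n})$ which is type-$k$ invertible at $(x_0,\xi_0)$. Since $\cB$ is SG-admissible and $c_k$ has order $(0,0)$, the operator $\Op(c_k)$ maps $\cB$ continuously into itself, so $\Op(c_k)f\in\cB$. Hence $(x_0,\xi_0)\in\Theta^k_\cB(f)$, proving $\WF^k_\cB(f)=\emptyset$ for each $k$, and consequently $\WFgB(f)=\emptyset$.

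For the converse, assume $\WFgB(f)=\emptyset$. The idea is to construct a globally elliptic symbol $a\in\SG^{0,0}$, written as a finite sum of admissible cutoffs $c_j$ of the three types from Remark \ref{psiinvremark}, each of which already satisfies $\Op(c_j)f\in\cB$. I would proceed as follows. By hypothesis, for every $(x_0,\xi_0)\in(\R^n\times\R^n)\setminus 0$ there is, according to its type ($k=1,2$ or $3$), a cutoff $c_{x_0,\xi_0}\in\SG^{0,0}$ which is type-$k$ invertible at $(x_0,\xi_0)$ and such that $\Op(c_{x_0,\xi_0})f\in\cB$. Using the conical invariance of invertibility (the sets $\Omega_{k,R}$ being $k$-conical, as in Remark \ref{rem:wfcon}), the corresponding regions cover compact ``sections at infinity'': the compact sets $\overline{B_R}\times S^{n-1}$, $S^{n-1}\times\overline{B_R}$ and $S^{n-1}\times S^{n-1}$, together with the compact set $|x|+|\xi|\le R$ for some suitable $R>0$. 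By compactness, I can extract a finite subcover and form the finite sum $a=\sum_{j=1}^{N}c_j+\chi$, where $\chi\in C^\infty_c(\R^{2n})$ is a cutoff making $a$ nonvanishing also on the compact core. From the construction, $a\in\SG^{0,0}$ is globally elliptic, i.e., satisfies \eqref{eq:ellcond} with $(m,\mu)=(0,0)$ for $|x|+|\xi|\geq R^\prime$ with some $R^\prime>0$.

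Having obtained such an $a$, the SG calculus yields a parametrix $b\in\SG^{0,0}$ with $\Op(b)\Op(a)=I+\Op(r)$, where $r\in\SG^{-\infty,-\infty}$, so $\Op(r)$ has a Schwartz kernel and maps $\cS^\prime$ into $\cS\subseteq\cB$. Thus
\begin{equation*}
f=\Op(b)\Op(a)f-\Op(r)f=\Op(b)\Bigl(\sum_{j=1}^{N}\Op(c_j)f+\Op(\chi)f\Bigr)-\Op(r)f.
\end{equation*}
Each $\Op(c_j)f\in\cB$ by choice of $c_j$; $\Op(\chi)f\in\cS\subseteq\cB$ because $\chi\in\cS(\R^{2n})\subset\SG^{-\infty,-\infty}$; $\Op(r)f\in\cS\subseteq\cB$; and $\Op(b)$ preserves $\cB$ by SG-admissibility. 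Hence $f\in\cB$, completing the proof. The main obstacle is the compactness/finite covering step: one has to be sure that finitely many cutoffs of all three types suffice to cover the full ``sphere at infinity'' of $\R^n\times\R^n$ and that their sum produces a symbol elliptic in the global SG sense. Once this is in place, the parametrix construction and the admissibility of $\cB$ complete the argument routinely.
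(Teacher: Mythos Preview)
The paper does not actually prove this proposition: it is stated as a recalled result, with the proof implicitly deferred to \cite{CJT2}. So there is no ``paper's own proof'' to compare against here. That said, your sketch is precisely the standard argument used in that reference, and it is correct. The forward implication is indeed immediate from SG-admissibility; for the converse, the finite-covering step you flag as the main obstacle is handled exactly as you indicate, by passing to the radial compactification $\mathbb{B}^n\times\mathbb{B}^n$ (cf.\ Theorem \ref{thm4.6.1.1}) and covering its compact boundary by the three types of conic neighbourhoods, then adding a compactly supported $\chi$ to control the core. One small point worth making explicit: the type-$1$ cutoffs alone cannot cover the whole face $\R^n_x\times S^{n-1}_\xi$ of the boundary, since each has $\varphi\in C_0^\infty$; you need the type-$3$ cutoffs to take over once $|x|$ is large, and similarly for the type-$2$ face. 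Your decomposition into the compact pieces $\overline{B_R}\times S^{n-1}$, $S^{n-1}\times\overline{B_R}$, $S^{n-1}\times S^{n-1}$ does this correctly, but it is worth saying that the radii and cones must be chosen compatibly so that the resulting sum is genuinely bounded below on $\{|x|+|\xi|\ge R'\}$.
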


\par

Theorem \ref{thm:propwfs} extends the analogous result in \cite{CJT4} to the more general case of 
a classical, SG-hyperbolic involutive operator $L$ of Levi type, and the one in \cite{Taniguchi02}
to the global wave-front sets introduced above. It is a consequence of Theorem \ref{thm:mainLastSecThm}
and of Theorem 5.17 in \cite{CJT4}.

\begin{theorem}\label{thm:propwfs}
		Let $L$ in \eqref{eq:4.74} be a classical, SG-hyperbolic, involutive operator of Levi type, that is,
		of the type considered in Proposition \ref{thm:4.20} with SG-classical coefficients, of the form \eqref{eq:4.117}.
		Let $g_k\in\cS^\prime(\R^n)$, $k=0,\dots,m-1$, and assume that the
		$m$-tuple 
    		$(\cB_0,\dots,\cB_{m-1})$ consists of SG-admissible spaces.
    		Also assume that the SG-admissible space $\cC$ is such that $(\cB_k,\cC)$, $k=0,\dots,m-1$, 
    		are weakly-I SG-ordered pairs with respect to
    		\[
    			 k-j, k-j, k=0,\dots,m-1, j=0,\dots, k, \phi^{(\alpha)}, \alpha\in M, \text{ and } \emptyset.
    		\]
    Then, for the solution $u(t,s)$ of the Cauchy problem \eqref{eq:4.74} with $f\equiv0$, $(t,s)\in\Delta_{T^\prime}$, $s\in[0,T^\prime)$, we find
    \begin{equation}\label{eq:wfprop}
    	\WFF^k_\cC( u(t,s) ) \subseteq  \bigcup_{j=1}^m
    	\bigcup_{\alpha\in M^\nmid_j}
	   \bigcup_{\substack{\bt_j\in\Delta_j(T^\prime)
	   \\ t_0=t,t_j=s}}
	   \bigcup_{\ell=0}^{m-1}              
	                 	(\Phi_{\alpha}(\bt_j)(\WFF^k_{\cB_\ell} 
	                 	(g_\ell)))^{\mathrm{con}_k} , \quad k=1,2,3,
     \end{equation}
    where $V^{\mathrm{con}_k}$ for $V\subseteq \Omega_k$,
is the smallest $k$-conical subset of $\Omega_k$ which includes $V$, $k\in\{1,2,3\}$
and  $\Phi_\alpha(\bt_j)$ is the canonical transformation of $T^\star \R^{n}$ into itself generated 
by the parameter-dependent SG-classical phase functions $\phi^{(\alpha)}(\bt_j)\in\Phr$, $\alpha\in M_j^\nmid$, $\bt_j\in\Delta_j(T^\prime)$,
$t_0=t$, $t_j=s$, $j=1,\dots,m$, appearing in 
\eqref{eq:soleqordm}. 
\end{theorem}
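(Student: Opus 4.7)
The plan is to combine Theorem \ref{thm:mainLastSecThm}, which gives an explicit representation of the solution, with the propagation-of-singularities results for single SG-classical Fourier integral operators with regular SG-classical phase functions proved in \cite{CJT4}. First, I would invoke Theorem \ref{thm:mainLastSecThm} with $f\equiv 0$ to write
\[
u(t,s)=\sum_{\alpha\in M}W_\alpha(t,s)\,G_\alpha\quad\mod\, C^\infty(\Delta_{T'};\cS(\R^n)),
\]
where each $W_\alpha(t,s)$ is, modulo smoothing terms, a linear combination of parameter-dependent (iterated time-integrals of) matrix-valued SG FIOs of the form $\Op_{\phi^{(\alpha)}(\bt_j)}(w^{(\alpha)}(\bt_j))$, whose regular phase functions $\phi^{(\alpha)}$ are multi-products $\varphi_{m_1}\sharp\cdots\sharp\varphi_{m_j}$ of the eikonal-solution phase functions attached to the characteristic roots, and whose symbols $w^{(\alpha)}$ form bounded families in $\SG^{0,0}$. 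The SG-classical assumption on $L$ guarantees here that each $\varphi_k$, and thus each multi-product $\phi^{(\alpha)}$, is a regular SG-classical phase function, so that the canonical transformations $\Phi_\alpha(\bt_j)$ of $T^\star\R^n$ are well defined also in the directions at infinity.

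Next, by \eqref{eq:4.124}, the initial data of the reduced first-order system read
\[
G_\alpha(s,s)=g_{\card{\alpha}}+\sum_{j=0}^{\card{\alpha}-1}\Op(\Upsilon^j_\alpha(s))\,g_j,
\]
where the $\Op(\Upsilon^j_\alpha(s))$ are SG pseudo-differential operators of order $(\card{\alpha}-j,\card{\alpha}-j)$. Since SG pseudo-differential operators are pseudolocal with respect to the global wave-front sets $\WFF^k_{\cB}$ (see \cite{CJT4}), we obtain, under the SG-admissibility of the $\cB_\ell$,
\[
\WFF^k_{\cB_{\card{\alpha}}}(G_\alpha)\subseteq \bigcup_{\ell=0}^{m-1}\WFF^k_{\cB_\ell}(g_\ell),\qquad k=1,2,3,
\]
so that the global singularities of the initial data of the first-order system are controlled by those of the original Cauchy data.

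The core step is then Theorem 5.17 of \cite{CJT4}, which, for a single regular SG-classical FIO $\Op_\phi(a)$ with $\phi\in\Phr$ and pairs of function spaces being weakly-I SG-ordered of the appropriate orders, produces the estimate
\[
\WFF^k_\cC(\Op_\phi(a)\,v)\subseteq \Phi\bigl(\WFF^k_\cB(v)\bigr)^{\mathrm{con}_k},\qquad k=1,2,3,
\]
where $\Phi$ is the canonical transformation generated by $\phi$. The hypotheses in our statement on the weakly-I SG-orderings of the pairs $(\cB_k,\cC)$ with respect to the orders $k-j,k-j$ and the phase functions $\phi^{(\alpha)}$ are tailored exactly so that this propagation result applies, term by term, to each summand $W_\alpha(t,s)\,G_\alpha$. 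Combining this with the previous inclusion for $\WFF^k_{\cB_{\card{\alpha}}}(G_\alpha)$ and summing over $\alpha\in M$, along with taking the $k$-conical hull in the right-hand side, produces exactly the union appearing in \eqref{eq:wfprop}.

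The main technical obstacle will be propagating the estimate uniformly through the iterated time integrals defining $W_\alpha(t,s)$, so as to obtain the union over $\bt_j\in\Delta_j(T')$ with $t_0=t$, $t_j=s$ on the right of \eqref{eq:wfprop}. This requires a parameter-dependent version of the propagation estimate of \cite{CJT4}, uniform in $\bt_j$: one must check that the wave-front set of an integral $\int W_\alpha(t,\bt_j,s)\,G_\alpha\,d\bt_j$ is contained in the union over $\bt_j$ of the wave-front sets of the integrands, which follows from the continuity of the relevant maps, from the uniform boundedness in $\SG^{0,0}$ of the symbol families $w^{(\alpha)}(\bt_j)$ and from the $C^\infty$-regularity in $\bt_j$ of the multi-product phase functions $\phi^{(\alpha)}(\bt_j)$ established in Section \ref{sec:sgphfcomlaw} and Section \ref{sec:fundsol}. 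Since the $k$-conical hull operation is closed under arbitrary unions, putting everything together yields the claimed inclusion \eqref{eq:wfprop}.
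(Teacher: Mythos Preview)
Your proposal is correct and follows essentially the same approach as the paper: represent $u(t,s)$ via Theorem \ref{thm:mainLastSecThm}, reduce to the action of single SG-classical FIOs $\Op_{\phi^{(\alpha)}(\bt_j)}(w^{(\alpha)}(\bt_j))$ on the $G_\alpha$, invoke Theorem 5.17 of \cite{CJT4}, and handle the iterated time integrals by uniformity in $\bt_j$. The paper's proof differs only in presentation: rather than arguing ``forward'' through pseudolocality for $G_\alpha$ and then the FIO propagation estimate, it argues by contraposition, picking $(x_0,\xi_0)$ outside the right-hand side of \eqref{eq:wfprop}, choosing a single type-$k$ invertible $c_k\in\SG^{0,0}$, bringing $\Op(c_k)$ inside each iterated integral, and checking directly that $\Op(c_k)(W_\alpha(t,s)G_\alpha)\in\cC$ for every $\alpha$.
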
 

\begin{proof}
We prove \eqref{eq:wfprop} only for the case $k=3$, since the arguments for the cases $k=1$ and $k=2$ are analogous. Let 
\[
	(x_0,\xi_0)\in \bigcap_{j=1}^m
    	\bigcap_{\alpha\in M^\nmid_j}
	   \bigcap_{\substack{\bt_j\in\Delta_j(T^\prime)
	   \\ t_0=t,t_j=s}}
	   \bigcap_{\ell=0}^{m-1}              
	                 	[\Omega_3\setminus(\Phi_{\alpha}(\bt_j)(\WFF^3_{\cB_\ell} 
	                 	(g_\ell)))^{\mathrm{con}_3}].
\]
Then, by \eqref{eq:soleqordm} and \cite[Theorem 5.17]{CJT4}, in view of \eqref{eq:4.124} and the hypotheses 
on $\cC$, $\cB_k$, $k=0,\dots,m-1$, there exists $c_3\in \SG^{0,0}$, type-$3$ invertible at $(x_0,\xi_0)$,
such that $\Op(c_3)(W_\alpha(t,s)G_\alpha)\in\cC$, $\alpha\in M^\nmid_j$, $j=1,\dots,m$. 

In fact, for the terms such that $\alpha\in M^\nmid_1$, this follows by a direct application of Theorem 5.17 in \cite{CJT4}.
For the terms with $\alpha\in M^\nmid_j$, $j=2,\dots, m$, we first observe that we can bring $\Op(c_3)$ within the
iterated integrals analogous to those appearing in \eqref{eq:reducedfundsol}. Then, in view of the hypothesis on $(x_0,\xi_0)$,
again by Theorem 5.17 in \cite{CJT4} and the properties of the operators $W_\alpha$ in \eqref{eq:soleqordm},
the integrand belongs to $\cC$ for any $\bt_j$, $j=2,\dots,m$, in the integration domain. In view of the smooth dependence
on $\bt_j$, the iterated integral belongs to $\cC$ as well. Since the right-hand side of \eqref{eq:soleqordm} is a finite sum,
modulo a term in $\cS$, we conclude that $\Op(c_3)(u(t,s))\in \cC$, which implies $(x_0,\xi_0)\notin\WFF^3_\cC( u(t,s) )$ and proves the claim.
\end{proof}

\begin{remark}
	\begin{enumerate}
		\item We recall that the canonical transformation generated by an arbitrary regular phase function $\varphi\in\Phr$ is defined by the relations
	\[
		(x,\xi )=\Phi_\varphi (y,\eta )
		\quad \Longleftrightarrow \quad
  		\begin{cases}
     			y  = \varphi^\prime_{\xi} (x,\eta) =  \varphi^\prime_{\eta} (x,\eta),
   			\\[1ex]
     			\xi = \varphi^\prime_x (x, \eta).
 		 \end{cases}
	\]

		\item \label{point:wfsbichar} Assume that the hypotheses of Theorem \ref{thm:propwfs} hold true. Then
			$\WFF^k_\cC(u(t,s))$, $(t,s)\in \Delta_{T^\prime}$, $k=1,2,3$, consists of unions of arcs of bicharacteristics, 
			generated by the phase functions appearing in \eqref{eq:soleqordm} and emanating from points 
			belonging to  $\WFF^m_{\cB_k} (g_k)$, $k=0,\dots,m-1$, cf. \cite{CoMa, Morimoto, Taniguchi02}.

		\item The hypotheses on the spaces $\cB_k$, $k=0,\dots,m-1$, $\cC$, 
		  	 are authomatically fulfilled for 
			$\cB_k=H^{r+m-1-k,\varrho+m-1-k}(\R^n)$, $\cC=H^{r,\varrho}(\R^n)$, $r,\varrho\in\R$, $k=0,\dots,m-1$.
			That is, the results in Theorem \ref{thm:propwfs} and in point \ref{point:wfsbichar} above hold true, in particular,
			for the $\WFF^k_{r,\varrho}(u(t,s))$ wave-front sets, $r,\varrho\in\R$, $k=1,2,3$. 
		\item A result similar to Theorem \ref{thm:propwfs} holds true for the solution $U(t,s)$ of the system \eqref{sys} when $F\equiv0$
		        and $\Lambda$ and $R$ are matrices of SG-classical operators. 
	\end{enumerate}
\end{remark}

%%%%%%%%%%%%%%%%%%%%%%%%%%%%%%%%%%%%%%%%%%%
\section{Stochastic Cauchy problems for weakly SG-hyperbolic linear operators}\label{sec:sghypspde}%
%%%%%%%%%%%%%%%%%%%%%%%%%%%%%%%%%%%%%%%%%%%
\setcounter{equation}{0}

This section is devoted to the proof of the existence of random-field solutions of a stochastic PDE of the form

\begin{equation}\label{eq:SPDE}
  L(t,D_t;x,D_x)u(t,x) = \gamma(t,x) + \sigma(t,x)\dot{\Xi}(t,x),
\end{equation}
associated with the initial conditions $D_t^ku(0,x)=g_k(x)$, $k=0,\dots,m-1$, for a SG-involutive operator $L$, where $\gamma$ and $\sigma$ are suitable real-valued
functions, $\dot\Xi$ is a random noise, described below, and $u$ is an unknown stochastic process called \emph{solution} of the SPDE. 

Since the sample paths of the solution $u$ are, in general, not in the domain of the operator $L$, in view of the singularity of the random noise, we rewrite \eqref{eq:SPDE} in its corresponding integral (i.e., \textit{weak}) form and look for \emph{mild solutions of \eqref{eq:SPDE}}, that is, stochastic processes $u(t,x)$ satisfying
\begin{equation}\label{eq:mildsolutionSPDE}
  u(t,x) = v_0(t,x)+\int_0^t\int_{\R^n} \Lambda(t,s,x,y)\gamma(s,y)dyds +\int_0^t\int_{\R^n} \Lambda(t,s,x,y)\sigma(s,y)\dot\Xi(s,y)dyds,
\end{equation}
where $\Lambda$ is a suitable distribution, associated with the fundamental solution of the operator $L$, and in \eqref{eq:mildsolutionSPDE} we adopted
the usual abuse of notation involving \textit{distributional integrals}.

Based on the results of the previous Section \ref{sec:sginvcp}, and on the analysis in \cite{AsCoSu:1}, we can show that \eqref{eq:mildsolutionSPDE}
has a meaning, and we call it the solution of \eqref{eq:SPDE} with the associated initial conditions.

\subsection{Stochastic integration with respect to a martingale measure.}\label{subs:stochastics}
We recall here the definition of stochastic integral with respect to a martingale measure, using material coming from \cite{AsCoSu:1,AsCoSu:2}, to which we refer
the reader for further details. Let us consider a distribution-valued 
Gaussian process $\{\Xi(\phi);\; \phi\in\mathcal{C}_0^\infty(\mathbb{R}_+\times\R^n)\}$ on a complete probability space $(\Omega, \scrF, \P)$,
with mean zero and covariance functional given by
\begin{equation}
	\E[\Xi(\phi)\Xi(\psi)] = \int_0^\infty\int_{\R^n} \big(\phi(t)\ast\tilde{\psi}(t)\big)(x)\,\Gamma(dx) dt,
	\label{eq:correlation}
\end{equation}
where $\widetilde{\psi}(t,x) := \psi(t,-x)$, $\ast$ is the convolution operator and $\Gamma$ is a nonnegative, nonnegative definite, tempered measure on $\R^n$.
Then \cite[Chapter VII, Th\'{e}or\`{e}me XVIII]{schwartz} implies that there exists a nonnegative tempered measure $\mu$ on $\R^n$ such that $\caF\mu = \widehat{\mu}=\Gamma$. 
By Parseval's identity, the right-hand side of \eqref{eq:correlation} can be rewritten as
\begin{equation*}
	\E[\Xi(\phi)\Xi(\psi)] = \int_0^{\infty}\int_{\R^n}[\caF\phi(t)](\xi)\
	\cdot
	\overline{[\caF\psi(t)](\xi)}\,\mu(d\xi) dt.
\end{equation*}
The tempered measure $\Gamma$ is usually called \emph{correlation measure}. The tempered measure $\mu$ such that $\Gamma=\widehat\mu$ is usually called \emph{spectral measure}.

%%%

In this section we consider the SPDE \eqref{eq:SPDE} and its mild solution \eqref{eq:mildsolutionSPDE}: this is the way in which we understand \eqref{eq:SPDE}; we provide conditions to show that each term on the right-hand side of \eqref{eq:mildsolutionSPDE} is meaningful. 
\\
In fact, we call \textit{(mild) random-field solution to \eqref{eq:SPDE}} an $L^2(\Omega)$-family of random variables $u(t,x)$, $(t,x)\in[0,T]\times\R^n$, jointly measurable, satisfying the stochastic integral equation \eqref{eq:mildsolutionSPDE}. 
\\

To give a precise meaning to the stochastic integral in \eqref{eq:mildsolutionSPDE} we define
\beqs\label{intm}
\ds\int_0^t\int_{\R^n}\Lambda(t,s,x,y)\sigma(s,y)\dot\Xi(s,y)dsdy := \int_0^t\int_\R^n \Lambda(t,s,x,y)\sigma(s,y)M(ds,dy),
\eeqs
where, on the right-hand side, we have a stochastic integral with respect to the martingale measure $M$ related to $\Xi$. As explained in 
\cite{dalangfrangos}, by approximating indicator functions with $C^\infty_0$-functions, the process $\Xi$ can indeed be extended to a worthy martingale measure 
$M=(M_t(A);\; t\in\R_+, A\in\scrB_b(\R^n))$, where $\scrB_b(\R^n)$ denotes the bounded Borel subsets of $\R^n$. The natural filtration generated by this martingale measure will be denoted in the sequel by $(\scrF_t)_{t\geq 0}$.
The stochastic integral with respect to the martingale measure $M$ of stochastic processes $f$ and $g$, indexed by $(t,x)\in[0,T]\times\R^n$ and satisfying suitable conditions, is constructed by steps  (see \cite{conusdalang,dalang,walsh}), starting from the class $\caE$ of simple processes, and making use of the
pre-inner product (defined for suitable $f,g$) 
\begin{equation}\label{eq:norm02}
\begin{aligned}
\langle f,g\rangle_{0}&= \E\bigg[\int_0^T\int_\R^n \big(f(s)\ast\tilde{g}(s)\big)(x)\,\Gamma(dx) ds \bigg]
\\
& = \E\bigg[\int_0^T\int_\R^n [\caF f(s)](\xi)\cdot\overline{[\caF g(s)](\xi)}\,\mu(d\xi) ds\bigg], 
 \end{aligned}
 \end{equation}
with corresponding semi-norm $\|\cdot\|_0$, as follows.
\begin{enumerate}
\item For a {\it simple process} 
\[ g(t,x;\omega) = \sum_{j=1}^m 1_{(a_j,b_j]}(t)1_{A_j}(x)X_j(\omega)\in\mathcal E, \]
(with $m\in\N$, $0\leq a_j < b_j\leq T$, $A_j\in\scrB_b(\R^n)$, $X_j$ bounded, and $\scrF_{A_j}$-measurable random variable for all $1\leq j\leq n$) the stochastic integral with respect to $M$ is given by
\[ (g\cdot M)_t := \sum_{j=1}^m \big(M_{t\wedge b_j}(A_j) - M_{t\wedge a_j}(A_j)\big)X_j, \]
where $x\wedge y := \min\{x,y\}$. One can show, by applying the definition, that the fundamental isometry
\begin{equation}\label{eq:isometry}
  \E\big[(g\cdot M)_t^2\big] = \|g\|^2_0
\end{equation}
holds true for all $g\in\caE$. 
\item Since the pre-inner product \eqref{eq:norm02} is well-defined on elements of $\mathcal E$, if now we define $\caP_0$ as the completion of $\caE$ with respect to $\langle\cdot,\cdot\rangle_0$, then, for all the elements $g$ of the Hilbert space $\caP_0$, we can construct the stochastic integral with respect to $M$ as an $L^2(\Omega)$-limit of simple processes via the isometry \eqref{eq:isometry}. 
So, $\caP_0$ turns out to be the space of all integrable processes (with respect to $M$). Moreover, by Lemma 2.2 in \cite{sanzsuess1} we know that $\caP_0=L^2_{p}([0,T]\times\Omega,\caH)$,
where here $L^2_p(\ldots)$ stands for the predictable stochastic processes in $L^2(\ldots)$ and $\caH$ is the Hilbert space which is obtained by completing the Schwartz functions with respect to the inner product $\langle\cdot,\cdot\rangle_0$. Thus, $\caP_0$ consists of predictable processes which may contain tempered distributions in the $x$-argument (whose Fourier transforms are functions, $\P$-almost surely).  
\end{enumerate}

Now, to give a meaning  to the integral \eqref{intm}, we need to impose conditions on the 
distribution $\Lambda$ and on the coefficient $\sigma$ such that 
$\Lambda\sigma\in \mathcal P_0$.
In \cite{alessiandre}, sufficient conditions for the existence of the integral on the right-hand side of \eqref{intm} have been given, in the case that $\sigma$ depends on the spatial argument $y$, assuming that the spatial Fourier transform of the function $\sigma$ is a complex-valued measure with finite total variation. Namely, we assume that, for all $s\in[0,T]$,
\[ |\caF\sigma(\cdot,s)| = |\caF\sigma(\cdot,s)|(\R^n) = \sup_\pi \sum_{A\in\pi} |\caF\sigma(\cdot,s)|(A) < \infty, \]
where $\pi$ is any partition on $\R^n$ into measurable sets $A$, and the supremum is taken over all such partitions. Let, in the sequel,
$\nu_s:=\caF\sigma(\cdot,s)$, and let $|\nu_s|_\tv$ denote its total variation.
%On the other hand, to deal with the pathwise integral $\int_0^t\int_\R^n \Lambda(t,s,x,y)\gamma(s,y)dyds$ that appears in  \eqref{eq:mildsolutionSPDE} we need to state sufficient conditions to give it a rigorous meaning. We assume that the spatial Fourier transform of the coefficient $\gamma(s)$ is a measure with finite total variation, denoted by $\chi_s$. 
We summarize all these conditions in the following theorem; for its proof, see \cite[Theorem 2.6]{alessiandre}.

\begin{theorem}\label{thm:existenceanduniquenessconstant}
Let $\Delta_T$ be the simplex given by $0\leq t\leq T$ and $0\le s \le t$. Let, for $(t,s,x)\in\Delta_T\times\R^n$, $\Lambda(t,s,x)$ be a deterministic function with values in $\SX'(\R^n)_\infty$, the 
space of rapidly decreasing temperate distributions, and let $\sigma$ be a function in $L^2([0,T],C_b(\R^n))$. Assume that:
 \begin{enumerate}
    \item[(A1)]\label{ass:A1} the function $(t,s,x,\xi)\mapsto[\caF\Lambda(t,s,x)](\xi)$ is measurable, the function $s\mapsto\caF\sigma(s)=\nu_s \in L^2([0,T],\mathcal M_b(\R^n))$, and, moreover,
		
		\begin{equation}\label{eq:condition1}
			\int_0^T\bigg(\sup_{\eta\in\R^n} \int_{\R^n} |[\caF\Lambda(t,s,x)](\xi+\eta)|^2 \mu(d\xi)\bigg) |\nu_s|^2_\tv \, ds < \infty;		\end{equation}
    \item[(A2)]\label{ass:A2} $\Lambda$ and $\sigma$ are as in (A1) and
			\begin{align*}
				\lim_{h\downarrow0} \int_0^T		& \bigg(\sup_{\eta\in\R^n} \int_{\R^n} \sup_{r\in(s,s+h)} |[\caF(\Lambda(t,s,x)-\Lambda(t,r,x))](\xi+\eta)|^2 \mu(d\xi)\bigg) |\nu_s|^2_\tv\,ds = 0.
				\end{align*}
%   \item[(A3)]\label{ass:A3} the function $(t,s,x,\xi)\mapsto[\caF\Lambda(t,s,x)](\xi)$ is measurable, the function $s\mapsto\caF \gamma(s)=\chi_s \in L^2([0,T],\mathcal M_b(\R^n))$, and, moreover,
%		\begin{equation}\label{eq:condition4}
%			\int_0^T\bigg(\sup_{\eta\in\R^n} |[\caF\Lambda(t,s,x)](\eta)|^2 \bigg) |\chi_s|^2_\tv\,ds < \infty;
%		\end{equation}
% \item[(A4)]\label{ass:A4} $\Lambda$ and $\gamma$ are as in (A3) and
%		\begin{align*}
%		\lim_{h\downarrow0} & \int_0^T\bigg(\sup_{\eta\in\R^n} \sup_{r\in(s,s+h)} |\caF(\Lambda(t,s,x)-\Lambda(t,r,x))(\eta)|^2 \bigg)|\chi_s|^2_\tv\,ds = 0;
%		\end{align*}
%\item[(A5)]\label{ass:A5} for every $(t,x)\in[0,T]\times\R^n$, $v_0(t,x)$ is finite.		
  \end{enumerate}
Then $\Lambda\sigma\in\caP_0$, so the stochastic integral on the right-hand side of \eqref{intm} is well-defined and
 \begin{align*}
	\E\big[((\Lambda(t,\cdot,x,\ast)\sigma(\cdot,\ast))\cdot M)_t^2\big] \leq \int_0^t\bigg(\sup_{\eta\in\R^n} \int_{\R^n} |[\caF\Lambda(t,s,x)](\xi+\eta)|^2 \mu(d\xi)\bigg) |\nu_s|^2_\tv\,ds.
\end{align*}
%\begin{itemize}
%\item assumptions $(A1)$ and $(A2)$ provide that $\Lambda\sigma\in\caP_0$, so the stochastic integral on the right-hand side of \eqref{intm} is well-defined and
% \begin{align*}
%	\E\big[((\Lambda(t,\cdot,x,\ast)\sigma(\cdot,\ast))\cdot M)_t^2\big] \leq \int_0^t\bigg(\sup_{\eta\in\R^n} \int_{\R^n} |[\caF\Lambda(t,s,x)](\xi+\eta)|^2 \mu(d\xi)\bigg) |\nu_s|^2_\tv\,ds;
%\end{align*}
%\item assumptions $(A3)$ and $(A4)$ provide that the pathwise integral is well-defined and
%  \begin{align}\label{eq:normcontinuitypathwise}
%\bigg(\int_0^t\int_{\R^n} \Lambda(t,s,x,y)\gamma(s,y)dyds\bigg)^2  \leq C\int_0^t\left(\sup_{\eta\in\R^n} |\caF\Lambda(t,s,x)(\eta)|^2\right)|\chi_s|^2_\tv\,ds;
%\end{align}
%\item assumptions $(A1)-(A5)$ provide that the random-field solution of the SPDE \eqref{eq:SPDE} given by \eqref{eq:mildsolutionSPDE} is well-defined.
%\end{itemize}
\end{theorem}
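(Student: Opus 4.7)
The plan is to verify that $\Lambda\sigma$ belongs to the predictable Hilbert space $\caP_0$ and then to read off the stated $L^2(\Omega)$-estimate directly from the Walsh isometry \eqref{eq:isometry}. Since both $\Lambda(t,s,x,\cdot)$ and $\sigma(s,\cdot)$ are deterministic, the integrand $g(s,y) := \Lambda(t,s,x,y)\sigma(s,y)$ is $\scrF_0$-measurable, hence predictable; the real task is to show that $g$ lies in $L^2([0,t];\caH)$ and is the $\|\cdot\|_0$-limit of genuine simple processes in $\caE$.

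The first step would be to compute $\|g\|_0^2$ on the Fourier side. Because $\caF\sigma(s,\cdot) = \nu_s$ is a finite complex Borel measure and, by (A1), $\caF\Lambda(t,s,x,\cdot)$ is a measurable function of $\xi$, the spatial Fourier transform of the product is the convolution
\[
\caF[\Lambda(t,s,x,\cdot)\sigma(s,\cdot)](\xi) = \int_{\R^n}[\caF\Lambda(t,s,x)](\xi-\eta)\,\nu_s(d\eta).
\]
Applying the Cauchy--Schwarz inequality in $|\nu_s|$ and then Fubini to interchange the $\mu$- and $|\nu_s|$-integrations gives, after the harmless relabelling $\eta\mapsto -\eta$,
\[
\int_{\R^n}\bigl|\caF[\Lambda(t,s,x,\cdot)\sigma(s,\cdot)](\xi)\bigr|^2\mu(d\xi)\le |\nu_s|_{\tv}^{2}\,\sup_{\eta\in\R^n}\int_{\R^n}\bigl|[\caF\Lambda(t,s,x)](\xi+\eta)\bigr|^2\mu(d\xi).
\]
Integrating in $s$ over $[0,t]$ and invoking (A1) yields $\|g\|_0^2<\infty$.

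The second step is to approximate $g$ in $\|\cdot\|_0$ by simple processes. For a partition $\pi=\{0=s_0<\cdots<s_N=t\}$ I would set
\[
g_\pi(s,y):=\sum_{j=0}^{N-1}\Lambda(t,s_j,x,y)\sigma(s_j,y)\,1_{(s_j,s_{j+1}]}(s),
\]
and approximate each spatial slice $\Lambda(t,s_j,x,\cdot)\sigma(s_j,\cdot)\in\caH$ by linear combinations of indicators of bounded Borel sets, so that $g_\pi$ becomes a limit of genuine elements of $\caE$. Repeating the convolution--Cauchy--Schwarz bound on $g-g_\pi$ produces
\[
\|g-g_\pi\|_0^{2}\le\int_0^t\!\Bigl(\sup_{\eta\in\R^n}\int_{\R^n}\sup_{r\in(s_{j(s)},s_{j(s)+1}]}\!\!\bigl|\caF(\Lambda(t,s,x)-\Lambda(t,r,x))(\xi+\eta)\bigr|^2\mu(d\xi)\Bigr)|\nu_s|_{\tv}^{2}\,ds,
\]
where $j(s)$ is the index for which $s\in(s_{j(s)},s_{j(s)+1}]$. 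As $|\pi|\to 0$, condition (A2) forces this to zero by dominated convergence, the dominating function being supplied by (A1). Hence $g\in\caP_0$, and applying the isometry \eqref{eq:isometry} produces exactly the stated variance bound.

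The main technical obstacle I foresee is the first reduction: since $\Lambda(t,s,x,\cdot)$ is only a rapidly decreasing temperate distribution in $y$, the product $\Lambda\sigma$ is not a priori a function, and the whole $\caH$-analysis has to be carried out via Fourier transforms. The uniform-in-$\eta$ supremum built into (A1) is precisely what makes the convolution--Cauchy--Schwarz bound survive this lack of pointwise regularity in $y$; once this is in place, the time-continuity estimate for approximation by simple processes is a routine consequence of (A2).
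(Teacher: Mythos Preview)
The paper does not actually prove this theorem: it is quoted as a known result, with the line ``for its proof, see \cite[Theorem 2.6]{alessiandre}'' immediately preceding the statement. So there is no in-paper argument to compare against; your sketch is, in effect, a reconstruction of the proof from the cited reference.

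That said, your outline captures the correct mechanism: the spatial Fourier transform turns the product $\Lambda\sigma$ into the convolution $\caF\Lambda * \nu_s$, Cauchy--Schwarz in the finite measure $|\nu_s|$ produces the $\sup_\eta$ bound, and (A1) gives $\|g\|_0<\infty$ while (A2) drives the approximation by time-discretised processes. One small point to tighten: in your step-function $g_\pi$ you freeze \emph{both} $\Lambda$ and $\sigma$ at the partition points $s_j$, but the hypothesis $\sigma\in L^2([0,T],C_b(\R^n))$ gives no continuity of $\sigma$ in $s$, and (A2) controls only the increment in $\Lambda$. The cleaner route is to freeze only $\Lambda$, writing $g_\pi(s,\cdot)=\sum_j \Lambda(t,s_j,x,\cdot)\sigma(s,\cdot)1_{(s_j,s_{j+1}]}(s)$, so that the error $g-g_\pi$ is governed exactly by (A2); the remaining reduction of each $\Lambda(t,s_j,x,\cdot)\sigma(s,\cdot)$ to genuine simple processes then uses the density of $\caE$ in $\caP_0=L^2_p([0,T]\times\Omega;\caH)$, which is available once you know each time-slice lies in $\caH$.
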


The reason for the assumption that $\Lambda(t)\in\SX'(\R^n)_\infty$ is that, in this case, the Fourier transform in the second spatial argument is a smooth function of slow growth and the convolution of such a distribution with any other distribution in $\mathcal S'(\R^n)$ is well-defined, see \cite[Chapter VII, Section 5]{schwartz}. A necessary and sufficient condition for $T\in\SX'(\R^n)_\infty$ is that each regularization of $T$ with a $C^\infty_0$-function is a Schwartz function. This is true in our application, due to next Proposition \ref{prop:ftofschwartzkernel} and the fact that the Fourier transform is a bijection on the Schwartz functions.

\begin{proposition}\label{prop:ftofschwartzkernel}
Let $A=\Op_\varphi(a)$ be a SG Fourier integral operator, with symbol $a\in S^{m,\mu}(\R^{2n})$, $(m,\mu)\in\R^2$,
and phase function $\varphi$. Let $K_A$ denote its Schwartz kernel. Then, the Fourier transform with respect to the second argument of $K_A$,  $\caF_{\cdot\mapsto\eta}K_{A}(x,\cdot)$, is given by
  \begin{equation}\label{eq:ftofschwartzkernel}
		\caF_{\cdot\mapsto\eta}K_{A}(x,\cdot) = \e^{i \varphi(x,-\eta)}a(x,-\eta).
  \end{equation}
\end{proposition}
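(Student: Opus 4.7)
The plan is to derive \eqref{eq:ftofschwartzkernel} by pairing $K_A(x,\cdot)$, viewed as a tempered distribution in $y$ for each fixed $x$, against Schwartz test functions and exploiting the definition \eqref{eq:typei} of $\Op_\varphi(a)$ directly. Since $A=\Op_\varphi(a):\cS(\R^n)\to\cS(\R^n)$ is continuous (cf.\ Subsection \ref{sec:fioprelim}), its Schwartz kernel $K_A$ belongs to $\cS'(\R^{2n})$, and the section $K_A(x,\cdot)\in\cS'(\R^n)$ is characterised by the identity $\langle K_A(x,\cdot),\psi\rangle=(\Op_\varphi(a)\psi)(x)$ for every $\psi\in\cS(\R^n)$.

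Next, for an arbitrary $\phi\in\cS(\R^n)$, the definition of the Fourier transform of a tempered distribution (consistent with \eqref{eq:tfu}) yields
\[
\langle \caF_{y\mapsto\eta}K_A(x,\cdot),\phi\rangle
=\langle K_A(x,\cdot),\caF\phi\rangle
=(\Op_\varphi(a)\caF\phi)(x).
\]
Inserting \eqref{eq:typei} and using the Fourier inversion identity $\caF^2\phi(\xi)=(2\pi)^n\phi(-\xi)$, which follows at once from \eqref{eq:tfu}, the right-hand side becomes
\[
(2\pi)^{-n}\int_{\R^n}e^{i\varphi(x,\xi)}a(x,\xi)\,\caF(\caF\phi)(\xi)\,d\xi
=\int_{\R^n}e^{i\varphi(x,\xi)}a(x,\xi)\phi(-\xi)\,d\xi.
\]
This last integral is absolutely convergent, since $\phi(-\xi)$ decays faster than any polynomial and therefore dominates the polynomial growth of $a(x,\cdot)\in\SG^{m,\mu}$. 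The change of variables $\eta=-\xi$ rewrites it as $\int_{\R^n}e^{i\varphi(x,-\eta)}a(x,-\eta)\phi(\eta)\,d\eta$, and since $\phi$ is arbitrary we identify $\caF_{y\mapsto\eta}K_A(x,\cdot)$ with the (smooth, polynomially bounded in $\eta$) function $\eta\mapsto e^{i\varphi(x,-\eta)}a(x,-\eta)$, as claimed.

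The argument is essentially book-keeping of Fourier conventions; the only point requiring some care is to avoid introducing regularizing cutoffs for the oscillatory integral representation of $K_A$. This is permitted here precisely because we pair $K_A(x,\cdot)$ against Schwartz functions only via the duality $\langle\caF T,\phi\rangle=\langle T,\caF\phi\rangle$, so that every integral which actually appears in the computation is absolutely convergent. Should one prefer a direct approach through the oscillatory integral $K_A(x,y)=(2\pi)^{-n}\int e^{i(\varphi(x,\xi)-y\cdot\xi)}a(x,\xi)\,d\xi$, the same conclusion is obtained by a standard regularization $a\leadsto a\,\chi(\varepsilon\xi)$ and a limit $\varepsilon\downarrow 0$, which is the one place where symbolic estimates would enter nontrivially.
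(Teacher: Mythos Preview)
Your proof is correct. The paper states Proposition \ref{prop:ftofschwartzkernel} without proof, treating it as an elementary observation, so there is no argument in the paper to compare against; your computation via the duality $\langle \caF T,\phi\rangle=\langle T,\caF\phi\rangle$ and the identity $\caF^2\phi(\xi)=(2\pi)^n\phi(-\xi)$ (consistent with the convention \eqref{eq:tfu}) supplies exactly the missing details.
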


%Theorem \ref{thm:existenceanduniquenessconstant} then gives sufficient conditions for the existence of a well-defined 
%(mild) random-field solution \eqref{eq:mildsolutionSPDE}
%of \eqref{eq:SPDE} in the case $\gamma=0$ and $D_t^ku(0,x)=0$ for every $k=0,\ldots,m-1.$

%
 In the next subsection we will apply Theorem \ref{thm:existenceanduniquenessconstant}, Proposition \ref{prop:ftofschwartzkernel} and the theory developed in the previous sections to prove the existence of
random-field solutions for stochastic PDEs associated with a SG-involutive operator. 

\subsection{Random field solutions of stochastic linear SG-involutive equations}\label{subs:inv}
We conclude the paper with our fifth main result. 
\begin{theorem}\label{thm:linearin} 
Let us consider the Cauchy problem 
\begin{equation}\label{eq:cpstoch}
	\left\{
\begin{array}{ll}
L(t,D_t;x,\partial_x)u(t,x) = \gamma(t,x) + \sigma(t,x)\dot{\Xi}(t,x),          & t\in[0,T], x\in\R^n,
\\[1ex]
D_{t}^k u(0,x) = g_{k}(x), & k=0,\dots,m-1, x\in\R^n,
\end{array}
\right.
\end{equation}
for a SPDE associated with a SG-involutive operator $L$ of the type \eqref{eq:4.69}, $m\in\N$, of Levi type, that is,
satisfying \eqref{eq:4.117}.
Assume also, for the initial conditions, that $g_k\in H^{z+m-k-1, \zeta+m-k-1}(\R^n)$, $0\leq k\leq m-1$, with $z\in\R$ and $\zeta>n/2$. 
Furthermore, assume the Gaussian noise $\dot{\Xi}$ to be of the type described in Subsection \ref{subs:stochastics}, with the associated spectral measure such that 
\begin{equation}\label{eq:measinv}
  \int_{\R^n} \mu(d\xi) < \infty.
\end{equation}
Finally, assume that $\gamma\in C([0,T]; H^{z,\zeta}(\R^n))$, $\sigma \in C([0,T], H^{0,\zeta}(\R^n))$, 
and $s\mapsto\mathcal F\sigma(s)=\nu_s \in L^2([0,T],\mathcal M_b(\R^n))$.

Then, for some $T^\prime\in(0,T]$, there 
exists a random-field solution 
$u$ of \eqref{eq:cpstoch}. Moreover, 
$\displaystyle\E[u]\in C([0,T^\prime], H^{z,\zeta}(\R^n))$.
\end{theorem}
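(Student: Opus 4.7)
The plan is to apply Theorem \ref{thm:mainLastSecThm} to obtain an explicit candidate mild solution, then verify that each constituent makes sense by combining the continuity properties of SG Fourier integral operators with the stochastic integration result Theorem \ref{thm:existenceanduniquenessconstant}. Replacing formally the right-hand side $f(t)$ in \eqref{eq:4.74} by $\gamma(t)+\sigma(t)\dot\Xi(t)$ and exploiting the linearity of the representation \eqref{eq:soleqordm} with $s=0$, I obtain the candidate $u=v_{\mathrm{hom}}+v_{\mathrm{det}}+v_{\mathrm{stoch}}$, where
\begin{equation*}
v_{\mathrm{hom}}(t)=\sum_{\alpha\in M} W_\alpha(t,0)\,G_\alpha,\qquad v_{\mathrm{det}}(t)=\sum_{\alpha\in M_{m-1}}\int_0^t W_\alpha(t,\tau)\gamma(\tau)\,d\tau,
\end{equation*}
and
\begin{equation*}
v_{\mathrm{stoch}}(t,x)=\sum_{\alpha\in M_{m-1}}\int_0^t\!\int_{\R^n}\Lambda_\alpha(t,\tau,x,y)\,\sigma(\tau,y)\,M(d\tau,dy),
\end{equation*}
with $\Lambda_\alpha(t,\tau,x,\cdot)$ denoting the Schwartz kernel in the second spatial variable of the FIO combination $W_\alpha(t,\tau)$ of Theorem \ref{thm:mainthm}, and $M$ the martingale measure associated with $\Xi$.

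For the deterministic parts, Theorem \ref{thm:fiocont} yields that each component of $W_\alpha(t,\tau)$ maps $H^{r,\varrho}(\R^n)$ continuously into itself for any $r,\varrho\in\R$, since its amplitudes are of order $(0,0)$. Combined with \eqref{eq:Gaord}, the assumption $g_k\in H^{z+m-k-1,\zeta+m-k-1}(\R^n)$, and the continuous embedding $H^{s,\sigma}\hookrightarrow H^{z,\zeta}$ for $s\geq z$, $\sigma\geq \zeta$, this yields $v_{\mathrm{hom}}\in C([0,T'],H^{z,\zeta}(\R^n))$. The Duhamel term $v_{\mathrm{det}}$ is treated analogously, using $\gamma\in C([0,T],H^{z,\zeta}(\R^n))$ and the smooth dependence of the SG-FIOs on the time parameters, built into the constructions of Sections \ref{sec:sgcalc}--\ref{sec:fundsol} (cf.\ Proposition \ref{mainprop}). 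Since $M$ is a centered martingale measure, $v_{\mathrm{stoch}}$ has vanishing expectation, so the identity $\E[u]=v_{\mathrm{hom}}+v_{\mathrm{det}}\in C([0,T'],H^{z,\zeta}(\R^n))$ follows at once.

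The core of the argument is the verification, for each fixed $(t,x)\in[0,T']\times\R^n$ and each $\alpha\in M_{m-1}$, that the process $(\tau,y)\mapsto\Lambda_\alpha(t,\tau,x,y)\sigma(\tau,y)$ belongs to the class $\caP_0$, via conditions (A1) and (A2) of Theorem \ref{thm:existenceanduniquenessconstant}. Applying Proposition \ref{prop:ftofschwartzkernel} to each FIO factor in the structure of $W_\alpha$ recalled in \eqref{eq:reducedfundsolbis}, one checks that $[\caF_{y\to\eta}\Lambda_\alpha(t,\tau,x,\cdot)](\eta)$ is a finite sum of (possibly iterated time-integrals of) expressions of the form $e^{i\phi^{(\alpha)}(t,\tau;x,-\eta)}\,w^{(\alpha)}(t,\tau;x,-\eta)$, where $\phi^{(\alpha)}=\varphi_{m_1}\sharp\cdots\sharp\varphi_{m_j}\in\Phr$ is a multi-product phase and $w^{(\alpha)}\in \SG^{0,0}(\R^{2n})$. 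This ensures $\Lambda_\alpha(t,\tau,x,\cdot)\in\cS'(\R^n)_\infty$, and the uniform bound $|\caF\Lambda_\alpha|\leq C$ coming from the $(0,0)$ order of $w^{(\alpha)}$, together with \eqref{eq:measinv}, yields (A1) directly:
\begin{equation*}
\int_0^t\!\Bigl(\sup_\eta\!\int_{\R^n}|\caF\Lambda_\alpha(t,\tau,x)(\xi+\eta)|^2\mu(d\xi)\Bigr)|\nu_\tau|^2_\tv\,d\tau\leq C^2\mu(\R^n)\,\|\nu\|^2_{L^2([0,T],\mathcal M_b)}<\infty.
\end{equation*}

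The main technical obstacle I foresee is condition (A2). The uniform bound $4C^2\mu(\R^n)$ permits dominated convergence in $\tau$, reducing matters to proving, for each $\tau$, that $\sup_\eta\int_{\R^n}\sup_{r\in(\tau,\tau+h)}|\caF\Lambda_\alpha(t,\tau,x)(\xi+\eta)-\caF\Lambda_\alpha(t,r,x)(\xi+\eta)|^2\mu(d\xi)\to 0$ as $h\downarrow 0$. The natural Lipschitz estimate $|e^{i\phi^{(\alpha)}(t,\tau)}-e^{i\phi^{(\alpha)}(t,r)}|\leq |\tau-r|\,C\langle x\rangle\langle\xi+\eta\rangle$, which follows from $\partial_\tau\phi^{(\alpha)}\in \SG^{1,1}$ via Proposition \ref{prop:tderivofmultiph}, grows with $|\xi+\eta|$, so the uniformity in $\eta$ cannot be obtained from this estimate alone. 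The hard part is to split $\R^n$ as $\{|\xi+\eta|\leq R\}\cup\{|\xi+\eta|>R\}$, use the Lipschitz bound on the first set and the trivial bound $|e^{i\phi}-e^{i\phi'}|\leq 2$ on the second, and choose $R=R(h)\to\infty$ sufficiently slowly as $h\downarrow 0$, exploiting that $\mu$ is finite; the amplitude contribution is easier since $w^{(\alpha)}$ is a smooth bounded family in $\SG^{0,0}$ with respect to $\tau$. Once (A2) is established, Theorem \ref{thm:existenceanduniquenessconstant} delivers the stochastic integral as an $L^2(\Omega)$-valued random variable depending jointly measurably on $(t,x)$, completing the proof of existence of the random-field solution together with the expectation identity obtained in the second paragraph.
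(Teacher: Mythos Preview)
Your overall plan—splitting the candidate solution into homogeneous, deterministic-Duhamel, and stochastic parts, handling the first two via Theorem \ref{thm:fiocont}, and treating the stochastic term through Theorem \ref{thm:existenceanduniquenessconstant} together with Proposition \ref{prop:ftofschwartzkernel}—is precisely the paper's approach, and your treatment of $v_{\mathrm{hom}}$, $v_{\mathrm{det}}$, and of condition (A1) matches theirs essentially verbatim.

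The gap is in your verification of (A2). The splitting $\{|\xi+\eta|\le R\}\cup\{|\xi+\eta|>R\}$ cannot absorb the supremum over $\eta$ that sits \emph{outside} the $\mu$-integral: for any fixed $R$, choosing $|\eta|$ large moves the ball $\{\xi:|\xi+\eta|\le R\}$ away from the support of $\mu$, so the ``tail'' contribution $4C^2\,\mu(\{\xi:|\xi+\eta|>R\})$ is $4C^2\,\mu(\R^n)$ and does not tend to $0$ as $R\to\infty$. The admissible choice $\mu=\delta_0$ makes this transparent: then (A2) collapses to $\sup_{\eta}\sup_{r\in(s,s+h)}|\caF\Lambda(t,s,x)(\eta)-\caF\Lambda(t,r,x)(\eta)|\to 0$, which is exactly the uniform-in-$\eta$ continuity you yourself correctly flagged as \emph{not} available from the Lipschitz bound on the phase. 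So finiteness of $\mu$ alone does not rescue the argument via a frequency cutoff. The paper does not attempt such a splitting; it directly asserts the uniform bound \eqref{ourstar} (phrased as uniform continuity of $s\mapsto\caF\Lambda(t,s,\cdot)(\ast)$ with values in $\SG^{0,0}$, hence an $L^\infty$-modulus of continuity tending to zero), and explicitly defers the details to \cite{AsCoSu:1}. In other words, even in the paper (A2) is the delicate step and is not proved in full there; you have correctly isolated the obstruction, but the specific repair you sketch would fail, and the paper's route is a direct modulus-of-continuity claim rather than a cutoff argument.
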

\begin{proof} The operator $L$ is of the form considered in Section \ref{sec:sginvcp}. In particular, by Theorem \ref{thm:mainLastSecThm} we know that the solution to \eqref{eq:cpstoch} is formally given by \eqref{eq:soleqordm}, that is
\begin{equation}\label{theu}
	u(t)  = \sum_{\alpha\in M}E_{\varnothing \alpha}(t,0) G_{\alpha}+
	\sum_{\alpha\in M_{m-1}}\int_0^t E_{\varnothing \alpha}(t,s)  f(s)\,ds,    \quad t\in[0,T^\prime],
\end{equation}
where $M=\bigcup_{k=0}^m M_k$ with $M_k$ the permutations of $k$ elements of the set $\{1,\ldots,m\}$, and where $E_{\varnothing \alpha}(t,s)$, $\alpha\in M$, $(t,s)\in\Delta_{T^\prime}$ are (modulo elements with kernels in $C^\infty(\Delta_{T'};\mathcal S)$) suitable linear combinations of parameter-dependent families of iterated integrals of regular SG Fourier integral operators, with phase functions given by (sorted) sharp products of solutions to the eikonal equations associated with the characteristic roots of $L$, and matrix-valued symbols in $S^{(0,0)}$ determined again through the characteristic roots of $L$.
Let us insert now $f(t,x)=\gamma(t,x) + \sigma(t,x)\dot{\Xi}(t,x)$ in \eqref{theu}, so that, formally, 
\begin{equation}\label{eq:sollin1}
	\begin{aligned}
	u(t,x)&=v_0(t,x)+\int_0^t\int_{\R^d}\Lambda(t,s,x,y)\gamma(s,y)\,dyds +\int_0^t\int_{\R^d}\Lambda(t,s,x,y)\sigma(s,y)\dot{\Xi}(s,y)\,dyds
	\\
	&=v_0(t,x)+v_1(t,x)+v_2(t,x),
	\end{aligned}
\end{equation}
In \eqref{eq:sollin1}, $\Lambda(t,s,x,y)$ is the kernel of the finite sum $\sum_{\alpha\in M_{m-1}}E_{\varnothing \alpha}(t,s)$, that is, $\Lambda$ is (modulo an element of $C^\infty(\Delta_{T'};\mathcal S)$) a linear combination of (iterated integrals of) kernels of parameter-dependent FIOs with symbols of order $(0,0)$.
We have already observed (see the last lines of the proof of Theorem \ref{thm:mainLastSecThm}) that $\displaystyle v_0\in C([0,T'],H^{z,\zeta})$. 
Since, by assumption, $\zeta>\frac{n}{2}$, $v_0$ produces a function which is continuous and $L^2$ with respect to $x\in\R^n$ and $t\in[0,T']$, respectively. We have that

$$\forall (t,x)\in[0,T]\times\R^n,\ v_0(t,x)\ is\ finite.$$
%so that assumption $(A5)$ in Theorem \ref{thm:existenceanduniquenessconstant} holds true.
Since all the operators $E_{\varnothing\alpha}(t,s)$ in \eqref{theu} are linear and continuous from $H^{z,\zeta}$ to itself,
and $\gamma\in C([0,T'],H^{z,\zeta})$, the first integral in \eqref{eq:sollin1} certainly makes sense, and also 
$\displaystyle v_1\in C([0,T'],H^{z,\zeta})$. This gives:
$$\forall (t,x)\in[0,T]\times\R^n,\ v_1(t,x)\ is\ finite.$$
Let us now focus on the term $v_2$. We can rewrite it as
\beqs\label{eq:v2}
	v_2(t,x)=\int_0^t\int_{\R^d}\Lambda(t,s,x,y)\sigma(s,y) M(ds, dy),
\eeqs
where $M$ is the martingale measure associated with the stochastic noise $\Xi$, as defined in Section \ref{subs:stochastics}. 
By Proposition \ref{prop:ftofschwartzkernel} we then find
\begin{equation}\label{yeah} 
	|[\caF_{y\mapsto\eta}\Lambda(t,s,x,\cdot)](\eta)|^2 \leq C_{t,s} \x^{0}\langle\eta\rangle^{0}=C_{t,s},
\end{equation}
where $C_{t,s}$ can be chosen to be continuous in $s$ and $t$, since $\Lambda$ differs by an element of $\mathcal C^\infty(\triangle_{T'},\mathcal S)$ from the kernel of a linear combination 
of (iterated integrals of) kernels of (parameter-dependent) SG FIOs with symbol in $\mathcal C(\triangle_{T'},S^{0,0})$.
Using \eqref{yeah}, we get that
condition (A1) in Theorem \ref{ass:A1} is satisfied if
\[
  \int_0^t \left(\sup_{\xi\in\R^n}\int_{\R^n} |[\caF_{y\mapsto\eta}\Lambda(t,s,x,\cdot)](\eta+\xi)|^2\mu(d\eta)\right)|\nu_s|^2_\tv\,ds 
   \lesssim \int_0^t |\nu_s|^2_\tv\, ds \int_{\R^n}\mu(d\eta)<\infty.
\]
In view of the assumptions on $\sigma$, we conclude that assumption $(A1)$ holds true as long as \eqref{eq:measinv} does. 
To check the continuity condition $(A2)$ in Theorem \ref{ass:A1}, since $\Lambda$ is 
regular with respect to $s$ and $t$, it suffices to show that 
\begin{equation}\label{ourstar}
\sup_{r\in(s,s+h)} |\caF(\Lambda(t,s,x)-\Lambda(t,r,x))(\xi+\eta)|^2\leq C_{t,s,h}^2,
\end{equation}
with $C_{t,s,h}\to 0$ as $h\to 0$ and $C_{t,s,h}\leq C_{T'}$ for every $h\in [0,t-s],$ $(t,s)\in\Delta_{T'}$.
%i.e. the constants $C_{t,s}$ can be chosen to be uniformly continuous with respect to $s\in [0,t]$ for all $t\in [0,\bat T]$,
Indeed, if \eqref{ourstar} holds, then (A2) holds
%:
%\begin{align*}
%&\lim_{h\to 0}\int_0^t\bigg(\sup_{\eta\in\R^n} \int_{\R^n} \sup_{r\in(s,s+h)} |\caF(\Lambda(t,s,x)-\Lambda(t,r,x))(\xi+\eta)|^2 \mu(d\xi)\bigg)|\nu_s|^2_\tv\, ds
%\\
%&\leq \lim_{h\to 0}\int_0^tC_{t,s,h}^2\bigg(\int_{\R^n} \mu(d\xi)\bigg)|\nu_s|^2_\tv \,ds=\int_{\R^n} \mu(d\xi)\lim_{h\to 0}\int_0^tC_{t,s,h}^2|\nu_s|^2_\tv \,ds=0,
%\end{align*}
via Lebesgue's Dominated Convergence Theorem, in view of assumption \eqref{eq:measinv}, the fact that $|\nu_s|^2_\tv\in L^1[0,T]$, and $C_{t,s,h}\leq C_{T_0}$. 

Then, it only remains to show that \eqref{ourstar} holds true. But this follows from the fact that the function $s\mapsto \caF\Lambda(t,s,\cdot)(\ast)$ is, by \eqref{yeah}, uniformly continuous on $[0,t]$ with values in the Fr\'echet space $S^{0,0}(\R^{2n})$, endowed with the norm
$$||a-b||=\displaystyle\sum_{\ell=0}^\infty\frac1{2^\ell}\frac{\|a-b\|_\ell^{0,0}}{1+\|a-b\|_\ell^{0,0}},$$
so its modulus of continuity, 
\begin{align*}
\omega_{t,s}(h)=\sup_{r\in(s,s+h)} ||(\caF\Lambda(t,s,\cdot)(\ast)-\caF\Lambda(t,r,\cdot)(\ast))||
\end{align*}
tends to $0$ as $h\to 0$. For more details see \cite{AsCoSu:1}.

% Using the seminorms with $\ell=0$ and the norm definition we get 
%\begin{equation}\label{eq:lastgap}
%\begin{aligned}
%\sup_{r\in (s,s+h)} & |\caF\Lambda(t,s,x)(\xi+\eta)-\caF\Lambda(t,r,x)(\xi+\eta)|
%\\
%&\leq \sup_{r\in(s,s+h)}|\caF\Lambda(t,s,\cdot)(\ast)-\caF\Lambda(t,r,\cdot)(\ast)|_0^{0,0}\langle x\rangle^0\langle\xi+\eta\rangle^0
%\\
%%&\leq \omega_{t,s}(h)(1+ \sup_{r\in(s,s+h)}\|\caF\Lambda(t,s,\cdot)(\ast)-\caF\Lambda(t,r,\cdot)(\ast)\|_0^{0,0})
%%\\
%&\leq \omega_{t,s}(h)(1+2C_{T'}),
%\end{aligned}
%\end{equation}
%where 
%\[ C_{T'} := \max_{0\leq s\leq t\leq T'}C_{t,s}< \infty,  \]
%by \eqref{yeah} and the fact that $(t,s)\mapsto C_{t,s}$ is continuous on $\Delta_{T'}$. Therefore, the term in the last line of \eqref{eq:lastgap} goes to zero as $h\to 0.$ Choosing the constant $C_{t,s,h}=\omega_{t,s}(h)(1+2C_{T'})$ we get \eqref{ourstar}.\\
%
\indent
The argument above shows that we can apply Theorem \ref{ass:A1} to get that $v_2$ is well-defined as a stochastic integral with respect to the martingale measure canonically associated with $\Xi$. 
%\textcolor{red}{Similar and simplified arguments give that assumptions $(A3)$ and $(A4)$ in Theorem \ref{thm:existenceanduniquenessconstant} hold true.}

Summing up, the random-field solution $u(t,x)$ in \eqref{eq:sollin1} makes sense: its deterministic part is well-defined for every $(t,x)\in[0,T]$ and its stochastic part makes sense as a stochastic integral with respect to a martingale measure.%

The regularity claim $\displaystyle\E[u]\in C([0,T'],H^{z,\zeta}(\R^d))$ follows from the regularity properties of the $E_{\varnothing\alpha}$, of $\gamma$ and of the Cauchy data, taking expectation on both sides of \eqref{eq:sollin1}, and recalling the fact that the expected value $\E[v_2]$ of the stochastic integral is zero, being $\Xi$ a Gaussian process with mean zero. It follows that the regularity 
of $\E[u]$ is the same as the one of the solution of the associated deterministic Cauchy problem.

\end{proof}

\begin{remark}
	One could say that the random-field solution $u$ of \eqref{eq:cpstoch} found in Theorem \ref{thm:linearin} ``is unique'' in the following sense.
	First, when $\sigma\equiv 0$, it reduces to the unique solution of the deterministic Cauchy problem \eqref{eq:4.74},
	 with $f\equiv \gamma$ and $s=0$. Moreover, by linearity,
	 if $u_1$ and $u_2$ are two solutions of the linear Cauchy problem \eqref{eq:cpstoch},
	$u=u_1-u_2$ satisfies the deterministic equation $Lu=0$ with trivial initial conditions, and such Cauchy problem admits in $\mathcal S'$ only the trivial solution.
	The latter follows immediately by the $\mathcal S'$ well-posedness (with loss of smoothness and decay) of the Cauchy problem for the homogeneous deterministic linear
	equation $Lu=0$ proved in Theorem \ref{thm:mainLastSecThm}.  
\end{remark}

%%%%%%%%%%%%%%%%%%%%%%%%%%%%%%%%%%%%%%%%%%%%%%%%%%%
%%%%%                Bibliografia             %%%%%
%%%%%%%%%%%%%%%%%%%%%%%%%%%%%%%%%%%%%%%%%%%%%%%%%%%
	%\addcontentsline{toc}{part}{References}
	
\bibliographystyle{abbrv}

\end{document}